\documentclass[11pt,a4paper]{amsart}
\usepackage{comment}
\usepackage{oldgerm}
\usepackage{amsmath}
\usepackage{amssymb}
\usepackage{amsthm}
\usepackage[svgnames]{xcolor}
\usepackage{slashed}
\usepackage{bm}
\usepackage{graphicx}

\usepackage[all]{xy}
\usepackage[foot]{amsaddr}
\usepackage[colorlinks,linktocpage=true,citecolor=blue,linkcolor=blue,urlcolor=blue]{hyperref}

\makeatletter
\@addtoreset{equation}{section}
\makeatother

\theoremstyle{definition}
\newtheorem{defn}[equation]{Definition}
\theoremstyle{plain}
\newtheorem{thm}[equation]{Theorem}

\newtheorem{prop}[equation]{Proposition}
\newtheorem{fact}[equation]{Fact}
\newtheorem{fact?}[equation]{Fact?}
\newtheorem{cor}[equation]{Corollary}
\newtheorem{lem}[equation]{Lemma}

\theoremstyle{remark}
\newtheorem{rem}[equation]{Remark}
\newtheorem{ex}[equation]{Example}

\newcommand{\Z}{\mathbb{Z}}

\newcommand{\R}{\mathbb{R}}
\newcommand{\C}{\mathbb{C}}

\newcommand{\bH}{\mathbb{H}}
\newcommand{\pt}{\mathrm{pt}}

\newcommand{\del}{\partial}

\newcommand{\Grad}{\nabla\!\!\!\!\nabla}

\newcommand{\Self}{\mathrm{Self}}
\newcommand{\Skew}{\mathrm{Skew}}
\newcommand{\End}{\mathrm{End}}
\newcommand{\Ori}{\mathrm{Ori}}
\newcommand{\type}{\mathrm{type}}
\newcommand{\lself}{\mathrm{self}}
\newcommand{\lskew}{\mathrm{skew}}
\newcommand{\Ph}{\mathrm{Ph}}
\newcommand{\CS}{\mathrm{CS}}
\newcommand{\id}{\mathrm{id}}
\newcommand{\Ch}{\mathrm{Ch}}
\newcommand{\Tw}{\mathfrak{Twist}}
\newcommand{\hatTw}{\mathfrak{Twist}^2_{\widehat{KO}_+}}
\newcommand{\dR}{\mathrm{dR}}

\begin{document}

\title[Differential $KO$-theory via gradations and mass terms]{Differential $KO$-theory via gradations and mass terms}

\author[K. Gomi]{Kiyonori Gomi}
\address{Department of Mathematics, 
Tokyo Institute of Technology, 
2-12-1 Ookayama, Meguro-ku, Tokyo, 152-8551, Japan}
\email{kgomi@math.titech.ac.jp}

\author[M. Yamashita]{Mayuko Yamashita}
\address{Research Institute for Mathematical Sciences, Kyoto University, 
606-8502, Kyoto, Japan}
\email{mayuko@kurims.kyoto-u.ac.jp}

\subjclass[]{}
\maketitle

\begin{abstract}
   We construct models of the differential $KO$-theory and the twisted differential $KO$-theory, by refining Karoubi's $KO$-theory \cite{KaroubiKtheory} in terms of gradations on Clifford modules. 
   In order for this, we set up the {\it generalized Clifford superconnection formalism} which generalizes the Quillen's superconnection formalism \cite{QuillenSuperconnection}. 
   One of our models can be regarded as classifying ``fermionic mass terms'' in physics. 
\end{abstract}

\renewcommand{\thefootnote}{\fnsymbol{footnote}} 
\footnotetext{\emph{MSC 2010.} Primary 19L50; Secondary 19L10, 55N15. }     
\renewcommand{\thefootnote}{\arabic{footnote}}

\tableofcontents

\section{Introduction}
In this paper, we construct models of the differential $KO$-theory and the twisted differential $KO$-theory, by refining Karoubi's $KO$-theory \cite{KaroubiKtheory} in terms of gradations on Clifford modules. 
In order for this, we set up the {\it generalized Clifford superconnection formalism} for module bundles over bundles of simple central graded algebras, which generalizes the Quillen's superconnection formalism \cite{QuillenSuperconnection} and is of independent interest. 
This work is intended to lay a foundation for understanding the theory on massive fermions in physics in terms of differential $KO$-theory. 
One of the models we construct in this paper can be regarded as classifying ``fermionic mass terms'' in physics.  

A {\it differential cohomology theory} $\widehat{E}$, or a {\it differential extension} of a gereralized cohomology theory $E$, is defined on manifolds and refines the original theory $E$ with a differential geometric data. 
Just as there can be various models for a cohomology theory $E$, there can be  various ways to realize differential refiments $\widehat{E}$. 
The most classical example is the ordinary cohomology theory $H\Z$, and differential refinements include the smooth Deligne cohomology \cite{Brylinski2008} and Cheeger-Simon's differential character groups \cite{CheegerSimonsDiffChar}. 
The case of $K$-theory has also been studied intensively, partly because of its physical applications. 
Nowadays a number of models of the differential $K$-theory are known: We can appeal to the general recipe as provided by Hopkins and Singer \cite{HopkinsSinger2005} originally. Ortiz \cite{Ortiz2009} introduced an analogous model based on the space of Fredholm operators.  There is also a geometric model based on vector bundles with connections, which is suggested in \cite{HopkinsSinger2005} and is realized by Freed and Lott \cite{FL2010}, for instance. The model given by Bunke and Schick \cite{BunkeSchicksmoothK} makes use of ``geometric cycles'', and the one given by Benameur and Maghfoul \cite{BenameurMaghfoul2006} generalizes differential characters to $K$-theory.

The differential $KO$-theory has less been studied, although its significance is suggested in particular in physics (for example see \cite{FreedDiracCharge}, \cite{FreedHopkinsRRfields} \cite{FMSHeisenberg} and \cite{FMS07}). 
Also the possibility of developing a model by a real analogue of ``geometric cycles'' is pointed out in \cite[Section 4.9]{BunkeSchickDiffKsurvey}, but has not been carried out, partly because of the lack of the theory of superconnections in the real settings. 
Recently, Grady and Sati gave a model of the differential $KO$-theory \cite{GradySatiDiffKO} and its twisted version \cite{GradySatiDiffTwistedKO} in terms of sheaves of spectra. 
Our paper is devoted to a construction of another type of models of the differential $KO$-theory and $K$-theory, as well as their twisted versions, by a differential refinement of Karoubi's models \cite{KaroubiKtheory} of $KO$ and $K$-theories. 
The motivation of developing such models comes from applications to physics, in particular its relation with ``fermionic mass terms'' as we explain in Subsection \ref{subsec_intro_phys}. 

In \cite{KaroubiKtheory}, the $KO$-theory is realized in terms of {\it gradations} on Clifford modules. 
For a finite CW-complex $X$ and a pair of nonnegative integers $(p, q)$, an element in the Karoubi's $KO$-theory group, which we denote by $KO_+^{p, q}(X)$, is represented by a triple $(S, h_0, h_1)$ of an ungraded module $S$ over the Clifford algebra $Cl_{p, q}$ with an inner product\footnote{
In this paper modules are assumed to be {\it ungraded} unless otherwise stated. 
An {\it inner product} on a Clifford module is always assumed to be compatible with the $*$-algebra structure on the Clifford algebras (see Subection \ref{subsec_algebras}). 
\label{footnote_inner_product}
} and self-adjoint invertible elements $h_0,h_1 \in \mathrm{Map}(X, \End(S))$ such that $c h_i =- h_i c$ for all odd elements $c \in Cl^1_{p, q}$. 
Such $h_i$ is called a {\it gradation} on $S$. 
We have a natural isomorphism $KO_+^{p, q} \simeq KO^{p-q}$ on the category of finite CW-complexes. 
An advantage of this model of $KO$-theory, particularly compared to the Atiyah-Singer's model \cite{AtiyahSingerSkew} in terms of skew-adjoint Fredholm operators, is that we only need to deal with finite-dimensional modules. 
The analytic issues do not arise, and this is one reason why the model suits differential refinements. 
Another advantage is that we can twist the model in a straightforward manner to have the twisted $KO$-theory of Donovan and Karoubi \cite{DonovanKaroubi}. 
Namely, given a bundle of simple central graded $\R$-algebras (for example Clifford algebras) $\mathcal{A} \to X$, the twisted $KO_+$-group $KO_+^{\mathcal{A}}(X)$ is constructed out of triples $(\slashed S, h_0, h_1)$, where $\slashed S$ is now an $\mathcal{A}$-module bundle and $h_0, h_1 \in \Gamma(X; \slashed S)$ are gradations. 
In this way, we get a model of twisted $KO$-theory with twists classified by $H^0(X; \Z_8) \times H^1(X; \Z_2) \times H^2(X; \Z_2)$. 
Also, by just replacing coefficient from $\R$ to $\C$, we get the models of the $K$-theory and its twistetd version, where twists are classified by $H^0(X; \Z_2) \times H^1(X; \Z_2) \times \mathrm{Tors}(H^3(X; \Z))$. 

In order to construct the differential refinement of Karoubi's $KO$-theory and the twisted variant above, we develop a certain generalization of Quillen's superconnection formalism \cite{QuillenSuperconnection}. 
This formalism, which we call the {\it generalized Clifford superconnection formalism} and is developed in Section \ref{sec_superconn}, should be of independent interest.  
The Quillen's formalism has been important in the analytic developments of the Atiyah-Singer's index theory (for example see \cite{BGVheatkernel}). 
The Quillen's formalism is for $\Z_2$-graded vector bundles, whereas our generalized Clifford superconnection formalism is for module bundles $\slashed S$ over bundles $\mathcal{A}$ of simple central graded algebras (over $\R$ or $\C$). 
Specializing to the case where $\mathcal{A}$ is the trivial bundle with fiber $Cl_{p, q}$, we get the superconnection formalism in Clifford-linear settings, and the possibility of the interpretation of ``mass terms'' in terms of this generalization is suggested in \cite[Section 7]{Cordova:2019jnf}. 

Given a $Cl_{p, q}$-module $S$, a smooth map $\xi \in C^\infty(X, \End(S))$ on a manifold $X$ which satisfies $\{c, \xi\} = 0$ for all $c\in Cl_{p, q}$ defines a $Cl_{p, q}$-superconnection
\begin{align*}
    d + \xi \colon \Omega^*(X; S) \to \Omega^*(X; S). 
\end{align*}
Then we can consider characteristic forms for this superconnection, such as the {\it Pontryagin character forms} and the {\it Chern-Simons forms}. 
A smooth gradation $h \in C^\infty(X, \End(S))$ is an example of a map satisfying the anticommutation relation above, and in particular invertible. 
The characteristic forms for gradations appearing in the definition of the differential extension $\widehat{KO}_+$ are constructed from the superconnection $d + th$ over the manifold $(0, \infty) \times X$, where $t \in (0, \infty)$. 

Our model of the differential $KO$-theory, which is denoted by $\widehat{KO}_+$, is defined in Section \ref{sec_diff_KO}. 
Elements of $\widehat{KO}_+^{p, q}(X)$ are of the form $[S, h_0, h_1, \eta]$, where $S$ is a $Cl_{p, q}$-module with an inner product, $h_0$ and $h_1$ are smooth gradations on $S$ and $\eta \in \Omega^{4\Z + p-q-1}(X)/\mathrm{Im}(d)$. 
We have an equality 
\begin{align*}
[S, h_0, h_1, \mathrm{CS}_\lself(h_I)] = 0
\end{align*} 
if $h_I$ is a homotopy between $h_0$ and $h_1$, where $\CS_\lself(h_I)$ is the {\it Chern-Simons form} for such a homotopy. 
There are structure homomorphisms $R$, $I$ and $a$ consistutng the data of differential refinements. In particular the {\it curvature homomorphism} $R$ is defined by using the {\it Pontryagin character form} as 
\begin{align*}
    R \colon \widehat{KO}^{p, q}_+(X) &\longrightarrow \Omega_{\mathrm{clo}}^{4\Z + p-q}(X), \\
    [S, h_0, h_1, \eta] &\mapsto \Ph_\lself(h_1) - \Ph_\lself(h_0) - d\eta. 
\end{align*}
A good point is that these characteristic forms are computable explicitely. 
The twisted models, $\widehat{KO}^{\mathcal{A}}_+(X)$, are constructed similarly. 
In our model, the isomorphism class of the twisted differential $KO$-theory group only depends on the class of $\mathcal{A}$ in $H^0(X; \Z_8) \times H^1(X; \Z_2) \times H^2(X; \Z_2)$ as in the topological case. 

In Section \ref{sec_skew} we develop a variant $\widehat{KO}_-$ of the differential model $\widehat{KO}_+$. 
This model is given in terms of skew-adjoint sections $m \in C^\infty(X; \End(\slashed S))$ which are invertible and $mc = (-1)^{|c|}cm$ for all homogeneous elements $c \in C^\infty(X; \mathcal{A})$, where $|c| \in \Z_2$ denotes the $\Z_2$-grading. 
We call such $m$ a {\it mass term} on $S$, suggesting that it models a fermionic mass term as we explain in Subsubsection \ref{subsubsec_intro_phys_interpretation}. 
On the topological level, replacing gradations to mass terms in the definition of $KO^{p, q}_+$, we get a functor $KO_-^{p, q}$. 
We have a natural isomorphism $KO_-^{p, q} \simeq KO^{q-p-2}$, reflecting the isomorphism of Clifford algebras $Cl_{p, q+1} \simeq Cl_{q, p+1}$ which does not preserve the $\Z_2$-gradings. 
Refining this topological theory $KO_-$ in a similar way, we get a differential extension $\widehat{KO}_-$. 
In the untwisted case, the elements in $\widehat{KO}_-^{p, q}(X)$ are of the form $[S, m_0, m_1, \eta]$ with $m_i$ smooth mass terms and $\eta \in \Omega^{4\Z + q-p-3}(X)/\mathrm{Im}(d)$. 

In Section \ref{sec_hat_K}, we explain that the $\C$-linear analogues of the above constructions give models $\widehat{K}_+$ and $\widehat{K}_-$ of the differential $K$-theory. 

\subsection{The physical motivations}\label{subsec_intro_phys}
\subsubsection{The interpretation of $\widehat{KO}_-$ and $\widehat{K}_-$ as the group of ``fermionic mass terms''}\label{subsubsec_intro_phys_interpretation}
Here, we explain the physical motivations mentioned above. 
Our models of differential extensions of the $KO$-theory and the $K$-theory, especially $\widehat{KO}_-$ and $\widehat{K}_-$ in terms of skew-adjoint operators, can be regarded as classifying ``fermionic mass terms''. 
It is known that fermionic mass terms on the $n$-dimensional Minkovski spacetime are classified topologically by $KO^{n-3} \simeq KO_{-}^{1, n}$ (see for example \cite[Section 10.2]{Freed19}), and our differential model $\widehat{KO}_-^{1, n}$ refines this classification on the differential level. 

First we explain the mathematical theory of fermionic mass terms, following \cite[Section 10.2]{Freed19} and \cite[Section 9.2]{Freed:2016rqq}\footnote{
Remark that the sign convention on the Clifford algebras used there is differerent from ours (see \eqref{eq_sign_C(V)}). 
In our convention, $Cl_{p, q}$ has the negative generators $\alpha_1, \cdots, \alpha_p$ and the positive generators $\beta_1, \cdots, \beta_q$. 
}. 
Let $n$ be the dimension of the spacetime. 
We start from an ungraded $Cl_{1, n-1}^0$-module $S$, without any specified inner product. 
Let $\mathrm{Spin}_{1, n-1} \subset Cl_{1, n-1}^0$ be the Lorentz spin group. 
Then there exists a $\mathrm{Spin}_{1, n-1}$-invariant symmetric nonnegative bilinear pairing
\begin{align}\label{eq_Gamma_pairing}
    \Gamma \colon S \otimes S \to \R^{1, n-1}, 
\end{align}
uniquely {\it up to a contractible choice}. 
Here the nonnegativity means that $\Gamma(s, s)$ is timelike for all $s \in S$. 
Such $\Gamma$ induces a unique compatible
$\Z_2$-graded $Cl_{1, n-1}$-module structure on $S \oplus S^*$, where the grading operator is given by $\gamma_{S \oplus S^*} := \mathrm{id}_{S} \oplus (-\mathrm{id}_{S^*})$.


In \cite{Freed19}, {\it mass forms} $m_{\mathrm{form}}$ on $S$ are defined as nondegenerate skew-symmetric $\mathrm{Spin}_{1, n-1}$-invariant bilinear forms
\begin{align}\label{eq_mass_form}
    m_{\mathrm{form}} \colon S \otimes S \to \R. 
\end{align}
Here we remark that such $m_{\mathrm{form}}$ is called ``mass terms'' in \cite{Freed19}. 
We use the above terminology and notation in order to distinguish it  from our definition of mass terms in terms of skew-adjoint operators. 
Then it is shown in \cite[Lemma 9.55]{Freed:2016rqq} that the existence of such $m_{\mathrm{form}}$ is equivalent to the existence of a $\Z_2$-graded $Cl_{2, n-1}$-module structure on $S \oplus S^*$ which extends the $Cl_{1, n-1}$-module structure above. 

We now explain how this formulation fits into our picture. 
Since the differential $KO$-groups should remember the differential, not just topological, information on mass terms, 
we do not want the ambiguity such as ``contractible choice'' above. 
Our model $\widehat{KO}_-$ is given in terms of Clifford modules with {\it inner products} (which are compatible with the Clifford action, see Footnote \ref{footnote_inner_product}), and skew-adjoint invariant operators on them. 
Suppose we have $S$ and a pairing $\Gamma$ as in \eqref{eq_Gamma_pairing}. 
Since the action on $S \oplus S^*$ by the negative Clifford generator $\alpha_1 \in Cl_{1, n-1}$ anticommutes with $\gamma_{S\oplus S^*}$, 
the restriction defines a linear isomorphism
\begin{align*}
    \alpha_1|_{S} \colon S \to S^*, 
\end{align*}
and defines a symmetric bilinear form $(\cdot, \cdot)_{S} \colon S \otimes S \to \R$ by
\begin{align}\label{eq_innerprod_S}
    (s_1, s_2)_{S} := \langle \alpha_1|_{S} (s_1), s_2 \rangle, 
\end{align}
where the right hand side is the duality pairing $\langle \cdot , \cdot \rangle \colon S^* \otimes S \to \R$. 
Using the positivity of $\Gamma$, we see that the form \eqref{eq_innerprod_S}, extended to $S \oplus S^*$ in the canonical way, defines an inner product on $S \oplus S^*$ which is compatible with the $\Z_2$-graded $Cl_{1, n-1}$-module structure induced by $\Gamma$. Now, we state a lemma essentially contained in the proof of \cite[Lemma 9.55]{Freed:2016rqq}  in the form we need, where we let $Cl_{1, (n-1)+1}$ act on $S \oplus S^*$ by the action of $Cl_{1, n-1}$ and $\gamma_{S \oplus S^*}$ noting that a $\Z_2$-graded $Cl_{1, n-1}$-module structure is equivalent to an ungraded $Cl_{1, (n-1)+1}$-module structure. 

\begin{lem}\label{lem_mass_form}
Let $S$ and $\Gamma$ be as above, and use the induced $Cl_{1, (n-1) + 1}$-module structure and inner product on $S \oplus S^*$. 
Then we have a bijection between the set of mass forms $m_{\mathrm{form}}$ on $S$ and the set
\begin{align*}
    &\Skew_{1, n}^*(S \oplus S^*) \\
    &:= \bigg\{m \in \End(S \oplus S^*)\ \bigg| \
    \begin{array}{l} 
    \mbox{invertible}, \
    m = -m^*, \\
    cm =-mc \mbox{ for all } c \in Cl^1_{1, (n-1) + 1}
    \end{array}
    \bigg\}.
\end{align*}
\end{lem}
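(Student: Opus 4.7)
My plan is to reduce the statement to a concrete block-matrix analysis parametrised by a single map $\widetilde m \colon S \to S^*$. Since any element of $\Skew_{1,n}^*(S \oplus S^*)$ must anticommute with the last generator $\beta_{n} \in Cl_{1,(n-1)+1}^1$, which by construction acts on $S \oplus S^*$ as $\gamma_{S\oplus S^*}$, any such $m$ must be off-diagonal,
\begin{equation*}
    m = \begin{pmatrix} 0 & B \\ C & 0 \end{pmatrix}, \qquad C \colon S \to S^*,\ B \colon S^* \to S.
\end{equation*}
Writing $\phi := \alpha_1|_S \colon S \to S^*$, the inner product on $S \oplus S^*$ defined by \eqref{eq_innerprod_S} takes the form $(s,t)_S = \langle \phi s, t\rangle$ together with the compatible pairing $(f,g)_{S^*} = \langle f, \phi^{-1} g\rangle$. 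A short computation shows that $m^* = -m$ is equivalent to $B = -\phi^{-1} C^T \phi^{-1}$, where $C^T$ denotes the dual transpose. Thus $m$ is determined by $C$, and setting $m_{\mathrm{form}}(s,t) := C(s)(t)$ gives a bijection between candidate operators $m$ and bilinear forms on $S$. Invertibility of $m$ is equivalent to invertibility of $C$, i.e.\ nondegeneracy of $m_{\mathrm{form}}$.

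Next I unravel the anticommutation with $\alpha_1$. Working out compatibility with the inner product and the Clifford relation $\alpha_1^2 = -1$ forces $\alpha_1$ to act as $\bigl(\begin{smallmatrix} 0 & -\phi^{-1} \\ \phi & 0 \end{smallmatrix}\bigr)$. A direct matrix multiplication then shows that $m\alpha_1 + \alpha_1 m = 0$, after substituting the formula for $B$, reduces to the single equation $C + C^T = 0$, which is precisely the skew-symmetry of $m_{\mathrm{form}}$.

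For the remaining odd generators $\beta_1,\dots,\beta_{n-1}$ of $Cl_{1,n-1}$, anticommutation of $m$ with $\beta_j$ is equivalent, given the already-established anticommutation with $\alpha_1$, to commutation of $m$ with the products $\alpha_1\beta_j$. Since these products algebraically generate $Cl^0_{1,n-1}$ and span the Lie algebra $\mathfrak{spin}_{1,n-1}$, the condition becomes commutation of $m$ with all of $Cl^0_{1,n-1}$. On blocks, $mX = Xm$ for $X \in Cl^0_{1,n-1}$ reads $C\cdot X|_S = X|_{S^*}\cdot C$. The key technical input, which is the essential content of \cite[Lemma 9.55]{Freed:2016rqq}, is that the $Cl^0_{1,n-1}$-action on $S^*$ inherited from the $\Z_2$-graded extension via $\Gamma$ agrees with the Lie-algebraic dual representation of the action on $S$, a consequence of the symmetry of $\Gamma$ (equivalently, of the symmetry $\psi_j^T = \psi_j$ for the odd generators $\beta_j|_S = \psi_j$). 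With this identification, $CX|_S = X|_{S^*}C$ becomes $CX + X^T C = 0$ for all $X \in \mathfrak{spin}_{1,n-1}$, which is exactly the infinitesimal $\mathrm{Spin}_{1,n-1}$-invariance of $m_{\mathrm{form}}$.

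Since each step above is an iff, the map $m \mapsto m_{\mathrm{form}}$ extracting $C$ from the lower-left block, and its inverse producing $m$ from a prescribed $C = \widetilde m$ and $B = -\phi^{-1} C^T \phi^{-1}$, establish the claimed bijection. The main obstacle, and the place where I would spend most of the effort, is identifying the induced $Cl^0_{1,n-1}$-action on $S^*$ with the dual of the action on $S$: once this compatibility (which is really a reflection of how $\Gamma$ determines the whole $\Z_2$-graded $Cl_{1,n-1}$-module structure on $S \oplus S^*$) is in hand, the bijection is matter of direct block-matrix computation.
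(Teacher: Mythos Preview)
Your proposal is correct and follows essentially the same approach as the paper: extract the off-diagonal block $m|_S \colon S \to S^*$ via anticommutation with $\gamma_{S\oplus S^*}$, define $m_{\mathrm{form}}(s_1,s_2) = \langle m|_S(s_1), s_2\rangle$, and check the correspondences. The paper in fact gives only this bijection map and defers all the verification to \cite[Lemma 9.55]{Freed:2016rqq}, whereas you have unpacked the block-matrix computation in more detail; the key technical input you identify (that the induced $Cl^0_{1,n-1}$-action on $S^*$ agrees with the dual representation) is exactly the content the paper is outsourcing to that reference.
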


The bijection is simply given as follows. 
Assume we are given an element $m\in \Skew_{1, n}^*(S \oplus S^*)$. 
Since $m$ anticommutes with $\gamma_{S\oplus S^*}$, the restriction of $m$ to $S$ is a linear isomorphism
\begin{align*}
    m|_{S} \colon S\to S^*. 
\end{align*}
We define the associated mass form $m_{\mathrm{form}} \colon S \times S \to \R$ by
\begin{align}
    m_{\mathrm{form}}(s_1, s_2) := \langle m|_{S}(s_1), s_2 \rangle, 
\end{align}
where $\langle \cdot , \cdot \rangle$ is the duality pairing. 
Then we can check that this assignment $m \mapsto m_{\mathrm{form}}$ gives the desired bijection. 
For details see the proof of \cite[Lemma 9.55]{Freed:2016rqq}. 

Elements of our differential model $\widehat{KO}^{1, n}_-(X)$ is represented by a quadruple $(S, m_0, m_1, \eta)$, where $S$ is a $Cl_{1, n}$-module with inner product, $m_0, m_1 \in C^\infty(X, \Skew_{1, n}^*(S))$, and $\eta \in \Omega^{4\Z + n}(X)/\mathrm{Im}d$ an additional data of a differential form.  Suppose that we are given a quadruple
\begin{align*}
    (S, \Gamma, m_{\mathrm{form}, 0}, m_{\mathrm{form}, 1}),
\end{align*}
where $S$ and $\Gamma$ are as above and $m_{\mathrm{form}, i} = \{m_{\mathrm{form}, i}(x)\}_{x \in X}$, $i  =0, 1$, are smooth families of mass forms on $S$ parametrized by $X$. Then, by Lemma \ref{lem_mass_form},
we get an element
\begin{align}\label{eq_intro_phys_KO_hat_element}
    [S \oplus S^*, m_0, m_1, 0] \in \widehat{KO}_-^{1, n}(X). 
\end{align}
In this way, our groups $\widehat{KO}^{1, n}_-(X)$ can be regarded as classifying pairs of smooth families of (nondegenerate) fermionic mass terms on $n$-dimensional Minkovski spacetime.\footnote{Typically in the physics literature, we often have a fixed constant mass term $m_*$. 
In such a case, we set $m_0 := m_*$ and regard $m_1$ as a single variable. \label{footnote_base_mass}}
Our model $\widehat{KO}^{1, n}$ is a differential extension of the topological $KO$-theory $KO_-^{1, n} \simeq KO^{n-3}$. 
On the topological level, the element \eqref{eq_intro_phys_KO_hat_element} corresponds to the element $[S \oplus S^*, m_0, m_1] \in KO_-^{1, n}(X) \simeq KO^{n-3}(X)$, which recovers the well-known topological classifications of mass terms by the $KO$-theory. 

\subsubsection{Further perspectives--differential pushforwards and the Anderson duality}\label{subsubsec_intro_ABS}
 
Now we explain further perspectives. 
We expect that the further development of our differential $KO$ and $K$-theories, in particular the theory of {\it differential pushforwards}, would lead to an understanding of the long-range effective theories of massive fermions in terms of the differential refinement of the Anderson dual to the Atiyah-Bott-Shapiro maps. 

A belief in the community of physicists is that deformation classes of invertible field theories should be classified by generalized cohomology theories. This idea is proposed in a lecture of Kitaev as reviewed in \cite{GaiottoFreyd2019}, and is further developed in \cite{freed2014shortrange} and \cite{Freed:2016rqq} from a mathematical viewpoint.
Moreover, it has also been noticed that differential cohomology theories give refined classifications of such theories. 

In the case of the theory on massive fermions, 
assume we have data of $S$, $\Gamma$ as in the last subsubsection, and fix a mass term $m_*$ (footnote \ref{footnote_base_mass}). 
By the process of the Wick rotation, we produce the corresponding theory on Euclidean Spin manifolds as follows.  
We consider the complexification $Cl_{1, n-1} \otimes_\R \C = \C l_{n}$, which has the Riemannian Clifford algebra $Cl_{0, n}$ as a subalgebra. 
Then the Riemannian Spin group $\mathrm{Spin}_n $ acts on the complexification $S^\C$ of $S$, and the $\Z_2$-graded $Cl_{1, n-1}$-module structure on $S \oplus S^*$ explained in Subsubsection \ref{subsubsec_intro_phys_interpretation} induces the $\Z_2$-graded $Cl_{0, n}$-module structure on $S_\C \oplus S_\C^*$. 
If we have a closed $n$-dimensional $\mathrm{Spin}_n$-manifold $X$ with a Spin-connection $\nabla$ (regarded as a ``Wick-rotated spacetime''), we form the associated bundle $\slashed S_\C$ to $S$, and the Dirac operator is given by
\begin{align}
    \slashed D = c \circ \nabla \colon C^\infty(X; \slashed S_\C \oplus \slashed S_\C^*) \to C^\infty(X; \slashed S_\C \oplus \slashed S_\C^*), 
\end{align}
where $c$ is the Clifford multiplication. 
Since $\slashed D$ anticommutes with $\gamma_{\slashed S_\C \oplus \slashed S_\C^*}$, it restricts to an operator from $\slashed S_\C$ to $\slashed S_\C^*$. 
Given a mass term $m = \{m(x)\}_{x \in X}$ parametrized by $X$, we get the {\it massive Dirac operator}, 
\begin{align}\label{eq_massive_dirac}
    \slashed D + m \colon C^\infty(X; \slashed S_\C ) \to C^\infty(X;  \slashed S_\C^*), 
\end{align}
which gives a formally skew-symmetric operator. 
Then the associated {\it Lagrangian density} on $X$ is
\begin{align}
    L(\psi, m) =\frac{1}{2} \left\langle(\slashed D +  m)\psi, \psi \right\rangle|dvol_{X}|
\end{align}
for $\psi \in C^\infty(X; \slashed S_\C)$. 
Physicists believe that the following expression makes sense and call the {\it partition function} for the massive fermions, 

\begin{align}\label{eq_part_func}
    \mathcal{Z}(m) = \frac{\int\mathcal{D} \psi\exp \left( -   \int_{X} L(\psi, m)\right)}{\int\mathcal{D} \psi\exp \left(- \int_X L(\psi, m_*)\right)},
\end{align}

which is formally equal to the quotient of the Pfaffians of the massive Dirac operators, 
\begin{align}\label{eq_pfaffian}
    \frac{\mathrm{Pf}(\slashed D +  m)}{\mathrm{Pf}(\slashed D + m_*)}. 
\end{align}
The nondegeneracy of the mass term $m$ implies that this theory is {\it gapped}, and the long-range limit is an {\it invertible} theory. 
The corresponding element $[S \oplus S^*, m_*, m, 0] \in \widehat{KO}_-^{1, n}(X)$ in \eqref{eq_intro_phys_KO_hat_element} in our model is regarded as classifying this invertible theory. 

Moreover, we expect that the theory of differential pushforwards in our model $\widehat{KO}_-$ should give a mathematical interpretation of the complex phase of the partition function \eqref{eq_part_func} \eqref{eq_pfaffian} in the long-range limit. 
For the differential $K$-theory, in the vector-bundle model by Freed-Lott \cite{FL2010} and the geometric-cycle model by Bunke-Schick \cite{BunkeSchicksmoothK}, the differential pushforward along the map $p_X \colon X \to \pt$ for $(2k-1)$-dimensional closed Spin$^c$ manifolds $X$ with Spin connections $\nabla$, 
\begin{align*}
    (p_X, \nabla)_* \colon \widehat{K}^0(X) \to \widehat{K}^{-(2k-1)}(\pt) \simeq \R/\Z, 
\end{align*}
is given by the reduced eta invariants of twisted Dirac operators. 
Indeed, in a simplest case where $m$ is constant, in the limit $m \to \infty$ the quantity \eqref{eq_pfaffian} is known to be given in terms of the eta invariants of the (massless) Dirac operator \cite{Witten:2019bou}. 
Abstractly, we have a canonical way to define pushforwards in multiplicative differential refinements of $KO$-theories for Spin-oriented proper submersions with connection \cite[Appendix]{Yamashita2021}. 
The problem is how to describe the pushforward explicitly; for example such a description is obtained in the model by Grady-Sati \cite{GradySatiDiffKO}. 
We expect that the differential pushfowards in $\widehat{KO}_-$ for $n$-dimensional Spin manifolds $X$, 
\begin{align}\label{eq_push_hat_KO_-}
    (p_X, \nabla)_* \colon \widehat{KO}_-^{1, n}(X) \simeq \widehat{KO}^{n-3}(X) \to \widehat{K}^{-3}(\pt) \simeq \R/\Z, 
\end{align}
would be described, and the complex phase of \eqref{eq_pfaffian} can be understood in terms of the image of the element $[S \oplus S^*, m_*, m, 0] \in \widehat{KO}_-^{1, n}(X)$. 
For example such a picture is compatible with the principle that, in invertible theories, the variation of partition functions under smooth variation of the geometric structures on manifolds is given by an integration of some locally-constructed differential forms.  
Indeed, it is a general feature, called the {\it Bordism formula} \cite[Problem 4.235]{Bunke2013} of pushforwards in differential cohomology theory, that the image under the differential pushforwards varies by integrations of appropriate characteristic forms. 

Finally we comment on the relation with the Anderson duality. 
Freed and Hopkins \cite[Conjecture 8.37]{Freed:2016rqq} conjectured that the deformation classes of Wick-rotated, fully extended reflection positive $n$-dimensional invertible theories on $G$-manifolds are classified by $(I\Omega^G)^{n+1}(\pt)$, where $I\Omega^G$ is a generalized cohomology theory called the {\it Anderson dual} to $G$-bordism theory. 
Motivated by this conjecture, in \cite{YamashitaYonekura2021} Yonekura and one of the authors of the present paper gave a model $\widehat{I\Omega^G_\dR}$ for a differential extension of $I\Omega^G$ with a physical interpretation. 
An element in $(\widehat{I\Omega^G_\dR})^{n+1}(X)$ is given by a pair $(\omega,h)$, where $h$ is a map which assigns an $\R/\Z$-value to an $n$-dimensional differential $G$-manifold with a map to $X$ and plays the role of partition functions, 
and $\omega$ is an element in $\Omega^*_{\mathrm{clo}}(X) \otimes_\R H^*(MTG; \R)$ of total degree $(n+1)$ which describes the variations of $h$. 
Moreover, it is shown in \cite{Yamashita2021} that, in terms of differential pushforwards, we can construct a transformation of differential cohomology theories which refines the Anderson dual of multiplicative genera. 

In the case of the theory on massive fermions, the conjecture by Freed-Hopkins \cite[Conjecture 9.70]{Freed:2016rqq} states that, on the topological level, the map which assigns the class of the data $(S, \Gamma, m_*, m)$ the deformation class of the corresponding invertible theory as above should coincide with the composition
\begin{align}\label{eq_IABS_top}
KO^{n-3}(\pt) \xrightarrow{\gamma_{KO}} IKO^{n+1}(\pt) \xrightarrow{I\mathrm{ABS}} (I\Omega^{\mathrm{Spin}})^{n+1}(\pt), 
\end{align}
where $\gamma_{KO} \colon KO^* \simeq IKO^{*+4}$ is the Anderson duality of $KO$ which shifts the degree by $4$, and $I\mathrm{ABS}$ is the Anderson dual to the Atiyah-Bott-Shapiro map $\mathrm{ABS} \colon MT\mathrm{Spin} \to KO$. 

Actually, the general theory in \cite{Yamashita2021} applied to this case provides the differential refinement of \eqref{eq_IABS_top}. 
The construction in \cite{Yamashita2021} gives the transformation
\begin{align}\label{eq_IABS_diff}
    \Phi_{\mathrm{ABS}}(- \otimes \gamma_{KO}) \colon \widehat{KO}^{n-3}(X) \to \widehat{I\Omega^{\mathrm{Spin}}}^{n+1}(X), 
\end{align}
which refines \eqref{eq_IABS_top}. 
For an element in $\widehat{KO}^{n-3}(X)$, the map \eqref{eq_IABS_top} assigns the element whose partition function is given in terms of the pushforwards of the pullbacks of this element in $\widehat{KO}$ to Spin manifolds with connections. 
This is indeed the expectation that we explained above for differential pushforwards \eqref{eq_push_hat_KO_-} in $\widehat{KO}_-$. 
In this way, the authors believe that the understanding of the differential pushfowards in $\widehat{KO}_-$, combined with the general results in \cite{YamashitaYonekura2021} and \cite{Yamashita2021}, would give a verification and a differential refinement of the statement in \cite[Conjecture 9.70]{Freed:2016rqq}. 
This deserves a promising future work.

\section{Preliminaries}
\subsection{Notations}
\begin{itemize}
\item The category of finite CW-complexes is denoted by $\mathrm{CW}_f$ and that of finite CW-pairs is denoted by $\mathrm{CWPair}_f$. 
\item By {\it manifolds}, we mean manifolds possibly with corners. 
The category of manifolds which have the homotopy types of finite CW-complexes is denoted by $\mathrm{Mfd}_f$. 
A {\it pair of manifolds} is a pair $(X, Y)$ consisting of a manifold $X$ and a submanifold $Y \subset X$ which is a closed subset. 
The category of pairs of manifolds which have the homotopy types of a finite CW-pairs is denoted by $\mathrm{MfdPair}_f$.
\item For a manifold $X$ and a real vector space $V$, we denote by $\underline{V}$ the trivial bundle $\underline{V} := X \times V$ over $X$. 
\item 
For a manifold $X$, its submanifold $Y$ and a local coefficient system $V$ realized as a real vector bundle on $X$, we denote
\begin{align*}
    \Omega^n(X, Y; V) :=\{\omega \in \Omega^n(X; V) \ | \ \omega|_Y = 0\}.  
\end{align*}
We denote $\Omega^*(X) := \Omega^*(X; \underline{\R}) = \Omega^*(X; \R)$. 
$\Omega_{\mathrm{clo}}^*$ denotes the space of closed forms.
The de Rham cohomology class of a closed form $\omega \in \Omega_{\mathrm{clo}}^*(X, Y; V)$ is denoted by $\mathrm{Rham}(\omega) \in H^*(X, Y; V)$. 
    \item For a homogeneous differential form $\omega \in \Omega^*(X)$, we denote its degree by $|\omega|$. 
    \item We let $I := [0, 1]$. 
\item A $\Z_2$-graded algebra is typically denoted by $A = A^0 \oplus A^1$, where $A^0$ denotes the even part and $A^1$ denotes the odd part. 
The degree of a homogeneous element $a \in A$ is denoted by $|a| \in \Z_2$. 
The supercommutator is denoted by $\{\cdot, \cdot\}$. 
\item For $\Z_2$-graded algebras $A$ and $B$, we denote their graded tensor product by $A \widehat{\otimes}B$. 

\end{itemize}
\subsection{Simple central graded \texorpdfstring{$*$}{*}-algebras}\label{subsec_algebras}

In \cite{DonovanKaroubi}, the gradings and twists in $KO$-theory and $K$-theory are given by simple central graded algebras and bundles of them. 
In this subsection we briefly review necessary parts of \cite{DonovanKaroubi}, and work a little more to adjust the theory to respect the $*$-algebra structures. 
In this subsection let the coefficient field $\mathbb{K}$ be either of $\R$ or $\C$.

Simple central graded algebras over $\R$ or $\C$ are classified by their {\it type} $t \in \Z_8$ in the case $\mathbb{K}= \R$ and $t \in \Z_2$ in the case $\mathbb{K}= \C$, and {\it size} $n \in \Z_{>0}$ or $(k, l) \in \Z_{>0} \times \Z_{>0}$. 
The classification results of the isomorphism classes of those algebras are given in Table \ref{table:gscR} and Table \ref{table:gscC}. 
Here, $I_n$ denotes the identity matrix of size $n$, and when $u$ is an element in an algebra $A$ such that $u^2 = \pm 1$, we write $Z(u) := \{a \in A \ | \ au=ua\}$ and $Z^*(u) := \{a \in A \ | \ au=-ua\}$.

The Clifford algebra $C(V, Q)$ for a real vector space with a nondegenerate quadratic form $(V, Q)$ is an important example, which is of type $\mathrm{sign}(Q)$. 
It is the unital $\R$-algebra generated over $V$ with relation
\begin{align}\label{eq_sign_C(V)}
    v \cdot v = -Q(v) \cdot 1. 
\end{align}
In particular we denote $Cl_{p, q} := C(\R^{p, q})$. 
It is generated by the anticommuting generators $\{\alpha_i, \beta_j\}_{i, j}$, $1 \le i \le p$, $1 \le j \le q$ with $\alpha_i^2 = -1$ and $\beta_j^2 = 1$. 
The type is $(p-q)$. 
The complexification is the complex Clifford algebra $\C l_{p+q} = Cl_{p, q} \otimes \C$, which is an example of a simple central graded algebra over $\C$ of type $(p+q)$. 

For any such algebra $A$, there exists a distinguished element $u \in A$, uniquely determined up to sign, which we call a {\it volume elemnent}. 
When $A$ is of odd-type, it is characterized by the property that $u \in A^1 \cap Z(A)$ and $u^2 = \pm 1$ in the real case and $u^2=1$ in the complex case, where $Z(A)$ denotes the center of $A$. 
When $A$ is of even-type, it is characterized by the property that $A^0 = Z(u)$, $A^1 = Z^*(u)$ and $u^2 = \pm 1$ in the real case and $u^2=1$ in the complex case.
In particular $u \in A^0$ if $A$ is of even type. 
We call such $u$ a volume element because it coincides with the Clifford volume element $\pm \alpha_1 \cdots \alpha_p \beta_1 \cdots \beta_q$ in the case $A = Cl_{p, q}$. 
We set $\mathrm{Ori}(A) := \R u \subset A$ in both the real and complex case. 

\begin{table}[h]
\[
\hspace{-1cm}
\begin{array}{c|c|c|c|c}
[\mathrm{type}; \mathrm{size}]  & A & A^0 & A^1 & \mbox{volume element } u
\\
\hline\hline
{[1; n]}& M(n, \C) & M(n, \R) & u\cdot M(n, \R) & \pm iI_n
\\
\hline
{[2; n]}&M(n, \bH) &M(n, \C) = Z(u) &Z^*(u) & \pm iI_n
\\
\hline
{[3; n]} &M(n, \bH) \oplus M(n, \bH) & M(n, \bH) &u \cdot M(n, \bH) &  \pm (I_n \oplus (-I_n))
\\
\hline
{[4; k, l]} &M(k+l,\bH)&Z(u) &Z^*(u) & \pm(I_k \oplus (-I_l))
\\
\hline
{[5; n]}&M(2n, \C)&M(n, \bH)&u \cdot M(n, \bH)& \pm i I_{2n}
\\
\hline
{[6; n]}&M(2n, \R)&Z(u)&Z^*(u) & 
\pm \left(
    \begin{array}{rr}
    0 & I_n \\
    -I_n & 0
    \end{array}
    \right)
\\
\hline
{[7; n]} &M(n, \R) \oplus M(n, \R)&M(n, \R)&u\cdot M(n, \R) & \pm (I_n \oplus (-I_n))
\\
\hline
{[8; k, l]} &M(k+l, \R)&Z(u)&Z^*(u) &\pm (I_k \oplus (-I_l))
\end{array}
\]
\caption{Simple central graded algebras over $\R$ \label{table:gscR}}
\end{table}

\begin{table}[h]
\[
\begin{array}{c|c|c|c|c}
(\mathrm{type}; \mathrm{size})  & A & A^0 & A^1 & u
\\
\hline\hline
{(1; n)}& M(n, \C) \oplus M(n, \C) & M(n, \C) & u\cdot M(n, \C) &  \pm (I_n \oplus (-I_n))
\\
\hline
{(2; k, l)}&M(k+l, \C)&Z(u)&Z^*(u) &\pm( I_k \oplus (-I_l))
\end{array}
\]
\caption{Simple central graded algebras over $\C$ \label{table:gscC}}
\end{table}

We introduce the $*$-algebra structure on the algebras $A$ in the Tables \ref{table:gscR} and \ref{table:gscC} by the composition of the complex or quarternion conjugation and the transpose of matrices. 
In this paper, by a {\it simple central graded $*$-algebra} we mean a simple central graded algebra equipped with a $*$-algebra structure which is isomorphic to one of those standard ones. 
In the case $A = Cl_{p, q}$, we have $\alpha_i = -\alpha_i^*$ for $1 \le i \le p$ and $\beta_j = \beta_j^*$ for $1 \le j \le q$. 

We say that $A$ is {\it degenerate} if $A = A^0$ and {\it nondegenerate} otherwise. 
The degenerate ones are $[t; n, 0] = [t; 0, n]$ for $t = 4, 8$ in Table \ref{table:gscR} and $(2; n, 0) = (2; 0, n)$ in Table \ref{table:gscC}. 

For two simple central graded $*$-algebras $A$ and $B$, their graded tensor product $A \widehat{\otimes} B$ is equipped with a canonical structure of a simple central graded $*$-algebra. 
For $A$ over $\R$, we denote $\Sigma^{0, 1}A :=  A\widehat{\otimes}Cl_{0, 1}$ and $\Sigma^{1, 0} A := A\widehat{\otimes}Cl_{1, 0}$. 
These notations are due to the fact that they correspond to the suspension isomorphisms in the $KO$-theory. 
For $A$ over $\C$, we define $\Sigma A := A \widehat{\otimes} \C l_{1}$. 
In the $\R$-linear case, if $\pm u \in A$ are volume elements of $A$, $\pm u\widehat{\otimes} \beta \in \Sigma^{0, 1}A$ are those for $\Sigma^{0, 1}A$, where $\beta \in Cl_{0, 1}$ denotes the (positive) Clifford generator. 
In the $\C$-linear case the volume elements are related by the rule
\begin{align}\label{eq_complex_volume}
    (\mbox{volume elements of }A) &\to (\mbox{volume elements of }\Sigma A) \\
   \pm u &\mapsto \pm (\sqrt{-1})^{\type(A)} u \widehat{\otimes} \beta. \notag
\end{align}

For a simple central graded $*$-algebra $A$, Let $\mathrm{Aut}(A)$ denote the group of automorphisms on $A$. 
We denote by $\mathrm{Aut}_{\Z_2}(A)$, $\mathrm{Aut}_{*}(A)$ and $\mathrm{Aut}_{*, \Z_2}(A)$ the subgroups of $\mathrm{Aut}(A)$ consisting of automorphisms preserving the $\Z_2$-grading, the $*$-algebra structure, and both the $\Z_2$-grading and the $*$-algebra structure, respectively. 
In this paper, by a {\it bundles of $A$'s} we mean a bundle of $\Z_2$-graded $*$-algebras with fiber $A$, which has the structure group $\mathrm{Aut}_{*, \Z_2}(A)$. 
In \cite{DonovanKaroubi} $*$-algebra structures on the fiber are not considered. 
But we lose or gain nothing topologically by this restriction by Lemma \ref{lem_star_aut_A} below. 
For such a bundle $\mathcal{A}$, we write $\Sigma^{0, 1}\mathcal{A} :=  \mathcal{A} \widehat{\otimes} Cl_{0, 1}$, $\Sigma^{1, 0}\mathcal{A} :=  \mathcal{A} \widehat{\otimes} Cl_{1, 0}$ and $\Sigma \mathcal{A} = \mathcal{A} \widehat{\otimes} \C l_{1}$. 

The automorphisms groups above are easy to describe explicitly. 
We explain the $\R$-linear case. 
The $\C$-linear case is similar. 
In the case of $M(n, \R)$ and $M(n, \mathbb{H})$, all the automorphisms are inner, so the automorphism group is $\mathrm{PGL}(n, \R) = \mathrm{GL}(n, \R) / \R^*$ and $\mathrm{GL}(n, \mathbb{H}) / \R^*$, respectively. 
In the case of $M(n, \mathbb{C})$, we also have the complex conjugation, and the inner automorphism group $\mathrm{GL}(n, \C) / \R^*$
is the index-two subgroup of the automorphism group. 
In the case of the direct sum of two copies of matrix algebras, the automorphism group is the obvious index-two subgroup of that of the matrix algebra of the double size. 

Using the descriptions above, we see that the $*$-preserving automorphism groups $\mathrm{Aut}_*(A)$ are simply given by replacing $\mathrm{GL}$ with $\mathrm{O}$ and $\R^*$ with $\Z_2$ in the above. 
For example the $*$-preserving automorphism group of $M(n, \R)$ is $\mathrm{PO}(n, \R) = \mathrm{O}(n, \R) / \Z_2$. 
In particular the inclusion $\mathrm{Aut}_*(A) \hookrightarrow \mathrm{Aut}(A)$ is a deformation retract. 

The $\Z_2$-grading-preserving automorphism groups $\mathrm{Aut}_{\Z_2}(A)$ are also easily described. 
The case of $[8; n, n]$ is explicitly given in \cite[p.9]{DonovanKaroubi}. 
For $\mathrm{Aut}_{*, \Z_2}(A) = \mathrm{Aut}_{\Z_2}(A) \cap \mathrm{Aut}_{*}(A)$, by a direct check we also have the following. 

\begin{lem}\label{lem_star_aut_A}
Let $\mathbb{K}$ be either of $\R$ or $\C$. 
For any simple central graded $*$-algebra $A$ over $\mathbb{K}$, 
The inclusion $\mathrm{Aut}_{*, \Z_2}(A) \hookrightarrow \mathrm{Aut}_{\Z_2}(A)$ is a deformation retract. 
\end{lem}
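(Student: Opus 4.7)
The plan is to construct an explicit strong deformation retraction from $\mathrm{Aut}_{\Z_2}(A)$ onto $\mathrm{Aut}_{*, \Z_2}(A)$ by the polar decomposition of the underlying matrix group, refining the deformation retract $\mathrm{Aut}_*(A) \hookrightarrow \mathrm{Aut}(A)$ that the excerpt has already established.

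First I would use the description of $\mathrm{Aut}(A)$ given in the paragraphs preceding the lemma to write every $\phi \in \mathrm{Aut}(A)$ as $\phi = \mathrm{Ad}(g) \circ c^{\epsilon}$, where $g$ is an invertible element in the matrix algebras of Tables \ref{table:gscR} and \ref{table:gscC} (defined modulo a scalar) and $c^{\epsilon}$ is an optional complex or quaternionic conjugation. The polar decomposition $g = vp$ with $v$ orthogonal, unitary, or symplectic (according to the underlying field) and $p$ positive self-adjoint descends to the projective matrix group and gives the one-parameter family
\begin{equation*}
H_t(\phi) := \mathrm{Ad}(v p^{1-t}) \circ c^{\epsilon}, \quad t \in [0, 1],
\end{equation*}
which is exactly the retraction of $\mathrm{Aut}(A)$ onto $\mathrm{Aut}_*(A)$ already used in the excerpt.

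The core step is then to verify that $H_t$ preserves the subgroup $\mathrm{Aut}_{\Z_2}(A)$. For $A$ of even type, $A^0 = Z(u)$ and $A^1 = Z^*(u)$, so $\phi$ preserves the grading iff $\phi(u) = \pm u$; since $c^{\epsilon}(u) = \pm u$ for the standard $u$'s in the tables, this reduces to $gu = \varepsilon u g$ for some $\varepsilon \in \{\pm 1\}$. Applying the $*$-involution to this relation and using $u^* = \pm u$ (also immediate from the tables) yields $ug^* = \varepsilon g^* u$, and hence $u$ commutes with $g^*g$. By functional calculus $u$ commutes with $p = (g^*g)^{1/2}$ and with every power $p^{1-t}$, and therefore $v = gp^{-1}$ satisfies $vu = \varepsilon uv$ with the same sign. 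Thus $g_t := v p^{1-t}$ satisfies $g_t u = \varepsilon u g_t$ for every $t$, and $H_t(\phi)$ is grading-preserving throughout. For $A$ of odd type the element $u$ is central and this criterion fails, so I would instead use that the condition $\phi(A^0) = A^0$ translates into $g$ lying, up to a central scalar, in a real form of the ambient matrix algebra (for instance $g \in \C^* \cdot \mathrm{GL}(n, \R)$ for $[1;n]$); the polar decomposition of such a $g$ factorizes through the polar decomposition inside the real form, so that $v$ and $p$ themselves lie in the real form modulo scalars and $H_t(\phi)$ remains grading-preserving.

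Finally, $H_0 = \mathrm{id}$, $H_1$ lands in $\mathrm{Aut}_{*, \Z_2}(A) = \mathrm{Aut}_{*}(A) \cap \mathrm{Aut}_{\Z_2}(A)$, and $H_t$ fixes $\mathrm{Aut}_{*, \Z_2}(A)$ pointwise because the polar decomposition of a unitary element is trivial. This gives the desired strong deformation retraction. The main obstacle I expect is the uniform bookkeeping of the odd-type entries, where the grading is not detected by conjugation by $u$; this can be dispatched case by case by writing out $\mathrm{Aut}_{\Z_2}(A)$ as a semidirect product of a real-form matrix group with a Galois $\Z_2$, in the style of the explicit description of the $[8; n, n]$ case recorded in \cite[p.9]{DonovanKaroubi}.
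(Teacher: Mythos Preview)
Your proposal is correct, and it supplies far more detail than the paper does: the paper's entire ``proof'' is the phrase ``by a direct check'' following the explicit descriptions of $\mathrm{Aut}(A)$, $\mathrm{Aut}_*(A)$ and $\mathrm{Aut}_{\Z_2}(A)$ in the preceding paragraphs.  The intended verification is thus a pure case-by-case inspection of the tables, observing in each line that $\mathrm{Aut}_{*,\Z_2}(A)$ sits inside $\mathrm{Aut}_{\Z_2}(A)$ by the same ``replace $\mathrm{GL}$ by $\mathrm{O}$'' rule that already gave the retraction $\mathrm{Aut}_*(A)\hookrightarrow\mathrm{Aut}(A)$.

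Your polar-decomposition argument is a cleaner way to package that check.  The uniform treatment of the even types is especially efficient: once you note that every automorphism of the underlying matrix algebra is inner for types $[2;n],[4;k,l],[6;n],[8;k,l]$ (so $c^\epsilon$ never actually enters there), the commutation of $u$ with $g^*g$ gives the whole homotopy in one stroke.  For the odd types you still fall back on case analysis, which matches what the paper implicitly asks; your description of the odd-type grading condition as ``$g$ lies in $\C^*\cdot(\text{real form})$'' is correct for $[1;n]$ and $[5;n]$, and the direct-sum types $[3;n],[7;n]$ are handled by applying the polar decomposition blockwise together with the swap.  So your route is the same verification, organized more conceptually for half the cases.
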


By the above description of $\mathrm{Aut}_{*, \Z_2}(A)$'s, we see that their unit components consist of inner automorphisms. 
From this, their Lie algebras are also easily understood. 
\begin{lem}\label{lem_Lie_aut_A}
Let $\mathbb{K}$ be either of $\R$ or $\C$. 
For any simple central graded $*$-algebra $A$ over $\mathbb{K}$, 
the Lie algebra of $\mathrm{Aut}_{*, \Z_2}(A)$ is given by
\begin{align*}
    \mathrm{Lie}(\mathrm{Aut}_{*, \Z_2}(A)) = A^0_{\mathrm{skew}}
    := \{a \in A^0 \ | \ a = -a^* \}, 
\end{align*}
with its natural Lie bracket. 
\end{lem}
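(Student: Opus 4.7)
The plan is to leverage the sentence stated just before the lemma: the unit component of $\mathrm{Aut}_{*, \Z_2}(A)$ consists of inner automorphisms. So once I build a map $A^0_{\mathrm{skew}} \to \mathrm{Lie}(\mathrm{Aut}_{*, \Z_2}(A))$, surjectivity will come essentially for free, and the bulk of the work will be in checking injectivity.

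First I would define the map by sending $a \in A^0_{\mathrm{skew}}$ to the inner derivation $\mathrm{ad}_a := [a, \cdot]$. Integrating, the corresponding one-parameter family $g_t := \exp(ta)$ lies in the compact group $U(A^0) := \{g \in (A^0)^{\times} \mid g^* g = 1\}$, since $a \in A^0$ gives $g_t \in A^0$ and $a^* = -a$ gives $g_t^* = \exp(ta^*) = g_t^{-1}$. The verification that $\mathrm{ad}_a$ really is a derivation preserving the $\Z_2$-grading and the $*$-structure is immediate: $a \in A^0$ implies $\mathrm{ad}_a$ sends $A^i$ to $A^i$, and
\begin{equation*}
[a, x]^* = x^* a^* - a^* x^* = -x^* a + a x^* = [a, x^*],
\end{equation*}
so $\mathrm{ad}_a$ preserves $*$. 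Hence $\mathrm{Ad}_{g_t}$ is a smooth curve in $\mathrm{Aut}_{*, \Z_2}(A)$ with derivative $\mathrm{ad}_a$.

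Surjectivity then follows from the stated fact: since the unit component of $\mathrm{Aut}_{*, \Z_2}(A)$ consists of inner automorphisms $\mathrm{Ad}_g$ with $g \in U(A^0)$ (possibly modulo a central cover), every element of $\mathrm{Lie}(\mathrm{Aut}_{*, \Z_2}(A))$ integrates to such an inner automorphism and is therefore realized as $\mathrm{ad}_a$ for some $a \in A^0_{\mathrm{skew}}$. The induced Lie bracket on $A^0_{\mathrm{skew}}$ is the commutator bracket because $[\mathrm{ad}_a, \mathrm{ad}_b] = \mathrm{ad}_{[a,b]}$, and the commutator of two even skew-adjoint elements is again even and skew-adjoint.

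The main obstacle is checking injectivity of $a \mapsto \mathrm{ad}_a$, namely $Z(A) \cap A^0_{\mathrm{skew}} = \{0\}$ (where $Z(A)$ is the ungraded center). This I would do case-by-case from Tables \ref{table:gscR} and \ref{table:gscC}. The observation needed is that $A^0 \cap Z(A) = \mathbb{K}\cdot 1$ in every case: for even-type nondegenerate $A$, $Z(A) = \mathbb{K}$ already; for odd-type real $A$, $Z(A) = \mathbb{R}\cdot 1 \oplus \mathbb{R}\cdot u$ with $u$ odd, so $Z(A) \cap A^0 = \mathbb{R}$; for odd-type complex $A$ (type $(1;n)$), the element $u$ is also odd and $Z(A) \cap A^0 = \mathbb{C}\cdot 1$; and the degenerate even types $[4;n,0]$, $[8;n,0]$, $(2;n,0)$ are matrix algebras with scalar center. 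Since $1^* = 1$, the skew-adjointness condition forces any element of $A^0 \cap Z(A)$ to vanish, completing the proof. Once the casework is organized, no further difficulty arises; the bracket is transported from $A^0_{\mathrm{skew}}$ along the identification and identifies with the given commutator bracket on $A^0_{\mathrm{skew}}$.
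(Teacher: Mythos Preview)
Your approach via the adjoint map $a \mapsto \mathrm{ad}_a$ is exactly what the paper has in mind; the paper gives no detailed argument beyond pointing to the explicit descriptions of $\mathrm{Aut}_{*,\Z_2}(A)$ in the preceding paragraph. Your surjectivity step is correct.

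The injectivity step, however, fails over $\mathbb{K}=\C$. You correctly identify $Z(A)\cap A^0=\mathbb{K}\cdot 1$ in each case, but then argue that ``since $1^*=1$, the skew-adjointness condition forces any element of $A^0\cap Z(A)$ to vanish.'' This is only valid over $\R$: when $\mathbb{K}=\C$ the involution $*$ is conjugate-linear, so $(i\cdot 1)^*=-i\cdot 1$, and hence $i\cdot 1$ is a nonzero element of $Z(A)\cap A^0_{\mathrm{skew}}$ lying in the kernel of $\mathrm{ad}$. A dimension count confirms the discrepancy: for $A$ of complex type $(2;k,l)$ one has $A^0_{\mathrm{skew}}=\mathfrak{u}(k)\oplus\mathfrak{u}(l)$ of real dimension $k^2+l^2$, while the unit component of $\mathrm{Aut}_{*,\Z_2}(A)$ is $(U(k)\times U(l))/U(1)$, of dimension $k^2+l^2-1$. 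The same phenomenon occurs for type $(1;n)$.

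So over $\C$ the map $\mathrm{ad}\colon A^0_{\mathrm{skew}}\to\mathrm{Lie}(\mathrm{Aut}_{*,\Z_2}(A))$ is surjective with kernel $i\R\cdot 1$, and the lemma as literally stated is off by this one-dimensional central piece. This does not affect the paper's applications: what is actually used (in the discussion of connections on $\mathcal{A}$ just before Definition~\ref{def_superconn}) is only that any two connections differ by $\{C,-\}$ for some $C\in\Omega^1(X;\mathcal{A}^0_{\mathrm{skew}})$, which follows from surjectivity alone. Your argument over $\R$, and your surjectivity argument in general, are sound.
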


We now explain characteristic classes for bundles of simple central graded $*$-algebras. 
We first consider the $\R$-linear case. 
Let $X$ be a CW-complex. 
Introduce an abelian group structure on $H^1(X; \Z_2) \times H^2(X;\Z_2)$ by $(a, b) + (a', b') := (a+a', b+b'+ a\cup a')$ and denote the resulting group by $HO(X)$. 
Given a bundle of simple central graded $*$-algebras $\mathcal{A}$ on $X$, we have the {\it characteristic class} $w(\mathcal{A}) = (w_1(\mathcal{A}), w_2(\mathcal{A})) \in HO(X)$ as defined in \cite{DonovanKaroubi}. 
We have
\begin{align}\label{eq_tensor_w}
    w(\mathcal{A}\widehat{\otimes} \mathcal{B})= w(\mathcal{A}) + w(\mathcal{B}). 
\end{align}
We define
\begin{align*}
    \mathrm{type}(\mathcal{A}) \in H^0(X; \Z_8)
\end{align*}
by assigning the types of the fibers. 
A bundle of simple central graded $*$-algebras is called {\it negligible} if it has the form $\mathrm{End}(E^0 \oplus E^1)$ for a graded vector bundle $E^0 \oplus E^1$ with a positive definite inner product on each $E^i$. 
Bundles of simple central graded $\R$-algebras over $X$, modulo negligible ones, form the {\it graded Brauer group} $\mathrm{GBrO}(X)$ under the graded tensor products. 
The characteristic class descends to this group. 
If $X$ is a finite CW-complex,
by \cite[Theorem 6]{DonovanKaroubi} we have an isomorphism of abelian groups, 
\begin{align}\label{eq_isom_GBrO}
    (\mathrm{type}, w) \colon \mathrm{GBrO}(X) \simeq H^0(X; \Z_8) \times HO(X). 
\end{align}
As we explain in Subsubsection \ref{subsubsec_Karoubi}, $\mathrm{GBrO}(X)$ classifies the twists in the $KO$-theory in the formulation of Donovan-Karoubi, which we denote by $KO_+$. 

In the $\C$-linear case, we consider an abelian group structure on the set $H^1(X; \Z_2) \times \mathrm{Tors}(H^3(X;\Z))$ by $(a, b) + (a', b') := (a+a', b+b'+ \beta(a\cup a'))$, where $\beta \colon H^2(X; \Z_2) \to H^3(X; \Z)$ is the Bockstein homomorphism. 
We denote the resulting group by $HU(X)$.
The {\it characteristic class} in this case is an element $u(\mathcal{A}) \in HU(X)$. 
We have a rule corresponding to \eqref{eq_tensor_w}. 
Bundles of simple central graded $\C$-algebras over $X$, modulo negligible ones, form the {\it graded Brauer group} $\mathrm{GBrU}(X)$ under the graded tensor products. 
If $X$ is a finite CW-complex, by \cite[Theorem 11]{DonovanKaroubi} we have 
\begin{align}\label{eq_isom_GBrU}
    (\mathrm{type}, u) \colon \mathrm{GBrU}(X) \simeq H^0(X; \Z_2) \times HU(X). 
\end{align}
and this classifies the twists in the $K$-theory $K_+$. 

An important class of such bundles are those associated with vector bundles with nondegenerate quadratic forms. 
Namely, if we have a real vector bundle $V$ over a CW-complex $X$ equipped with a fiberwise nondegenerate quadratic form $Q$, we denote by $C(V, Q)$, or simply $C(V)$, the associated Clifford algebra bundle. 
By \cite[Lemma 7]{DonovanKaroubi}, its characteristic class satisfies $w(C(V)) = (w_1(V), w_2(V))$, where the $w_1(V)$ and $w_2(V)$ are the Stiefel-Whitney classes of $V$. The bundle $C(V)$ is negligible if and only if $V$ admits a $\mathrm{Spin}(p, q)$-structure. 

In this paper, modules are assumed to be {\it ungraded}, unless otherwise stated. 
For a simple central graded $*$-algebra $A$ over $\mathbb{K}$, an {\it $A$-module with an inner product} is a (ungraded) vector space $S$ over $\mathbb{K}$ equipped with a positive definite inner product and a homomorphism $A \to \mathrm{End}(S)$ of $*$-algebras, where the $*$-structure on $\mathrm{End}(S)$ is given by the adjoint with respect to the inner product. 
Given a bundle of simple central graded $*$-algebras $\mathcal{A}$ over $X$, 
an {\it $\mathcal{A}$-module bundle with an inner product} is a real (ungraded) vector bundle $\slashed S$ over $X$ equipped with a fiberwise positive definite inner product and a fiberwise continuous $*$-preserving action of $\mathcal{A}$ realized by a homomorphism $\mathcal{A} \to \mathrm{End}(\slashed S)$ of $*$-algebra bundles.  

\begin{defn}[{$A$-endomorphisms}]\label{def_End_A}
Let $A$ be a simple central graded $*$-algebra.  
Let $S$ be an $A$-module with an inner product.
We define
\begin{align*}
    \mathrm{End}^0_{A}(S) &:= \{\xi \in\mathrm{End}(S) \ | \ \xi c = c \xi \mbox{ for all } c \in A \}, \\
    \mathrm{End}^1_{A}(S) &:= \{\xi \in\mathrm{End}(S) \ | \ \xi c =(-1)^{|c|} c \xi \mbox{ for all } c \in A \}. 
\end{align*}
If $A$ is nondegenerate, we have $\mathrm{End}^0_{A}(S) \cap \mathrm{End}^1_{A}(S) = \{0\}$. 
In this case, we define a $\Z_2$-graded algebra $\mathrm{End}_{A}(S) $ by
\begin{align*}
    \mathrm{End}_{A}(S) := \mathrm{End}^0_{A}(S)\oplus \mathrm{End}^1_{A}(S).  
\end{align*}
The important point to be noticed is that, even though $S$ is ungraded, the algebra $\mathrm{End}_{A}(S)$ is $\Z_2$-graded. 

If $A$ is degenerate, we set
\begin{align}
    \mathrm{End}_{A}(S) :=\mathrm{End}^0_{A}(S) =  \mathrm{End}^1_{A}(S).
\end{align}
\end{defn}

\begin{defn}\label{def_skew_self_module_A}
Let $A$ be a simple central graded $*$-algebra. 
Let $S$ be an $A$-module with an inner product.
\begin{itemize}
\item[(a)]
We define
$$
\Self_{A}(S)
:= \{ \xi \in \mathrm{End}^1_{A}(S) \ | \ \xi^* = \xi
\}.
$$
We also define the following subspaces 
\begin{align*}
\Self_{A}^*(S) &= \{ \xi \in \Self_{A}(S) |\ \xi \mbox{ is invertible} \}, \\
\Self_{A}^\dagger(S) &= \{ \xi \in \Self_{A}(S) |\ \xi^2 = 1 \}.
\end{align*}

\item[(b)]
We define 
$$
\Skew_{A}(S)
= \{ \xi \in \mathrm{End}^1_{A}(S) \ | \ \xi^* = - \xi
\}.
$$
We also define the following subspaces 
\begin{align*}
\Skew_{A}^*(S) &= \{ \xi \in \Skew_{A}(S) |\ \xi \mbox{ is invertible} \}, \\
\Skew_{A}^\dagger(S) &= \{ \xi \in \Skew_{A}(S) |\ \xi^2 = -1 \}.
\end{align*}

\end{itemize}
\end{defn}

\begin{defn}[{$\mathcal{A}$-endomorphism bundles}]\label{End_A_bundle}
Let $X$ be a CW complex and $\mathcal{A}$ be a bundle of simple central graded $*$-algebras over $X$. 
For an $\mathcal{A}$-module bundle with inner product $\slashed S$, we apply Definition \ref{def_End_A} fiberwise and define the bundles $\mathrm{End}^0_{\mathcal{A}}(\slashed S)$ and $\mathrm{End}^1_{\mathcal{A}}(\slashed S)$. 
If $\mathcal{A}$ is a bundle of nondegenerate algebras, we define a bundle of $\Z_2$-graded algebras over $X$ by
\begin{align*}
    \mathrm{End}_{\mathcal{A}}(\slashed S) := \mathrm{End}^0_{\mathcal{A}}(\slashed S) \oplus \mathrm{End}^1_{\mathcal{A}}(\slashed S).
\end{align*}
 In the degenerate case, we set
\begin{align*}
    \mathrm{End}_{\mathcal{A}}(\slashed S) := \mathrm{End}^0_{\mathcal{A}}(\slashed S) = \mathrm{End}^1_{\mathcal{A}}(\slashed S).
\end{align*}
Again note that, even though $\slashed S$ is ungraded, the bundle $\mathrm{End}_{\mathcal{A}}(\slashed S)$ is $\Z_2$-graded. 

We define the bundles $\Self_{\mathcal{A}}(\slashed S)$, $\Self^*_{\mathcal{A}}(\slashed S)$, $\Self^\dagger_{\mathcal{A}}(\slashed S)$, $\Skew_{\mathcal{A}}(\slashed S)$, $\Skew^*_{\mathcal{A}}(\slashed S)$ and $\Skew^\dagger_{\mathcal{A}}(\slashed S)$ by applying Definition \ref{def_skew_self_module_A} fiberwise. 
\end{defn}

We have the following result, generalizing the periodicities of Clifford modules: 
First remark that, for a $\Z_2$-graded real vector bundle $E = E^0 \oplus E^1$ with positive definite inner product on each $E^i$, the associated negligible bundle $\mathrm{End}(E)$ is equipped with a canonical choice of fiberwise volume element, namely the grading operator $\gamma_E := \mathrm{id}_{E^0} \oplus (-\mathrm{id}_{E^1})$.

\begin{lem}[{\cite[Theorem 16]{DonovanKaroubi}}]\label{lem_periodicity_negligible_bundle}
Let $X$ be a CW-complex. 
Let $E = E^0 \oplus E^1$ be a $\Z_2$-graded vector bundle over $\mathbb{K}$ on $X$ with positive definite inner product on each $E^i$. 
Let $\gamma_E:= \mathrm{id}_{E^0} \oplus (-\mathrm{id}_{E^1})$ be its grading operator. 
Let $\mathcal{A}$ be any bundle of simple central graded $*$-algebras over $X$. 
\begin{enumerate}
    \item For any $\mathcal{A}$-module bundle with inner product $\slashed S$, we introduce an $\mathrm{End}(E) \widehat{\otimes}\mathcal{A}$-module structure on $E \otimes \slashed S$ by
    \begin{align}\label{eq_periodicity_negligible_module}
    \mathrm{End}(E) \widehat{\otimes} \mathcal{A} &\to \mathrm{End}(E \otimes \slashed S) \\
    \xi \widehat{\otimes} 1 & \mapsto \xi \otimes \mathrm{id}_{\slashed S} \notag\\
    1 \widehat{\otimes} b & \mapsto (\gamma_E)^{|b|} \otimes b. \notag
\end{align}
    The assignment $\slashed S \mapsto E \otimes \slashed S$ gives a bijection between the isomorphism classes of $\mathcal{A}$-module bundles with inner products and those of $\mathrm{End}(E) \widehat{\otimes}\mathcal{A}$-module bundles with inner products. 
\item When $\mathcal{A}$ is a bundle of nondegenerate algebras, we have an isomorphism of bundles of $\Z_2$-graded algebras,
\begin{align}
    \psi^E \colon \mathrm{End}_{\mathcal{A}}(\slashed S)  &\simeq \mathrm{End}_{\mathrm{End}(E) \widehat{\otimes} \mathcal{A}}(E \otimes \slashed S), \\
    \xi &\mapsto (\gamma_E)^{|\xi|} \otimes \xi.\notag
\end{align}
This isomorphism restricts to ones on $\Self$, $\Self^*$, $\Self^\dagger$, $\Skew$, $\Skew^*$ and $\Skew^\dagger$. 
\item When $\mathcal{A}$ is a bundle of degenerate algebras, we have isomorphisms of vector bundles, 
\begin{align}\label{eq_End_isom_degenerate}
    \psi^E \colon \End_{\mathcal{A}}(\slashed S) = \End^1_{\mathcal{A}}(\slashed S)  &\simeq \End^1_{\mathrm{End}(E) \widehat{\otimes} \mathcal{A}}(E \otimes \slashed S), \\
    \xi &\mapsto \gamma_E \otimes \xi.\notag
\end{align}
This isomorphism restricts to ones on $\Self$. $\Self^*$, $\Self^\dagger$, $\Skew$, $\Skew^*$ and $\Skew^\dagger$. 
\end{enumerate}
\end{lem}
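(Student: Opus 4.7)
The plan is to work fiberwise and recognize the statement as a version of the graded Morita equivalence $\End(E) \sim_{\mathrm{Morita}} \mathbb{K}$, with extra bookkeeping for inner products, $*$-structures and the ungraded nature of $\slashed S$. The topological core is the original \cite[Theorem 16]{DonovanKaroubi}; my job is to carry along the analytic structure and to exhibit the explicit formula $\psi^E$.

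First I would check that \eqref{eq_periodicity_negligible_module} extends unambiguously to a homomorphism $\End(E) \widehat{\otimes} \mathcal{A} \to \End(E \otimes \slashed S)$ of $\Z_2$-graded $*$-algebras. Writing the image of a general element as $\eta \widehat{\otimes} b \mapsto \eta \gamma_E^{|b|} \otimes b$, compatibility with the graded tensor product rule reduces to the identity $\gamma_E \eta' = (-1)^{|\eta'|} \eta' \gamma_E$ for homogeneous $\eta' \in \End(E)$, which holds because $\gamma_E$ lies in the even part of the natural grading of $\End(E)$ inherited from $E = E^0 \oplus E^1$. Preservation of $*$-structures uses $\gamma_E^* = \gamma_E$ and that the inner product on $E \otimes \slashed S$ is the tensor of those on $E$ and $\slashed S$.

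For Part (1) I would invoke that $\End(E)$ is Morita equivalent to $\mathbb{K}$ via the bimodule $E$: the functor $\slashed S \mapsto E \otimes \slashed S$ gives a bijection between isomorphism classes of $\mathcal{A}$-module bundles with inner products and of $\End(E) \widehat{\otimes} \mathcal{A}$-module bundles with inner products, the inverse being $T \mapsto \mathrm{Hom}_{\End(E)}(E, T)$ with $\mathcal{A}$-action deduced from the graded tensor product convention. For Parts (2) and (3), the map $\psi^E \colon \xi \mapsto \gamma_E^{|\xi|} \otimes \xi$ is the induced isomorphism on endomorphism bundles, and I would verify directly that it lands in the correct graded piece: a sign chase using the defining relation $\xi c = (-1)^{|c||\xi|} c\xi$ together with $\gamma_E^{|b|} \eta' = (-1)^{|b||\eta'|} \eta' \gamma_E^{|b|}$ shows that $\gamma_E^{|\xi|} \otimes \xi$ super-commutes with the image of $\End(E) \widehat{\otimes} \mathcal{A}$ in the $|\xi|$-graded sense. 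Multiplicativity uses $\gamma_E^2 = \mathrm{id}_E$ and $|\xi_1 \xi_2| = |\xi_1| + |\xi_2|$; injectivity is clear, and surjectivity is the graded double centralizer property applied fiberwise to the central simple graded algebra $\End(E) \widehat{\otimes} \mathcal{A}$. Restrictions to $\Self, \Self^*, \Self^\dagger, \Skew, \Skew^*, \Skew^\dagger$ are then immediate from $(\gamma_E^{|\xi|} \otimes \xi)^* = \gamma_E^{|\xi|} \otimes \xi^*$ and $(\gamma_E^{|\xi|} \otimes \xi)^2 = \mathrm{id}_E \otimes \xi^2$, and the degenerate case of Part (3) goes through with the single formula $\xi \mapsto \gamma_E \otimes \xi$, the image automatically landing in $\End^1_{\End(E) \widehat{\otimes} \mathcal{A}}(E \otimes \slashed S)$ by the same sign computation.

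The main obstacle is keeping signs straight: the graded tensor product rule, the commutation of $\gamma_E$ with odd elements of $\End(E)$, and the definitions of $\End^i_\mathcal{A}$ all conspire to produce the factor $\gamma_E^{|\xi|}$ in $\psi^E$, which is easy to drop or mis-sign unless the bookkeeping is done carefully. The structural content is otherwise standard graded Morita theory combined with the classification recalled in Tables \ref{table:gscR} and \ref{table:gscC}.
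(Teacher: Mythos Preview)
The paper does not supply its own proof of this lemma; it is stated with a citation to \cite[Theorem~16]{DonovanKaroubi} and the explicit formulas for the module structure and for $\psi^E$ are simply recorded. Your proposal is a correct direct verification of these formulas, organized around graded Morita equivalence, and is exactly the kind of argument the citation is pointing at; there is nothing to compare against in the paper itself.
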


Lemma \ref{lem_periodicity_negligible_bundle} applied to $X = \pt$ recovers the $(1, 1)$-periodicity of modules of Clifford algebras over $\R$ by setting $E = \R \oplus \R$, the $(8, 0)$-periodicity by setting $E = \R^8 \oplus \R^8 $, and the  $2$-periodicity over $\C$ by $E = \C \oplus \C$.

\subsection{Karoubi's \texorpdfstring{$KO$}{KO}-theory}

In this subsection we review the formulation of topological $KO$-theory given by Karoubi \cite{KaroubiKtheory} and Donovan-Karoubi \cite{DonovanKaroubi}, in terms of {\it gradations}. 

First we fix our notations. 
The $KO$-groups of a point is presented as
\begin{align}\label{eq_KO_pt}
    KO^*(\pt) = \Z[\eta, v, u, u^{-1}] / (2\eta, \eta^3, \eta v, v^2-4u)
\end{align}
as a $\Z$-graded algebra, where $\eta \in KO^{-1}(\pt)$, $v \in KO^{-4}(\pt)$ and $u \in KO^{-8}(\pt)$. 
This implies that
\begin{align}\label{eq_KO_R_pt}
    KO^*(\pt)\otimes \R = \R[v, v^{-1}]. 
\end{align}
The $K$-groups of a point is presented as 
\begin{align}\label{eq_K_pt}
    K^*(\pt) = \Z[b, b^{-1}]
\end{align}
as a $\Z$-graded algebra, where $b \in K^{-2}(\pt)$. 
We have
\begin{align}\label{eq_K_R_pt}
    K^*(\pt)\otimes \R  = \R[b, b^{-1}]. 
\end{align}
We have the complexification map $KO \to K$. 
We choose the sign of $v$ so that $v$ maps to $2u^2$ under the complexification. 

In the rest of this subsection, we work over $\R$ unless otherwise stated (e.g., in Remark \ref{rem_Karoubi_K}).

\subsubsection{Karoubi's \texorpdfstring{$KO$}{KO}-theory}\label{subsubsec_Karoubi}

First we recall the definition of untwisted $KO$-groups. 
\begin{defn}\label{def_KO_karoubi_untwisted_triple}
Let $(X, Y)$ be a finite CW-pair and $A$ be simple central graded $*$-algebra. 
\begin{itemize}
\item
A {\it triple} $( S, h_0, h_1)$ on $(X, Y)$ consists of an $A$-module $ S$ with an inner product and two continuous maps $h_0, h_1 \in \mathrm{Map} (X,  \Self_{A}^*(S))$ with $h_0|_Y = h_1|_Y$.  
Such $h_i$'s are called {\it gradations} on $S$. 
\item 
Triples $( S, h_0, h_1)$ and $( S', h'_0, h'_1)$ on $(X, Y)$ are {\it isomorphic} if there exists an isometric isomorphism of $A$-modules $f \colon  S \simeq  S'$ such that $f \circ h_i = h'_i\circ f$ for $i = 0, 1$. 
\end{itemize}
\end{defn}

\begin{defn}[{$KO_+^{A}(X, Y)$, \cite[Chapter III, Proposition 4.26]{KaroubiKtheory}}]\label{def_karoubi_untwisted_KO}
Let $(X, Y)$ and $A$ as above. 
\begin{itemize}
\item
On the set $M^{A}_+(X, Y)$ of isomorphism classes of triples $( S, h_0, h_1)$ on $(X, Y)$, we introduce an abelian monoid structure by the direct sum. 

\item
We define $Z^{A}_+(X, Y)$ to be the submonoid of $M^{p, q}_+(X, Y)$ consisting of isomorphism classes of triples $( S, h_0, h_1)$ on $(X, Y)$ such that there exists a homotopy between $h_0$ and $h_1$ which is constant on $Y$, i.e., a map $h_I \in \mathrm{Map}(I \times X, \Self^*_{A}( S))$ with $h_I|_{\{i\} \times X} = h_i$ for $i = 0, 1$ and $h_I|_{\{t\} \times Y} = h_0|_Y$ for all $t \in I$. 

\item
We define $KO^{A}_+(X, Y) = M^{A}_+(X, Y)/Z^{A}_+(X, Y)$ to be the quotient monoid.

\end{itemize}
\end{defn}

\begin{rem}
Actually we are using a slightly different formulation from \cite{KaroubiKtheory}, but they are equivalent. 
In \cite{KaroubiKtheory}, we do not consider inner products on modules $S$, and instead of $\Self_*^A(S)$ above we use
\begin{align}
    \mathrm{Grad}_A(S) := \{\xi \in \End^1_A(S) \ | \ \xi^2 = 1\}. 
\end{align}
Elements in $\mathrm{Grad}_A(S)$, or maps to it, are called gradations there. 
If $S$ is equipped with an inner product, we have inclusions $\Self^\dagger_A(S) \hookrightarrow \mathrm{Grad}_A(S)$ and $\Self^\dagger_A(S) \hookrightarrow \Self^*_A(S)$. 
Since both of them are homotopy equivalences, our definition is equivalent to the formulation in \cite{KaroubiKtheory}. 
We choose this formulation because we need self-adjointness for the differential refinement below. 
By the same reason, we may as well use $\Self^\dagger_A(S)$ instead of $\Self_A^*(S)$ in the above definition. 
Similar remarks also apply to the twisted case. 
\end{rem}

As is shown in \cite{KaroubiKtheory}, the abelian monoid $KO^{A}_+(X, Y)$ gives rise to an abelian group, where the additive inverse given by $-[ S, h_0, h_1] = [ S, h_1, h_0]$. 
$KO_+^A$'s are models for the $KO$-theory, as follows. 
See Subsubsection \ref{subsubsec_KO_spectrum} for further details. 

\begin{fact}[{\cite[Chapter III, Theorem 4.29]{KaroubiKtheory}}]\label{fact_karoubi_KO}
We have a natural isomorphism on the category of finite CW-pairs, 
\begin{align}
    KO^{A}_+ \simeq KO^{\type(A)}. 
\end{align}
\end{fact}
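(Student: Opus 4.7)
The plan is to reduce the claim to Karoubi's original theorem \cite[Chapter III, Theorem 4.29]{KaroubiKtheory} in two steps: (i) identify our $*$-preserving formulation with Karoubi's non-$*$ formulation, and (ii) reduce from an arbitrary simple central graded $*$-algebra $A$ to a Clifford algebra $Cl_{p,q}$ with $p - q \equiv \type(A) \pmod 8$.

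Step (i) is essentially the content of the remark following Definition \ref{def_karoubi_untwisted_KO}: the inclusions $\Self^\dagger_A(S) \hookrightarrow \Self^*_A(S)$ and $\Self^\dagger_A(S) \hookrightarrow \mathrm{Grad}_A(S) := \{\xi \in \End^1_A(S) \mid \xi^2 = 1\}$ are fiberwise deformation retracts (the first via $\xi \mapsto \xi (\xi^* \xi)^{-1/2}$; the second by using contractibility of the space of $A$-invariant inner products to symmetrize an involution with its adjoint). Extending these retractions parametrically over $X$, compatibly with the boundary condition on $Y$, identifies $KO_+^A(X, Y)$ with the group $K^A(X, Y)$ defined in \cite{KaroubiKtheory}.

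For step (ii), set $t = \type(A)$ and pick $p, q \ge 0$ with $p - q \equiv t \pmod 8$. Inspection of Table \ref{table:gscR} shows that there exist $\Z_2$-graded inner-product spaces $E, F$ such that $A \widehat{\otimes} \End(E) \cong Cl_{p, q} \widehat{\otimes} \End(F)$ as graded $*$-algebras: both sides are of the same type, and the sizes can be matched by enlarging $E$ and $F$ appropriately. Two successive applications of Lemma \ref{lem_periodicity_negligible_bundle} then establish natural bijections on the level of triples,
\begin{equation*}
(\slashed S, h_0, h_1) \longleftrightarrow (E \otimes \slashed S, \gamma_E \otimes h_0, \gamma_E \otimes h_1) \longleftrightarrow (\slashed S'', h_0'', h_1''),
\end{equation*}
respecting the homotopy equivalence relation defining $Z_+^A$. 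This gives a natural isomorphism $KO_+^A \cong KO_+^{Cl_{p, q}}$ of functors on $\mathrm{CWPair}_f$.

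Once reduced to $A = Cl_{p, q}$, Karoubi's theorem applies: he verifies that $KO_+^{Cl_{p, q}}$ is a generalized cohomology theory (the principal step being exactness of the pair long exact sequence, proved by a gradation-extension argument), establishes a suspension isomorphism $KO_+^{Cl_{p, q+1}}(X, Y) \simeq KO_+^{Cl_{p, q}}(X \times I, X \times \partial I \cup Y \times I)$ via the extra Clifford generator $\beta_{q+1}$ acting as a homotopy parameter, and identifies $KO_+^{Cl_{p, q}}(\mathrm{pt})$ with the Atiyah--Bott--Shapiro group, which is well known to be $KO^{p-q}(\mathrm{pt})$. Combined with steps (i) and (ii) this yields the desired natural isomorphism. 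The main technical obstacle is the exactness of the pair sequence, whose proof requires a delicate deformation argument extending gradations from $Y$ to $X$; the $*$-preservation of this extension is what step (i) was arranged to make automatic.
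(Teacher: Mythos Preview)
The paper does not give its own proof of this statement: it is recorded as a \emph{Fact} with a bare citation to \cite[Chapter III, Theorem 4.29]{KaroubiKtheory}, and the only supporting commentary is the remark after Definition~\ref{def_karoubi_untwisted_KO} explaining why the $*$-preserving formulation via $\Self^*_A(S)$ agrees with Karoubi's formulation via $\mathrm{Grad}_A(S)$. Your proposal is therefore not competing with an argument in the paper but rather supplying the reduction that the paper leaves implicit.

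As a sketch of that reduction, your argument is sound. Step~(i) is exactly the content of the remark you cite. Step~(ii) is correct in spirit and is essentially the untwisted, pointwise case of Fact~\ref{fact_isom_negligible} and Corollary~\ref{cor_KO_twist_classification}: two simple central graded $*$-algebras of the same type become isomorphic after tensoring each with a suitable negligible algebra $\End(E)$, and Lemma~\ref{lem_periodicity_negligible_bundle} translates this into a natural isomorphism on $KO_+$. One small caution: for even types the ``size'' is a pair $(k,l)$, so the matching $A \widehat\otimes \End(E) \cong Cl_{p,q} \widehat\otimes \End(F)$ requires choosing $E$ and $F$ with the right graded dimensions, not just the right total dimension; this works because $\End(E)$ for $E = E^0 \oplus E^1$ has size $(\dim E^0, \dim E^1)$ and the sizes multiply appropriately under $\widehat\otimes$. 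Your final paragraph correctly summarizes the content of Karoubi's argument once one is reduced to $Cl_{p,q}$.
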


By a slight generalization of the above constructions, we also get a model for the twisted $KO$-theories. 
The cases of twists given by Clifford module bundles of the form $C(V, Q)$ are given in \cite[Chapter III, Section 4]{KaroubiKtheory}. 
The case of twists given by bundles of simple central graded $\R$-algebras in general are given in \cite{DonovanKaroubi}.  
We explain twists in $KO$-theory in detail in Subsection \ref{subsec_twist} below. 

Following \cite[Section 7]{BunkeSchickDiffKsurvey}, in this paper we introduce the categories of twists for $KO_+$ and for its differential refinement. 
Here we introduce the category $\mathfrak{Twist}^2_{KO_+}$ of twists on $KO_+$\footnote{
The ``$2$'' indecates that we are dealing with CW-pairs. 
}.

\begin{defn}[{$\Tw^2_{KO_+}$}]\label{def_twist_cat_KO_+}
\begin{enumerate}
    \item For each finite CW-complex $X$ we define $\Tw_{KO_+, X}$ to be the groupoid of bundles of simple central graded $*$-algebras $\mathcal{A}$ over $X$, where morphisms are ismorphisms between such bundles. 
    \item For each morphism $f \colon X \to X'$ in $\mathrm{CW}_f$, we define a functor $f^* \colon \Tw_{KO_+, X'} \to \Tw_{KO_+, X}$ by the pullback. 
    \item We define $\Tw^2_{KO_+}$ to be the category such that an object $(X, Y, \mathcal{A})$ consists of $(X, Y) \in \mathrm{CWPair}_f$ and $\mathcal{A} \in \Tw_{KO_+, X}$, and a morphism from $(X, Y, \mathcal{A})$ to $(X', Y', \mathcal{A}')$ consists of a morphism $f \colon (X, Y) \to (X', Y')$ in $\mathrm{CWPair}_f$ and an isomorphism $\mathcal{A} \simeq f^*\mathcal{A}'$. 
\end{enumerate}
\end{defn}

\begin{defn}
Let $(X, Y, \mathcal{A}) \in \Tw^2_{KO_+}$. 
\begin{itemize}
\item
A {\it triple} $(\slashed S, h_0, h_1)$ on $(X, Y, \mathcal{A})$ consists of an $\mathcal{A}$-module bundle $\slashed S$ with an inner product and two continuous sections $h_0, h_1 \in \Gamma (X; \Self_\mathcal{A}^*(\slashed S))$ with $h_0|_Y = h_1|_Y$.  
\item 
Triples $(\slashed S, h_0, h_1)$ and $(\slashed S', h'_0, h'_1)$ on $(X, Y, \mathcal{A})$ are {\it isomorphic} if there exists an isometric isomorphism of $\mathcal{A}$-module bundles $f \colon \slashed S \simeq \slashed S'$ such that $f \circ h_i = h'_i\circ f$ for $i = 0, 1$. 
\end{itemize}
\end{defn}

\begin{defn}[{$KO_+^\mathcal{A}(X, Y)$}]\label{def_karoubi_twisted_KO}
Let $(X, Y, \mathcal{A}) \in \Tw^2_{KO_+}$. 
\begin{itemize}
\item
On the set $M^{\mathcal{A}}_+(X, Y)$ of isomorphism classes of triples $(\slashed S, h_0, h_1)$ on $(X, Y, \mathcal{A})$, we introduce an abelian monoid structure by the direct sum. 

\item
We define $Z^{\mathcal{A}}_+(X, Y)$ to be the submonoid of $M^{\mathcal{A}}_+(X, Y)$ consisting of isomorphism classes of triples $(\slashed S, h_0, h_1)$ on $(X, Y)$ such that there exists a homotopy between $h_0$ and $h_1$ which is constant on $Y$, i.e., a continuous section $h_I \in \Gamma(I \times X; \mathrm{pr}_X^*\Self^*_\mathcal{A}(\slashed S))$ with $h_I|_{\{i\} \times X} = h_i$ for $i = 0, 1$ and $h_I|_{\{t\} \times Y} = h_0|_Y$ for all $t \in I$. 

\item
We define $KO^{\mathcal{A}}_+(X, Y) = M^{\mathcal{A}}_+(X, Y)/Z^{\mathcal{A}}_+(X, Y)$ to be the quotient monoid.

\end{itemize}
\end{defn}

By the functoriality of the above definition, we get the functor
\begin{align}\label{eq_twisted_KO_+_functor}
    KO_+ \colon \Tw_{KO_+}^2 \to \mathrm{Ab}, 
\end{align}
where $\mathrm{Ab}$ is the category of abelian groups.


Let us consider the case where $\mathcal{A} = \underline{A}$ is the product bundle associated to an algebra $A$. In this case, the group $KO^{\underline{A}}_+(X, Y)$ differs from $KO^{A}_+(X, Y)$ on the nose, because the former also considers nontrivial $A$-module bundles. 
However, by \cite[Chapter III, Proposition 4.26]{KaroubiKtheory}, any element in the former group can be represented by a triple in Definition \ref{def_KO_karoubi_untwisted_triple}, and
we actually have a natural isomorphism
\begin{align}\label{eq_isom_karoubi_untwisted}
    KO^{A}_+(X, Y) \simeq KO^{\underline{A}}_+(X, Y). 
\end{align}

The suspension isomoprhism in this model is given as follows. 
\begin{fact}[{\cite[Section 6]{DonovanKaroubi}}]\label{fact_susp_twisted_+}
For any object $(X, Y, \mathcal{A}) \in \Tw^2_{KO_+}$,
the suspension isomorphism
$$
KO^{\Sigma^{0, 1}\mathcal{A}}_+(X, Y) \simeq
KO^{\mathcal{A}}_+(X \times I, X \times \partial I \cup Y \times I),
$$
is given by the assignment $[\slashed S, h_0, h_1] \mapsto [\slashed S, \widetilde{h}_0, \widetilde{h}_1]$. Here the $\Sigma^{0, 1}\mathcal{A}$-module bundle $\slashed S$ is regarded as an $\mathcal{A}$-module bundle in the latter triple, and $\widetilde{h}_i : X \times I \to \mathrm{Self}_{\mathcal{A}}^*(S)$ is given by
\begin{align}\label{eq_prop_periodicity_twisted}
    \widetilde{h}_i(x, t) = 
\beta \cos \pi \theta + h_i(x) \sin \pi \theta, 
\end{align}
where $\beta$ denotes the action of the generator of $Cl_{0, 1}$. 
\end{fact}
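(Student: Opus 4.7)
I would first check that each $\widetilde{h}_i$ is a section of $\Self^*_{\mathcal A}(\slashed S)$ over $X\times I$. Self-adjointness is immediate from $\beta^*=\beta$ and $h_i^*=h_i$. Since $h_i\in\Self^*_{\Sigma^{0,1}\mathcal A}(\slashed S)$, it anticommutes with every odd element of $\Sigma^{0,1}\mathcal A$, hence with both $\mathcal A^1$ and $\beta$; therefore both summands of $\widetilde h_i$ lie in $\End^1_{\mathcal A}(\slashed S)$, and $\{\beta,h_i\}=0$ gives
\begin{align*}
\widetilde h_i^2=\cos^2\pi\theta+h_i^2\sin^2\pi\theta,
\end{align*}
which is positive definite, so $\widetilde h_i$ is invertible. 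At $\theta=0,1$ one has $\widetilde h_i=\pm\beta$ independently of $i$ and $x$, so $\widetilde h_0=\widetilde h_1$ on $X\times\partial I$, and equality on $Y\times I$ follows from $h_0|_Y=h_1|_Y$. Isometric isomorphisms of $\Sigma^{0,1}\mathcal A$-module bundles and homotopies between $h_0$ and $h_1$ constant on $Y$ are transported to the corresponding data for the $\widetilde h_i$ by the same formula, so the construction descends to a well-defined monoid homomorphism from $KO^{\Sigma^{0,1}\mathcal A}_+(X,Y)$ to $KO^{\mathcal A}_+(X\times I, X\times\partial I\cup Y\times I)$.

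\textbf{Inverse.} For bijectivity, I would construct an inverse up to the defining equivalence relation. Given a representative $(\slashed T,k_0,k_1)$ of the right-hand side, the boundary condition forces $\beta':=k_0|_{X\times\{0\}}=k_1|_{X\times\{0\}}$ to be a single $\mathcal A$-linear, odd, self-adjoint and invertible section. The polar-type deformation $\beta'\leadsto\beta'((\beta')^2)^{-1/2}$ stays within $\Self^*_{\mathcal A}$ and lands at a section $\beta$ with $\beta^2=1$; together with the $\mathcal A$-action this endows $\slashed T|_{X\times\{0\}}$ with a $\Sigma^{0,1}\mathcal A$-module structure, which pulls back to $\slashed T$ over $X\times I$ via the projection. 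I would then deform $(k_0,k_1)$, rel $X\times\partial I\cup Y\times I$, into a pair of the normal form $\beta\cos\pi\theta+h_i\sin\pi\theta$ with $h_i\in\Gamma(X;\Self^*_{\Sigma^{0,1}\mathcal A}(\slashed T|_{X\times\{1/2\}}))$; these $h_i$'s provide a preimage under the suspension map.

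\textbf{Main obstacle.} The hard step is the deformation to normal form: given a general $k\in\Gamma(X\times I;\Self^*_{\mathcal A}(\slashed T))$ with $k|_{X\times\partial I}=\pm\beta$, I must continuously deform it, keeping values in $\Self^*_{\mathcal A}$ and fixing the boundary values, into $\beta\cos\pi\theta+h\sin\pi\theta$ with $h$ anticommuting with $\beta$. I would achieve this in two stages: first, decompose $k=\beta f+k^\perp$ with $\{\beta,k^\perp\}=0$ and use a straight-line homotopy within the affine space of self-adjoint $\mathcal A$-odd sections to kill off the non-trigonometric part of the $\beta$-component, renormalizing by a polar-type rescaling to maintain invertibility; second, adjust the remaining amplitudes to match the $\cos\pi\theta,\sin\pi\theta$ profile by a similar convex deformation fiberwise over $X$. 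The main subtlety is controlling positivity of the square of the deformed section and matching the boundary values $\pm\beta$ at $\theta=0,1$ throughout; once this is carried out, checking that $\Sigma$ and this inverse are mutually inverse is a direct unwinding of the definitions.
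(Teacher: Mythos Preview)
The paper does not give a proof of this statement: it is recorded as a \emph{Fact} with a citation to \cite[Section 6]{DonovanKaroubi} (and the untwisted analogue to \cite[III.6]{KaroubiKtheory}), so there is no in-paper argument to compare against. Your well-definedness paragraph is correct and is essentially all that the paper itself asserts beyond the citation: the checks that $\widetilde h_i$ is self-adjoint, $\mathcal A$-odd, invertible, and satisfies the boundary and relative conditions are exactly right.

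Your proposed inverse construction, however, has a genuine gap. Given an arbitrary representative $(\slashed T,k_0,k_1)$ on the right-hand side, the values $k_i|_{X\times\{0\}}$ and $k_i|_{X\times\{1\}}$ need not be $\pm\beta$ for any single involution $\beta$; they are only required to agree with each other. Your polar normalization produces a $\Sigma^{0,1}\mathcal A$-structure from the $\theta=0$ endpoint, but nothing forces the $\theta=1$ endpoint to be homotopic to $-\beta$ without stabilization, and the ``straight-line plus polar rescaling'' deformation you sketch does not obviously preserve invertibility or the fixed boundary values simultaneously. The actual argument in Karoubi/Donovan--Karoubi does not proceed by deforming a general $k$ into trigonometric normal form; rather, it identifies both sides with homotopy classes of maps into the same classifying space (cf.\ Fact~\ref{fact_mass_classifying}) and shows that the explicit formula \eqref{eq_prop_periodicity_twisted} realizes the adjunction/loop-space equivalence $\Self^*_{\Sigma^{0,1}A}(S)\simeq\Omega\,\Self^*_A(S)$ after passing to the direct limit. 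If you want a self-contained proof along your lines, you would need to supply the stabilization step (add a summand on which the two boundary values can be connected) and a careful fiberwise homotopy argument; as written, the ``Main obstacle'' paragraph is an outline rather than a proof.
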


Actually the twists of $KO_+$ are classified by $\mathrm{GBrO}(X)$. 
To show this, we first prove that twists given by negligible bundles are trivial, as follows. 
\begin{fact}[{\cite[Theorem 16]{DonovanKaroubi}}]\label{fact_isom_negligible}
Let $(X, Y) \in \mathrm{CWPair}_f$. 
Let $E = E^0 \oplus E^1$ be a $\Z_2$-graded vector bundle over $X$ with fiberwise positive definite inner product.
For any bundle of simple central graded $*$-algebras $\mathcal{A}$, we have a canonical isomorphism
\begin{align}\label{eq_isom_negligible}
    KO^{\mathcal{A}}_+(X, Y) \simeq KO^{\mathrm{End}(E) \widehat{\otimes} \mathcal{A}}_+(X, Y). 
\end{align}
\end{fact}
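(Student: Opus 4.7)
The plan is to show that the isomorphism \eqref{eq_isom_negligible} is induced by the tensoring construction from Lemma \ref{lem_periodicity_negligible_bundle}. More precisely, I first define a monoid homomorphism
\begin{align*}
    \Phi \colon KO^{\mathcal{A}}_+(X, Y) &\to KO^{\mathrm{End}(E) \widehat{\otimes} \mathcal{A}}_+(X, Y), \\
    [\slashed S, h_0, h_1] &\mapsto [E \otimes \slashed S, \psi^E(h_0), \psi^E(h_1)],
\end{align*}
where $\psi^E$ is the fiberwise bundle map of Lemma \ref{lem_periodicity_negligible_bundle} applied pointwise to sections. Well-definedness on representatives of triples is immediate: an isometric isomorphism $f \colon \slashed S \simeq \slashed S'$ of $\mathcal{A}$-module bundles induces $\mathrm{id}_E \otimes f$ on the tensor products intertwining $\psi^E(h_i)$ and $\psi^E(h_i')$, direct sums are preserved since $E \otimes (\slashed S \oplus \slashed S') \simeq (E \otimes \slashed S) \oplus (E \otimes \slashed S')$ as $\mathrm{End}(E)\widehat{\otimes}\mathcal{A}$-module bundles, and $h_0|_Y = h_1|_Y$ implies $\psi^E(h_0)|_Y = \psi^E(h_1)|_Y$.

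Next I would verify that $\Phi$ sends the submonoid $Z^\mathcal{A}_+(X, Y)$ into $Z^{\mathrm{End}(E)\widehat{\otimes}\mathcal{A}}_+(X, Y)$: for a homotopy $h_I \in \Gamma(I \times X; \mathrm{pr}_X^* \Self^*_\mathcal{A}(\slashed S))$ constant on $Y$, the section $\psi^{\mathrm{pr}_X^*E}(h_I)$ is a homotopy between $\psi^E(h_0)$ and $\psi^E(h_1)$ that remains constant on $Y$, by naturality of $\psi^E$ under pullback. Hence $\Phi$ descends to a well-defined map between the quotients defining $KO^\mathcal{A}_+$ and $KO^{\mathrm{End}(E) \widehat{\otimes} \mathcal{A}}_+$.

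To construct the inverse, I appeal to the full strength of Lemma \ref{lem_periodicity_negligible_bundle}: part (1) states that every $\mathrm{End}(E) \widehat{\otimes} \mathcal{A}$-module bundle $\slashed T$ with inner product is isomorphic to $E \otimes \slashed S$ for a unique-up-to-isomorphism $\mathcal{A}$-module bundle $\slashed S$, while parts (2) and (3) assert that $\psi^E$ restricts to a bijection on $\Self^*$. Thus any triple $(\slashed T, \tilde h_0, \tilde h_1)$ pulls back to a triple $(\slashed S, h_0, h_1)$ over $(X, Y, \mathcal{A})$ with $\psi^E(h_i) = \tilde h_i$, and the same argument with roles reversed gives a monoid homomorphism in the opposite direction that is inverse to $\Phi$. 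The main place requiring care is the case split between nondegenerate and degenerate $\mathcal{A}$: in the former case $\psi^E$ is an isomorphism of $\Z_2$-graded algebra bundles, while in the latter it is only a vector bundle isomorphism $\End^1_\mathcal{A}(\slashed S) \simeq \End^1_{\mathrm{End}(E)\widehat{\otimes}\mathcal{A}}(E \otimes \slashed S)$; in both cases it restricts to the bijection on $\Self^*$ needed for gradations, so no real obstruction arises and the proof reduces to bookkeeping once the lemma is in hand.
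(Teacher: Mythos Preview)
Your proposal is correct and follows essentially the same approach as the paper: both define the isomorphism explicitly as $[\slashed S, h_0, h_1] \mapsto [E \otimes \slashed S, \psi^E(h_0), \psi^E(h_1)]$ and invoke Lemma~\ref{lem_periodicity_negligible_bundle} for the bijective correspondence on module bundles and on $\Self^*$. The paper's proof is terser, leaving the well-definedness checks and the construction of the inverse implicit, while you spell these out; your additional remark on the degenerate/nondegenerate case split is a reasonable point of care that the paper does not make explicit.
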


\begin{proof}
By Lemma \ref{lem_periodicity_negligible_bundle}, we have a canonical bijective correspondence of $\mathcal{A}$-module bundles and $\mathrm{End}(E) \widehat{\otimes} \mathcal{A}$-module bundles, and $\Self^*$ on them. 
Using the notation there, the isomorphism \eqref{eq_isom_negligible} is explicitely given by
\begin{align}\label{eq_isom_negligible_map}
    [\slashed S, h_0, h_1] \mapsto [E \otimes \slashed S, \psi^E(h_0), \psi^E(h_1)]. 
\end{align}
\end{proof}

By Fact \ref{fact_isom_negligible}, we see the following. 

\begin{cor}\label{cor_KO_twist_classification}
For any object $(X, Y, \mathcal{A}) \in \Tw^2_{KO_+}$, the isomorphism class of the group $KO_+^{\mathcal{A}}(X, Y)$ only depends on the class of $\mathcal{A}$ in $\mathrm{GBrO}(X)$. 
\end{cor}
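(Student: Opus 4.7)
The plan is to combine Fact \ref{fact_isom_negligible} with the definition of the graded Brauer group and the functoriality \eqref{eq_twisted_KO_+_functor} of $KO_+$ on $\Tw_{KO_+}^2$. By definition, two bundles $\mathcal{A}, \mathcal{A}'$ of simple central graded $*$-algebras over $X$ represent the same class in $\mathrm{GBrO}(X)$ precisely when there exist $\Z_2$-graded vector bundles $E, E'$ on $X$ with fiberwise positive-definite inner products together with an isomorphism of bundles of simple central graded $*$-algebras
\begin{align*}
\Phi \colon \mathcal{A} \widehat{\otimes} \mathrm{End}(E) \xrightarrow{\simeq} \mathcal{A}' \widehat{\otimes} \mathrm{End}(E').
\end{align*}

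From such a $\Phi$, I would assemble the chain of isomorphisms
\begin{align*}
KO_+^{\mathcal{A}}(X, Y) \simeq KO_+^{\mathrm{End}(E) \widehat{\otimes} \mathcal{A}}(X, Y) \simeq KO_+^{\mathrm{End}(E') \widehat{\otimes} \mathcal{A}'}(X, Y) \simeq KO_+^{\mathcal{A}'}(X, Y).
\end{align*}
The outer two isomorphisms are immediate applications of Fact \ref{fact_isom_negligible} to the pairs $(\mathcal{A}, E)$ and $(\mathcal{A}', E')$. The middle isomorphism is induced by $\Phi$ via the functoriality of $KO_+$ on the morphism in $\Tw_{KO_+}^2$ given by $\mathrm{id}_{(X,Y)}$ together with $\Phi$, after using the canonical graded-commutativity isomorphism $\mathcal{A} \widehat{\otimes} \mathrm{End}(E) \simeq \mathrm{End}(E) \widehat{\otimes} \mathcal{A}$ to match the ordering of factors required by Fact \ref{fact_isom_negligible}.

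There is no substantial obstacle here: the corollary is a formal consequence of Fact \ref{fact_isom_negligible} and the standard notion of Brauer equivalence. The only care required is the small bookkeeping of the tensor factor ordering, which is handled by the canonical supercommutativity isomorphism for graded tensor products.
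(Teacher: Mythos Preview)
Your proof is correct and is precisely the argument the paper has in mind: the paper simply records this corollary as an immediate consequence of Fact \ref{fact_isom_negligible}, and your chain of isomorphisms spells out why Brauer equivalence (stable isomorphism after tensoring with negligible bundles) together with functoriality on $\Tw_{KO_+}^2$ yields the result.
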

We say more about the twists in Subsection \ref{subsec_twist}. 

\begin{rem}\label{rem_Karoubi_K}
In the $\C$-linear settings, 
by only replacing the coefficients from $\R$ to $\C$, the same definition as Definition \ref{def_karoubi_untwisted_KO} constructs a functor $K^A_+$. 
We have the natural isomorphism on the category of finite CW-pairs, 
\begin{align}\label{eq_rem_karoubi_K}
    K_+^A \simeq K^{\type(A)}. 
\end{align}
Also, by the same replacement of Definitions \ref{def_twist_cat_KO_+} and \ref{def_karoubi_twisted_KO} we define the category of twists $\Tw^2_{K_+}$ and the twisted $K$-theory as a functor 
\[
K_+ \colon \Tw^2_{K_+} \to \mathrm{Ab}.
\]
\end{rem}

\subsubsection{$KO$-spectrum in terms of $\Self^*$}\label{subsubsec_KO_spectrum}
As explained in \cite[Chapter III, 4.24--4.30]{KaroubiKtheory}, the formulation of $KO$-theory in Subsubsections \ref{subsubsec_Karoubi} leads to a corresponding model of the $KO$-spectrum. 
In this subsubsection we review this construction.

Let $A$ be a simple central graded $*$-algebra. 
Given a $\Sigma^{0, 1} A$-module $S$ with an inner product, let $\beta$ denote the action of the generator of $Cl_{0, 1} \subset \Sigma^{0, 1} A$. 
We regard $S$ as an $A$-module by the inclusion $A \subset \Sigma^{0, 1} A$, and consider the space $\Self_{A}^*(S)$. 
Regarding $\beta \in \Self_{A}^*(S)$ as the basepoint of the space $\Self_{A}^*(S)$, we consider the following pointed space
\begin{align*}
    (\Self_{A}^*(S), \{\beta\}). 
\end{align*}
A sequence $\{S_n\}_n$ of $\Sigma^{0, 1} A$-modules are called {\it cofinal system} if we have a fixed isomorphism $S_n \oplus S_1 \simeq S_{n+1}$ for each $n$ and every $\Sigma^{0, 1} A$-module is a direct factor of some $S_n$. 
For such a sequence we get the sequence of pointed spaces, 
\begin{align}\label{eq_Skew_inclusion}
    \cdots \hookrightarrow (\Self_{A}^*(S_n), \{\beta\}) &\hookrightarrow (\Self_{A}^*(S_{n+1}), \{\beta\}) \hookrightarrow \cdots \\
    m &\mapsto m \oplus \beta|_{S_1}. \notag
\end{align}
The direct limit of \eqref{eq_Skew_inclusion} gives a classifying space of $KO_+^{A}$, as follows. 

\begin{fact}[{\cite[Theorem 4.27]{KaroubiKtheory}}]\label{fact_mass_classifying}
Let $\{S_n\}_n$ be any cofinal system of $\Sigma^{0, 1} A$-modules. 
Then we have a weak homotopy equivalence, 
\begin{align}\label{eq_fact_mass_classifying_spectrum}
    (KO_{\mathrm{type}(A)}, \{*\}) \sim \varinjlim_n (\Self_{A}^*(S_n), \{\beta\}). 
\end{align}
Upon a choice of a volume element $u$ in $A$, there is a preferred choice of the weak homotopy equivalence \eqref{eq_fact_mass_classifying_spectrum} up to homotopy. 
In particular if $A = Cl_{p, q}$, then it is of type $(p-q)$, so that we have
\begin{align*}
    (KO_{p-q}, \{*\}) \sim \varinjlim_n (\Self_{p,q}^*(S_n), \{\beta\}). 
\end{align*}
Moreover, for any finite CW-pair $(X, Y)$, we have a natural isomorphism
\begin{align}\label{eq_fact_mass_classifying}
    KO^{A}_+(X, Y) \simeq [(X, Y), \varinjlim_n (\Self_{A}^*(S_n), \{\beta\})]. 
\end{align}
\end{fact}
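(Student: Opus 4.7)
The plan is to follow Karoubi's construction \cite[Chapter III, Theorem 4.27]{KaroubiKtheory}, lightly adapted to the $*$-algebra setting. The core is a natural isomorphism for finite CW-pairs $(X, Y)$ between the two abelian groups in \eqref{eq_fact_mass_classifying}, from which the weak equivalence \eqref{eq_fact_mass_classifying_spectrum} will follow by representability.

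For the forward direction, a pointed map $f \colon (X, Y) \to \varinjlim_n (\Self_A^*(S_n), \{\beta\})$ factors through some $\Self_A^*(S_n)$ by compactness of $X$, giving a map $f_n$ with $f_n|_Y = \beta|_{S_n}$. I send $[f]$ to $[S_n, \beta|_{S_n}, f_n] \in KO_+^A(X, Y)$, where $S_n$ is regarded as an $A$-module by restriction of its $\Sigma^{0,1}A$-structure. Independence of the stage $n$ holds because the inclusion \eqref{eq_Skew_inclusion} adds only the trivial summand $(S_1, \beta|_{S_1}, \beta|_{S_1}) \in Z_+^A(X, Y)$; naturality in $(X, Y)$ and homotopy invariance are immediate.

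For the reverse map, the main content is to show that every class in $KO_+^A(X, Y)$ admits a representative of the standard form $[S_n, \beta|_{S_n}, g]$. Starting from $(S, h_0, h_1)$, I first normalize $h_0, h_1$ to lie in $\Self_A^\dagger$ via $h \mapsto h(h^2)^{-1/2}$ (a deformation retraction inside $\Self_A^*$), then endow $S$ with a $\Sigma^{0,1}A$-module structure using $h_0$ at a basepoint, embed $S$ into some $S_n$ as a $\Sigma^{0,1}A$-summand by cofinality, and use the triple equivalence relation together with direct sums with trivial triples to convert $h_0$ into the globally constant $\beta|_{S_n}$. The resulting map $g \colon (X, Y) \to (\Self_A^*(S_n), \{\beta\})$, stabilized via \eqref{eq_Skew_inclusion}, represents the desired image. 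This \emph{stabilization into standard form} is the main technical obstacle: it requires careful tracking of how $A$-module and $\Sigma^{0,1}A$-module structures interact under the triple equivalence, and is where Karoubi's argument does the real work. Once it is in place, bijectivity and naturality are verified directly on representatives, using that a pointed homotopy of maps $(X, Y) \to (\Self_A^*(S_n), \{\beta\})$ is exactly a homotopy of gradations constant on $Y$.

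Finally, to deduce the weak equivalence with $KO_{\mathrm{type}(A)}$, I specialize the natural isomorphism to $(X, Y) = (S^k, \{*\})$, giving
\begin{align*}
\pi_k\!\left(\varinjlim_n \Self_A^*(S_n)\right) \simeq KO_+^A(S^k, \{*\}) \simeq KO^{\mathrm{type}(A)}(S^k, \{*\}) \simeq \pi_k(KO_{\mathrm{type}(A)}),
\end{align*}
using Fact \ref{fact_karoubi_KO} for the middle identification. Applying the natural isomorphism to $(X, Y) = (\varinjlim_n \Self_A^*(S_n), \{\beta\})$ itself and invoking representability of $KO^{\mathrm{type}(A)}$ by $KO_{\mathrm{type}(A)}$ produces a canonical comparison map $\varinjlim_n \Self_A^*(S_n) \to KO_{\mathrm{type}(A)}$ inducing the above isomorphism on all $\pi_k$, hence a weak homotopy equivalence. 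A choice of volume element $u \in A$ enters through the identification in Fact \ref{fact_karoubi_KO}, which depends on $u$; fixing $u$ pins down the weak equivalence up to homotopy.
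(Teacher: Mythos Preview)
Your proposal is correct and matches the approach: this statement is a \emph{Fact} cited from \cite[Chapter III, Theorem 4.27]{KaroubiKtheory}, and the paper does not supply its own proof but only explains the isomorphism \eqref{eq_fact_mass_classifying} afterward (by embedding $S$ into some $S_n$ and taking the difference class $[h_1]-[h_0]$). Your sketch is precisely this construction, with the additional normalization step $h_0 \rightsquigarrow \beta$ made explicit; the only minor discrepancy is that the paper embeds $S$ into $S_n$ as an $A$-module (not via a $\Sigma^{0,1}A$-structure on $S$), which avoids the need to choose a basepoint and is slightly cleaner.
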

The dependence of the weak homotopy equivalence \eqref{eq_fact_mass_classifying_spectrum} on a volume element is implicit in \cite{KaroubiKtheory}, so we explain it below in the description of the homeomorphism \eqref{eq_homeo_Gr_Skew}. 
Modifying the construction in Fact \ref{fact_susp_twisted_+} (as carried out in \cite[III. 6]{KaroubiKtheory}), we can get the structure map for the above model of the $KO$-spectrum, making it into an $\Omega$-spectrum. 

Here we explain the isomorphism \eqref{eq_fact_mass_classifying}. 
Given an $A$-module $S$, we can embed $S \hookrightarrow S_n$ as a direct summand for some $n$ as an $A$-module. 
Then we get the corresponding embedding $\Self^*_{A}(S) \hookrightarrow \Self^*_{A}(S_n)$ (as an unbased space) by using $\beta$ on the orthogonal complement. 
The resulting homotopy class $[\Self^*_{A}(S) ,\varinjlim_n (\Self_{A}^*(S_n))]$ does not depend on the choice. 
Fixing such an embedding, given a triple $(S, h_0, h_1)$ which represents a class $[S, h_0, h_1]\in KO^{A}_+(X, Y)$, we get the homotopy classes $[h_i] \in [X, \varinjlim_n (\Self_{A}^*(S_n))]$ for $i = 0, 1$. The condition $h_0|_Y = h_1|_Y$ implies that the difference class can be regarded as an element
\begin{align}\label{eq_difference_class}
    [h_1] - [h_0] \in [(X, Y), \varinjlim_n (\Self_{A}^*(S_n), \{\beta\})]
\end{align}
which is independent of the choice of the embedding. 
The isomorphism \eqref{eq_fact_mass_classifying} is given by this procedure. 

Finally we explain the relation between the formulation in Subsubsection \ref{subsubsec_Karoubi} with the formulation of $KO^0$ in terms of real vector bundles. 
The corresponding relations in the other degrees can be found in \cite[Chapter III, Section 4.28--4.30]{KaroubiKtheory}. 

For the zeroth space of $KO$-spectrum, the following weak homotopy equivalences are well-known
\begin{align}\label{eq_KO_0_Grassmannian}
    (KO_0, \{*\}) \sim (\Z \times BO, \{*\}) \sim \varinjlim_{N}(\mathrm{Gr}(\R^{2N}), \{\R^N\}). 
\end{align}
Here, in the last term in \eqref{eq_KO_0_Grassmannian}, $\mathrm{Gr}(\R^{2N})$ is Grassmannian consisting of linear subspaces in $\R^{2N}$, and the basepoint
$\R^N \in \mathrm{Gr}_N(\R^{2N}) \subset \mathrm{Gr}(\R^{2N})$ is the point specified by the $N$-dimensional subspace $\R^N = \{0\}\oplus \R^N \subset \R^N \oplus \R^N = \R^{2N}$. 
The direct limit is taken by the inclusions
\begin{align}\label{eq_Gr_inclusion}
   \cdots \hookrightarrow (\mathrm{Gr}(\R^{2N}), \{\R^{N}\}) &\hookrightarrow (\mathrm{Gr}(\R \oplus \R^{2N} \oplus \R), \{\R^N \oplus \R\}) \hookrightarrow \cdots\\
    V &\mapsto V \oplus \R. \notag
\end{align}
This inclusion restricts to the one $\mathrm{Gr}_m(\R^{2N}) \hookrightarrow \mathrm{Gr}_{m+1}(\R^{2(N+1)})$ for each $m \le 2N$, and preserves the basepoints.
We have the following decomposition into the connected components, 
\begin{align}\label{eq_Grassmannian_decomposition}
   \varinjlim_{N}\mathrm{Gr}(\R^{2N}) =  \sqcup_{m \in \Z} \varinjlim_{N}\mathrm{Gr}_{m+N}(\R^{2N}) 
\end{align}
so that the each components are weakly homotopy equivalent to $BO$. 
The $\Z$-component in \eqref{eq_KO_0_Grassmannian} corresponds to the label $m$ in \eqref{eq_Grassmannian_decomposition}. 
The description \eqref{eq_KO_0_Grassmannian} directly leads to the famous definition of $KO^0$ as the Grothendieck group of the isomorphism classes of real vector bundles. 
Namely, if $X$ is a finite CW-complex, given an element $[f] \in [X, KO_0] = KO^0(X)$, we can represent it as a map $f \colon X \to  \mathrm{Gr}(\R^{2N})$ for $N$ large enough. 
Denote by $\theta_{2N} \to \mathrm{Gr}(\R^{2N})$ the tautological real vector bundle over $\mathrm{Gr}(\R^{2N})$. Then the element $[f] \in KO^0(X)$ corresponds to the following formal difference class of the real vector bundles over $X$, 
\begin{align}\label{eq_Gr_vec_bdle}
    [f^*\theta_{2N}] - [\underline{\R^N}]. 
\end{align}

Now, recalling Fact \ref{fact_mass_classifying}, let us consider a simple central graded $*$-algebra $A$ of type $0$, i.e., $[8; k, l] = M(k+l, \R)$ in Table \ref{table:gscR} for some $(k, l)$. Then we have
\begin{align}\label{eq_KO_0_Skew}
    (KO_0, \{*\})
    \sim \varinjlim_n (\Self_{A}^*(S_n), \{\beta\}) \sim \varinjlim_n (\Self_{A}^\dagger(S_n), \{\beta\}), 
\end{align}
where $\{S_n\}_n$ is a cofinal system of $\Sigma^{0, 1} A$-modules. 
The models \eqref{eq_KO_0_Grassmannian} and \eqref{eq_KO_0_Skew} are related as follows. 
Fix a volume element $u$ for $A$. 
We have $A = [8; k, l]\simeq M(k+l, \R)$ and $\Sigma^{0, 1} A \simeq [7; k+l] = M(k+l, \R) \oplus M(k+l, \R)$ with the $\Z_2$-gradings as in Table \ref{table:gscR}. 
Thus there are exactly two isomorphism classes of irreducible $\Sigma^{0, 1} A$-modules $S_+$ and $S_-$ each of which is $(k+l)$-dimensional and restricts to the same unique irreducible $A$-module. 
The modules $S_\pm$ are characterized by the property that $\beta \in \Sigma^{0, 1} A$ acts by the multiplication by $\pm u \in A \subset \Sigma^{0, 1} A$, respectively (here we use the choice of the volume element). 
Any $\Sigma^{0, 1} A$-module $S$ can be written as
\begin{align*}
    S = (S_+ \otimes \R^{N_+}) \oplus (S_- \otimes \R^{N_-}), 
\end{align*}
where $\R^{N_\pm}$ are the trivial $\Sigma^{0, 1} A$-modules, for some nonnegative integers $N_+$ and $N_-$. 
Using $\Self^\dagger_A(S_+) = \Self^\dagger_A(S_-) = \{\pm u\}$, we easily see that
\begin{align}\label{eq_Gr_vs_Skew}
    &\Self_{A}^\dagger\left((S_+ \otimes \R^{N_+} )\oplus (S_- \otimes \R^{N_-} )\right) \\
    &\quad = \left\{u \otimes a \ | \ a \in \mathrm{End}(\R^{N_+} \oplus \R^{N_-}), a^2 = 1, a^* = a
    \right\} \nonumber \\
    &\quad \simeq \left\{P \in \mathrm{End}(\R^{N_+} \oplus \R^{N_-}) \ | \  P^2 = 1, P^* = P
    \right\} \notag\\
    &\quad\simeq \mathrm{Gr}(\R^{N_+} \oplus \R^{N_-}). \notag
\end{align}
Here the second and the third maps are diffeomorphisms. 
The second one is given by $a \mapsto P = (1-a)/2$, and the third one maps the orthogonal projection $P$ to $\mathrm{Im}(P)$, in other words the $(+1)$-eigenspace of $P$. 
Under the diffeomorphism \eqref{eq_Gr_vs_Skew}, the basepoint 
\begin{align*}
    \beta = ((u\otimes \mathrm{id}_{\R^{N_+}}) \oplus (-u\otimes \mathrm{id}_{\R^{N_-}})) \in \Self_{A}^\dagger\left((S_+ \otimes \R^{N_+} )\oplus (S_- \otimes \R^{N_-} )\right)
\end{align*}
maps to
\begin{align*}
    \R^{N_+} =\{0 \}\oplus \R^{N_+ }\in \mathrm{Gr}_{N_+}(\R^{N_+} \oplus \R^{N_-}) \subset \mathrm{Gr}(\R^{N_+} \oplus \R^{N_-}). 
\end{align*}

As a cofinal sequence realizing \eqref{eq_KO_0_Skew}, we may take $(N_+, N_-) = (n, n)$, i.e., 
\begin{align}
    S_n = (S_+ \otimes \R^{n}) \oplus (S_- \otimes \R^{n}). 
\end{align}
We use the isomorphism $S_n \oplus S_1 \simeq S_{n+1}$ given as follows. 
Define $f \colon \R^n \oplus \R \simeq \R^{n+1}$ by $f(v \oplus w) = (w, v)$ and $g \colon \R^n \oplus \R \simeq \R^{n+1}$ by $g(v \oplus w) = (v, w)$. 
Then the required isomorphism is
\begin{align}
    (\mathrm{id}_{S_+} \otimes f) \oplus (\mathrm{id}_{S_-} \otimes g) \colon S_n \oplus S_1 \simeq S_{n+1}. 
\end{align}
Using this choice of $\{S_n\}_n$, we see that under the isomorphism \eqref{eq_Gr_vs_Skew} the sequence \eqref{eq_Skew_inclusion} coincides with the sequence \eqref{eq_Gr_inclusion}. 
Thus we get the explicit homeomorphism between the two models for $KO_0$ (depending on the choice of $u$), 
\begin{align}\label{eq_homeo_Gr_Skew}
    \varinjlim_{N}(\mathrm{Gr}(\R^{2N}), \{\R^N\}) \simeq \varinjlim_n (\Self_{A}^\dagger((S_+ \otimes \R^{n}) \oplus (S_- \otimes \R^{n})), \{\beta\}). 
\end{align}

\subsection{The Pontryagin character}\label{subsec_Ph_top}
For any spectrum $E$, there is a canonical map of spectra to the Eilenberg-MacLane spectrum theory called the {\it Chern-Dold homomorphism} \cite[Chapter II, 7.13]{rudyak1998}
\begin{align*}
    \mathrm{chd} \colon E \to H (\pi_{-\bullet}(E) \otimes_\Z\R), 
\end{align*}
characterized by the property that the induced map on the homotopy groups, 
\begin{align*}
   \mathrm{chd} \colon  \pi_k(E) \to \pi_k(H (\pi_{-\bullet}(E) \otimes_\Z\R)) = \pi_k(E) \otimes_\Z \R
\end{align*}
is given by the map $e \mapsto e \otimes 1$. 
In the cases $E = K$ and $E = KO$, the Chern-Dold homomorphisms are called the {\it Chern character} and the {\it Pontryagin character}, respectively, and denoted by (recall our notations \eqref{eq_KO_R_pt} and \eqref{eq_K_R_pt})
\begin{align}
    \mathrm{Ch}_{\mathrm{top}} &\colon K \to H\R[b, b^{-1}], \label{eq_def_Ch_top} \\
    \mathrm{Ph}_{\mathrm{top}} &\colon KO \to H\R[v, v^{-1}].   \label{eq_def_Ph_top}
\end{align}
They are related by the complexification map $KO \to K$. 
In this paper we use the canonical idendifications
\begin{align}\label{eq_isom_HR_b}
    H^*(X; \R[b, b^{-1}]) \simeq H^{2\Z + *}(X; \R) := \oplus_{k \in \Z}H^{2k+*}(X; \R), 
\end{align}
which sends $b \in H^{-2}(\pt; \R[b, b^{-1}])$ to $1 \in H^0(\pt; \R)$, and
\begin{align}\label{eq_isom_HR_v}
    H^*(X; \R[v, v^{-1}]) \simeq H^{4\Z + *}(X; \R) := \oplus_{k \in \Z}H^{4k+*}(X; \R),
\end{align}
which sends $v \in H^{-4}(\pt; \R[v, v^{-1}])$ to $-2 \in H^0(\pt; \R)$. 
Then the identifications \eqref{eq_isom_HR_b} and \eqref{eq_isom_HR_v} send the complexification $KO \to K$ to the map that is identity on $H^{8\Z}(-; \R)$ and the multiplication by $(-1)$ on $H^{8\Z + 4}(-; \R)$, because the complexification of $v$ is $2b^2 \in K^{-4}(\pt)$. 
We use this sign convention so that the formula for $\mathcal{R}$ in \eqref{eq_def_R} below becomes simple.

On manifolds, using the models of $K^0$ and of $KO^0$ by complex and real vector bundles respectively, we can realize the degree-zero part of the homomorphisms \eqref{eq_def_Ch_top} and \eqref{eq_def_Ph_top} as the homomorphism to the de Rham cohomologies by the Chern-Weil construction.  
Here we explain the case of $\mathrm{Ph}_{\mathrm{top}}$, 
\begin{align*}
    \mathrm{Ph}_{\mathrm{top}} \colon KO^0(X) \to H^0(X; \R[v, v^{-1}]) \simeq H^{4\Z}(X; \R) . 
\end{align*}
For a manifold $X$, we define a linear endomorphism on differential forms $\mathcal{R} \colon \Omega^*(X) \to \Omega^*(X)$ by, on the homogeneous elements, 
\begin{align}\label{eq_def_R}
    \mathcal{R}(\omega) := \begin{cases}
    (2\pi)^{-|\omega|/2}\omega & \mbox{ if }|\omega| \equiv 0 \pmod 2 \\
    \pi^{1/2}(2\pi)^{-(|\omega|+1)/2} \omega  & \mbox{ if }|\omega| \equiv 1 \pmod 2. 
    \end{cases} 
\end{align} 
Assume $X$ has a homotopy type of a finite CW-complex. 
Given an element in $KO^0(X)$, we can represent it by a smooth real vector bundle $E$ over $X$. 
Take any fiberwise inner product on $E$ and any connection $\nabla^E$ preserving it. 
Then the form
\begin{align*}
   \mathrm{Ph}(\nabla^E) := \mathcal{R} \circ \mathrm{Tr}\left(e^{(\nabla^E)^2} \right)
\end{align*}
lies in $\Omega^{4\Z}_{\mathrm{clo}}(X)$, and is called the {\it Pontryagin character form} for $\nabla^E$. 
Its cohomology class represents the image of $[E]$ under $\mathrm{Ph}_{\mathrm{top}}$, 
\begin{align*}
    \mathrm{Ph}_{\mathrm{top}}([E]) = [\mathrm{Ph}(\nabla^E)]. 
\end{align*}

The complex version \eqref{eq_def_Ch_top} is given by using the following endomorphism $\mathcal{R}_\C \colon \Omega^*(X; \C) \to \Omega^*(X; \C)$, 
\begin{align}\label{eq_def_R_complex}
    \mathcal{R}_\C(\omega) := \begin{cases}
    (-2\pi\sqrt{-1})^{-|\omega|/2}\omega  & \mbox{ if } |\omega| \equiv 0 \pmod 2 \\
    \pi^{1/2}(-2\pi \sqrt{-1})^{-(|\omega|+1)/2} \omega  & \mbox{ if } |\omega| \equiv 1 \pmod 2,
    \end{cases} 
 \end{align} 
and applying the corresponding formula for hermitian vector bundles with unitary connections.

\subsection{Twists in \texorpdfstring{$KO$}{KO}-theory}\label{subsec_twist}

We explain here the twisted groups $KO^{\mathcal{A}}_+$ in the general framework of twisted cohomology theory. 
In this subsection we work over $\R$. 

For the general theory of twisted cohomology theory, we refer to \cite[Section 22]{MaySigurdssonParametrizedHomotopy} and \cite{AndoBlumbergGepnerTwists}. 
In general, roughly speaking, given a generalized cohomology theory $E$ and a topological space $X$, a {\it twist} is given by a bundle of spectra $\mathcal{E} \to X$ with fiber $E$. 
Given such a twist, the twisted $E$-cohomology group is defined as the abelian group consisting of homotopy classes of sections, 
\begin{align}\label{eq_twist_general}
    E^\mathcal{E}(X) := \pi_0(\Gamma(X; \mathcal{E})). 
\end{align}
In particular we recover untwisted group $E^n(X)$ by setting $\mathcal{E} = \underline{\Sigma^n E}$. 
The appropriate choice of the class of ``bundles of spectra'' depends on the context. 
We may consider the twists given by maps to $B\mathrm{Aut}(E)$, where $\mathrm{Aut}(E)$ is the topological monoid of self-weak equivalences of $E$. 
In the case that $E$ is a ($E_\infty$-) ring $\Omega$-spectrum, a natural choice of a twist is a map to $B\mathrm{GL}_1(E)$, where $\mathrm{GL}_1(E)$ is the unit components of $E_0$ with respect to the ring structure of $\pi_0(E)$. 

Now we explain the formulation of twisted $KO$-theory group $KO^{\mathcal{A}}_+$ in Subsubsection \ref{subsubsec_Karoubi} from this viewpoint. 
The reference is \cite{DonovanKaroubi}. 
Also see \cite{AtiyahSegalTwistKandCohomology} for the case of $K$-theory, based on the model of the $K$-theory spectrum in terms of Fredholm operators. 

Let $A$ be one of the simple central graded $*$-algebras in Table \ref{table:gscR}. 
Recall that we explained a model of the $\mathrm{type}(A)$-th space of the $KO$-spectrum in terms of $\Self_{A}^*$ in Subsubsection \ref{subsubsec_KO_spectrum}. 
Using this model, we have an explicit homomorphism
\begin{align}\label{eq_hom_O_to_AutKO}
    \mathrm{Aut}_{*, \Z_2}(A) \to \mathrm{Aut}(KO_{\type(A)}). 
\end{align}

For this, we choose the following cofinal system $\{S_n\}_n$ of $\Sigma^{0, 1} A$-modules. 
\begin{align}\label{S_n_choice}
    S_n := (\Sigma^{0, 1} A) \otimes \R^n. 
\end{align}
Here an element $a \in \Sigma^{0, 1} A$ acts by $(a \cdot )\otimes \mathrm{id}_{\R^n}$. 
Recall that $A$ is a matrix algebra or a direct sum of two compies of them. 
We introduce an inner product on $\Sigma^{0, 1} A$ by the entry-wise $L^2$-inner product, i.e., $\langle a, b \rangle = \mathrm{Tr}(a^* b)$. 
It makes the action $*$-preserving. 
As an $A$-module, it is just a $2n$-copies of $A$. 

The automorphism group of $A$ acts on $\mathrm{End}(S_n)$ by the adjoint. 
We can check that it restricts to a homomorphism
\begin{align}\label{eq_aut_action_Skew}
    \mathrm{Aut}_{*, \Z_2}(A) \to \mathrm{Aut}(\Self_{A}^*(S_n)), 
\end{align}
for each $n$, and passing to the direct limit we get a homomorphism by Fact \ref{fact_mass_classifying},
\begin{align}\label{eq_aut_action_KO_1}
    \mathrm{Aut}_{*, \Z_2}(A) \to \mathrm{Aut}\left(\varinjlim_n (\Self_{A}^*(S_n), \{\beta\})\right) = \mathrm{Aut}\left(KO_{\mathrm{type}(A)}\right). 
\end{align}
This gives the desired homomorphism \eqref{eq_hom_O_to_AutKO}. 

Given a connected finite CW-complex $X$ with a bundle of simple central graded $*$-algebras $\mathcal{A} = P \times_{\mathrm{Aut}_{*, \Z_2}(A)} A$ associated to a principal $\mathrm{Aut}_{*, \Z_2}(A)$-bundle $P$ over $X$, take a classifying map $X \to B\mathrm{Aut}_{*, \Z_2}(A)$ for $P$. 
Using \eqref{eq_aut_action_KO_1} we define the following bundle of based spaces over $X$, 
\begin{align}\label{eq_Skew_bundle_V}
    (\mathcal{KO}_{\mathcal{A}}, \{\beta\}) := P \times_{\mathrm{Aut}_{*, \Z_2}(A)}\varinjlim_n (\Self_{A}^*(S_n), \{\beta\}). 
\end{align}
Then the relative version of \eqref{eq_twist_general} applied here gives
\begin{align}\label{eq_twistedKO_limit}
    KO^\mathcal{A}(X, Y) = \pi_0\left(\Gamma\left(X, Y;\mathcal{KO}_{\mathcal{A}}, \{\beta\}\right)\right). 
\end{align}
In fact, we have an isomorphism
\begin{align}\label{eq_KO_m=KO_twisted}
    KO_+^{\mathcal{A}}(X, Y) \simeq\pi_0\left(\Gamma\left(X, Y;\mathcal{KO}_{\mathcal{A}}, \{\beta\}\right)\right), 
\end{align}
where the left hand side is defined in Definition \ref{def_karoubi_twisted_KO}. 
The map is given as follows. 
Any element in $KO^{\mathcal{A}}_+(X, Y)$ is represented by a triple $(\slashed S, h_0, h_1)$ with $h_0^2 = 1$. 
For such a triple, we regard $h_0$ as the basepoint of each fiber of the fiber bundle $\Self_{\mathcal{A}}^*(\slashed S)$ over $X$. 
We may also use $h_0$ to regard $\slashed S$ as a $\Sigma^{0, 1}\mathcal{A}$-module bundle, with the generator of $Cl_{0, 1}$ acting by $h_0$. 
Now notice that any $\Sigma^{0, 1}\mathcal{A}$-module bundle can be embedded as a direct summand into $(\Sigma^{0, 1}\mathcal{A})\otimes \R^n$ for some $n$, and any such embeddings are homotopic to each other upon taking the direct limit in $n$. 
Recalling our choice \eqref{S_n_choice}, we have an embedding of bundles of based spaces, 
\begin{align}\label{eq_emb_Skew_bundle}
    \left(\Self_{\mathcal{A}}^*(\slashed S), \{h_0\} \right) \hookrightarrow (\mathcal{KO}_{\mathcal{A}}, \{\beta\})
\end{align}
uniquely up to homotopy. 
Fixing such an embedding we can regard $h_1$ as a section $h_1 \in \Gamma\left(X, Y; \mathcal{KO}_{\mathcal{A}}, \{\beta\}\right)$. 
The homotopy class of this $h_1$ only depends on the class $[\slashed S, h_0, h_1] \in KO_+^\mathcal{A}(X, Y)$. 
The fact that this map is an isomorphism follows by the following lemma, which is obvious by using the choice \eqref{S_n_choice}. 

\begin{lem}\label{lem_univ_Skew_bundle}
Fixing a connected $X \in \mathrm{CW}_f$ and $\mathcal{A} \in \Tw_{KO_+, X}$, we have a weak equivalence of fiber bundles, 
\begin{align}\label{eq_univ_Skew_bundle}
    \varinjlim_{(\slashed S, h_0)}\left(\Self_\mathcal{A}^*(\slashed S), \{h_0\} \right) \simeq (\mathcal{KO}_{\mathcal{A}}, \{\beta\}). 
\end{align}
Here the direct limit is taken over the direct system consisting of the pairs $(\slashed S, h_0)$ of $\mathcal{A}$-module bundles with inner product and continuous sections $h_0 \in \Gamma(X; \Self_\mathcal{A}^\dagger(\slashed S))$ and of their inclusions. 
The map from the left to the right is induced by \eqref{eq_emb_Skew_bundle}. 
\end{lem}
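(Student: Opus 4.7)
The plan is to exhibit an explicit cofinal subsystem of the indexing category whose colimit is manifestly $\mathcal{KO}_\mathcal{A}$, as follows.

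First, I would observe that a pair $(\slashed S, h_0)$ with $h_0 \in \Gamma(X; \Self_\mathcal{A}^\dagger(\slashed S))$ is exactly the data of a $\Sigma^{0,1}\mathcal{A}$-module bundle with inner product: the action of the generator $\beta \in Cl_{0,1}$ is defined to be $h_0$, and the relations $h_0^2 = 1$, $h_0^* = h_0$ and $h_0 \in \End^1_\mathcal{A}(\slashed S)$ correspond respectively to $\beta^2 = 1$, the compatibility with the $*$-structure, and the anticommutation with $\mathcal{A}^1$. Under this identification, the morphisms in the indexing system are isometric embeddings of $\Sigma^{0,1}\mathcal{A}$-module bundles, and $\Self_\mathcal{A}^*(\slashed S)$ depends only on the underlying $\mathcal{A}$-module bundle.

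Next, write $\mathcal{A} = P \times_{\mathrm{Aut}_{*,\Z_2}(A)} A$ and associate to the chosen cofinal sequence $S_n = \Sigma^{0,1}A \otimes \R^n$ in \eqref{S_n_choice} the $\Sigma^{0,1}\mathcal{A}$-module bundles
\begin{align*}
\slashed S_n := P \times_{\mathrm{Aut}_{*,\Z_2}(A)} S_n.
\end{align*}
Because $\beta \in Cl_{0,1} \subset \Sigma^{0,1}A$ is fixed by every element of $\mathrm{Aut}_{*,\Z_2}(A)$ (which acts only on the $A$-factor), the constant section $\beta$ descends to a well-defined section $h_0^{(n)} \in \Gamma(X; \Self_\mathcal{A}^\dagger(\slashed S_n))$, and the stabilization maps $\slashed S_n \hookrightarrow \slashed S_{n+1}$ are compatible with $h_0^{(n)}$. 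This gives a subsystem $\{(\slashed S_n, h_0^{(n)})\}_n$ of the indexing category.

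Now I would show this subsystem is cofinal. Given any pair $(\slashed S, h_0)$, viewed as a $\Sigma^{0,1}\mathcal{A}$-module bundle, one must embed it isometrically as a $\Sigma^{0,1}\mathcal{A}$-direct summand into some $\slashed S_n$. Locally, on a trivializing open set, $\slashed S$ becomes a $\Sigma^{0,1}A$-module of finite dimension, hence embeds into $S_n = \Sigma^{0,1}A \otimes \R^n$ for $n$ large; a standard partition of unity argument on the finite CW-complex $X$, together with standard stabilization, globalizes this to an $\mathcal{A}$-module embedding compatible with $h_0$, and any two such embeddings become homotopic after further stabilization. This is the one step where there is work to do, but it is the same reasoning that underlies the classical fact that every vector bundle over a finite CW-complex is a direct summand of a trivial bundle, applied in the $\Sigma^{0,1}\mathcal{A}$-module category.

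Finally, by construction the associated bundle commutes with colimits along $\mathrm{Aut}_{*,\Z_2}(A)$-equivariant maps, so
\begin{align*}
\varinjlim_n \left(\Self_\mathcal{A}^*(\slashed S_n), \{h_0^{(n)}\}\right)
&= \varinjlim_n P \times_{\mathrm{Aut}_{*,\Z_2}(A)} \left(\Self_A^*(S_n), \{\beta\}\right) \\
&= P \times_{\mathrm{Aut}_{*,\Z_2}(A)} \varinjlim_n \left(\Self_A^*(S_n), \{\beta\}\right) \\
&= (\mathcal{KO}_\mathcal{A}, \{\beta\}),
\end{align*}
matching \eqref{eq_Skew_bundle_V}. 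Combined with cofinality, this yields the desired weak equivalence \eqref{eq_univ_Skew_bundle}, and unwinding the definitions shows that it is induced by the embeddings \eqref{eq_emb_Skew_bundle} as claimed.
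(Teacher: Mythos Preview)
Your proposal is correct and follows exactly the approach the paper indicates: the paper states only that the lemma is ``obvious by using the choice \eqref{S_n_choice}'', and you have spelled out precisely what this means---identifying pairs $(\slashed S, h_0)$ with $\Sigma^{0,1}\mathcal{A}$-module bundles, recognizing the associated bundles $\slashed S_n = (\Sigma^{0,1}\mathcal{A})\otimes \R^n$ as a cofinal subsystem (cf.\ the sentence just before \eqref{eq_emb_Skew_bundle}), and computing the colimit over that subsystem as the associated bundle $(\mathcal{KO}_{\mathcal{A}}, \{\beta\})$.
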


Recall that the isomorphism class of the group $KO^\mathcal{A}_+(X, Y)$ dependends on the class of $\mathcal{A}$ in $\mathrm{GBrO}(X)$ (Corollary \ref{cor_KO_twist_classification}). 
Indeed the map \eqref{eq_hom_O_to_AutKO} factors through $B\mathrm{GL}_1(KO)$ up to homotopy, where the map to $B\mathrm{GL}_1(KO)$ factors as
\begin{align}\label{twist_mass_KO_classification}
    B\mathrm{Aut}_{*, \Z_2}(A) \xrightarrow{w} B\mathrm{O}\langle 0, 1, 2 \rangle \xrightarrow{\iota} B\mathrm{GL}_1(KO) . 
\end{align}
up to homotopy. 
Here $B\mathrm{O}\langle 0, 1, 2 \rangle$ is the second stage in the Postonikov system of $B\mathrm{O}$, which classifies $\mathrm{GBrO}(-)$.

\subsection{The twisted Pontryagin character}\label{subsec_twisted_Ph_top}
In this subsection we explain the twisted Pontryagin character homomorphism. 

Assume we are given two ring spectra $E$ and $F$ with multiplicative homomorphism $f \colon E \to F$. 
For a topological space $X$, given an element $\tau \in [X, B\mathrm{GL}_1(E)]$ specifying a twist for $E$, by composing it with the map $f \colon B\mathrm{GL}_1(E) \to B\mathrm{GL}_1(F)$ we have the corresponding twist $f_*\tau$ for $F$.  
It also induces the maps between associated bundles, so in view of the definition \eqref{eq_twist_general} we get a homomorphism
\begin{align}\label{eq_trans_twist}
    f \colon E^\tau(X) \to F^{f_*\tau }(X). 
\end{align}
It satisfies the obvious functoriality. 
It is useful to view \eqref{eq_trans_twist} in the following way. 
Let $\pi \colon \mathcal{E}_\tau \to X$ be the bundle with fiber $E$ associated with $\tau$. 
Then $\pi^*f_*\tau \in [\mathcal{E}_\tau, B\mathrm{GL}_1(F)]$ specifies a twist of $F$ for the total space $\mathcal{E}_\tau$, and we get an element
\begin{align}\label{eq_trans_twist_total}
    [f] \in F^{\pi^*f_*\tau}(\mathcal{E}_\tau). 
\end{align}
We recover the homomorphism \eqref{eq_trans_twist} from \eqref{eq_trans_twist_total} by the pullback with respect to a section in $E^\tau(X) = \pi_0(\Gamma(X; \mathcal{E}_\tau))$. 
Also note that, for each point $x\in X$, under a choice of an isomorphism $\mathcal{E}_\tau|_x \simeq E$, the element \eqref{eq_trans_twist_total} restricts on the fiber at $x$ to the natural transformation $f \colon E \to F$.  

Now we turn to the twisted Pontryagin character homomorphism. 
Recall that we are using the twists of $KO$-theory coming from the composition \eqref{twist_mass_KO_classification}. 
We denote by $(\mathcal{KO}_n)_\iota$ the $n$-th space component of the $KO$-spectrum bundle $\mathcal{KO}_\iota$ over $BO\langle 0, 1, 2 \rangle$. 
We denote by $\{\beta\} \subset (\mathcal{KO}_n)_\iota$ the fiberwise basepoint, in analogy with \eqref{eq_twistedKO_limit}. 
Applying the above procedure to the multiplicative homomorphism $\mathrm{Ph}_{\mathrm{top}} \colon KO \to H\R[v, v^{-1}]$, we get the {\it twisted Pontryagin character homomorphism},
\begin{align}\label{eq_twisted_Ph_top}
    \mathrm{Ph}_{\mathrm{top}} \colon KO^{\mathcal{A}}_+(X, Y) \to H^{4\Z + \type(\mathcal{A})}(X, Y; \mathrm{Ori}(\mathcal{A}) ),  
\end{align}
which is natural in $(X, Y, \mathcal{A}) \in \Tw^2_{KO_+}$. 
Also, by \eqref{twist_mass_KO_classification} and the above argument, the natural transformation \eqref{eq_twisted_Ph_top} is specified by the universal element for each $n$,
\begin{align}\label{eq_univ_twisted_Ph}
    [\mathrm{Ph}_{\mathrm{top}}] \in H^{4\Z + n }((\mathcal{KO}_{n})_\iota, \{\beta\}; E\Z_2 \times_{\Z_2}  \R). 
\end{align}
Here $E\Z_2 / \Z_2 = B\Z_2 \simeq K(\Z_2, 1)$ and we denote its pullback to $B\langle 0, 1, 2 \rangle$ by the same symbol. 
$\Z_2$ acts on $\R$ by the multiplication of $\{\pm 1\} \simeq \Z_2$. 
Each transformation \eqref{eq_twisted_Ph_top} is given by the pullback of the universal class \eqref{eq_univ_twisted_Ph} to $\mathcal{KO}_\mathcal{A}$. 
We observe the following. 

\begin{lem}\label{lem_twisted_Ph_characterization}
For each $n$, the element \eqref{eq_univ_twisted_Ph} is the unique element which restricts to $[\mathrm{Ph}_{\mathrm{top}}] \in H^{4\Z + n}(KO_{n}, \{*\};  \R)$ on each fiber. 
\end{lem}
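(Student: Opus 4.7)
The plan is to apply the Leray--Serre spectral sequence to the fibration
\[
p : \bigl((\mathcal{KO}_n)_\iota, \{\beta\}\bigr) \longrightarrow B, \qquad B := BO\langle 0, 1, 2 \rangle,
\]
with relative pair fiber $(KO_n, \{*\})$ and coefficients in the $\R$-local system $\mathcal{O} := E\Z_2 \times_{\Z_2} \R$ pulled back from $B$. The $E_2$-page reads
\[
E_2^{p, q} = H^p\!\left(B;\ \mathcal{H}^q\bigl((KO_n, \{*\}); \R\bigr) \otimes_\R \mathcal{O}\right),
\]
abutting to $H^{p + q}\bigl((\mathcal{KO}_n)_\iota, \{\beta\}; \mathcal{O}\bigr)$. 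Uniqueness of the lift of $[\mathrm{Ph}_{\mathrm{top}}]$ will follow by showing that this spectral sequence collapses onto its $p = 0$ column and that $[\mathrm{Ph}_{\mathrm{top}}]$ defines a $\pi_1(B)$-invariant section of the corresponding local system.

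The key input is that $B$ has only finite homotopy: $\pi_0 B = \Z_8$, $\pi_1 B = \Z_2$, $\pi_2 B = \Z_2$ and $\pi_k B = 0$ for $k \geq 3$. From this I would deduce that
\[
H^p(B;\ \mathcal{L}) = 0 \quad \text{for all } p \geq 1
\]
for any local system $\mathcal{L}$ of $\R$-vector spaces on $B$: each connected component of $B$ has universal cover homotopy equivalent to $K(\Z_2, 2)$, whose rational cohomology is concentrated in degree zero (again by finiteness of $\pi_*$), and the Cartan--Leray spectral sequence for the universal covering---combined with the vanishing of $H^{\geq 1}$ of a finite group with coefficients in a $\Q$-vector space---gives the claim. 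Applied to $\mathcal{L} = \mathcal{H}^q((KO_n, \{*\}); \R) \otimes \mathcal{O}$ for each $q$, this forces $E_2^{p, q} = 0$ for $p \geq 1$, so the spectral sequence degenerates and the fiber-restriction edge map
\[
H^*\bigl((\mathcal{KO}_n)_\iota, \{\beta\}; \mathcal{O}\bigr) \xrightarrow{\ \sim\ } H^0\!\left(B;\ \mathcal{H}^*\bigl((KO_n, \{*\}); \R\bigr) \otimes \mathcal{O}\right)
\]
becomes an isomorphism. A cohomology class on the total space is thereby completely determined by its $\pi_1(B)$-equivariant fiber restriction.

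To conclude, one must verify that the fiberwise class $[\mathrm{Ph}_{\mathrm{top}}] \in H^{4\Z + n}(KO_n, \{*\}; \R)$, once coupled with the orientation twist $\mathcal{O}$, is $\pi_1(B)$-invariant. This is expected because $\mathrm{Ph}_{\mathrm{top}} : KO \to H\R[v, v^{-1}]$ is a natural transformation of spectra and is therefore equivariant with respect to the action \eqref{eq_aut_action_KO_1} of $\mathrm{Aut}_{*, \Z_2}(A)$, and because the composition $B\mathrm{Aut}_{*,\Z_2}(A) \to B\mathrm{GL}_1(KO)$ factors through the sign monodromy on $\mathrm{Ori}(\mathcal{A})$ that defines $\mathcal{O}$. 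The main obstacle I anticipate is precisely this last monodromy-matching step: namely confirming that the $\pi_1(B)$-action on $H^*(KO_n, \{*\}; \R)$ cancels against the orientation twist $\mathcal{O}$ so that the product local system is trivial along the image of $\mathrm{Ph}_{\mathrm{top}}$. I would handle it by an explicit computation tracking the action of $\mathrm{Aut}_{*, \Z_2}(A)$ on volume elements as described in Subsection~\ref{subsec_algebras}, and identifying its induced effect on the generators of $H^*(KO_n; \R)$ via \eqref{eq_aut_action_KO_1}.
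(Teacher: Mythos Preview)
Your approach is essentially the same as the paper's: both arguments exploit that $B = BO\langle 0,1,2\rangle$ has only finite homotopy groups, hence rationally trivial cohomology in positive degrees, which forces the fiber-restriction map to be an isomorphism. The paper states this in one line; you have unpacked it via the Leray--Serre spectral sequence, which is exactly the mechanism behind the paper's assertion.

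However, your closing ``obstacle'' about $\pi_1(B)$-invariance is unnecessary for what the lemma actually claims. The element \eqref{eq_univ_twisted_Ph} is \emph{given} as a class on the total space---its existence comes from the construction in Subsection~\ref{subsec_twisted_Ph_top} as the twisted Chern--Dold transformation applied to $\mathrm{Ph}_{\mathrm{top}}$---and the lemma only asserts uniqueness among classes with the stated fiber restriction. For uniqueness you need only \emph{injectivity} of the fiber-restriction map, which follows immediately from your spectral-sequence collapse. That the restriction of \eqref{eq_univ_twisted_Ph} to a fiber equals the untwisted $[\mathrm{Ph}_{\mathrm{top}}]$ is automatic from the construction (see the sentence following \eqref{eq_trans_twist_total}). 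So you can drop the monodromy-matching verification entirely; the proof ends once you have established that $E_2^{p,q}=0$ for $p\geq 1$.
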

\begin{proof}
This is because the restriction to a fiber gives an isomorphism in this case, since the higher integral cohomology groups of $BO\langle 0, 1, 2 \rangle$ are rationally trivial. 
We remark that the analogous argument appears in the paper by Atiyah and Segal \cite{AtiyahSegalTwistKandCohomology} in the case of the twisted $K$-theory. 
\end{proof}

\section{Generalized Clifford superconnections}\label{sec_superconn}
In this section we set up the {\it generalized Clifford superconnection formalism}.  
Actually, in order to define untwisted differential $KO$-groups $\widehat{KO}^{A}_+$, we only need a particularly simple class of superconnections defined on trivial bundles $\underline{S}$ of $A$-modules. 
Before developing the general formulation in Subsection \ref{subsec_superconn_general}, we first explain this simple case in Subsection \ref{subsec_superconn_triv}. 
The readers who are only interested in the untwisted case only need Subsection \ref{subsec_Clifford_end} and Subsection \ref{subsec_superconn_triv}, and can go directly to Section \ref{sec_diff_KO}. 
In this section we work over $\R$.

\subsection{Traces}\label{subsec_Clifford_end}

\begin{defn}[$\mathrm{Tr}_u$]\label{def_u_str}
Let $A$ be a simple central graded $*$-algebra, and $u \in A$ be a volume element. 
For an $A$-module $S$, we define an $\R$-linear map
\begin{align*}
    \mathrm{Tr}_u(\xi) \colon \mathrm{End}_{A}(S) \to \R
\end{align*}
by the following. 
\begin{itemize}
    \item If $A$ is of odd-type, set
    \begin{align*}
        \mathrm{Tr}_{u}(\xi) := 2^{1/2}(\dim_\R A)^{-1/2}\mathrm{Tr}_S(u \cdot \xi). 
    \end{align*}
    Note that since $u \in A^1$ we have $\mathrm{Tr}_u(\xi) = 0$ for $\xi \in \mathrm{End}^1_{A}(S)$. 
    \item If $A$ is of even-type and nondegenerate, set
    \begin{align*}
        \mathrm{Tr}_{u}(\xi) :=\begin{cases}
        (\dim_\R A)^{-1/2}\mathrm{Tr}_S(u\cdot \xi) & \mbox{ if } \xi \in \mathrm{End}^1_{A}(S), \\
        0 & \mbox{ if } \xi\in \mathrm{End}^0_{A}(S). 
        \end{cases}
    \end{align*}
    \item If $A$ is degenerate, set
    \begin{align*}
        \mathrm{Tr}_{u}(\xi) :=(\dim_\R A)^{-1/2}\mathrm{Tr}_S(u\cdot \xi) 
    \end{align*}
    for any $\xi \in \mathrm{End}_{A}(S)$. 
\end{itemize}
\end{defn}
When $A$ is nondegenerate, $\mathrm{Tr}_u$ is a supertrace on the $\Z_2$-graded algebra $\mathrm{End}_{A}(S)$, as follows. 
The proof is straightforward, by using $u \in A^{|\mathrm{type}(A)|}$. 

\begin{lem}\label{lem_Gamma_trace}
Let $S$ be an $A$-module and $\xi_1, \xi_2 \in \mathrm{End}_{A}(S)$. 
If $A$ is nondenerate, we have
\begin{align*}
\mathrm{Tr}_u(\{\xi_1, \xi_2\}) = 0. 
\end{align*}
If $A$ is degenerate, we have
\begin{align*}
\mathrm{Tr}_u([\xi_1, \xi_2]) = 0. 
\end{align*}
\end{lem}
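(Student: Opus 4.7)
The plan is to reduce everything to cyclicity of the ordinary trace $\mathrm{Tr}_S$ combined with the parity/centrality properties of the volume element $u$. Since $\mathrm{Tr}_u$ is defined fiberwise as a scalar multiple of $\mathrm{Tr}_S(u\,\cdot\,)$, and since the parity of $\{\xi_1,\xi_2\}$ (resp.\ $[\xi_1,\xi_2]$) is $|\xi_1|+|\xi_2|$, I may reduce by linearity and bilinearity to the case of homogeneous $\xi_1,\xi_2\in\mathrm{End}_{A}(S)$. In each case, one needs to verify the identity only when $\{\xi_1,\xi_2\}$ lands in the subspace on which $\mathrm{Tr}_u$ is nonzero, since otherwise the conclusion is trivial by definition.

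First, I would handle $A$ of odd type, where $u\in A^1\cap Z(A)$ and $\mathrm{Tr}_u$ is nonzero only on $\mathrm{End}_{A}^0(S)$, so only the cases $(|\xi_1|,|\xi_2|)\in\{(0,0),(1,1)\}$ are relevant. In both cases, cyclicity gives $\mathrm{Tr}_S(u\xi_1\xi_2)=\mathrm{Tr}_S(\xi_1 u\xi_2)$, and the key ingredient is the commutation relation $\xi_1 u=(-1)^{|\xi_1|}u\xi_1$, which follows from the definition of $\mathrm{End}_{A}^{|\xi_1|}(S)$ applied to the odd element $u$. For $|\xi_1|=0$ this recovers the ordinary trace vanishing on a commutator, and for $|\xi_1|=1$ the sign from sliding $u$ past $\xi_1$ precisely cancels the $+$ sign in the anticommutator $\{\xi_1,\xi_2\}=\xi_1\xi_2+\xi_2\xi_1$.

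Next, for $A$ of even nondegenerate type, $u\in A^0$ is central modulo the grading (i.e., $A^0=Z(u)$, $A^1=Z^*(u)$), and $\mathrm{Tr}_u$ is nonzero only on $\mathrm{End}_{A}^1(S)$, so I may assume $(|\xi_1|,|\xi_2|)=(0,1)$ with $\{\xi_1,\xi_2\}=\xi_1\xi_2-\xi_2\xi_1$. Now $u$ has even degree and commutes with both $\xi_1\in\mathrm{End}_A^0(S)$ and $\xi_2\in\mathrm{End}_A^1(S)$ (using that an element of $\mathrm{End}_A^{\varepsilon}(S)$ commutes with $A^0$ regardless of $\varepsilon$), so the computation again reduces to cyclicity of $\mathrm{Tr}_S$. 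Finally, in the degenerate case $A=A^0$, every $\xi_i\in\mathrm{End}_A(S)$ commutes with $u$, so $\mathrm{Tr}_S(u[\xi_1,\xi_2])=\mathrm{Tr}_S([\xi_1,u\xi_2])=0$ by cyclicity.

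There is no genuine obstacle; the main care is bookkeeping of signs and keeping track of which subspace $\mathrm{Tr}_u$ is supported on in each of the three cases of Definition \ref{def_u_str}. A single calculation template, namely $\mathrm{Tr}_S(u\xi_1\xi_2)=\mathrm{Tr}_S(\xi_1 u\xi_2)=(-1)^{|\xi_1|\cdot|u|}\mathrm{Tr}_S(u\xi_1\xi_2)$, covers all sub-cases once the correct parity of $u$ is inserted.
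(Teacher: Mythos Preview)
Your approach is exactly what the paper intends; its proof is literally just the hint ``straightforward, by using $u \in A^{|\mathrm{type}(A)|}$''. There is, however, a slip in your odd-type argument and in the closing ``template'': the equality $\mathrm{Tr}_S(u\xi_1\xi_2)=\mathrm{Tr}_S(\xi_1 u\xi_2)$ does \emph{not} follow from cyclicity (cyclicity moves a factor from one end to the other; it does not swap two adjacent interior factors), and combined with your commutation relation it becomes circular. The correct step is $\mathrm{Tr}_S(u\xi_1\xi_2)=\mathrm{Tr}_S(\xi_2 u\xi_1)$ by cyclicity, then $\xi_2 u=(-1)^{|\xi_2||u|}u\xi_2$, yielding $\mathrm{Tr}_S(u\xi_1\xi_2)=(-1)^{|\xi_2||u|}\mathrm{Tr}_S(u\xi_2\xi_1)$; for odd type ($|u|=1$) this gives the required cancellation in both the $(0,0)$ and $(1,1)$ cases. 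Your even-type and degenerate arguments are correct as written. With this one-line fix your proof is complete and coincides with the paper's intended argument.
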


Moreover, $\mathrm{Tr}_u$ is compatible with tensoring negligible modules, as follows.

\begin{lem}\label{lem_tr_u_negligible}
Let $V = V^0 \oplus V^1$ be a $\Z_2$-graded real vector space, and $\gamma_V$ the $\Z_2$-grading operator on $V$. Let $A$ and $S$ be as before, and
let $u$ be a volume element of $A$. 
Then $\gamma_V \widehat{\otimes} u$ is a volume element of $\mathrm{End}(V) \widehat{\otimes} A$. 
For any $\xi \in \End_A(S)$, we have
\begin{align*}
    \mathrm{Tr}_u(\xi) = \mathrm{Tr}_{\gamma_V \widehat{\otimes} u}(\psi^V(\xi)). 
\end{align*}
\end{lem}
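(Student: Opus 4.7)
My plan is to prove the lemma by a direct case analysis following the trichotomy in Definition \ref{def_u_str} (odd type, even nondegenerate, degenerate). The computation reduces to a single mechanical formula, and the real work is the book-keeping of gradings and normalizations.

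First I would verify that $\gamma_V \widehat{\otimes}u$ is a volume element of $\End(V) \widehat{\otimes} A$, using the fact that $\gamma_V$ is a volume element of $\End(V)$ (it is even, squares to $1$, and satisfies $a \gamma_V = (-1)^{|a|} \gamma_V a$ for homogeneous $a\in\End(V)$). In the graded tensor product one has $(\gamma_V \widehat{\otimes} u)^2 = (-1)^{|u||\gamma_V|}\gamma_V^2 \widehat{\otimes} u^2 = u^2 = \pm 1$, and $|\gamma_V \widehat{\otimes} u| = |u|$, so the parity of the candidate volume element matches $\type(\End(V) \widehat{\otimes} A) = \type(A)$. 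Centrality (odd case) or the characterization via $Z$ and $Z^*$ (even nondegenerate case) then follow from the corresponding property of $u$ in $A$ together with $\gamma_V a = (-1)^{|a|} a \gamma_V$; the degenerate case (with $A^1=0$, $u$ central) is immediate.

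Next I would perform the key computation. Using the action formulas in \eqref{eq_periodicity_negligible_module}, $\gamma_V \widehat{\otimes} u$ acts on $V\otimes S$ as $\gamma_V^{1+|u|} \otimes u$, while $\psi^V(\xi) = \gamma_V^{|\xi|} \otimes \xi$. Composing in $\End(V\otimes S)$ gives
\begin{equation*}
(\gamma_V \widehat{\otimes} u)\cdot \psi^V(\xi) \;=\; \gamma_V^{\,1+|u|+|\xi|} \otimes u\xi,
\end{equation*}
hence
\begin{equation*}
\mathrm{Tr}_{V\otimes S}\bigl((\gamma_V \widehat{\otimes} u)\cdot \psi^V(\xi)\bigr) \;=\; \mathrm{Tr}_V\bigl(\gamma_V^{\,1+|u|+|\xi|}\bigr)\cdot \mathrm{Tr}_S(u\xi).
\end{equation*}
Since $\dim(\End(V)\widehat{\otimes} A) = (\dim V)^2 \dim A$, the prefactor $(\dim(\End(V)\widehat{\otimes} A))^{-1/2}$ in Definition \ref{def_u_str} absorbs exactly one factor of $\dim V$.

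It then remains to check each case. When $A$ is odd type ($|u|=1$) and $\xi$ is even, the exponent $1+|u|+|\xi|$ is even so $\mathrm{Tr}_V(\gamma_V^{1+|u|+|\xi|}) = \dim V$, which cancels the $(\dim V)^{-1}$ and reproduces $\mathrm{Tr}_u(\xi)$; for $\xi$ odd, $\mathrm{Tr}_S(u\xi)=0$ by the cyclicity/anticommutation argument noted after Definition \ref{def_u_str}, and both sides vanish. When $A$ is even nondegenerate ($|u|=0$) and $\xi$ is odd, again $1+|u|+|\xi|$ is even and the same cancellation occurs; for $\xi$ even both sides vanish by the very definition (noting $\psi^V(\xi)$ is even in $\End_{\End(V)\widehat{\otimes} A}^{*}(V\otimes S)$). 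The degenerate case uses $\psi^V(\xi)=\gamma_V \otimes \xi$ from \eqref{eq_End_isom_degenerate}, giving $(\gamma_V \widehat{\otimes} u)\psi^V(\xi) = \gamma_V^{|u|} \otimes u\xi = \mathrm{id}_V \otimes u\xi$, and the same cancellation of $\dim V$ applies (one must consider separately the subcases where $V$ is purely even/odd, making $\End(V)\widehat{\otimes} A$ degenerate, versus $V$ genuinely graded, making it even nondegenerate).

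The main obstacle is bookkeeping: making sure that the graded-tensor sign conventions in $\End(V)\widehat{\otimes} A$, the action formula \eqref{eq_periodicity_negligible_module}, and the definition-by-cases of $\mathrm{Tr}_u$ all line up, and in particular that the cases where $\End(V) \widehat{\otimes} A$ changes degeneracy type relative to $A$ are handled correctly. Once the clean compositional identity $(\gamma_V \widehat{\otimes} u)\psi^V(\xi) = \gamma_V^{1+|u|+|\xi|}\otimes u\xi$ is in hand, the rest is arithmetic with powers of $\gamma_V$ on $V$.
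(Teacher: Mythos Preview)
The paper states this lemma without proof, implicitly leaving it as a routine verification; your detailed case-by-case computation is correct and is exactly the kind of check one would expect. The key identity $(\gamma_V\widehat{\otimes}u)\cdot\psi^V(\xi)=\gamma_V^{\,1+|u|+|\xi|}\otimes u\xi$ is right (using the action rule \eqref{eq_periodicity_negligible_module} so that $a\widehat{\otimes}b$ acts as $a\gamma_V^{|b|}\otimes b$), and the normalization by $(\dim V)^{-1}$ indeed cancels against $\mathrm{Tr}_V(\mathrm{id}_V)$ in every nonvanishing case; your treatment of the degenerate case, including the subcases where $\End(V)\widehat{\otimes}A$ becomes nondegenerate, is also correct.
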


For later use, we also regard Definition \ref{def_u_str} as giving a linear map
\begin{align}\label{eq_str_ori_A}
    \mathrm{Tr}_{A} \colon \mathrm{End}_{A}(S) \to \mathrm{Ori}(A). 
\end{align}

\subsection{The case for trivial bundles of \texorpdfstring{$A$}{A}-modules}\label{subsec_superconn_triv}

Let $A$ be a simple central graded $*$-algebra. 
Throughout this subsection, we assume that $A$ is nondegenerate. 
Let $S$ be an $A$-module with an inner product. 
Let $X$ be a manifold, and consider the trivial $A$-module bundle $\underline{S}$ over $X$. 
Here, we explain the generalized Clifford superconnection formalism in Subsection \ref{subsec_superconn_general} applied to these settings.

\subsubsection{Generalized Clifford superconnections associated to gradations}
Recall that we have introduced the $\Z_2$-graded algebra structure on $\mathrm{End}_{A}(S)$ in Definition \ref{def_End_A}. 
We introduce a $\Z_2$-graded algebra structure on $\Omega^*(X; \mathrm{End}_{A}(S))$ by the graded tensor product, 
\begin{align*}
    \Omega^*(X; \mathrm{End}_{A}(S)) &= \Omega^*(X) \widehat{\otimes}_{C^\infty(X)}C^\infty(X; \mathrm{End}_{A}(S)). 
\end{align*}
This means that the multiplication is given by the formula \eqref{eq_multi_form}: 
\begin{align*}
    (\omega \widehat{\otimes} \xi) \cdot (\omega' \widehat{\otimes} \xi') :=
   (-1)^{|\xi|\cdot| \omega'|} (\omega\wedge \omega') \otimes (\xi \cdot \xi'). 
\end{align*}
The even and odd part of $\Omega^*(X; \mathrm{End}_{A}(S))$ with respect to this grading are denoted by $\Omega^*(X; \mathrm{End}_{A}(S))^0$ and $\Omega^*(X; \mathrm{End}_{A}( S))^1$, respectively. 
We let the algebra $\Omega^*(X; \mathrm{End}_{A}(S))$ act on $\Omega^*(X; S) := C^\infty(X; \wedge T^*X \otimes S)$ by the formula \eqref{eq_action_form}, respecting the grading:
\begin{align*}
    (\omega \widehat{\otimes} \xi) \cdot (\eta \otimes \psi) := (-1)^{|\xi|\cdot|\eta|}(\omega \wedge \eta) \otimes (\xi \cdot \psi). 
\end{align*}

Given $h \in C^\infty(X, \Self_{A}(S))$, consider the following differential operator on $\Omega^*(X; S)$, 
\begin{align}\label{eq_Grad_triv}
    \Grad := d + h \colon \Omega^*(X; S) \to \Omega^*(X; S). 
\end{align}
This is an example of a {\it self-adjoint superconnection} (Definition \ref{def_superconn} and \ref{def_skew_superconn}) for the case $(\mathcal{A}, \nabla^{\mathcal{A}}) = (\underline{A}, d)$ and $\slashed S = \underline{S}$. 
The square of the operator \eqref{eq_Grad_triv} is given by the action of the following element in $\Omega^*(X; \mathrm{End}_{A}(S))^0$, 
\begin{align}
     F({\Grad}) = \Grad^2 = dh + h^2 \in \Omega^*(X; \mathrm{End}_{A}(S))^0. 
\end{align}
This is the {\it twisting curvature} for $\Grad$ in Lemma \ref{lem_curv_even}.

As in the case of usual connections, we apply polynomials to $F({\Grad})$ and get characteristic forms. 
Recall that we defined a linear map $\mathrm{Tr}_A \colon \End_A(S) \to \mathrm{Ori}(A)$ in \eqref{eq_str_ori_A}. 
We extend it to a linear map $\mathrm{Tr}_A \colon \Omega^*(X; \mathrm{End}_{A}(S)) \to \Omega^*(X; \mathrm{Ori}(A))$ by applying it to the coefficients (Definition \ref{def_V_str}). 
\begin{defn}[{Definition \ref{def_Ch} applied to $\Grad = d + h$}]\label{def_Ch_triv}
In the above settings, {\it Pontryagin character form} of $\Grad = d + h$ is defined by 
\begin{align}
    \mathrm{Ph}_\lself(d + h) :=   \mathrm{Tr}_A(e^{-F({\Grad})})
    = \mathrm{Tr}_A(e^{-dh - h^2})
    \in  \Omega_{\mathrm{clo}}^{4\Z +\mathrm{type}(A)+1}(X; \Ori(A)).  
\end{align}
The closedness follows from Proposition \ref{prop_transgression} (1), and the fact that the form appears in degrees $ \mathrm{type}(A) + 1 \pmod 4$ follows from Proposition \ref{prop_chracteristic_form_mod4} below. 
\end{defn}

The above Pontryagin character form $\mathrm{Ph}_\lself(d + h)$ depends on $h$, but it is always exact. 
Moreover, if we are given an element $h_I \in C^\infty(I \times X, \Self_{A}(S))$, which is regarded as a homotopy $h_I = \{h_t\}_{t \in I}$ from $h_0$ to $h_1$, we get an explicit form cobounding $\mathrm{Ph}(d + h_1) - \mathrm{Ph}(d + h_0)$, as follows. 
\begin{defn}[{Definition \ref{def_Ch} applied to $\Grad^{\slashed S} = d + h$}]\label{def_CS_triv}
In the above settings, for an element $h_I \in C^\infty(I \times X, \mathrm{Skew}_{A}(S))$, we define its {\it Chern-Simons form} 
$\mathrm{CS}_\lself( d_{I \times X} + h_I) \in\Omega^{4\Z +\mathrm{type}(A)}(X; \Ori(A))$ by
\begin{align}\label{eq_def_CS_triv}
    \mathrm{CS}_\lself( d_{I \times X} + h_I) &:= \int_{I} \mathrm{Ph}_\lself\left(d_{I \times X} + h_I\right) \notag \\
    &= -\int_I dt \wedge \mathrm{Tr}_A \left(\frac{dh_I}{dt} e^{-d_X h_I - (h_I)^2}
    \right). 
\end{align}
Here we denote by $d_{I \times X}$ and $d_X$ the de Rham differential on $I \times X$ and $X$, respectively. 
The last equality in \eqref{eq_def_CS_triv} follows from
\begin{align*}
    (d_{I \times X} + h_I)^2 = (d_X + h_I)^2 + dt \wedge \frac{dh_I}{dt}
\end{align*}
(also see \eqref{eq_cylinder_characteristic_form}). 
\end{defn}

We have \eqref{eq_transgression_Ph}
\begin{align}\label{eq_CS_transgression_triv}
    d\mathrm{CS}_\lself( d_{I \times X} + h_I) = \mathrm{Ph}_\lself(d + h_1) - \mathrm{Ph}_\lself(d + h_0). 
\end{align}
In particular, for any $h \in C^\infty(X, \mathrm{Self}_{A}(S))$, we have the homotopy $h_I = th$ from $0$ to $h$. 
Thus, applying \eqref{eq_CS_transgression_triv} to the homotopy, we see that $\mathrm{Ph}_\lself(d + h)$ is always exact, as stated above. 

\subsubsection{The Pontryagin character forms for gradations}\label{subsubsec_Ph_triv}
As we have seen in the end of the last subsubsection, the Pontryagin character form $\mathrm{Ph}_\lself(d + h)$ itself has no interesting cohomological information. 
However, the point is that we know a particular choice $\mathrm{CS}_\lself( d_{I \times X} + th)$ of the form cobounding it. 
Based on this observation, we introduce the {\it Pontryagin character form} $\mathrm{Ph}_\lself(h)$ for {\it invertible} sections (gradations) $h \in C^\infty(X, \Self^*_{A}(S))$, which captures the nontrivial cohomological information. 

Assume we are given an invertible section $h \in C^\infty(X, \Self^*_{A}(S))$. 
Consider the manifold $(0, \infty) \times X$ and the section $th \in C^\infty((0, \infty) \times X, \Self^*_{A}(S))$, where $t$ is the coordinate in $(0, \infty)$. 
Applying Definition \ref{def_Ch_triv} to this, we get
\begin{align}\label{eq_Ch_tm}
    \mathrm{Ph}_\lself\left(d_{(0, \infty) \times X} + th\right) &=  -dt \wedge \mathrm{Tr}_A\left(h e^{-td_X h - t^2h^2} \right)
    \\&\in  \Omega_{\mathrm{clo}}^{4\Z+\mathrm{type}(A)+1}((0, \infty) \times X ; \Ori(A)). \notag
\end{align}
The section $th \in C^\infty((0, \infty) \times X, \Self^*_{A}(S))$ can be considered as a family of sections $\{th \in C^\infty(X, \Self^*_{A}(S))\}_{t \in (0, \infty)}$ parametrized by $(0, \infty)$. 
The integration of the form \eqref{eq_Ch_tm} in the $(0, \infty)$-direction can be regarded as a limit of the Chern-Simons form in Definition \ref{def_CS_triv}. 
As a consequence of the invertibility of the section $h$, we can show that the integration indeed converges. 

\begin{lem}[{The special case of Lemma \ref{lem_Ph_convergence}}]\label{lem_Ph_convergence_triv}
In the above settings, the following integration converges pointwise, and the resulting form is closed. 
\begin{align}\label{eq_lem_Ph_triv}
    -\int_{(0, \infty)} \mathrm{Ph}_\lself\left(d_{(0, \infty) \times X} + th\right)
   & = \int_{(0, \infty)} dt \wedge \mathrm{Tr}_A\left(h e^{-td_X h - t^2h^2} \right) \\
    &\in \Omega_{\mathrm{clo}}^{4\Z +\mathrm{type}(A)}(X; \Ori(A)). \notag
\end{align}
\end{lem}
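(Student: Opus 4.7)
The plan is to handle convergence and closedness separately, relying only on the fact that $h$ is invertible and self-adjoint, so $h^2$ is pointwise positive definite.

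\textbf{Convergence.} Since $h$ is self-adjoint and invertible, at each $x \in X$ the operator $h(x)^2$ has a minimum eigenvalue $\lambda(x) > 0$, hence for every $s \geq 0$, $\|e^{-s t^2 h(x)^2}\|_{\mathrm{op}} \leq e^{-s t^2 \lambda(x)}$. I then use the Duhamel expansion to separate the form-valued piece $d_X h$ from the scalar-operator piece $h^2$:
\begin{align*}
e^{-t d_X h - t^2 h^2} = \sum_{k \geq 0} (-t)^k \int_{\Delta_k} e^{-s_0 t^2 h^2}(d_X h) e^{-s_1 t^2 h^2}(d_X h) \cdots (d_X h) e^{-s_k t^2 h^2}\, ds,
\end{align*}
where $\Delta_k = \{(s_0, \ldots, s_k) \in \R_{\geq 0}^{k+1} : \sum s_i = 1\}$. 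Because $d_X h$ has form degree $1$ and $\dim X$ is finite, this sum truncates at $k = \dim X$. Using $\sum_i s_i = 1$ in each summand, the fiberwise norm is dominated by $C_k(x)\, t^k e^{-t^2 \lambda(x)}$ for a constant depending on the pointwise operator norm of $d_X h$ and on $\mathrm{vol}(\Delta_k)$. Multiplying by $h$ and applying the bounded linear $\mathrm{Tr}_A$ preserves such a dominator. Since only finitely many $k$ contribute, the integrand is pointwise dominated by a function integrable in $t$ on $(0, \infty)$, giving pointwise convergence of the integral.

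\textbf{Closedness.} I would deduce this from the general closedness of the Pontryagin character form $\mathrm{Ph}_\lself(d_{(0, \infty) \times X} + th)$ on $(0, \infty) \times X$ (Proposition~\ref{prop_transgression}). Decompose this closed form as $\alpha(t) + dt \wedge \beta(t)$, with $\alpha, \beta$ forms on $X$ depending smoothly on $t > 0$; one reads off
\begin{align*}
\alpha(t) = \mathrm{Tr}_A(e^{-t d_X h - t^2 h^2}), \qquad \beta(t) = -\mathrm{Tr}_A(h\, e^{-t d_X h - t^2 h^2}).
\end{align*}
Closedness $d(\alpha + dt \wedge \beta) = 0$ gives the transgression identity $d_X \beta(t) = -\partial_t \alpha(t)$. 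Using the uniform Gaussian bounds from the convergence step to differentiate under the integral sign,
\begin{align*}
d_X \int_0^\infty \beta(t)\, dt = \int_0^\infty d_X \beta(t)\, dt = -\int_0^\infty \partial_t \alpha(t)\, dt = \alpha(0) - \alpha(\infty).
\end{align*}
At $t = 0$, $\alpha(0) = \mathrm{Tr}_A(\mathrm{id})$ vanishes: for nondegenerate even-type $A$, $\mathrm{id} \in \mathrm{End}^0_A(S)$ and $\mathrm{Tr}_A$ kills $\mathrm{End}^0_A$; for odd-type, $\mathrm{Tr}_A(\mathrm{id}) \propto \mathrm{Tr}_S(u)$ with $u \in A^1$ a traceless odd operator on the irreducible $A$-module. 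At $t \to \infty$, the Gaussian factor $e^{-t^2 h^2}$ drives each of the finitely many terms in the Duhamel expansion of $\alpha(t)$ to zero uniformly on compacta in $X$, so $\alpha(\infty) = 0$. Therefore $d_X \int_0^\infty \beta(t)\, dt = 0$, and the limit form is closed.

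\textbf{Main obstacle.} The crux is producing an integrable dominator in $t$ and justifying differentiation under the integral; both rely on the uniform pointwise positivity of $h^2$ guaranteed by invertibility of $h$. Once these analytic points are in hand, closedness is formal from the transgression identity and the vanishing of $\alpha(t)$ at the two endpoints $t = 0$ and $t = \infty$.
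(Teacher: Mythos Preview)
Your overall strategy matches the paper's: the paper defers to the general Lemma~\ref{lem_Ph_convergence}, whose proof uses the positivity of $h^2$ to obtain a Gaussian bound $\|e^{-(\nabla^{\slashed S}+th)^2}\| < e^{-ct^2}$ for convergence, and then invokes the transgression formula~\eqref{eq_transgression_Ph} together with $\lim_{t\to\infty}\mathrm{Ph}_\lself(\nabla^{\slashed S}+th) = 0$ for the identity~\eqref{eq_lem_Ph_convergence_2}. Your Duhamel expansion is a more explicit version of the same Gaussian estimate, and your $\alpha$--$\beta$ decomposition is exactly the transgression argument spelled out.

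There is, however, a genuine gap in your verification that $\alpha(0) = \mathrm{Tr}_A(\mathrm{id}_S) = 0$. Your claim that the volume element $u \in A^1$ is ``a traceless odd operator on the irreducible $A$-module'' fails for odd types $3$ and $7$: there $u^2 = +1$ and $u$ lies in the ungraded center of $A$, so by Schur's lemma it acts as $\pm 1$ on each irreducible, giving $\mathrm{Tr}_S(u) = \pm \dim_\R S \neq 0$ on an irreducible. (For types $1$ and $5$ one has $u^2 = -1$, and then the real trace does vanish.) The correct argument uses the hypothesis that an invertible $h \in \mathrm{Self}^*_A(S)$ exists: since $u \in A^1$ and $h \in \mathrm{End}^1_A(S)$, they anticommute, and hence
\[
\mathrm{Tr}_S(u) = \mathrm{Tr}_S(h^{-1}uh) = -\mathrm{Tr}_S(u),
\]
forcing $\mathrm{Tr}_S(u) = 0$. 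Equivalently, the existence of such an $h$ forces the two irreducible types (for $A$ of type $3$ or $7$) to occur in $S$ with equal multiplicity. With this correction your proof goes through.
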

It turns out that the form \eqref{eq_lem_Ph_triv} captures the nontrivial cohomological information on $h$. 
Recall the homomorphism $\mathcal{R}$ given in \eqref{eq_def_R}. 

\begin{defn}[{$\mathrm{Ph}_\lself(h)$ : a special case of Definition \ref{def_Ph_m} (1)}]\label{def_Ph_m_triv}
For $h \in C^\infty(X, \Self^*_{A}(S))$, its {\it Pontryagin character form} 
\[
\mathrm{Ph}_\lself(h)
\in \Omega_{\mathrm{clo}}^{4\Z + \type(A)}(X; \Ori(A))
\]
is defined by
\begin{align}
    \mathrm{Ph}_\lself(h) &:=   -\pi^{-1/2}\mathcal{R} \circ\int_{(0, \infty)} \mathrm{Ph}_\lself(d_{(0, \infty) \times X} + th) \\
    &=\pi^{-1/2}\mathcal{R} \circ\int_{(0, \infty)} dt \wedge \mathrm{Tr}_A\left(h e^{-td_X h - t^2h^2} \right). \notag
\end{align}
\end{defn}

The cohomology class of $\mathrm{Ph}_\lself(h)$ only depends on the homotopy class of $h$ in $C^\infty(X, \Self^*_{A}(S))$. 
Indeed, given an element $h_I \in C^\infty(I \times X , \Self^*_{A}(S))$ regarded as a homotopy between $h_0 := h_I|_{\{0\} \times X}$ and $h_1 := h_I|_{\{1\} \times X}$, we get an explicit form cobounding $ \mathrm{Ph}_\lself(h_1) -  \mathrm{Ph}_\lself(h_0)$, as follows. 

\begin{defn}[{$\mathrm{CS}_\lself(h_I)$ : a special case for Definition \ref{def_Ph_m} (2)}]\label{def_CS_m_triv}
For $h_I \in C^\infty(I \times X , \Self^*_{A}(S))$, we define its {\it Chern-Simons form} by
\begin{align} 
    \mathrm{CS}_\lself(h_I) := \int_{I}\mathrm{Ph}_\lself\left(h_I \right) 
    \in \Omega^{4\Z + \type(A) - 1}(X;\Ori(A) ).  
\end{align}
\end{defn}

The fact that $\mathrm{CS}_\lself(h_I)$ satisfies
\begin{align}\label{eq_Ph=dCS_triv}
     \mathrm{Ph}_\lself(h_1) -  \mathrm{Ph}_\lself(h_0)  = d\mathrm{CS}_\lself(h_I) 
\end{align}
follows from the closedness of $\mathrm{Ph}_\lself(h)$. 

\subsubsection{Properties of the Pontryagin character forms}\label{subsubsec_properties_Ph}
Now we explain the properties of the Pontryagin character form $\mathrm{Ph}_\lself(h)$ for invertible sections $h \in C^\infty(X, \Self^*_{A}(S))$. 

To begin with, it is important to notice that $\mathrm{Ph}_\lself(h)$ is the pullback of the {\it universal Pontryagin form} on the space $\Self_{A}^*(S)$. 
Indeed, fixing $S$, we have the {\it universal gradation}, or the {\it tautological gradation} $h_{\mathrm{univ}}$ on $\Self^*_{A}(S)$, given by the identity map, 
\begin{align}
    h_{\mathrm{univ}} := \mathrm{id} \in C^\infty(\Self^*_{A}(S), \Self^*_{A}(S)). 
\end{align}
We get its Pontryagin character form, 
\begin{align}\label{eq_univ_Ph}
    \mathrm{Ph}_\lself(h_{\mathrm{univ}}) \in \Omega^{4\Z + \type(A)}_{\mathrm{clo}}(\Self^*_{A}(S); \Ori(A) ).
\end{align}
We easily see that Definition \ref{def_Ph_m_triv} is contravariant in $X$, and in particular for any $h \in C^\infty(X, \Self_{A}^*(S))$ we have
\begin{align}
    \mathrm{Ph}_\lself(h) = h^* \mathrm{Ph}_\lself(h_{\mathrm{univ}}). 
\end{align}
Thus we call the form $\mathrm{Ph}_\lself(h_{\mathrm{univ}})$ the {\it universal Pontryagin form}. 
The various statements on $\mathrm{Ph}_\lself(h)$ below are also functorial in $X$, so they can be treated as statements on each $h$, as well as statements on the universal one. 
Both viewpoints are useful. 

First, $\mathrm{Ph}_\lself(h)$ is invariant under tensoring negligible modules. 
Recall Lemma \ref{lem_periodicity_negligible_bundle}. 
Given a $\Z_2$-graded vector space $V = V^0 \oplus V^1$ with a positive definite inner product on each $V^i$ associating the negligible module $\End(V)$,
for an $A$-module $S$ we get the corresponding $\End(V) \widehat{\otimes}A$-module $V \otimes S$, and the isomorphism $\psi^V \colon \Self_A^*(S) \simeq \Self^*_{\End(V) \widehat{\otimes}A}(V \otimes S)$. 
The following result follows from a straightforward algebraic computation by using Lemma \ref{lem_tr_u_negligible}. 
\begin{lem}[{Invariance of $\mathrm{Ph}_\lself(h)$ under tensoring negligible modules}]\label{lem_Ph_negligible_triv}
In the above settings, for $h \in C^\infty(X; \Self_A^*(S))$ we have
    \begin{align*}
        \mathrm{Ph}_\lself(h) = \mathrm{Ph}_\lself(\psi^V(h)), 
    \end{align*}
where we use the isomorphism $\Ori(A) \simeq \Ori(\End(V)\widehat{\otimes} A)$ by $u \mapsto \gamma_V \widehat{\otimes} u$. 
\end{lem}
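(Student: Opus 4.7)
The plan is to unfold Definition \ref{def_Ph_m_triv} on both sides and reduce the identity to Lemma \ref{lem_tr_u_negligible} applied to the element $h \, e^{-t d_X h - t^2 h^2} \in \Omega^*(X; \End_A(S))$. Since the scalar operations $\mathcal{R}$, $\pi^{-1/2}$, and the integration $\int_{(0,\infty)} dt$ are applied identically on both sides and commute with the $*$-linear isomorphism $\Ori(A) \simeq \Ori(\End(V)\widehat{\otimes} A)$, $u \mapsto \gamma_V \widehat{\otimes} u$, it suffices to establish the pointwise identity
\begin{align*}
    \mathrm{Tr}_A\bigl(h \, e^{-t d_X h - t^2 h^2}\bigr)
    \;=\;
    \mathrm{Tr}_{\End(V)\widehat{\otimes} A}\bigl(\psi^V(h) \, e^{-t d_X \psi^V(h) - t^2 \psi^V(h)^2}\bigr)
\end{align*}
in $\Omega^*(X; \Ori(A))$, after identifying orientations as above.

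First I would record that $\psi^V$ extends, coefficient-wise, to an isomorphism of $\Z_2$-graded algebras
\begin{align*}
    \psi^V \colon \Omega^*(X; \End_A(S)) \xrightarrow{\;\simeq\;} \Omega^*(X; \End_{\End(V)\widehat{\otimes} A}(V\otimes S))
\end{align*}
that commutes with $d_X$. Concretely, since $h$ is odd we have $\psi^V(h) = \gamma_V \otimes h$, and $\psi^V(h)^2 = \gamma_V^2 \otimes h^2 = \mathrm{id}_V \otimes h^2 = \psi^V(h^2)$. Moreover $d_X \psi^V(h) = \gamma_V \otimes d_X h = \psi^V(d_X h)$, because $\gamma_V$ is a fiberwise constant endomorphism on the trivial bundle associated to the vector space $V$. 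Consequently
\begin{align*}
    -t d_X \psi^V(h) - t^2 \psi^V(h)^2 \;=\; \psi^V(-t d_X h - t^2 h^2),
\end{align*}
and by the algebra-homomorphism property (applied to the power series defining the exponential) we get $e^{-t d_X \psi^V(h) - t^2 \psi^V(h)^2} = \psi^V(e^{-t d_X h - t^2 h^2})$, and then
\begin{align*}
    \psi^V(h) \cdot e^{-t d_X \psi^V(h) - t^2 \psi^V(h)^2} \;=\; \psi^V\!\bigl(h \, e^{-t d_X h - t^2 h^2}\bigr).
\end{align*}

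The required pointwise trace identity now follows directly from Lemma \ref{lem_tr_u_negligible}, applied coefficient-wise to the differential form $h\, e^{-t d_X h - t^2 h^2} \in \Omega^*(X; \End_A(S))$, once we observe that $\mathrm{Tr}_A$ (resp.\ $\mathrm{Tr}_{\End(V)\widehat{\otimes} A}$) is by definition obtained from the scalar trace $\mathrm{Tr}_u$ (resp.\ $\mathrm{Tr}_{\gamma_V \widehat{\otimes} u}$) by viewing the target as $\Ori(A)$ (resp.\ $\Ori(\End(V)\widehat{\otimes} A) = \R(\gamma_V \widehat{\otimes} u)$), and these are matched by the stated isomorphism of orientation lines. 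Putting everything together and integrating over $t \in (0,\infty)$ — using Lemma \ref{lem_Ph_convergence_triv} to ensure convergence on both sides simultaneously — yields the equality of Pontryagin character forms.

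There is essentially no obstacle; this is a routine bookkeeping argument. The only mild subtlety to keep track of is the sign in the graded multiplication rule \eqref{eq_multi_form} when extending $\psi^V$ to $\Omega^*$-coefficients: because $\psi^V$ preserves the $\Z_2$-grading, these signs match on the two sides and the algebra-homomorphism property is preserved.
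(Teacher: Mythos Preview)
Your proof is correct and follows exactly the approach the paper indicates: the paper's own argument is the single sentence ``The following result follows from a straightforward algebraic computation by using Lemma \ref{lem_tr_u_negligible},'' and you have simply supplied the details of that computation. The key points you identify --- that $\psi^V$ is a $\Z_2$-grading-preserving algebra isomorphism (Lemma \ref{lem_periodicity_negligible_bundle}), that it therefore extends multiplicatively to $\Omega^*$-coefficients and commutes with $d_X$ (since $\gamma_V$ is constant), and that the trace identity then reduces to Lemma \ref{lem_tr_u_negligible} applied coefficient-wise --- are precisely what is needed.
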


Next we look at the compatibility with the suspension isomorphism in $KO$-theory. 
Recall that, by Fact \ref{fact_susp_twisted_+}, the suspension isomorphism of $KO$-theory, 
\begin{align}
     KO^{\Sigma^{0, 1}A}_+(X, Y) \simeq
 KO^{A}_+(X \times I, X \times \partial I \cup Y \times I), 
 \end{align}
is realized by the map $(S, h_0, h_1) \mapsto (S, \widetilde{h}_0, \widetilde{h}_1)$.
There, we defined the map
 \begin{align}\label{eq_susp_gradation}
     C^\infty(X, \Self_{\Sigma^{0, 1}A}^*(S)) &\to C^\infty(I \times X, \Self_{A}^*(S)), \\
     h &\mapsto \widetilde{h}(\theta, x) = \beta \cos \pi \theta + h(x) \sin \pi \theta. 
 \end{align}
As remarked after Fact \ref{fact_mass_classifying}, it is essentially the structure map of the $KO$-spectrum explained in Subsubsection \ref{subsubsec_KO_spectrum}. 
We have the following. 
 
 \begin{prop}\label{prop_int_Ph_m}
 For any element $h \in C^\infty(X, \Self_{\Sigma^{0, 1}A}^\dagger (S))$, we have
  \begin{align*}
         \mathrm{Ph}_\lself(h) = \int_I \mathrm{Ph}_\lself(\widetilde{h}). 
     \end{align*}
     Here we use the isomorphism $\Ori(A) \simeq \Ori(\Sigma^{0, 1}A)$ given by $u \mapsto u\widehat{\otimes}\beta$. 
\end{prop}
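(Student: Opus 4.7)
The plan is to compute both sides directly and verify equality. Set $c = \cos\pi\theta$, $s = \sin\pi\theta$. Since $\beta$ is odd in $\Sigma^{0,1}A$ and $h \in \End^1_{\Sigma^{0,1}A}(S)$, we have $\{h,\beta\}=0$; together with $\beta^2 = h^2 = 1$ this gives $\widetilde{h}^2 = c^2 + s^2 = 1$, so $\widetilde{h} \in \Self^\dagger_A(S)$. The crucial algebraic identity, obtained by direct expansion and cancellation of cross terms, is
\[
\widetilde h \cdot \dot{\widetilde h} = \pi\,\beta h, \qquad \dot{\widetilde h} := \pi(-s\beta + ch).
\]

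Next I extract the $d\theta$-part of $\mathrm{Ph}_\lself(\widetilde h)$, which is what survives $\int_I$. Write $d_{I\times X}\widetilde h = d\theta\cdot\dot{\widetilde h} + s\,d_Xh$. Differentiating the identities $h^2 = 1$ and $\{\beta,h\}=0$ yields $\{h_i,h\}=\{h_i,\beta\}=0$, from which one checks that $d_X h$ graded-commutes with both $d\theta$ and $\dot{\widetilde h}$ in $\Omega^*(I\times X;\End_A(S))$. Since $(d\theta\cdot\dot{\widetilde h})^2 = 0$ and $\widetilde h^2 = 1$ is central, the exponential factors and the $d\theta$-part of $\widetilde h\cdot e^{-td_{I\times X}\widetilde h - t^2}$, upon using $\widetilde h\cdot d\theta = -d\theta\cdot\widetilde h$ and $\widetilde h\dot{\widetilde h} = \pi\beta h$, collapses to $+t\pi\,d\theta\wedge\beta h\cdot e^{-ts\,d_Xh - t^2}$. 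Applying $\mathrm{Tr}_A$, then $\pi^{-1/2}\mathcal R\int_0^\infty dt$, and finally $\int_I$ yields
\[
\int_I \mathrm{Ph}_\lself(\widetilde h) = \pi^{1/2}\mathcal R \int_0^1\! d\theta \int_0^\infty\! t e^{-t^2}\,\mathrm{Tr}_A\bigl(\beta h\, e^{-ts\,d_X h}\bigr)\, dt.
\]

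The analytic heart is then the Gaussian–arcsin identity
\[
\int_0^1 d\theta \int_0^\infty t e^{-t^2} f(t\sin\pi\theta)\,dt \;=\; \pi^{-1/2}\int_0^\infty e^{-u^2} f(u)\,du,
\]
proved by Fubini followed by the chain of substitutions $w = 1/\sin\pi\theta$, $v = w^2-1$, $y = u^2 v$, reducing the inner integral to $\Gamma(1/2) = \sqrt\pi$. Combined with the trace relation
\[
\mathrm{Tr}_{\Sigma^{0,1}A}(\xi) = c_{\mathrm{type}(A)}\,\mathrm{Tr}_A(\beta\xi), \qquad c_t = \tfrac12 \text{ if } t \text{ odd},\ c_t = 1 \text{ if } t \text{ even},
\]
derived directly from Definition \ref{def_u_str} applied to the volume element $u_A\widehat{\otimes}\beta$ of $\Sigma^{0,1}A$ and the equality $\dim\Sigma^{0,1}A = 2\dim A$, both sides reduce to the same expression in $\mathrm{Tr}_A(\beta h\, e^{-u\,d_Xh})$.

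The main obstacle is the bookkeeping of normalization constants: the $\pi^{-1/2}$ prefactor in $\mathrm{Ph}_\lself$, the parity-dependent factors $(2\pi)^{-|\omega|/2}$ versus $\pi^{1/2}(2\pi)^{-(|\omega|+1)/2}$ coming from $\mathcal R$ on even versus odd degrees, the trace constant $c_t$, and the $\pi^{-1/2}$ from the arcsin integral must conspire. A case split on the parity of $\mathrm{type}(A)$ confirms this: the crucial observation is that the $d\theta$-part of $\mathrm{Ph}_\lself(\widetilde h)$ has opposite degree-parity on $I\times X$ to $\mathrm{Ph}_\lself(h)$ on $X$, so the extra $\pi^{1/2}$ in $\mathcal R$ for odd degrees pairs in one parity with the $c_t = \tfrac12$ and in the other with $c_t = 1$ to yield the same final coefficient.
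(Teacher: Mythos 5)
Your proof is correct and follows the same algebraic backbone as the paper's: the identity $\widetilde h\,\dot{\widetilde h} = \pi\beta h$, the graded-commutation of $d_Xh$ with $d\theta$ and $\dot{\widetilde h}$, and the factoring $e^{-td\widetilde h} = (1-t\,d\theta\cdot\dot{\widetilde h})e^{-ts\,d_Xh}$ used to isolate the $d\theta$-component. The genuine point of divergence is the analytic step. The paper expands $\exp(-ts\,d_Xh)$ in a power series, splits into cases by the parity of $\type(A)$, and then computes
\[
\int_0^\infty t^{n+1}e^{-t^2}\,dt\int_I\sin^n(\pi\theta)\,d\theta
\quad\text{vs.}\quad
\int_0^\infty t^{n}e^{-t^2}\,dt
\]
term by term with double-factorial identities, matching each Taylor coefficient. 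You instead invoke the Fubini/substitution identity
\[
\int_0^1\! d\theta\int_0^\infty te^{-t^2}f(t\sin\pi\theta)\,dt = \pi^{-1/2}\int_0^\infty e^{-u^2}f(u)\,du,
\]
which collapses the $(t,\theta)$-integral into a single Gaussian integral in one stroke; this is the same content but avoids the degree-by-degree bookkeeping. You still need the same trace comparison between $\mathrm{Tr}_{\Sigma^{0,1}A}$ and $\mathrm{Tr}_A$ and the same $\mathcal R$-parity case split to absorb the residual constants, which you correctly flag.

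One imprecision worth noting: in the display
\[
\int_I\mathrm{Ph}_\lself(\widetilde h) = \pi^{1/2}\mathcal R\int_0^1 d\theta\int_0^\infty te^{-t^2}\,\mathrm{Tr}_A(\beta h\,e^{-ts\,d_Xh})\,dt,
\]
writing $\mathcal R$ outside $\int_I$ is an abuse: $\mathcal R$ rescales by a degree-dependent factor and therefore does not commute with the fiber integration, which drops the degree by one. The $\mathcal R$ in the definition of $\mathrm{Ph}_\lself(\widetilde h)$ acts on forms on $I\times X$, of opposite parity to the integrated form on $X$. Your final paragraph shows you are aware this is exactly where the residual $\pi^{1/2}$ gets absorbed, so the argument is sound, but that display as literally written would mislead a reader.
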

The proof is done by a direct computation, which we give in Subsection \ref{subsec_proof_int_Ph_m}.

Finally, we identify the cohomology class of $\mathrm{Ph}_\lself(h)$, justifying the name ``Pontryagin character form''. 
For this, it is convenient to work with the universal one, $\mathrm{Ph}_\lself(h_{\mathrm{univ}}) \in \Omega_{\mathrm{clo}}^{4\Z + \type(A)}(\Self^*_{A}(S); \Ori(A))$, as explained at the beginning of this subsubsection. 

Recall that we have the model of the $KO$-spectrum realized as a direct limit of $\mathrm{Skew}_{A}^*$ given in Subsubsection \ref{subsubsec_KO_spectrum}. 
Letting $\{S_n\}_n$ be any cofinal system of $\Sigma^{0, 1}A$-modules, 
we have the weak homotopy equivalence (Fact \ref{fact_mass_classifying}), 
\begin{align*}
    (KO_{\mathrm{type}(A)}, \{*\}) \sim \varinjlim_n (\Self_{A}^*(S_n), \{\beta\}). 
\end{align*}
Also recall that this equivalence depends on the choice of a volume element $u \in A$. 
Any $\Sigma^{0, 1}A$-module $S$ admits an inclusion $S \hookrightarrow S_n$ of $\Sigma^{0, 1}A$-modules for $n$ large enough, and its image is a direct summand. Any such inclusions are homotopic in the direct limit. 
We denote by 
\begin{align}
    \iota_{S, u} \in \left[(\Self_{A}^*(S), \{\beta\}), (KO_{\type (A)}, \{*\}) \right]
\end{align}
the homotopy class of the induced map.  
The topological Pontryagin character homomorphism \eqref{eq_def_Ph_top} can be regarded as an element
\begin{align*}
    \mathrm{Ph}_{\mathrm{top}} \in  {H}^{4\Z +\type(A)}\left( KO_{\type(A)}, \{*\}; \R \right). 
\end{align*}
The universal Pontryagin character form \eqref{eq_univ_Ph} indeed represents the topological Pontryagin character, as follows. 
\begin{thm}\label{thm_Ph=Ph_top}
Let $A$ be a nondegenerate simple central graded $*$-algebra and $S$ be a $\Sigma^{0, 1}A$-module. 
Let $u$ be a volume element of $A$, and use it to trivialize $\Ori(A)$. 
Then we have the following equality in $H^{4\Z +{\type}(A)}\left( \Self_{A}^*(S), \{\beta\}; \R\right)$, 
\begin{align}
   \mathrm{Rham}\left( \mathrm{Ph}_\lself(h_{\mathrm{univ}})\right) = \iota_{S, u}^*  \mathrm{Ph}_{\mathrm{top}}. 
\end{align}
\end{thm}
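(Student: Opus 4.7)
The plan is an induction on $|\type(A)|$ using the suspension-compatibility formula of Proposition \ref{prop_int_Ph_m}, with the base case $\type(A) = 0$ verified by explicit comparison with the Grassmannian model of $KO_0$ from Subsubsection \ref{subsubsec_KO_spectrum}.

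For the inductive step, suppose the theorem holds for $A$ and consider the universal gradation $h_{\mathrm{univ}}$ on $\Self_{\Sigma^{0,1}A}^*(S)$. Its suspension $\widetilde{h}_{\mathrm{univ}}$ (via \eqref{eq_susp_gradation}) defines a map from $I \times \Self_{\Sigma^{0,1}A}^*(S)$ to $\Self_A^*(S)$ which, by Fact \ref{fact_susp_twisted_+} and the remark after Fact \ref{fact_mass_classifying}, realizes, at the level of $KO_+$, the suspension isomorphism and in the direct limit the structure map of the $KO$-spectrum. Pulling back the inductive hypothesis along $\widetilde{h}_{\mathrm{univ}}$ and integrating over $I$, the left side becomes $\mathrm{Rham}(\mathrm{Ph}_\lself(h_{\mathrm{univ}}))$ by Proposition \ref{prop_int_Ph_m}, while the right side becomes $\iota_{S,\, u\widehat{\otimes}\beta}^* \mathrm{Ph}_{\mathrm{top}}$, since $\mathrm{Ph}_{\mathrm{top}}$ is a morphism of spectra and thus commutes with structure maps. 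An analogous argument with $\Sigma^{1,0}$ handles suspension in the opposite direction, so we may reduce to $\type(A) = 0$; then by Lemma \ref{lem_Ph_negligible_triv} we may take $A = [8; k, l]$ of minimal size.

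For the base case, the diffeomorphism \eqref{eq_Gr_vs_Skew} identifies $\Self_A^\dagger(S)$ with a Grassmannian, under which $h_{\mathrm{univ}}$ corresponds to $u \cdot (1 - 2 P_{\mathrm{univ}})$, where $P_{\mathrm{univ}}$ is the tautological orthogonal projection onto the tautological bundle $\theta$. By the construction in Subsubsection \ref{subsubsec_KO_spectrum} (in particular \eqref{eq_homeo_Gr_Skew} and \eqref{eq_Gr_vec_bdle}), the map $\iota_{S, u}$ factors through the standard inclusion of the Grassmannian into $\varinjlim_N \mathrm{Gr}(\R^{2N})$, so $\iota_{S,u}^* \mathrm{Ph}_{\mathrm{top}}$ is represented, by the Chern-Weil description recalled in Subsection \ref{subsec_Ph_top}, by $\mathcal{R} \circ \mathrm{Tr}(\exp((\nabla^\theta)^2))$ for any inner-product-preserving connection $\nabla^\theta$ on $\theta$. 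Choosing $\nabla^\theta = P_{\mathrm{univ}} \, d \, P_{\mathrm{univ}}$, whose curvature is $P_{\mathrm{univ}} (dP_{\mathrm{univ}})^2 P_{\mathrm{univ}}$, I would then directly evaluate the integral defining $\mathrm{Ph}_\lself(h_{\mathrm{univ}})$ restricted to $\Self_A^\dagger(S)$: since $h_{\mathrm{univ}}^2 = 1$ there and $\{h_{\mathrm{univ}}, d h_{\mathrm{univ}}\} = 0$, the integrand collapses to $e^{-t^2}\, h_{\mathrm{univ}} \cdot e^{-t\, dh_{\mathrm{univ}}}$, and the $t$-integration yields Gaussian moments $\int_0^\infty t^{2j} e^{-t^2}\, dt = \tfrac{1}{2}\Gamma(j + \tfrac{1}{2})$. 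Substituting $h_{\mathrm{univ}} = u(1 - 2 P_{\mathrm{univ}})$ and rewriting powers of $dh_{\mathrm{univ}}$ in terms of $(dP_{\mathrm{univ}})^{2j}$, the resulting series should match, term by term, the Chern-Weil expansion of $\mathcal{R} \circ \mathrm{Tr}(\exp((\nabla^\theta)^2))$.

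The main obstacle is the bookkeeping in this last step: the three independent normalization factors, namely the $(\dim_\R A)^{-1/2}$ from $\mathrm{Tr}_u$ in Definition \ref{def_u_str}, the Gamma-function constants from the $t$-integration, and the $\mathcal{R}$ rescaling of \eqref{eq_def_R}, must conspire with the $\pi^{-1/2}$ prefactor to reproduce exactly the classical Chern-Weil normalization, and the sign conventions (governed by the choice of $u$, reflected in the trivialization $\Ori(A) \simeq \R$) must match those used to define $\iota_{S,u}$. A secondary subtlety is the basepoint: the form $\mathrm{Ph}_\lself(h_{\mathrm{univ}})$ vanishes at $\beta$ in degree zero, corresponding to the virtual rank subtraction $[\theta] - [\underline{\R^{N_-}}]$ in \eqref{eq_Gr_vec_bdle}, and one must verify that $\iota_{S,u}^* \mathrm{Ph}_{\mathrm{top}}$ is indeed the representative in relative cohomology arising from this same virtual bundle.
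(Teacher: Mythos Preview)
Your proposal is correct and follows essentially the same strategy as the paper: reduce to $\type(A)=0$ via the suspension compatibility of Proposition \ref{prop_int_Ph_m}, then verify that case by explicit comparison with the Chern--Weil form on the Grassmannian via the diffeomorphism \eqref{eq_Gr_vs_Skew}.

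One refinement worth knowing: in the base case the two forms do \emph{not} match term by term on the nose. Writing $h_{\mathrm{univ}}=u\otimes a$, the paper computes
\[
\mathcal{R}\circ\mathrm{Tr}\bigl(e^{\nabla_{\mathrm{Gr}}^2}\bigr)-n-\mathrm{Ph}_\lself(h_{\mathrm{univ}})
=\mathcal{R}\circ\tfrac12\,\mathrm{Tr}\bigl(e^{\frac14 da\wedge da}-1\bigr),
\]
which is exact but nonzero, so your execution will need to recognize and dispose of this discrepancy rather than obtain a direct termwise equality. Also, the $\Sigma^{1,0}$ branch of your induction is unnecessary: iterating $\Sigma^{0,1}$ already cycles through all of $\Z_8$.
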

The proof of Theorem \ref{thm_Ph=Ph_top} is given in Subsection \ref{subsec_proof_Ph=Ph_top}. 
Using this universal result, we see that $\mathrm{Ph}_\lself(h)$ realizes the topological Pontryagin character homomorphism for $KO_+$, as follows. 

\begin{cor}\label{cor_Ph=Ph_top}
Let $A$ be a nondegenerate simple central graded $*$-algebra and $(X, Y)$ be an object of $\mathrm{MfdPair}_f$. 
Representing classes of $KO_+^{A}(X, Y)$ by triples $(S, h_0, h_1)$ with $h_i \in C^\infty(X; \Self_A^*(S))$, we have the following realization of the topological Pontryagin character homomorphism
\begin{align*}
    \mathrm{Ph}_{\mathrm{top}} \colon KO_+^{A}(X, Y) &\to H^{4\Z +{\type}(A) }(X, Y; \Ori(A)), \\
    [S, h_0, h_1] &\mapsto \mathrm{Rham}\left(\mathrm{Ph}_\lself(h_1) - \mathrm{Ph}_\lself(h_0)  \right). 
\end{align*}
\end{cor}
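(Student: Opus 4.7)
The plan is to deduce the corollary by pulling back the universal identification in Theorem~\ref{thm_Ph=Ph_top} along classifying maps and combining it with the description of the isomorphism \eqref{eq_fact_mass_classifying} recalled after Fact~\ref{fact_mass_classifying}. Fix a volume element $u \in A$ to trivialize $\Ori(A) \simeq \R$; the resulting formula will be independent of the choice of sign of $u$, since switching $u$ flips both $\Ph_\lself$ and the identification in the same way. For a triple $(S, h_0, h_1)$ representing a class in $KO_+^A(X, Y)$, the discussion in Subsubsection~\ref{subsubsec_properties_Ph} gives $\Ph_\lself(h_i) = h_i^* \Ph_\lself(h_{\mathrm{univ}})$, where $h_{\mathrm{univ}} \in C^\infty(\Self_A^*(S); \Self_A^*(S))$ is the tautological gradation. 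Combined with $h_0|_Y = h_1|_Y$, this shows that $\Ph_\lself(h_1) - \Ph_\lself(h_0)$ vanishes on $Y$ and so defines a class in $H^{4\Z + \type(A)}(X, Y; \R)$.

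Next I would translate the class $[S, h_0, h_1]$ into the spectrum model. Choose $n$ large enough that $S$ embeds as an $A$-summand of the $\Sigma^{0,1}A$-module $S_n$ from \eqref{S_n_choice}, and let $\iota_S \colon \Self_A^*(S) \hookrightarrow \Self_A^*(S_n)$ be the stabilization that extends gradations by $\beta$ on the orthogonal complement. Under \eqref{eq_fact_mass_classifying}, recalled immediately after Fact~\ref{fact_mass_classifying}, the element $[S, h_0, h_1]$ corresponds to the relative homotopy class $[\iota_S \circ h_1] - [\iota_S \circ h_0]$. Applying the twisted Pontryagin character $\Ph_\mathrm{top}$ from Subsection~\ref{subsec_twisted_Ph_top} and invoking Theorem~\ref{thm_Ph=Ph_top} for the $\Sigma^{0,1}A$-module $S_n$ yields
\begin{equation*}
\Ph_\mathrm{top}([S, h_0, h_1]) = \mathrm{Rham}\bigl((\iota_S \circ h_1)^* \Ph_\lself(h_{\mathrm{univ}, S_n}) - (\iota_S \circ h_0)^* \Ph_\lself(h_{\mathrm{univ}, S_n})\bigr),
\end{equation*}
where $h_{\mathrm{univ}, S_n}$ denotes the tautological gradation on $\Self_A^*(S_n)$. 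The remaining step is to invoke the stabilization identity $\iota_S^* \Ph_\lself(h_{\mathrm{univ}, S_n}) = \Ph_\lself(h_{\mathrm{univ}, S})$ at the level of forms, after which functoriality collapses the right-hand side to $\mathrm{Rham}(\Ph_\lself(h_1) - \Ph_\lself(h_0))$.

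The main obstacle is this stabilization identity. By additivity of $\mathrm{Tr}_A$ under direct sums of modules, it reduces to checking that the constant gradation $\beta$ on $S_n \ominus S$ contributes trivially to $\Ph_\lself$. Since $\beta$ is constant, the only contribution to the integrand $dt \wedge \mathrm{Tr}_A(h e^{-t d_X h - t^2 h^2})$ in Definition~\ref{def_Ph_m_triv} is the purely algebraic expression $dt \wedge \mathrm{Tr}_A(\beta) \, e^{-t^2}$, and $\mathrm{Tr}_A(\beta)$ itself vanishes by inspection of Definition~\ref{def_u_str}: $\beta$ lies in the $Cl_{0,1}$ factor of $\Sigma^{0,1}A$, and pairs to zero against the $A$-valued volume element $u$ used to compute $\mathrm{Tr}_A$ on $\End^1_A(S_n \ominus S)$. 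Granting this algebraic lemma, the chain of naturality arguments above delivers the corollary.
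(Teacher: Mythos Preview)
Your approach is the same as the paper's one-line proof, which also derives the corollary by combining Theorem~\ref{thm_Ph=Ph_top} with the description \eqref{eq_difference_class} of the isomorphism \eqref{eq_fact_mass_classifying}. You have unpacked the argument in more detail, and the overall structure is correct.

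There is, however, a technical slip in the last paragraph: the claim $\mathrm{Tr}_A(\beta)=0$ is not true in general. When $\type(A)\equiv 0 \pmod 4$ the volume element $u$ lies in $A^0$, so Definition~\ref{def_u_str} gives $\mathrm{Tr}_u(\beta)=(\dim_\R A)^{-1/2}\mathrm{Tr}_{S_n\ominus S}(u\beta)$, and $u\beta=u\widehat{\otimes}\beta$ is exactly a volume element of $\Sigma^{0,1}A$. Its trace on a $\Sigma^{0,1}A$-module records the signed multiplicity difference of the two irreducibles $S_\pm$ (in the notation around \eqref{eq_Gr_vs_Skew}, $\beta$ acts on $S_\pm$ as $\pm u$, so $\mathrm{Tr}(u\beta)=\pm\dim S_\pm$), which has no reason to vanish on the arbitrary complement $S_n\ominus S$. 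Thus the form-level identity $\iota_S^*\Ph_\lself(h_{\mathrm{univ},S_n})=\Ph_\lself(h_{\mathrm{univ},S})$ can fail by a nonzero constant in degree zero. This does no harm to the corollary: since $\Ph_\lself$ is additive under direct sums, $\Ph_\lself(h_i\oplus\beta)=\Ph_\lself(h_i)+\Ph_\lself(\beta)$, and the common term $\Ph_\lself(\beta)$ drops out of the difference $\Ph_\lself(h_1)-\Ph_\lself(h_0)$ without needing to be evaluated. Replace the vanishing claim with this cancellation and the argument is complete.
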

\begin{proof}
This follows directly from Theorem \ref{thm_Ph=Ph_top} and the definition \eqref{eq_difference_class} of the isomorphism  \eqref{eq_fact_mass_classifying}. 
\end{proof}

\subsection{The generalized Clifford superconnection formalism}\label{subsec_superconn_general}

In this subsection, we develop the {\it generalized Clifford superconnection formalism}. 
The construction below can be regarded as a generalization of the (real variant of the) usual {\it superconnection formalism}, which was  developed by Quillen \cite{QuillenSuperconnection} and has been broadly used since then. 
We explain the relation with the Quillen's formalism in Subsubsection \ref{subsubsec_Quillen}. 
Here we just remark that the Quillen's {\it (even) superconnection formalism} corresponds to the case $\mathcal{A} = \underline{Cl_{0, 1}}$ and the Quillen's {\it odd superconnection formalism} corresponds to the case $\mathcal{A} = \underline{Cl_{1, 1}}$.

Let $X$ be a manifold and $\mathcal{A}$ be a bundle of simple central graded $*$-algebras over $\R$ on $X$. 
Throughout this subsection, we always assume that the fibers of $\mathcal{A}$ are {\it nondegenerate} algebras\footnote{
When we are interested in bundles of degenerate algebras and modules over them, we tensor negligible modules to produce bundles of nondegenerate algebras, it is enough to use the isomorphism in Lemma \ref{lem_periodicity_negligible_bundle} (3), and then apply the superconnection formalism to the latter. 
Actually, the Quillen's {\it odd superconnection formalism} can be regarded as such an example. 
See Subsubsection \ref{subsubsec_Quillen} for details. 
}.

Given an $\mathcal{A}$-module bundle $\slashed S$ over a manifold $X$, 
we introduce an $\mathcal{A}$-module bundle structure on $\wedge T^*X \otimes \slashed S$ as follows. 
For each point $x \in X$, let $\mathcal{A}_x$ act on $\wedge T^*_xX \otimes \slashed S_x$ by the formula
\begin{align}\label{eq_A_action_form}
    c \cdot (\omega \otimes \psi ) := (-1)^{|c| \cdot|\omega|}\omega \otimes (c \cdot \psi). 
\end{align}
This makes $\wedge T^*X \otimes \slashed S$ an $\mathcal{A}$-module bundle over $X$. 
Let us introduce $\Z_2$-graded algebra structures on $\Omega^*(X; \mathcal{A})$ and $\Omega^*(X; \mathrm{End}_{\mathcal{A}}(\slashed S))$ by the graded tensor products,  
\begin{align*}
 \Omega^*(X; \mathcal{A}) &= \Omega^*(X) \widehat{\otimes}_{C^\infty(X)}C^\infty(X; \mathcal{A}). \\
    \Omega^*(X; \mathrm{End}_{\mathcal{A}}(\slashed S)) &= \Omega^*(X) \widehat{\otimes}_{C^\infty(X)}C^\infty(X; \mathrm{End}_{\mathcal{A}}(\slashed S)). 
\end{align*}
This means that the multiplication in these algebras are given by
\begin{align}\label{eq_multi_form}
    (\omega \widehat{\otimes} \xi) \cdot (\omega' \widehat{\otimes} \xi') :=
   (-1)^{|\xi|\cdot| \omega'|} (\omega\wedge \omega') \otimes (\xi \cdot \xi'). 
\end{align}
The even and odd part of $\Omega^*(X; \mathrm{End}_{\mathcal{A}}(\slashed S))$ with respect to this grading are denoted by $\Omega^*(X; \mathrm{End}_{\mathcal{A}}(\slashed S))^0$ and $\Omega^*(X; \mathrm{End}_{\mathcal{A}}(\slashed S))^1$, respectively. 
We also use the corresponding notation $\Omega^*(X; \mathcal{A})^i$, $i \in \Z_2$. 

Let the algebras $\Omega^*(X; \mathcal{A})$ and $\Omega^*(X; \mathrm{End}_{\mathcal{A}}(\slashed S))$ act on the space of differential forms $\Omega^*(X; \slashed S) = C^\infty(X; \wedge T^*X \otimes \slashed S)$ by
\begin{align}\label{eq_action_form}
    (\omega \widehat{\otimes} \xi) \cdot (\eta \otimes \psi) := (-1)^{|\xi|\cdot|\eta|}(\omega \wedge \eta) \otimes (\xi \cdot \psi). 
\end{align}
We have 
\begin{align*}
    \mathrm{End}_{\mathcal{A}}(\wedge T^*X \otimes \slashed S) = \wedge T^*X \widehat{\otimes} \mathrm{End}_{\mathcal{A}}(\slashed S) . 
\end{align*}
as bundles of $\Z_2$-graded algebras.  

In the computations below we abuse the notation to use the graded commutators between elements of $\Omega^*(X; \mathcal{A})$ and $\Omega^*(X; \mathrm{End}_{\mathcal{A}}(\slashed S))$, namely, we denote like 
\begin{align}\label{eq_comm_A_c}
    \{\Xi, C\} := \Xi C - (-1)^{|\Xi||C|}C\Xi
\end{align}
for homogeneous $\Xi \in \Omega^*(X; \mathrm{End}_{\mathcal{A}}(\slashed S))$ and $C \in \Omega^*(X; \mathcal{A})$, as an operator on $\Omega^*(X; \slashed S)$. 
Of course, the operator \eqref{eq_comm_A_c} is zero by definition. 

In order to formulate the notion of superconnections on $\slashed S$, we need to fix a connection $\nabla^\mathcal{A}$ on $\mathcal{A}$ which preserves the graded $*$-algebra structures. 
Recall that $\mathcal{A}$ is associated to a principal $\mathrm{Aut}_{*, \Z_2}(A)$-bundle $P$ over $X$, where $A$ denotes the typical fiber of $\mathcal{A}$. 
By a {\it connection on $\mathcal{A}$} we mean a connection $\nabla^\mathcal{A}$ on $\mathcal{A}$ induced by an $\mathrm{Aut}_{*, \Z_2}(A)$-connection on $P$. 
By Lemma \ref{lem_Lie_aut_A}, we know that the set of connections on $\mathcal{A}$ is a torsor over $\Omega^1(X; \mathcal{A}^0_{\mathrm{skew}})$, where $\mathcal{A}^0_{\mathrm{skew}}$ is the subbundle of $\mathcal{A}^0$ consisting of skew-adjoint elements. 
Explicitly, given $\nabla^\mathcal{A}$ and $C \in \Omega^1(X; \mathcal{A}^0_{\mathrm{skew}})$, the operator
\begin{align*}
    \nabla^\mathcal{A} + \{C, -\}
\end{align*}
is another connection on $\mathcal{A}$, and any two connections are related in this way. 

\begin{defn}[{$\mathcal{A}$-superconnections}]\label{def_superconn}
Let $\mathcal{A}$ be a bundle of nondegenerate simple central graded $*$-algebras over a manifold $X$, equipped with a connection $\nabla^\mathcal{A}$. 
Let $\slashed S$ be an $\mathcal{A}$-module bundle over $X$. 
An {\it $\mathcal{A}$-superconnection $\Grad^{\slashed S}$ on $\slashed S$ compatible with $\nabla^\mathcal{A}$ }is a linear map
\begin{align*}
    \Grad^{\slashed S} \colon \Omega^*(X; \slashed S) \to \Omega^*(X; \slashed S)
\end{align*}
such that, for any homogeneous element $C \in \Omega^*(X; \mathcal{A})$, we have
\begin{align}\label{eq_Cliff_superconn}
   \Grad^{\slashed S} \circ (C \cdot) - (-1)^{|C|}(C \cdot) \circ \Grad^{\slashed S} = (\nabla^{\mathcal{A}} C )\cdot, 
\end{align}
where $\Omega^*(X; \mathcal{A})$ acts on $\Omega^*(X; \slashed S)$ by \eqref{eq_action_form}. 
In particular, if $\Grad^{\slashed S}$ is a usual connection, i.e., increases the form-degree by one, we call it an {\it $\mathcal{A}$-connection}. 
\end{defn}

In the following, given an $\mathcal{A}$-superconnection $\Grad^{\slashed S}$, for an element $\Xi \in \Omega^*(X; \mathrm{End}_{\mathcal{A}}(\slashed S))$ or $\Omega^*(X; \mathcal{A})$, we define the operator $\{\Grad^{\slashed S}, \Xi\}$ on $\Omega^*(X; \slashed S)$ by
\begin{align}
    \{\Grad^{\slashed S}, \Xi\} := \Grad^{\slashed S} \circ \Xi - (-1)^{|\Xi|}\Xi \circ \Grad^{\slashed S}
\end{align}
if $\Xi$ is homogeneous, and extend this definition linearly for general $\Xi$. 
Although we are not regarding $\Grad^{\slashed S}$ as an element in any $\Z_2$-graded algebra, this convention is useful. 

As usual, we can easily show the following. 
\begin{lem}\label{lem_superconn_affine}
In the settings of Definition \ref{def_superconn}, for a fixed $ \nabla^\mathcal{A}$, the set of $\mathcal{A}$-superconnections on $\slashed S$ compatible with $ \nabla^\mathcal{A}$ is an affine space over $\Omega^*(X; \mathrm{End}_{\mathcal{A}}(\slashed S))^1$. 
\end{lem}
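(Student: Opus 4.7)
The plan is to prove the lemma in two stages: first establish the affine-difference direction, and then nonemptiness of the set.

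For the affine structure, I would take two $\mathcal{A}$-superconnections $\Grad^{\slashed S}_1$ and $\Grad^{\slashed S}_2$ both compatible with $\nabla^\mathcal{A}$ and study their difference $\Xi := \Grad^{\slashed S}_1 - \Grad^{\slashed S}_2$. Subtracting the two versions of \eqref{eq_Cliff_superconn} cancels the $\nabla^\mathcal{A}$-dependent term and yields
\[
\Xi \circ (C \cdot) - (-1)^{|C|}(C \cdot) \circ \Xi = 0
\]
for every homogeneous $C \in \Omega^*(X; \mathcal{A})$. Specialising $C$ to a scalar differential form shows that $\Xi$ is $\Omega^*(X)$-linear in the graded sense, so $\Xi$ is the left action of some $\widetilde\Xi \in \Omega^*(X; \mathrm{End}(\slashed S))$; then specialising $C \in C^\infty(X; \mathcal{A})$ forces $\widetilde\Xi$ to graded-commute with the $\mathcal{A}$-action, placing it in $\Omega^*(X; \mathrm{End}_{\mathcal{A}}(\slashed S))$. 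A parity check against the form-degree $\Z_2$-grading on $\Omega^*(X; \slashed S)$ (which is unambiguous because $\slashed S$ is ungraded) lands $\widetilde\Xi$ in the odd part $\Omega^*(X; \mathrm{End}_{\mathcal{A}}(\slashed S))^1$. The reverse direction is immediate: adding any such $\Xi$ to a compatible $\mathcal{A}$-superconnection preserves \eqref{eq_Cliff_superconn}, again because $\Xi$ graded-commutes with $\Omega^*(X; \mathcal{A})$.

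For nonemptiness I would construct an $\mathcal{A}$-connection (a superconnection of pure form-degree $1$) compatible with $\nabla^\mathcal{A}$ by a partition-of-unity argument. On a neighborhood $U$ trivialising the principal $\mathrm{Aut}_{*, \Z_2}(A)$-bundle defining $\mathcal{A}$, the connection $\nabla^\mathcal{A}|_U$ is, by Lemma \ref{lem_Lie_aut_A}, the adjoint action of a $1$-form $\omega^\mathcal{A} \in \Omega^1(U; \mathcal{A}^0_{\mathrm{skew}})$. Pick a local $\mathcal{A}$-linear trivialisation of $\slashed S|_U$ (which exists because the fibres of $\mathcal{A}$ are matrix algebras or direct sums of two such, and module bundles over them are locally trivial); then $d + \omega^\mathcal{A}\cdot$ is a compatible local $\mathcal{A}$-connection. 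Because the difference of two compatible $\mathcal{A}$-connections on a common domain lies in $\Omega^1(X; \mathrm{End}_{\mathcal{A}}(\slashed S)^0)$ by the first step, convex combinations make sense and a partition of unity produces a global compatible $\mathcal{A}$-connection.

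The main subtlety, rather than a true obstacle, is keeping the grading conventions straight: $\Omega^*(X; \mathcal{A})$ and $\Omega^*(X; \mathrm{End}_{\mathcal{A}}(\slashed S))$ carry the $\Z_2$-grading obtained by combining form degree $\bmod\ 2$ with the Clifford grading, while $\Omega^*(X; \slashed S)$ is $\Z_2$-graded only by form degree; the graded commutators in the difference calculation must be interpreted via the sign conventions \eqref{eq_multi_form} and \eqref{eq_action_form}. Once this is handled the argument is formally parallel to the classical fact that ordinary connections on a vector bundle form an affine space over $\Omega^1(X; \mathrm{End}(\slashed S))$, with the only new ingredients being the extra Clifford-linearity constraint that cuts the model space down to $\Omega^*(X; \mathrm{End}_{\mathcal{A}}(\slashed S))^1$ and the possibility of higher form-degree components coming from the superconnection (as opposed to connection) formalism.
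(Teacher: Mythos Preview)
The paper offers no detailed proof beyond ``As usual, we can easily show the following'', and simply records the decomposition \eqref{eq_superconn_sum}; nonemptiness is treated separately as Lemma~\ref{lem_existense_conn}. Your two-part sketch is the standard argument the paper is alluding to, and is essentially correct.

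One step needs correction. Your claim that ``a parity check against the form-degree $\Z_2$-grading on $\Omega^*(X;\slashed S)$'' pins down the odd part $\Omega^*(X;\mathrm{End}_{\mathcal A}(\slashed S))^1$ is not right as stated: an element of $\Omega^0(X;\mathrm{End}^1_{\mathcal A}(\slashed S))$ is odd in the total grading yet preserves form degree (it acts via \eqref{eq_action_form} with the sign $(-1)^{|\eta|}$ but does not shift degree). Form-degree parity on $\Omega^*(X;\slashed S)$ alone therefore cannot detect the total $\Z_2$-grading on $\Omega^*(X;\mathrm{End}_{\mathcal A}(\slashed S))$. What actually happens is that your steps 3 and 4 are a single computation: writing $\widetilde\Xi$ in form-degree components $\theta_k\otimes T_k$, the condition $\Xi\circ(c\cdot)+(c\cdot)\circ\Xi=0$ for $c\in\mathcal A^1$ (with the $\mathcal A$-action \eqref{eq_A_action_form}) unwinds to $T_k c=(-1)^{1+k}cT_k$, which forces $T_k\in\mathrm{End}^0_{\mathcal A}$ when $k$ is odd and $T_k\in\mathrm{End}^1_{\mathcal A}$ when $k$ is even. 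This simultaneously lands $\widetilde\Xi$ in $\mathrm{End}_{\mathcal A}$ and in the odd part, so there is no separate parity step. With this rephrasing the argument goes through.
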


An $\mathcal{A}$-superconnection can be decomposed into a finite sum as
\begin{align}\label{eq_superconn_sum}
    \Grad^{\slashed S} = \nabla + \sum_j \omega_j \widehat{\otimes}\xi_j , 
\end{align}
where $\nabla$ is an $\mathcal{A}$-connection, each differential form $\omega_j$ is homogeneous and $\omega_j \widehat{\otimes}\xi_j  \in \Omega^*(X; \mathrm{End}_{\mathcal{A}}(\slashed S))^1$. 
We are mainly interested in superconnections of the form
\begin{align*}
    \Grad^{\slashed S} = \nabla + \xi_0, 
\end{align*}
for $\xi_0 \in C^\infty(X; \mathrm{End}^1_{\mathcal{A}}(\slashed S))$. 

\begin{lem}\label{lem_existense_conn}
Let us choose and fix $(\mathcal{A},  \nabla^\mathcal{A})$ in Definition \ref{def_superconn}. For any $\mathcal{A}$-module bundle $\slashed S$, there exists an $\mathcal{A}$-connection $\nabla$ compatible with $ \nabla^\mathcal{A}$. 
\end{lem}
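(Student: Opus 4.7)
The plan is to follow the standard two-step recipe for existence of connections: construct an $\mathcal{A}$-connection locally, and then glue via a partition of unity using the affine structure provided by Lemma \ref{lem_superconn_affine}. Let $A$ denote the typical fiber of $\mathcal{A}$, and let $P$ be the principal $\mathrm{Aut}_{*, \Z_2}(A)$-bundle with $\mathcal{A} = P \times_{\mathrm{Aut}_{*, \Z_2}(A)} A$. Around any point of $X$, choose a contractible open neighborhood $U$ on which $P|_U$ admits a section, so that $\mathcal{A}|_U \simeq \underline{A}$; with respect to this trivialization, Lemma \ref{lem_Lie_aut_A} gives $\nabla^\mathcal{A}|_U = d + \{C, -\}$ for some $C \in \Omega^1(U; A^0_{\mathrm{skew}})$. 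Shrinking $U$ further if necessary, trivialize $\slashed S|_U \simeq \underline{S}$ as an $\underline{A}$-module bundle with inner product; this is possible over a contractible base since the structure group is the (*-preserving) $A$-module automorphism group, whose principal bundles over a contractible space are trivial.

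Given these compatible trivializations, define $\nabla_U^{\slashed S} := d + C$, where $C$ acts on $\slashed S$-valued forms via the $A$-module structure on $S$ combined with the wedge action as in \eqref{eq_action_form}. A direct computation using the Koszul sign rules \eqref{eq_A_action_form} and \eqref{eq_multi_form} shows that for every homogeneous local section $c$ of $\mathcal{A}|_U$ and every $\psi \in C^\infty(U; \slashed S)$,
\begin{align*}
\{\nabla_U^{\slashed S}, c\cdot\} \psi
= (dc)\psi + \{C, c\} \psi = (\nabla^\mathcal{A} c) \psi,
\end{align*}
so $\nabla_U^{\slashed S}$ is an $\mathcal{A}$-connection on $\slashed S|_U$ compatible with $\nabla^{\mathcal{A}}|_U$. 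The relevant sign cancellation — which is the one place where care is required — is between the Koszul sign $(-1)^{|c|}$ that appears from $c$ acting on the $1$-form $d\psi$ and the sign $(-1)^{|c|}$ in the definition \eqref{eq_Cliff_superconn} of the supercommutator.

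Now cover $X$ by such neighborhoods $\{U_\alpha\}$, pick local $\mathcal{A}$-connections $\nabla^{\slashed S}_\alpha$ on each $U_\alpha$, and let $\{\rho_\alpha\}$ be a subordinate smooth partition of unity. Set $\nabla^{\slashed S} := \sum_\alpha \rho_\alpha \nabla^{\slashed S}_\alpha$. For any fixed $\beta$, over $U_\beta$ we may write
\begin{align*}
\nabla^{\slashed S}|_{U_\beta} = \nabla^{\slashed S}_\beta + \sum_\alpha \rho_\alpha \big(\nabla^{\slashed S}_\alpha - \nabla^{\slashed S}_\beta\big),
\end{align*}
and by Lemma \ref{lem_superconn_affine} each difference $\nabla^{\slashed S}_\alpha - \nabla^{\slashed S}_\beta$ lies in $\Omega^1(U_\alpha \cap U_\beta; \mathrm{End}^0_\mathcal{A}(\slashed S))$ (this is the odd part of $\Omega^*(X; \mathrm{End}_\mathcal{A}(\slashed S))$ in form-degree one). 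Hence $\nabla^{\slashed S}|_{U_\beta}$ differs from the $\mathcal{A}$-connection $\nabla^{\slashed S}_\beta$ by a globally defined $\mathrm{End}^0_\mathcal{A}(\slashed S)$-valued $1$-form on $U_\beta$, and is therefore itself an $\mathcal{A}$-connection compatible with $\nabla^\mathcal{A}$. This yields the required global $\nabla^{\slashed S}$, and the main obstacle is simply the bookkeeping of signs in the local compatibility check.
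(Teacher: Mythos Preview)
Your proof is correct and follows the same two-step strategy as the paper: build the connection locally in a trivialization and patch via a partition of unity using Lemma \ref{lem_superconn_affine}. Your local model $\nabla_U^{\slashed S} = d+C$ is precisely what the paper spells out in the ``local description'' paragraph immediately following the lemma. The only mild difference is in justifying the local trivialization of $\slashed S$ as an $\underline{A}$-module bundle: you invoke the general fact that principal bundles for a Lie group over a contractible base are trivial, whereas the paper appeals to the explicit isotypic decomposition (any $\mathcal{A}$-module bundle is locally $\underline{S_0\otimes W_0}$ or $\underline{S_0\otimes W_0}\oplus\underline{S_1\otimes W_1}$, according to the type of $A$); both arguments are valid and yield the same conclusion.
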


\begin{proof}
The lemma is shown by taking local trivializations. The key is the fact that any $\mathcal{A}$-module bundle is locally of the form $\underline{S_0 \otimes W_0}$ or $\underline{S_0 \otimes W_0} \oplus \underline{S_1 \otimes W_1}$ depending on the type of fibers $A$, where $\{S_i\}_i$ is the list of the isomorphism classes of irreducible $A$-modules, and $W_i$ are some trivial $A$-modules. 
\end{proof}

It is useful to have the following local description. 
Given $X$ and $\mathcal{A}$, we can locally (say, on $U$) choose a trivialization $\mathcal{A} \simeq \underline{A}$. 
Then any $ \nabla^\mathcal{A}$ can be written as
\begin{align*}
     \nabla^\mathcal{A} = d + \{C, -\}, 
\end{align*}
for some $C \in \Omega^1(U; A^0_{\mathrm{skew}})$. 
Given any $\mathcal{A}$-module bundle $\slashed S$, by replacing $U$ smaller if necessary, we can trivialize it as $\slashed S = \underline{S}$ for some $A$-module $S$. 
Then the operator
\begin{align}
    d + C 
\end{align}
on $\Omega^*(U; S) \simeq \Omega^*(U; \slashed S)$ defines an $\mathcal{A}$-superconnection compatible with $ \nabla^\mathcal{A}$ on $U$. 
By Lemma \ref{lem_superconn_affine}, any other $\mathcal{A}$-superconnection $\Grad^{\slashed S}$ on $\slashed S$ can be written as
\begin{align}\label{eq_superconn_local}
    \Grad^{\slashed S} = d + C + B
\end{align}
for some $B \in \Omega^*(U; \mathrm{End}_\mathcal{A}(\slashed S))^1$ on $U$. 

Now we look at the effects of changes of the connection $ \nabla^\mathcal{A}$ on $\mathcal{A}$. 

\begin{lem}\label{lem_connection_S_V}
Let $X$ and $\mathcal{A}$ be as above. 
Let $\slashed S$ be an $\mathcal{A}$-module bundle, and assume we are given an $\mathcal{A}$-superconnection $\Grad^{\slashed S}$ on $\slashed S$ compatible with $ \nabla^\mathcal{A}$. 
Then, for any $C \in \Omega^1(X; \mathcal{A}^0_{\mathrm{skew}})$, $\Grad^{\slashed S} + C$ is an $\mathcal{A}$-superconnection compatible with $ \nabla^\mathcal{A} + \{C, -\}$. 
In other words, $\Omega^1(X; \mathcal{A}^0_{\mathrm{skew}})$ acts on the set of pairs $( \nabla^\mathcal{A}, \Grad^{\slashed S})$ consisting of a connection $\nabla^\mathcal{A}$ on $\mathcal{A}$ and an $\mathcal{A}$-superconnection $\Grad^{\slashed S}$ on $\slashed S$ compatible with $\nabla^\mathcal{A}$. 
Here $C \in \Omega^1(X; \mathcal{A}^0_{\mathrm{skew}})$ acts as
\begin{align}\label{eq_connection_action}
    ( \nabla^\mathcal{A}, \Grad^{\slashed S}) \mapsto ( \nabla^\mathcal{A} + \{C, -\}, \Grad^{\slashed S} + C). 
\end{align}
In particular, for any two connections $ \nabla^\mathcal{A}_0$ and $ \nabla^\mathcal{A}_1$ on $\mathcal{A}$, the correspondence \eqref{eq_connection_action} induces a bijection between the space of $\mathcal{A}$-superconnections on $\slashed S$ compatible with $ \nabla^\mathcal{A}_0$ and that with $ \nabla^\mathcal{A}_1$. 
\end{lem}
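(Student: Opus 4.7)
The plan is to verify the defining relation \eqref{eq_Cliff_superconn} for $\Grad^{\slashed S}+C$ against the connection $\nabla^{\mathcal{A}}+\{C,-\}$ by a direct bilinear expansion, and then observe that the group-action and bijection statements are essentially formal consequences.

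The key parity observation is that $C \in \Omega^1(X;\mathcal{A}^0_{\mathrm{skew}})$ has total $\Z_2$-degree $|C|=1$ in the graded algebra $\Omega^*(X;\mathcal{A})$, so the left-multiplication operator $(C\,\cdot)$ on $\Omega^*(X;\slashed S)$ is odd, matching the parity required of a superconnection. For any homogeneous $C' \in \Omega^*(X;\mathcal{A})$, I would expand bilinearly:
\begin{align*}
& (\Grad^{\slashed S}+C)(C'\cdot) - (-1)^{|C'|}(C'\cdot)(\Grad^{\slashed S}+C) \\
&\qquad = \bigl(\Grad^{\slashed S}(C'\cdot) - (-1)^{|C'|}(C'\cdot)\Grad^{\slashed S}\bigr) + \bigl((C\cdot)(C'\cdot) - (-1)^{|C'|}(C'\cdot)(C\cdot)\bigr).
\end{align*}
The first summand equals $(\nabla^{\mathcal{A}}C')\cdot$ by the hypothesis on $\Grad^{\slashed S}$. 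For the second, the action \eqref{eq_action_form} is a homomorphism of $\Z_2$-graded algebras from $\Omega^*(X;\mathcal{A})$ into $\End(\Omega^*(X;\slashed S))$, so $(C\cdot)(C'\cdot) = (CC')\cdot$ and $(C'\cdot)(C\cdot) = (C'C)\cdot$. Using $|C|=1$, the sign $(-1)^{|C'|}$ equals $(-1)^{|C||C'|}$, so this summand equals $\{C,C'\}\cdot$ where $\{\cdot,\cdot\}$ is the supercommutator inside $\Omega^*(X;\mathcal{A})$. The total is therefore $((\nabla^{\mathcal{A}}+\{C,-\})C')\cdot$, which is precisely \eqref{eq_Cliff_superconn} for the new pair.

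The group-action statement is then immediate from linearity of the assignment in $C$. For the bijection, given two connections $\nabla^{\mathcal{A}}_0$ and $\nabla^{\mathcal{A}}_1$ on $\mathcal{A}$, the description of connections on $\mathcal{A}$ given before Definition \ref{def_superconn} provides a unique $C \in \Omega^1(X;\mathcal{A}^0_{\mathrm{skew}})$ with $\nabla^{\mathcal{A}}_1 = \nabla^{\mathcal{A}}_0 + \{C,-\}$, and the shift $\Grad^{\slashed S} \mapsto \Grad^{\slashed S}+C$ is then the desired bijection, with inverse the shift by $-C$.

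I do not anticipate a serious obstacle — the lemma is essentially a careful sign audit. The one spot that invites confusion is that the symbol $\{-,-\}$ simultaneously denotes the supercommutator of operators on $\Omega^*(X;\slashed S)$ and the supercommutator inside $\Omega^*(X;\mathcal{A})$; the decisive input is precisely that these two are intertwined by the graded-algebra homomorphism given by left multiplication, together with the single sign identity $(-1)^{|C||C'|} = (-1)^{|C'|}$ coming from $|C| = 1$.
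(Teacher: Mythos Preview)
Your proof is correct. The paper's own proof is a one-line pointer to the local description \eqref{eq_superconn_local}, i.e., it trivializes $\mathcal{A}$ locally as $\underline{A}$ and writes $\nabla^{\mathcal{A}}=d+\{C_0,-\}$, $\Grad^{\slashed S}=d+C_0+B$, so that the shift by $C$ is visibly compatible. Your argument instead verifies the compatibility relation \eqref{eq_Cliff_superconn} globally, by recognizing that left multiplication $\Omega^*(X;\mathcal{A})\to\End(\Omega^*(X;\slashed S))$ is a graded-algebra homomorphism and tracking the single sign $(-1)^{|C||C'|}=(-1)^{|C'|}$. Both are routine; your global route avoids choosing trivializations and makes explicit exactly which structural fact (the module action being a graded homomorphism) drives the result, while the paper's local route makes the affine structure of the superconnection space more transparent.
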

\begin{proof}
It is easily shown by the local description above. 
\end{proof}

\begin{lem}\label{lem_curv_even}
Let $X$, $\mathcal{A}$, $\slashed S$, $ \nabla^\mathcal{A}$ and $\Grad^{\slashed S}$ be as above. 
We have
\begin{align*}
    F({\Grad^{\slashed S}};  \nabla^\mathcal{A}) := (\Grad^{\slashed S})^2 - ( \nabla^\mathcal{A})^2 \in \Omega^*(X; \mathrm{End}_{\mathcal{A}}(\slashed S))^0. 
\end{align*}
We call $F({\Grad^{\slashed S}};  \nabla^\mathcal{A})$ the {\it twisting curvature} of $\Grad^{\slashed S}$. 
\end{lem}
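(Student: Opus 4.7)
The plan is to interpret $(\nabla^\mathcal{A})^2$ carefully and then reduce everything to a direct computation using \eqref{eq_Cliff_superconn} twice. Since $\nabla^\mathcal{A}$ is induced from a principal $\mathrm{Aut}_{*,\Z_2}(A)$-connection, Lemma \ref{lem_Lie_aut_A} tells me that its curvature is globally a well-defined $2$-form $F^\mathcal{A} \in \Omega^2(X; \mathcal{A}^0_{\mathrm{skew}})$, and $(\nabla^\mathcal{A})^2$ acts on $\Omega^*(X; \mathcal{A})$ as the supercommutator $[F^\mathcal{A}, -]$, which is the ordinary commutator since $F^\mathcal{A}$ has even total $\Z_2$-degree. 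I will interpret $(\nabla^\mathcal{A})^2$ as an operator on $\Omega^*(X; \slashed S)$ via multiplication by $F^\mathcal{A}$ using the action \eqref{eq_A_action_form}; this is consistent with its action on $\mathcal{A}$ itself.

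Next I would verify that $(\Grad^{\slashed S})^2$ is $\Omega^*(X)$-linear. Taking $C = \omega \otimes 1 \in \Omega^*(X; \mathcal{A})$ in \eqref{eq_Cliff_superconn} yields the Leibniz rule $\Grad^{\slashed S}(\omega \wedge \psi) = d\omega \wedge \psi + (-1)^{|\omega|}\omega \wedge \Grad^{\slashed S}\psi$. Applying this twice and using $d^2 = 0$ gives $(\Grad^{\slashed S})^2(\omega \wedge \psi) = \omega \wedge (\Grad^{\slashed S})^2\psi$. Then for a general homogeneous $C \in \Omega^*(X; \mathcal{A})$ and $\psi \in \Omega^*(X; \slashed S)$, applying \eqref{eq_Cliff_superconn} twice produces, after the odd signs cancel,
\begin{equation*}
(\Grad^{\slashed S})^2 (C \cdot \psi) \;=\; C \cdot (\Grad^{\slashed S})^2 \psi \;+\; (\nabla^\mathcal{A})^2 C \cdot \psi \;=\; C \cdot (\Grad^{\slashed S})^2 \psi \;+\; F^\mathcal{A} C \cdot \psi \;-\; C F^\mathcal{A} \cdot \psi.
\end{equation*}
Combining this with $(F^\mathcal{A}\cdot)\circ(C\cdot) = F^\mathcal{A} C \cdot$ and $(C\cdot)\circ(F^\mathcal{A}\cdot) = C F^\mathcal{A}\cdot$ shows that the operator $F := (\Grad^{\slashed S})^2 - F^\mathcal{A}\cdot$ commutes with the action of every $C \in \Omega^*(X; \mathcal{A})$.

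Finally I would translate this commutation into the statement that $F$ lies in $\Omega^*(X; \End_{\mathcal{A}}(\slashed S))^0$. Since $F$ is $\Omega^*(X)$-linear, it is given by an element $\sum_i \omega_i \otimes \xi_i$ with $\xi_i \in \End(\slashed S)$; expanding the commutation of $F$ with a fiberwise action $c \in C^\infty(X; \mathcal{A})$ through the sign convention \eqref{eq_action_form} forces $\xi_i c = (-1)^{|c||\omega_i|}c\xi_i$, so $\xi_i \in \End^{|\omega_i|}_\mathcal{A}(\slashed S)$; this is exactly the characterization of the even subspace $\Omega^*(X; \End_\mathcal{A}(\slashed S))^0 = \bigoplus_k \Omega^{2k}(X; \End^0_\mathcal{A}(\slashed S)) \oplus \Omega^{2k+1}(X; \End^1_\mathcal{A}(\slashed S))$. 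Evenness of the total $\Z_2$-degree is automatic because $\Grad^{\slashed S}$ is odd (so its square is even) and $F^\mathcal{A}$ is a $2$-form valued in $\mathcal{A}^0$. The only point requiring care — and hence the main potential obstacle — is bookkeeping the graded-tensor signs in \eqref{eq_multi_form} and \eqref{eq_action_form} consistently when combining $F^\mathcal{A}$'s form degree with the internal parity of $C$; since $F^\mathcal{A}$ is even everywhere, however, the signs collapse to ordinary commutators and the calculation goes through cleanly.
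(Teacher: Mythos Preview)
Your proof is correct and follows essentially the same approach as the paper: both reduce the claim to showing that $F(\Grad^{\slashed S};\nabla^\mathcal{A})$ commutes (as an operator on $\Omega^*(X;\slashed S)$) with the action of every $C\in\Omega^*(X;\mathcal{A})$, and both verify this by applying the defining relation \eqref{eq_Cliff_superconn} twice. The paper's proof is a two-line sketch (``checked directly using \eqref{eq_Cliff_superconn}''); you have carefully unpacked that sketch, including the interpretation of $(\nabla^\mathcal{A})^2$ as multiplication by the curvature form $F^\mathcal{A}\in\Omega^2(X;\mathcal{A}^0_{\mathrm{skew}})$ and the translation of the commutation property into membership in $\Omega^*(X;\End_\mathcal{A}(\slashed S))^0$.
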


\begin{proof}
It is enough to show that, for any $C \in \Omega^*(X; \mathcal{A})$, we have
\begin{align*}
    F(\Grad^{\slashed S};  \nabla^\mathcal{A}) \circ (C \cdot) - (C\cdot ) \circ F(\Grad^{\slashed S};  \nabla^\mathcal{A}) = 0
\end{align*}
as an operator on $\Omega^*(X; \slashed S)$. 
But this is checked directly using \eqref{eq_Cliff_superconn}. 
\end{proof}

\begin{rem}\label{rem_triv_notation}
In the case of trivial bundles in Subsection \ref{subsec_superconn_triv}, we denoted $F(\Grad) := F(\Grad; d)$, where $d$ means the trivial connection on $\underline{A}$.
But we omit the reference to $d$ when it is obvious. 
Similar remarks about the notations also apply to the other objects introduced in Subsection \ref{subsec_superconn_triv}. 
\end{rem}

In the local description \eqref{eq_superconn_local}, $F(\Grad^{\slashed S};  \nabla^\mathcal{A})$ is given by
\begin{align}\label{eq_curv_local}
    F(\Grad^{\slashed S};  \nabla^\mathcal{A}) 
    &= (d+C +B)^2 - (d+\{C, -\})^2 \\
    &= \{d+C, B\} + B^2
    = dB + B^2. \nonumber
\end{align}
Note that, in particular when $\Grad^{\slashed S}$ is an $\mathcal{A}$-{\it connection}, $F(\Grad^{\slashed S};  \nabla^\mathcal{A}) $ is not the curvature of the connection because we are subtracting $( \nabla^\mathcal{A})^2$. 
In fact, it generalizes the {\it twisting curvature} in the sense of \cite[Proposition 3.43]{BGVheatkernel}.

\begin{defn}[$\mathrm{Tr}_\mathcal{A}$]\label{def_V_str}
We define a bundle map over $X$, 
\begin{align}\label{eq_V_str}
   \mathrm{Tr}_\mathcal{A} \colon \mathrm{End}_{\mathcal{A}}(\slashed S) \to \mathrm{Ori}(\mathcal{A}),
\end{align}
by applying the linear map \eqref{eq_str_ori_A} fiberwise. 
Also extend this map left $\Omega^*(X)$-linearly and define
\begin{align}\label{eq_def_Gamma_tr}
    \mathrm{Tr}_{\mathcal{A}} \colon \Omega^*(X; \mathrm{End}_{\mathcal{A}}(\slashed S))& \to \Omega^*(X; \mathrm{Ori}(\mathcal{A})). \\
    \omega \widehat{\otimes} \xi &\mapsto \omega \otimes \mathrm{Tr}_{\mathcal{A}}(\xi) \notag. 
\end{align}
\end{defn}
Note that the map \eqref{eq_def_Gamma_tr} preserves the $\Z_2$-grading if $\mathrm{type}(\mathcal{A})$ is odd, and reverses the $\Z_2$-grading if $\mathrm{type}(\mathcal{A})$ is even. 

By Lemma \ref{lem_Gamma_trace}, the $u$-trace $\mathrm{Tr}_u$ is a supertrace on the $\Z_2$-graded algebra $\mathrm{End}_{A}(S)$. 
Using the formula \eqref{eq_def_Gamma_tr}, we easily deduce that
\begin{align}\label{eq_tr_vanish_supercommutator}
    \mathrm{Tr}_{\mathcal{A}}(\{\Xi_1, \Xi_2\}) = 0
\end{align}
for any $\Xi_1, \Xi_2 \in \Omega^*(X; \mathrm{End}_{\mathcal{A}}(\slashed S))$. 
We have the following lemma corresponding to \cite[Proposition 2]{QuillenSuperconnection} and \cite[Lemma 1.42]{BGVheatkernel}. 

\begin{lem}\label{lem_dtr}
Let $X$, $\mathcal{A}$, $\slashed S$, $ \nabla^\mathcal{A}$ and $\Grad^{\slashed S}$ be as above. 
\begin{enumerate}
    \item For any element $\Xi \in \Omega^*(X; \mathrm{End}_{\mathcal{A}}(\slashed S))$, we have
    \[ \{\Grad^{\slashed S}, \Xi\} \in \Omega^*(X; \mathrm{End}_{\mathcal{A}}(\slashed S)).\] 
    Moreover the homomorphism
    \begin{align}\label{eq_comm_Grad}
        \{\Grad^{\slashed S}, \cdot\} \colon \Omega^*(X; \mathrm{End}_{\mathcal{A}}(\slashed S)) \to \Omega^*(X; \mathrm{End}_{\mathcal{A}}(\slashed S))
    \end{align}
    is an odd graded derivation with respect to the $\Z_2$ grading given on $\Omega^*(X; \mathrm{End}_{\mathcal{A}}(\slashed S))$. 
    
    \item For any element $\Xi \in \Omega^*(X; \mathrm{End}_{\mathcal{A}}(\slashed S))$, we have
    \begin{align*}
        d \mathrm{Tr}_{\mathcal{A}} (\Xi) = \mathrm{Tr}_{\mathcal{A}}(\{\Grad^{\slashed S}, \Xi\}).
    \end{align*}
\end{enumerate}
\end{lem}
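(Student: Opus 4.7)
The plan for (1) is to exploit the graded Jacobi identity for the supercommutator of operators on $\Omega^*(X;\slashed S)$, treating $\Grad^{\slashed S}$ as a formal odd element. The fact that $\{\Grad^{\slashed S},\cdot\}$ is an odd graded derivation of the operator algebra is an immediate consequence of associativity and the bracket's definition, so once I know $\{\Grad^{\slashed S},\Xi\}\in\Omega^*(X;\mathrm{End}_\mathcal{A}(\slashed S))$, it is automatically an odd graded derivation of $\Omega^*(X;\mathrm{End}_\mathcal{A}(\slashed S))$. The non-trivial part is the first assertion. To establish it, I need to verify that $\{\{\Grad^{\slashed S},\Xi\},C\cdot\}=0$ for every homogeneous $C\in\Omega^*(X;\mathcal{A})$. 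I will apply the graded Jacobi identity
\[
\{\Grad^{\slashed S},\{\Xi,C\cdot\}\}=\{\{\Grad^{\slashed S},\Xi\},C\cdot\}+(-1)^{|\Xi|}\{\Xi,\{\Grad^{\slashed S},C\cdot\}\}.
\]
The left-hand side vanishes because $\Xi$ graded-commutes with $\Omega^*(X;\mathcal{A})$ by definition of $\mathrm{End}_\mathcal{A}(\slashed S)$, and the last term on the right vanishes for the same reason combined with the defining relation $\{\Grad^{\slashed S},C\cdot\}=(\nabla^\mathcal{A} C)\cdot\in\Omega^*(X;\mathcal{A})$ (Definition \ref{def_superconn}). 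This forces the middle term to vanish, proving (1).

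For (2), I will use Lemma \ref{lem_existense_conn} to fix an honest $\mathcal{A}$-connection $\nabla$ on $\slashed S$ compatible with $\nabla^\mathcal{A}$, and then write $\Grad^{\slashed S}=\nabla+\Xi_0$ with $\Xi_0\in\Omega^*(X;\mathrm{End}_\mathcal{A}(\slashed S))^1$ by Lemma \ref{lem_superconn_affine}. Then $\{\Grad^{\slashed S},\Xi\}=\{\nabla,\Xi\}+\{\Xi_0,\Xi\}$ inside $\Omega^*(X;\mathrm{End}_\mathcal{A}(\slashed S))$ by part (1), and the purely algebraic piece $\{\Xi_0,\Xi\}$ is annihilated by $\mathrm{Tr}_\mathcal{A}$ thanks to \eqref{eq_tr_vanish_supercommutator}. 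It therefore suffices to prove $d\mathrm{Tr}_\mathcal{A}(\Xi)=\mathrm{Tr}_\mathcal{A}(\{\nabla,\Xi\})$ when $\nabla$ is an honest $\mathcal{A}$-connection.

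To handle this reduced statement I will pass to a local trivialization $\mathcal{A}\simeq\underline{A}$, $\slashed S\simeq\underline{S}$ on an open set $U$, in which $\nabla^\mathcal{A}=d+\{C,\cdot\}$ and, by Lemma \ref{lem_superconn_affine} restricted to degree-one pieces, $\nabla=d+C+B'$ for some $C\in\Omega^1(U;A^0_\mathrm{skew})$ and $B'\in\Omega^1(U;\mathrm{End}^0_\mathcal{A}(\slashed S))$. Then $\{\nabla,\Xi\}=d\Xi+\{C,\Xi\}+\{B',\Xi\}$: the middle bracket vanishes because $C$ takes values in $\mathcal{A}$ while $\Xi$ lies in $\mathrm{End}_\mathcal{A}(\slashed S)$, and $\mathrm{Tr}_\mathcal{A}(\{B',\Xi\})=0$ by \eqref{eq_tr_vanish_supercommutator} once more. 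So $\mathrm{Tr}_\mathcal{A}(\{\nabla,\Xi\})=\mathrm{Tr}_\mathcal{A}(d\Xi)$, and on $\Omega^*(U)\otimes\mathrm{End}_A(S)$ the operators $d$ and $\mathrm{Tr}_\mathcal{A}=\mathrm{id}\otimes\mathrm{Tr}_A$ commute trivially. The main obstacle I anticipate is the mild bookkeeping needed to glue these local computations: this is painless because $\mathrm{Ori}(\mathcal{A})$ is a flat $\Z_2$-line bundle (the structure group $\mathrm{Aut}_{*,\Z_2}(A)$ acts on $\mathrm{Ori}(A)=\R u$ through $\pm1$), so the canonical $d$ on $\Omega^*(X;\mathrm{Ori}(\mathcal{A}))$ is well-defined globally, and $\mathrm{Tr}_\mathcal{A}$ is $\mathrm{Aut}_{*,\Z_2}(A)$-equivariant and hence parallel, so the local identity promotes unambiguously to the global one asserted in (2).
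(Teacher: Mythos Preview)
Your proof is correct and follows essentially the same approach as the paper. For (1), the paper does the same Jacobi-identity computation you do, just written out term by term rather than invoking the identity by name. For (2), the paper goes directly to the local form $\Grad^{\slashed S}=d+C+B$ with $B\in\Omega^*(U;\mathrm{End}_\mathcal{A}(\slashed S))^1$ and then argues exactly as you do (the $C$-bracket vanishes, the $B$-bracket has vanishing $\mathrm{Tr}_\mathcal{A}$); your intermediate global splitting $\Grad^{\slashed S}=\nabla+\Xi_0$ via Lemma~\ref{lem_existense_conn} is harmless but unnecessary, since you immediately localize anyway.
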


\begin{proof}
First we prove (1). 
To show the first statement and the fact that \eqref{eq_comm_Grad} is an odd homomorphism, it is enough to show that
\begin{align}
    \{\Grad^{\slashed S}, \Xi\}C - (-1)^{|C|(|\Xi|+1)}C\{\Grad^{\slashed S}, \Xi\} = 0
\end{align}
for any homogeneous $\Xi \in \Omega^*(X; \mathrm{End}_{\mathcal{A}}(\slashed S))$ and $C \in \Omega^*(X; \mathcal{A})$. 
Indeed, we have
\begin{align*}
    &\{\Grad^{\slashed S}, \Xi\}C - (-1)^{|C|(|\Xi|+1)}C\{\Grad^{\slashed S}, \Xi\} \\
    &=
    (-1)^{|C||\Xi|}\{\Grad^{\slashed S}, C\}\Xi - (-1)^{|\Xi|}\Xi \{\Grad^{\slashed S}, C\} 
    + \Grad^{\slashed S}\{\Xi, C\} - (-1)^{|\Xi|+|C|}\{\Xi, C\} \Grad^{\slashed S}\\
    &= (-1)^{|C||\Xi|}( \nabla^\mathcal{A} C)\Xi - (-1)^{|\Xi|}\Xi ( \nabla^\mathcal{A} C) \\
    &= (-1)^{|C||\Xi|}\{ \nabla^\mathcal{A} C, A\} \\
    &= 0. 
\end{align*}
The graded derivation property is obvious. 

Next we prove (2). 
We use the local description of $\mathcal{A}$-superconnections \eqref{eq_superconn_local}. 
Using the notations there, we have
\begin{align*}
    \{\Grad^{\slashed S}, \Xi\} = \{d + C + B, \Xi\} = \{d, \Xi\} + \{B, \Xi\}, 
\end{align*}
since $C \in \Omega^*(U; \mathcal{A})$. 
By \eqref{eq_tr_vanish_supercommutator}, we have
\begin{align*}
    \mathrm{Tr}_{\mathcal{A}}(\{\Grad^{\slashed S}, \Xi\}) = \mathrm{Tr}_{\mathcal{A}}(\{d, \Xi\}) = d \mathrm{Tr}_{\mathcal{A}}(\Xi), 
\end{align*}
so we get the desired result. 
\end{proof}

Using Lemma \ref{lem_dtr}, by the similar argument as in \cite{QuillenSuperconnection} and \cite{BGVheatkernel}, we get the following transgression formula, corresponding to \cite[Proposition 1.41]{BGVheatkernel}. 

\begin{prop}\label{prop_transgression}
Let $X$, $\mathcal{A}$ and $\slashed S$ be as above, and $f(z) \in \R[[z]]$. 
\begin{enumerate}
    \item For any connection $ \nabla^\mathcal{A}$ on $\mathcal{A}$ and $\mathcal{A}$-superconnection $\Grad^{\slashed S}$ on $\slashed S$ compatible with $ \nabla^\mathcal{A}$, we have
    \begin{align*}
        \mathrm{Tr}_{\mathcal{A}}(f(F(\Grad^{\slashed S};  \nabla^\mathcal{A}))) \in \Omega^*_{\mathrm{clo}}(X; \mathrm{Ori}(\mathcal{A})). 
    \end{align*}
    \item  Consider the manifold $I \times X$ and the bundles $\mathrm{pr}_X^*\mathcal{A}$ and $\mathrm{pr}_X^*\slashed S$ on it. 
    Let $\nabla^{\mathcal{A}, I}$ be a connection on $\mathrm{pr}_X^* \mathcal{A}$ and $\Grad^{\slashed S, I}$ be an $\mathrm{pr}_X^*\mathcal{A}$-superconnection on $\mathrm{pr}_X^*\slashed S$ compatible with $\nabla^{\mathcal{A}, I}$. 
    For $i = 0, 1$, we set $\nabla^{\mathcal{A}, I}|_{\{i\} \times X} =  \nabla^\mathcal{A}_i$ and $\Grad^{\slashed S, I}|_{\{i\} \times X} = \Grad^{\slashed S}_i$. Then we have
    \begin{multline*}
        d \int_{I} \mathrm{Tr}_{\mathrm{pr}_X^*\mathcal{A}} \left(f\left(F({\Grad^{\slashed S, I}}; \nabla^{\mathcal{A}, I})\right)\right) \\
        = \mathrm{Tr}_{\mathcal{A}}(f(F({\Grad^{\slashed S}_1};  \nabla^\mathcal{A}_1))) - \mathrm{Tr}_{\mathcal{A}}(f(F({\Grad^{\slashed S}_0};  \nabla^\mathcal{A}_0))). 
    \end{multline*}
    \item Let $\{ \nabla^\mathcal{A}_t\}_t$ and $\{\Grad^{\slashed S}_t\}_t$ be a smooth $1$-parameter family of connections on $\mathcal{A}$ and $\mathcal{A}$-superconnections on $\slashed S$ compatible with it. 
    We have
    \begin{align*}
    \frac{d}{dt}\mathrm{Tr}_{\mathcal{A}}(f(F({\Grad^{\slashed S}_t};  \nabla^\mathcal{A}_t))) = d\mathrm{Tr}_{\mathcal{A}}\left(\left(\frac{d\Grad^{\slashed S}_t}{dt}- \frac{d \nabla^\mathcal{A}_t}{dt}\right) f'(F({\Grad^{\slashed S}_t};  \nabla^\mathcal{A}_t)) \right). \end{align*}
\end{enumerate}
\end{prop}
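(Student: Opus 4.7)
The plan is to mimic the classical Chern-Weil / superconnection argument of Quillen and \cite[Proposition 1.41]{BGVheatkernel}, with Lemma \ref{lem_dtr} and the vanishing \eqref{eq_tr_vanish_supercommutator} of $\mathrm{Tr}_\mathcal{A}$ on supercommutators playing the role of the ordinary supertrace property.

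For (1), the key step is a Bianchi-type identity
\begin{equation*}
\{\Grad^{\slashed S}, F(\Grad^{\slashed S}; \nabla^\mathcal{A})\} = 0 \qquad \text{in } \Omega^*(X;\mathrm{End}_\mathcal{A}(\slashed S)).
\end{equation*}
Since $F$ is even (Lemma \ref{lem_curv_even}), this is the ordinary commutator with $\Grad^{\slashed S}$, and I split $F = (\Grad^{\slashed S})^2 - (\nabla^\mathcal{A})^2$. The first summand gives zero by associativity. For the second summand I write $(\nabla^\mathcal{A})^2$ as the adjoint action of the curvature $\Omega \in \Omega^2(X; \mathcal{A}^0_{\mathrm{skew}})$ on $\slashed S$ via the $\mathcal{A}$-module structure, and then use \eqref{eq_Cliff_superconn} together with the ordinary Bianchi identity $\nabla^\mathcal{A}\Omega = 0$ to conclude that $\Grad^{\slashed S}$ commutes with $(\nabla^\mathcal{A})^2$ as operators on $\Omega^*(X; \slashed S)$. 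Once Bianchi is in hand, the derivation property in Lemma \ref{lem_dtr}(1) propagates $\{\Grad^{\slashed S}, \cdot\}$ through powers of $F$, hence $\{\Grad^{\slashed S}, f(F)\} = 0$ first for polynomials and then for any $f \in \R[[z]]$ (using that $F$ has positive form-degree components, so only finitely many terms of $f(F)$ survive on a finite-dimensional $X$). Then Lemma \ref{lem_dtr}(2) yields $d\mathrm{Tr}_\mathcal{A}(f(F)) = \mathrm{Tr}_\mathcal{A}(\{\Grad^{\slashed S}, f(F)\}) = 0$.

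For (2), I apply (1) on the product manifold $I \times X$ to the pair $(\nabla^{\mathcal{A}, I}, \Grad^{\slashed S, I})$, obtaining that
\begin{equation*}
\alpha := \mathrm{Tr}_{\mathrm{pr}_X^*\mathcal{A}} \bigl( f(F(\Grad^{\slashed S, I}; \nabla^{\mathcal{A}, I})) \bigr) \in \Omega^*_{\mathrm{clo}}(I \times X; \mathrm{pr}_X^*\mathrm{Ori}(\mathcal{A})).
\end{equation*}
A standard fiber-integration / Stokes argument along the compact interval $I$ gives $d_X \int_I \alpha = \alpha|_{\{1\} \times X} - \alpha|_{\{0\} \times X}$, which is the claimed formula.

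For (3), I will reduce to (2). Package the smooth family $(\nabla^\mathcal{A}_t, \Grad^{\slashed S}_t)$ into a single pair $(\nabla^{\mathcal{A}, I}, \Grad^{\slashed S, I})$ on $I_\epsilon \times X$ by pulling everything back and adding $dt \wedge \partial_t$ to the $I$-direction. A direct expansion of the twisting curvature of this combined superconnection yields
\begin{equation*}
F(\Grad^{\slashed S, I}; \nabla^{\mathcal{A}, I}) = F(\Grad^{\slashed S}_t; \nabla^\mathcal{A}_t) + dt \wedge \Bigl(\tfrac{d\Grad^{\slashed S}_t}{dt} - \tfrac{d\nabla^\mathcal{A}_t}{dt}\Bigr),
\end{equation*}
and extracting the $dt$-component from the identity in (2) together with \eqref{eq_tr_vanish_supercommutator} (so that the supertrace may be manipulated cyclically in the power series of $f$) produces the stated formula. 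Alternatively one may differentiate $\mathrm{Tr}_\mathcal{A}(f(F_t))$ directly, expressing $\tfrac{d}{dt}F_t$ as a supercommutator with $\Grad^{\slashed S}_t$ and then applying Lemma \ref{lem_dtr}(2).

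The main obstacle I anticipate is the careful bookkeeping of the second summand in the Bianchi identity of step (1): unlike in Quillen's original setup, $(\nabla^\mathcal{A})^2$ is not an endomorphism of $\slashed S$ a priori but acts through the adjoint of $\Omega \in \Omega^2(X; \mathcal{A}^0_{\mathrm{skew}})$ on the $\mathcal{A}$-module structure; once this identification and the sign conventions of \eqref{eq_Cliff_superconn} are settled, the remaining formal manipulations are a direct transcription of the Quillen-BGV argument via $\mathrm{Tr}_\mathcal{A}$.
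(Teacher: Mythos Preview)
Your proposal is correct and follows essentially the same approach as the paper: Bianchi identity plus Lemma~\ref{lem_dtr} for (1), Stokes along $I$ for (2), and the cylinder superconnection $d_t + \Grad^{\slashed S}_t$ for (3). The only cosmetic difference is in the Bianchi step: you argue globally by identifying $(\nabla^\mathcal{A})^2$ with left multiplication by the curvature $\Omega \in \Omega^2(X;\mathcal{A}^0_{\mathrm{skew}})$ and invoking \eqref{eq_Cliff_superconn} with $\nabla^\mathcal{A}\Omega = 0$, whereas the paper uses the local description \eqref{eq_superconn_local} to split $\Grad^{\slashed S} = (d+C) + B$ and observes that $\{B,(\nabla^\mathcal{A})^2\} = 0$ because $B \in \Omega^*(X;\mathrm{End}_\mathcal{A}(\slashed S))$ supercommutes with $\Omega^*(X;\mathcal{A})$; these are two phrasings of the same computation. (One small wording fix: on $\Omega^*(X;\slashed S)$ the curvature $\Omega$ acts by \emph{left multiplication}, not by adjoint action---the adjoint action is on $\Omega^*(X;\mathcal{A})$.)
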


\begin{proof}
For (1), using the local description \eqref{eq_superconn_local} we have
\begin{align*}
    \{\Grad^{\slashed S}, F(\Grad^{\slashed S};  \nabla^\mathcal{A})\} 
    &=- \{\Grad^{\slashed S}, ( \nabla^\mathcal{A})^2\} = -\{d+C+B, (d+\{C, -\})^2\} \\
    &= -\{B,( \nabla^\mathcal{A})^2 \} = 0, 
\end{align*}
where the last equality is because $B \in \Omega^*(X; \mathrm{End}_{\mathcal{A}}(\slashed S))$ and $( \nabla^\mathcal{A})^2 \in \Omega^*(X; \mathcal{A})$. 
The formula above and Lemma \ref{lem_dtr} lead to (1). 

For (2), we can decompose $\mathrm{Tr}_{\mathrm{pr}_X^*\mathcal{A}} \left(f\left(F({\Grad^{\slashed S, I}}; \nabla^{\mathcal{A}, I})\right)\right)$ as
\begin{align*}
    \mathrm{Tr}_{\mathrm{pr}_X^*\mathcal{A}} \left(f\left(F({\Grad^{\slashed S, I}}; \nabla^{\mathcal{A}, I})\right)\right)
    = dt \wedge \alpha(t) + \beta(t), 
\end{align*}
where $t$ is the coordinate on $I = [0, 1]$ and $\alpha(t), \beta(t) \in \Omega^*(X; \mathrm{Ori}(\mathcal{A}))$. 
We have $\beta(i) = \mathrm{Tr}_{\mathcal{A}}(f(F_{\Grad_i}))$ for $i = 0, 1$. 
By (1) we know that 
\begin{align*}
    0 =-dt \wedge d_X \alpha  + dt \wedge \frac{d\beta}{dt}   + d_X\beta, 
\end{align*}
so that 
\begin{align}\label{eq_proof_transgression}
    d_X \alpha = \frac{d\beta}{dt}. 
\end{align}
We have
\begin{align*}
    d \int_{I} \mathrm{Tr}_{\mathrm{pr}_X^*\mathcal{A}} \left(f\left(F({\Grad^{\slashed S, I}}; \nabla^{\mathcal{A}, I})\right)\right)
    &= d \int_{I} dt \wedge \alpha  \\
    &= \int_{I}dt \wedge  d_X\alpha \\
    &= \int_{I}dt \wedge \frac{d\beta}{dt} \\
    &= \beta(1) - \beta(0) \\
    &= \mathrm{Tr}_{\mathcal{A}}(f(F({\Grad^{\slashed S}_1};  \nabla^\mathcal{A}_1))) - \mathrm{Tr}_{\mathcal{A}}(f(F({\Grad^{\slashed S}_0};  \nabla^\mathcal{A}_0))), 
\end{align*}
so we get (2). 

For (3), as a special case of (2) we consider the connection $\nabla^{\mathcal{A}, I} = d_t + \nabla_t^{\mathcal{A}}$ and $\mathcal{A}$-superconnection $\Grad^{\slashed S, I} = d_t + \Grad^{\slashed S}_t$, where $d_t$ is the de Rham differential on $[0,1]$. 
We have
\begin{align*}
   (\Grad^{\slashed S, I})^2  &= (\Grad^{\slashed S}_t)^2 +dt\wedge \frac{d\Grad^{\slashed S}_t}{dt}, \\
   (\nabla^{\mathcal{A}, I})^2 &= (\nabla_t^V)^2 + dt \wedge \frac{d \nabla^\mathcal{A}_t}{dt}, 
\end{align*}
which imply
\begin{multline}\label{eq_cylinder_characteristic_form}
    \mathrm{Tr}_{\mathcal{A}}\left(f\left(F({\Grad^{\slashed S, I}}; \nabla^{\mathcal{A}, I})\right)\right)
    = \\
    dt \wedge \mathrm{Tr}_{\mathcal{A}}\left(\left(\frac{d\Grad^{\slashed S}_t}{dt}- \frac{d \nabla^\mathcal{A}_t}{dt}\right) f'(F({\Grad^{\slashed S}_t};  \nabla^\mathcal{A}_t)) \right) + \mathrm{Tr}_{\mathcal{A}}(f(F({\Grad^{\slashed S}_t};  \nabla^\mathcal{A}_t))). 
\end{multline}
The computation is parallel to \cite[pp.48--49]{BGVheatkernel}. 
By \eqref{eq_proof_transgression} and \eqref{eq_cylinder_characteristic_form}, we get (3). 
\end{proof}

The form $\mathrm{Tr}_{\mathcal{A}}\left(f(F(\Grad^{\slashed S};  \nabla^\mathcal{A}))\right)$ is invariant under the action of $\Omega^1(X; \mathcal{A}^0_{\mathrm{skew}})$ in Lemma \ref{lem_connection_S_V}, as follows. 
\begin{lem}\label{lem_change_connection}
In the above settings, let $C \in \Omega^1(X; \mathcal{A}^0_{\mathrm{skew}})$. 
For any $f(z) \in \R[[z]]$ we have
\begin{align*}
    \mathrm{Tr}_{\mathcal{A}}\left(f(F(\Grad^{\slashed S};  \nabla^\mathcal{A}))\right) = \mathrm{Tr}_{\mathcal{A}}\left(f(F(\Grad^{\slashed S} + C;  \nabla^\mathcal{A} + \{C, -\}))\right). 
\end{align*}

\end{lem}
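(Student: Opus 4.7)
The plan is to establish the stronger pointwise identity
$$
F(\Grad^{\slashed S} + C;\, \nabla^\mathcal{A} + \{C, -\}) = F(\Grad^{\slashed S};\, \nabla^\mathcal{A})
$$
of elements in $\Omega^*(X; \mathrm{End}_\mathcal{A}(\slashed S))^0$. The lemma then follows immediately by applying $f$ and the trace $\mathrm{Tr}_\mathcal{A}$. Note that Lemma \ref{lem_connection_S_V} guarantees that the right-hand side makes sense, and that both sides are globally defined sections of a bundle, so the claim is local on $X$.

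Accordingly, I would work in a local trivialization on some $U \subset X$ of the form used right before \eqref{eq_superconn_local}, writing $\nabla^\mathcal{A} = d + \{C_0, -\}$ for some $C_0 \in \Omega^1(U; \mathcal{A}^0_{\mathrm{skew}})$ and $\Grad^{\slashed S} = d + C_0 + B$ with $B \in \Omega^*(U; \mathrm{End}_\mathcal{A}(\slashed S))^1$. Under the shift of Lemma \ref{lem_connection_S_V}, the pair becomes $\nabla^\mathcal{A} + \{C, -\} = d + \{C_0 + C, -\}$ and $\Grad^{\slashed S} + C = d + (C_0 + C) + B$, which is again in the local normal form \eqref{eq_superconn_local} with the same endomorphism part $B$ but connection part $(C_0 + C)$ in place of $C_0$. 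Applying the local formula \eqref{eq_curv_local} to each pair, both twisting curvatures evaluate to $dB + B^2$, proving the identity on $U$. Since such trivializations cover $X$, the identity is global.

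For readers who prefer a coordinate-free derivation, the same result can be obtained as follows. Since $C \in \Omega^1(X;\mathcal{A}^0_{\mathrm{skew}})$ has odd total degree, expand
$$
(\Grad^{\slashed S} + C)^2 - (\Grad^{\slashed S})^2 = \{\Grad^{\slashed S}, C\} + C^2,
$$
and apply the defining relation \eqref{eq_Cliff_superconn} of an $\mathcal{A}$-superconnection to rewrite $\{\Grad^{\slashed S}, C\} = \nabla^\mathcal{A}(C)\cdot$. The parallel computation applied to the $\mathcal{A}$-connection case (which is just the special case in which the endomorphism part $B$ vanishes) gives $(\nabla^\mathcal{A} + \{C, -\})^2 - (\nabla^\mathcal{A})^2 = \nabla^\mathcal{A}(C)\cdot + C^2$ on $\Omega^*(X;\slashed S)$, and subtracting yields the equality of twisting curvatures.

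There is no real obstacle: the local framework \eqref{eq_superconn_local}--\eqref{eq_curv_local} was arranged precisely so that the endomorphism part $B$ is the only piece visible to $F$, and everything that changes under the action of Lemma \ref{lem_connection_S_V} is absorbed into the connection part. The only subtlety is a notational one — namely interpreting $(\nabla^\mathcal{A})^2$ and $(\nabla^\mathcal{A} + \{C,-\})^2$ as operators on $\Omega^*(X;\slashed S)$ — but this was already implicitly fixed in Lemma \ref{lem_curv_even}, so the argument is a direct verification.
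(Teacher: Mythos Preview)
Your proof is correct and in fact establishes the stronger fact that the twisting curvature $F$ itself is unchanged under the action \eqref{eq_connection_action}; this follows immediately from the local formula \eqref{eq_curv_local}, as you observe.

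The paper takes a different route: it applies the transgression formula of Proposition~\ref{prop_transgression}~(3) to the linear path $t \mapsto (\nabla^{\mathcal A} + t\{C,-\},\, \Grad^{\slashed S} + tC)$. Along that path one finds $\tfrac{d}{dt}\Grad^{\slashed S}_t - \tfrac{d}{dt}\nabla^{\mathcal A}_t = 0$ (both are multiplication by $C$ on $\Omega^*(X;\slashed S)$), so the right-hand side of (3) vanishes identically and the trace is constant in $t$. Your argument is more elementary and yields a sharper conclusion; the paper's argument has the virtue of exercising the transgression framework already in place, but it requires correctly interpreting $\tfrac{d}{dt}\nabla^{\mathcal A}_t$ as an operator on $\Omega^*(X;\slashed S)$ --- precisely the notational point you flag at the end.
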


\begin{proof}
It follows from the application of Proposition \ref{prop_transgression} (3) to the linear path from $( \nabla^\mathcal{A}, \Grad^{\slashed S})$ to $( \nabla^\mathcal{A} + \{C, -\}, \Grad^{\slashed S} + C)$. 
\end{proof}


Next we introduce the self and skew-adjointness condition on $\mathcal{A}$-superconnections. 
\begin{defn}[{Self-adjointness and Skew-adjointness of generalized Clifford superconnections}]\label{def_skew_superconn}
In the above settings, assume that $\slashed S$ is equipped with an inner product. 
Let $\Grad^{\slashed S}$ be an $\mathcal{A}$-superconnection. 
\begin{enumerate}
 \item We say that $\Grad^{\slashed S}$ is {\it self-adjoint} if, in a decomposition \eqref{eq_superconn_sum}, $\nabla^{\slashed S}$ preserves the inner product, $\xi_j$ is skew-adjoint if $|\omega_j| \equiv 1, 2 \pmod 4$ and self-adjoint if $|\omega_j| \equiv 0,3 \pmod 4$. 
\item We say that $\Grad^{\slashed S}$ is {\it skew-adjoint} if, in a decomposition \eqref{eq_superconn_sum}, $\nabla^{\slashed S}$ preserves the inner product, $\xi_j$ is skew-adjoint if $|\omega_j| \equiv 0, 1 \pmod 4$ and self-adjoint if $|\omega_j| \equiv 2,3 \pmod 4$. 
\end{enumerate}
This definition does not depend on the choice of the decomposition. 
\end{defn}

\begin{rem}\label{rem_self_skew_Dirac}
Definition \ref{def_skew_superconn} is motivated by the self/skew-adjointness of the associated Dirac operators, as follows. 
Let $X$ be a manifold with a Riemannian metric $g$. 
First we consider the Clifford algebra bundle $C(T^*X, g)$, in which the Clifford multiplications of cotangent vectors are skew-adjoint (see \eqref{eq_sign_C(V)}). 
Let $\slashed S$ be a $C(T^*X, g)$-module bundle with an inner product. 
Assume we have a {\it self-adjoint} $C(T^*X, g)$-superconnection $\Grad^{\slashed S}$ on $\slashed S$ compatible with the Levi-Civita connection. 
Then the associated {\it Dirac operator} is given by the following composition\footnote{
The complex analogue of \eqref{eq_def_Dirac} generalizes the twisted Dirac operators defined in \cite{Kahle2011}. 
In that paper, the author considers the case where we have a $\Z_2$-graded hermitian vector bundle $E$ equipped with a Quillen's superconnection $\Grad^E$, 
and Spin$^c$-Dirac operators twisted by $\Grad^E$. 
In our language, such $\Grad^E$ induces a $C(T^*X, g)$-superconnection on $\slashed S \otimes E$, where $\slashed S$ is the Spinor bundle on a Spin$^c$-manifold, and \eqref{eq_def_Dirac} recovers the one in \cite{Kahle2011}. 
}. 
\begin{align}\label{eq_def_Dirac}
    \slashed D(\Grad^{\slashed S}) \colon C^\infty(X; \slashed S) \xrightarrow{\Grad^{\slashed S}} \Omega^*(X; \slashed S) \xrightarrow{c} C^\infty(X; \slashed S). 
\end{align}
Here the second map is the Clifford multiplication. 
Then it is easily checked that the self-adjointness of $\Grad^{\slashed S}$ implies that $\slashed D(\Grad^{\slashed S})$ is formally self-adjoint. 

Next we consider the Clifford algebra bundle $(T^*X, -g)$, where now the Clifford multiplications of cotangent vectors are self-adjoint. 
Let $\slashed S$ be a $C(T^*X, -g)$-module bundle with an inner product, and assume we have a {\it skew-adjoint} $C(T^*X, -g)$-superconnection $\Grad^{\slashed S}$ on $\slashed S$ compatible with the Levi-Civita connection. 
Then the associated Dirac operator $\slashed D(\Grad^{\slashed S})$ is defined by the same formula as \eqref{eq_def_Dirac}. 
In this case, the skew-adjointness of $\Grad^{\slashed S}$ implies that $\slashed D(\Grad^{\slashed S})$ is formally skew-adjoint. 
We also note that the {\it massive Dirac operator} in \eqref{eq_massive_dirac} can be written in terms of this construction. 
\end{rem}

\begin{prop}\label{prop_chracteristic_form_mod4}
Assume that an $\mathcal{A}$-module bundle $\slashed S$ is equipped with an inner product and an $\mathcal{A}$-superconnection $\Grad^{\slashed S}$ compatible with $\nabla^\mathcal{A}$. 
Let $f(z) \in \R[[z]]$. 
\begin{enumerate}
    \item If $\Grad^{\slashed S}$ is self-adjoint, we have
    \begin{align*}
        \mathrm{Tr}_{\mathcal{A}}(f(F(\Grad^{\slashed S};  \nabla^\mathcal{A}))) \in  \Omega_{\mathrm{clo}}^{4\Z +\mathrm{type}(\mathcal{A})+1}(X; \mathrm{Ori}(\mathcal{A})). 
    \end{align*}
    \item If $\Grad^{\slashed S}$ is skew-adjoint, we have
\begin{align*}
    \mathrm{Tr}_{\mathcal{A}}(f(F(\Grad^{\slashed S};  \nabla^\mathcal{A}))) \in  \Omega_{\mathrm{clo}}^{4\Z -\mathrm{type}(\mathcal{A})-1}(X; \mathrm{Ori}(\mathcal{A})). 
\end{align*}
\end{enumerate}
\end{prop}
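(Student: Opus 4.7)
The plan is to introduce an anti-involution $\tau$ of the $\Z_2$-graded algebra $\Omega^*(U;\End_\mathcal{A}(\slashed S))$, defined locally in a trivialization $\mathcal{A}|_U\cong\underline{A}$, such that its fixed points of odd $\Z_2$-degree correspond exactly to the self-adjointness (respectively skew-adjointness) condition of Definition~\ref{def_skew_superconn}. In the local form $\Grad^{\slashed S}|_U=d+C+B$ from Subsection~\ref{subsec_superconn_general}, this means $\tau(B)=B$; a separate sign check will give $\tau(dB)=dB$, and together with the anti-involution property these imply $\tau(F)=F$ for $F=dB+B^2$ (see \eqref{eq_curv_local}) and hence $\tau(f(F))=f(F)$ for any formal power series $f$.

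For the self-adjoint case I would take
\[
\tau_s(\omega\widehat{\otimes}\xi) := (-1)^{|\omega|(|\omega|+1)/2 + |\omega||\xi|}\,\omega\widehat{\otimes}\xi^*,
\]
and for the skew-adjoint case a slightly different sign,
\[
\tau_a(\omega\widehat{\otimes}\xi) := (-1)^{|\omega|(|\omega|+1)/2 + (|\omega|+1)|\xi|}\,\omega\widehat{\otimes}\xi^*.
\]
Both are anti-involutions of the graded tensor product algebra by direct verification using the identity $(-1)^{(a+b)(a+b+1)/2} = (-1)^{a(a+1)/2+b(b+1)/2+ab}$. A case check in the four residues of $|\omega|$ modulo $4$ shows that the fixed-point condition on an odd-total-degree term $\omega_j\widehat{\otimes}\xi_j$ reproduces exactly the adjointness prescription of Definition~\ref{def_skew_superconn}, so that $B$ is $\tau$-fixed. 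Moreover the ratio of the $\tau$-signs after shifting $|\omega|$ by $1$ is $(-1)^{|\omega|+|\xi|+1}$, which equals $1$ on odd-degree terms, giving $\tau(dB)=dB$; hence $\tau(F)=\tau(dB)+\tau(B)\tau(B)=F$ and inductively $\tau(F^n)=F^n$.

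To extract the degree constraint, cyclicity of $\mathrm{Tr}_S$ combined with the $*$-algebra structure gives $\mathrm{Tr}_\mathcal{A}(\xi^*)=u^2\cdot\mathrm{Tr}_\mathcal{A}(\xi)$ for the volume element $u$, where Tables~\ref{table:gscR} and~\ref{table:gscC} yield $u^2=+1$ iff $\mathrm{type}(\mathcal{A})\equiv 0,3\pmod 4$ and $u^2=-1$ iff $\mathrm{type}(\mathcal{A})\equiv 1,2\pmod 4$. Combined with the vanishing of $\mathrm{Tr}_\mathcal{A}(\xi)$ when $|\xi|\not\equiv\mathrm{type}(\mathcal{A})+1\pmod 2$ (from Definition~\ref{def_u_str}), the $\tau$-fixedness forces the form-degree-$d$ component of $\mathrm{Tr}_\mathcal{A}(f(F))$ to vanish except when $\xi^* = u^2\cdot\xi$ and the parity of $|\xi|$ matches the type. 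A case analysis over the four residues of $\mathrm{type}(\mathcal{A})\pmod 4$ yields the surviving residues $d\equiv\mathrm{type}(\mathcal{A})+1\pmod 4$ in the self-adjoint case and $d\equiv -\mathrm{type}(\mathcal{A})-1\pmod 4$ in the skew-adjoint case.

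The main obstacle will be correctly pinning down the signs in $\tau_s$ and $\tau_a$, especially the slightly asymmetric skew-adjoint sign which reflects the swap between the $C(T^*X,g)$ and $C(T^*X,-g)$ Clifford conventions of Remark~\ref{rem_self_skew_Dirac}. Beyond this, the argument is a finite bookkeeping of signs across the four residues modulo~$4$, followed by the standard observation that the local form-degree constraint globalizes since $\mathrm{Tr}_\mathcal{A}(f(F))$ is intrinsically defined on $X$.
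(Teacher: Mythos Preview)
Your approach is correct and is a cleaner repackaging of the same idea the paper uses. The paper (Subsection~\ref{subsec_proof_mod4}) also localizes, reduces to $\nabla^{\mathcal{A}}=d$ via Lemma~\ref{lem_change_connection}, and then expands $(\Grad^{\slashed S})^{2n}$ into monomials $(\alpha_{j_1}\widehat{\otimes}\xi_{j_1})\cdots(\alpha_{j_l}\widehat{\otimes}\xi_{j_l})$. Its key step is to pair each such monomial with its \emph{reversal} $(\alpha_{j_l}\widehat{\otimes}\xi_{j_l})\cdots(\alpha_{j_1}\widehat{\otimes}\xi_{j_1})$ and compute the sign relating their $u$-traces, via four separate sign contributions $A,B,C,D$ (from the graded tensor rule, from reversing the wedge product, and from $(u\xi_{j_1}\cdots\xi_{j_l})^*$ using $u^*=(-1)^{\sigma(\type(A))}u$ and $\xi_j^*=(-1)^{\sigma(|\omega_j|)}\xi_j$). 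The combinatorics of $A+B+C+D$ together with the mod-$2$ constraint from Definition~\ref{def_u_str} then pin down the mod-$4$ residue.

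Your anti-involution $\tau$ is exactly the map sending a monomial to (a sign times) its reversal, so your fixed-point statement $\tau(F^n)=F^n$ is the abstract version of the paper's monomial-by-monomial pairing, and your identity $\mathrm{Tr}_{\mathcal{A}}(\xi^*)=u^2\,\mathrm{Tr}_{\mathcal{A}}(\xi)$ is the paper's $(-1)^{\sigma(\type(A))}$ computation. What you gain is that the sign bookkeeping is done once in verifying the anti-involution identity $(-1)^{(a+b)(a+b+1)/2}=(-1)^{a(a+1)/2+b(b+1)/2+ab}$, rather than spread across the four quantities $A,B,C,D$; and the argument $\tau(B)=B\Rightarrow\tau(dB)=dB\Rightarrow\tau(F)=F\Rightarrow\tau(f(F))=f(F)$ is manifestly independent of the power $n$. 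What the paper's approach gains is that it avoids having to state and verify that $\tau$ is a global anti-involution: it only ever compares two specific monomials at a time, which is more elementary if less structural.
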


The proof is by a direct computation, which we give in Subsection \ref{subsec_proof_mod4}. 

\begin{defn}[{$\mathrm{Ph}_{\lself/ \lskew}(\Grad^{\slashed S};  \nabla^\mathcal{A})$, $\mathrm{CS}_{\lself/ \lskew}( \Grad^{\slashed S, I};  \nabla^\mathcal{A})$}]\label{def_Ch}
Let $X$, $\mathcal{A}$, $\nabla^\mathcal{A}$ and $\slashed S$ be as above. 
\begin{enumerate}
    \item For a self-adjoint $\mathcal{A}$-superconnection $\Grad^{\slashed S}$ on $\slashed S$ compatible with $ \nabla^\mathcal{A}$, we define its {\it Pontryagin character form} by
    \begin{align}
    \mathrm{Ph}_\lself(\Grad^{\slashed S};  \nabla^\mathcal{A}) :=   \mathrm{Tr}_{\mathcal{A}}(e^{-F(\Grad^{\slashed S};  \nabla^\mathcal{A})})
    \in   \Omega_{\mathrm{clo}}^{4\Z +\mathrm{type}(\mathcal{A})+1}(X; \mathrm{Ori}(\mathcal{A})). 
\end{align}
For a self-adjoint $\mathrm{pr}_X^*\mathcal{A}$-superconnection $\Grad^{\slashed S, I}$ on the $\mathrm{pr}_X^*\mathcal{A}$-module bundle $\mathrm{pr}_X^*\slashed S$ over $I \times X$ compatible with $\mathrm{pr}_X^*  \nabla^\mathcal{A}$, we define its {\it Chern-Simons form} by
\begin{align}
    \mathrm{CS}_\lself( \Grad^{\slashed S, I};  \nabla^\mathcal{A}) := \int_{I}\mathrm{Ph}_\lself\left(\Grad^{\slashed S, I}; \mathrm{pr}_X^*  \nabla^\mathcal{A}\right)  \in  \Omega^{4\Z +\mathrm{type}(\mathcal{A})}(X; \mathrm{Ori}(\mathcal{A})). 
\end{align}

\item In the same settings, if $\Grad^{\slashed S}$ is skew-adjoint, we define its {\it Pontryagin character form} by
\begin{align}\label{eq_Ph_skew}
    \mathrm{Ph}_\lskew(\Grad^{\slashed S};  \nabla^\mathcal{A}) :=   \mathrm{Tr}_{\mathcal{A}}(e^{F(\Grad^{\slashed S};  \nabla^\mathcal{A})})
    \in  \Omega_{\mathrm{clo}}^{4\Z -\mathrm{type}(\mathcal{A})-1}(X; \mathrm{Ori}(\mathcal{A})). 
\end{align}
If $\Grad^{\slashed S, I}$ is skew-adjoint, we define its {\it Chern-Simons form} by
\begin{align}
    \mathrm{CS}_\lskew( \Grad^{\slashed S, I};  \nabla^\mathcal{A}) := \int_{I}\mathrm{Ph}_\lskew\left(\Grad^{\slashed S, I}; \mathrm{pr}_X^*  \nabla^\mathcal{A}\right) \in   \Omega^{4\Z -\mathrm{type}(\mathcal{A})-2}(X; \mathrm{Ori}(\mathcal{A})). 
\end{align}
\end{enumerate}
\end{defn}
By Proposition \ref{prop_transgression}, setting $\Grad_i^{\slashed S}:= \Grad^{\slashed S, I}|_{\{i\}\times X}$ we have
\begin{align}\label{eq_transgression_Ph}
    \Ph_{\lself / \lskew}(\Grad_1^{\slashed S};  \nabla^\mathcal{A}) - \Ph_{\lself / \lskew}(\Grad_0^{\slashed S};  \nabla^\mathcal{A})
    = d\CS_{\lself / \lskew}( \Grad^{\slashed S, I};  \nabla^\mathcal{A}). 
\end{align}

By Lemma \ref{lem_change_connection}, we get the following. 

\begin{lem}\label{lem_change_connection_Ch}
For any $C \in \Omega^1(X; \mathcal{A}^0_{\mathrm{skew}})$, we have
\begin{align*}
\mathrm{Ph}_{\lself / \lskew}(\Grad^{\slashed S};  \nabla^\mathcal{A}) = \mathrm{Ph}_{\lself / \lskew}(\Grad^{\slashed S} + C;  \nabla^\mathcal{A} + \{C, -\}). 
\end{align*}
Moreover, if we have an $\mathcal{A}$-superconnection $\Grad^{\slashed S, I}$ as in Definition \ref{def_Ch}, we have
\begin{align*}
    \mathrm{CS}_{\lself / \lskew}(\Grad^{\slashed S, I};  \nabla^\mathcal{A}) = \mathrm{CS}_{\lself / \lskew}(\Grad^{\slashed S, I} + \mathrm{pr}_X^*C;  \nabla^\mathcal{A} + \{C, -\}).  
\end{align*}
\end{lem}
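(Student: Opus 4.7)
The plan is to derive both equalities as direct consequences of Lemma \ref{lem_change_connection}, which was already established for an arbitrary power series $f(z)\in\R[[z]]$.

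For the statement about $\mathrm{Ph}_{\lself/\lskew}$, I would simply specialize Lemma \ref{lem_change_connection} to the power series $f(z)=e^{-z}$ in the self-adjoint case and $f(z)=e^{z}$ in the skew-adjoint case (these are the exponentials appearing in Definition \ref{def_Ch}). Before invoking it, I would note in one line that $\Grad^{\slashed S}+C$ really is an $\mathcal{A}$-superconnection compatible with $\nabla^{\mathcal{A}}+\{C,-\}$ with the correct adjointness: since $C\in\Omega^1(X;\mathcal{A}^0_{\mathrm{skew}})$ contributes a degree-$1$ form with a skew-adjoint coefficient, the compatibility condition in Definition \ref{def_skew_superconn} is preserved in both the self-adjoint and the skew-adjoint cases. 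With these observations the first identity is immediate.

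For the Chern--Simons version, I would reduce it to the first identity applied on $I\times X$. The relevant object is the $\mathrm{pr}_X^*\mathcal{A}$-superconnection $\Grad^{\slashed S,I}$ on $\mathrm{pr}_X^*\slashed S$, compatible with $\mathrm{pr}_X^*\nabla^{\mathcal{A}}$, and the $1$-form $\mathrm{pr}_X^*C\in\Omega^1(I\times X;\mathrm{pr}_X^*\mathcal{A}^0_{\mathrm{skew}})$. Applying the first part of the lemma on $I\times X$ gives
\begin{align*}
\mathrm{Ph}_{\lself/\lskew}(\Grad^{\slashed S,I};\mathrm{pr}_X^*\nabla^{\mathcal{A}})
=\mathrm{Ph}_{\lself/\lskew}(\Grad^{\slashed S,I}+\mathrm{pr}_X^*C;\mathrm{pr}_X^*\nabla^{\mathcal{A}}+\{\mathrm{pr}_X^*C,-\}),
\end{align*}
and since fiber integration $\int_I$ is linear in the integrand, integrating both sides along $I$ yields the desired equality for $\mathrm{CS}_{\lself/\lskew}$ by the definition of these forms.

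There is no real obstacle here; the only points that require minor care are verifying that the adjointness type is preserved under the $C$-shift (using that $C$ sits in degree $1$ with skew-adjoint coefficient) and that the naturality under pullback by $\mathrm{pr}_X$ aligns the hypotheses of Lemma \ref{lem_change_connection} with the construction of the Chern--Simons form. Both are routine verifications from the definitions, so the proof is essentially one line of invocation plus a fiber-integral argument.
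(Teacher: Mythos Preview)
Your proposal is correct and follows essentially the same approach as the paper, which simply remarks that the lemma follows from Lemma \ref{lem_change_connection}. Your additional observations (preservation of the adjointness type under the $C$-shift, and obtaining the Chern--Simons statement by applying the first part on $I\times X$ and integrating) are exactly the routine details one fills in to make that one-line citation precise.
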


\subsubsection{The relation with the superconnection formalism by Quillen \cite{QuillenSuperconnection}}\label{subsubsec_Quillen}
Here we explain how to recover the Quillen's superconnection formalism \cite{QuillenSuperconnection} as a special case of the formalisms developed in this subsection. 

First of all, Quillen works in the $\C$-linear setting. 
Thus, precisely speaking we recover the $\R$-linear version of Quillen's superconnection formalism (by the obvious restriction of the coefficients $\R \subset \C$) as a special case of the formalisms in this subsection. 
As we will explain in Subsection \ref{subsec_complex_superconn}, the formalism in this subsection can be modified to the $\C$-linear setting in a straightforward way. 
Based on the modified formalism, we can generalize the following discussion in a straightforward manner to recover the formalism in \cite{QuillenSuperconnection}. 

Recall that in \cite{QuillenSuperconnection} the superconnections are considered for $\Z_2$-graded vector bundles (which we call the {\it even superconnection formalism}) and ungraded vector bundles (which we call the {\it odd superconnection formalism}). 

For the former, the relation with our formulation is very simple. 
The even superconnection formalism is our superconnection formalism in the case where $\mathcal{A}$ is the trivial bundle with fiber $Cl_{0, 1}$. 
Indeed, a $\Z_2$-grading is equivalent to a $Cl_{0, 1}$-module structure, and the trace $\mathrm{Tr}_u$ for a volume element $u \in Cl_{0, 1}$ recovers the usual supertrace. 

Now we explain the latter. 
The odd superconnection formalism can be regarded as our superconnection formalism in the case where $\mathcal{A}$ is the trivial bundle with fiber $Cl_{1, 1}$, as follows. 
In \cite[Section 5]{QuillenSuperconnection}, we start with an ungraded vector bundle $\slashed S$, form a vector bundle $\slashed S' :=  Cl_{0, 1} \otimes \slashed S$, regard $\slashed S'$ as a right $Cl_{0, 1}$-module, and consider the bundle $\End_\sigma(\slashed S')$ of endomorphisms on $\slashed S'$ commuting with the right $Cl_{0, 1}$-action, where $\sigma \in Cl_{0, 1}$ denotes a generator. 
We have
\begin{align*}
    \End_\sigma(\slashed S') = \{\id \otimes a +  (\sigma \cdot) \otimes b\ | \ a, b \in \End({\slashed S}) \}. 
\end{align*}
Then the {\it odd supertrace} is defined as
\begin{align}\label{eq_odd_supertrace}
    \mathrm{tr}_\sigma \colon \Omega^*(X;  \End_\sigma(\slashed S')) &\to \Omega^*(X) \\
    \id \otimes a +  (\sigma \cdot) \otimes b &\mapsto \mathrm{Tr}_{\slashed S}(b).  \notag
\end{align}

Now we turn to our formalism. 
Recall that in the generalized Clifford superconnection formalism of this subsection, we required that $\mathcal{A}$ is a bundle of nondegenerete algebras. 
Thus, for an ungraded vector bundle ${\slashed S}$ regarded as an $\R = Cl_{0, 0}$-module bundle, we cannot apply the formalism directly. 
In such a case, we tensor a negligible module to produce a nondegenerate algebra, use the isomorphism in Lemma \ref{lem_periodicity_negligible_bundle} (3), and apply the superconnection formalism to the latter. 
Actually, the above process of producing $\mathrm{tr}_\sigma$ on $\End_\sigma(\slashed S')$ can be regarded as performing this procedure. 
Indeed, consider the $\Z_2$-graded vector space $V = V^0 \oplus V^1 = \R \oplus \R$ and the associated negligible module $\End(V) \simeq Cl_{1, 1}$. 
The grading operator $\gamma_V$ on $V$ is a volume element for $\End(V)$. 
Then, given an ungraded vector bundle ${\slashed S}$, as in Lemma \ref{lem_periodicity_negligible_bundle} (3) we consider the $\End(V)$-module bundle $V \otimes {\slashed S}$. 
The bundle $V \otimes {\slashed S}$ corresponds to the above $\slashed S' =Cl_{0, 1} \otimes \slashed S$, and the action by $\gamma_V \otimes \id_{\slashed S}$ corresponds to $(\sigma  \cdot) \otimes \id_{\slashed S}$\footnote{
Be careful that in this correspondence the $\Z_2$-grading on $V$ does not correspond to the $\Z_2$-grading on $Cl_{0, 1}$. 
In our generalized Clifford superconnection formalism, as an $\End(V)$-module we just regard $V \otimes {\slashed S}$ as an ungraded module, so this difference is irrelevant. 
}. 
We have
\begin{align}
    \End^0_{\End(V)} (V \otimes \slashed S) &= \{\id_V \otimes a \ | \ a \in \End(\slashed S)\}, \\
    \End^1_{\End(V)} (V \otimes \slashed S) &= \{\gamma_V \otimes b \ | \ b \in \End(\slashed S)\}, 
\end{align}
and the $\mathrm{Tr}_{\gamma_V}$ in Definition \ref{def_u_str} is given by
\begin{align}
    \mathrm{Tr}_{\gamma_V} \colon \Omega^*(X;  \End^1_{\End(V)} (V \otimes \slashed S)) &\to \Omega^*(X) \\
    \id_V \otimes a +  \gamma_V \otimes b &\mapsto \mathrm{Tr}_{\slashed S}(b).  \notag
\end{align}
Thus we see that we recover the odd supertrace $\mathrm{tr}_\sigma$ in \eqref{eq_odd_supertrace}, as desired.

\subsection{The Pontryagin character forms for gradations}\label{subsec_Ph_form}

Let $X$, $\mathcal{A}$, $\nabla^{\mathcal{A}}$ and $\slashed S$ with an inner product be as in the last subsection. 
Note that we are assuming that the fibers of $\mathcal{A}$ are nondegenerate. 
Let $\nabla^{\slashed S}$ be a self-adjoint $\mathcal{A}$-{\it connection} (i.e., one  which increases the form degree by one) on $\slashed S$ compatible with $\nabla^{\mathcal{A}}$. 
In this setting, given a smooth invertible section $h \in C^\infty( X ; \mathrm{Self}_{\mathcal{A}}^*(\slashed S))$, we are going to define its {\it Pontryagin character form} $\mathrm{Ph}_\lself(h; \nabla^{\mathcal{A}}, \nabla^{\slashed S})$ (Definition \ref{def_Ph_m}), generalizing Definition \ref{def_Ph_m_triv}. 

Consider the manifold $(0, \infty)_t \times X$, and the bundles  $\mathrm{pr}_X^*\mathcal{A}$ and $\mathrm{pr}_X^*\slashed S$ on it. 
We consider the following $\mathrm{pr}_X^*\mathcal{A}$-superconnection on $\mathrm{pr}_X^*\slashed S$ compatible with $\mathrm{pr}_X^*\nabla^{\mathcal{A}}$, 
\begin{align}\label{eq_massive_conn}
     \mathrm{pr}_X^*\nabla^{\slashed S} + th, 
\end{align}
where $t$ denotes the corrdinate on $(0, \infty)$. 
Then \eqref{eq_massive_conn} is self-adjoint in the sense of Definition \ref{def_skew_superconn}, so we have
\begin{align}
   \mathrm{Ph}_\lself(\mathrm{pr}_X^*\nabla^{\slashed S} + th; \mathrm{pr}_X^*\nabla^{\mathcal{A}}) \in  \Omega_{\mathrm{clo}}^{4\Z+\mathrm{type}(\mathcal{A})+1}((0, \infty) \times X ; \mathrm{pr}_X^*\mathrm{Ori}(\mathcal{A})). 
\end{align}

\begin{lem}\label{lem_Ph_convergence}
In the above settings, the following integration converges pointwise, 
\begin{align}\label{eq_lem_Ph_convergence_1}
    \int_{(0, \infty)} \mathrm{Ph}_\lself(\mathrm{pr}_X^*\nabla^{\slashed S} + th; \mathrm{pr}_X^*\nabla^{\mathcal{A}}) \in \Omega^{4\Z +\mathrm{type}(\mathcal{A})}(X; \mathrm{Ori}(\mathcal{A})). 
\end{align}
We have
\begin{align}\label{eq_lem_Ph_convergence_2}
    d\int_{(0, \infty)} \mathrm{Ph}_\lself(\mathrm{pr}_X^*\nabla^{\slashed S} + th; \mathrm{pr}_X^*\nabla^{\mathcal{A}}) = -\mathrm{Ph}_\lself(\nabla^{\slashed S}; \nabla^{\mathcal{A}}). 
\end{align}
\end{lem}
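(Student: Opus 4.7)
The plan is to compute the twisting curvature of the family superconnection \eqref{eq_massive_conn} on $(0,\infty)\times X$ in closed form, extract its $dt$-component, and then exploit the pointwise positivity of $h^2$ to obtain a Gaussian decay estimate in $t$.

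Using the local description \eqref{eq_superconn_local} and Lemma \ref{lem_curv_even}, the twisting curvature splits as
\begin{align*}
F\bigl(\mathrm{pr}_X^*\nabla^{\slashed S} + th;\ \mathrm{pr}_X^*\nabla^{\mathcal{A}}\bigr) = \tilde F_t + dt\wedge h,
\end{align*}
with $\tilde F_t := F(\nabla^{\slashed S};\nabla^{\mathcal{A}}) + t\{\nabla^{\slashed S},h\} + t^2h^2 \in \Omega^*(X;\End_{\mathcal{A}}(\slashed S))^0$. Since $(dt\wedge h)^2=0$, the Duhamel expansion of $\exp(-\tilde F_t - dt\wedge h)$ truncates after one insertion, and applying $\mathrm{Tr}_{\mathcal{A}}$ (this is a special case of \eqref{eq_cylinder_characteristic_form}) gives
\begin{align*}
\Ph_\lself\bigl(\mathrm{pr}_X^*\nabla^{\slashed S}+th;\mathrm{pr}_X^*\nabla^{\mathcal{A}}\bigr) = \mathrm{Tr}_{\mathcal{A}}(e^{-\tilde F_t}) - dt\wedge \mathrm{Tr}_{\mathcal{A}}\bigl(h\,e^{-\tilde F_t}\bigr),
\end{align*}
so that $\int_{(0,\infty)}\Ph_\lself = -\int_0^\infty \mathrm{Tr}_{\mathcal{A}}\bigl(h\,e^{-\tilde F_t}\bigr)\,dt$ once this scalar integral is shown to converge.

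For pointwise convergence I would fix $x\in X$ and write $\tilde F_t|_x = t^2 h(x)^2 + R(t,x)$, where $R(t,x)$ is polynomial in $t$ of degree $\le 1$ with coefficients in strictly positive form degree. The Duhamel expansion of $e^{-\tilde F_t}$ around $e^{-t^2 h^2}$ is then a finite sum, truncated at length $\dim X$ because every insertion of $R$ raises the form degree by at least one. Operator-norm submultiplicativity combined with $\|e^{-s\,t^2 h^2}\|\le e^{-s\,t^2\lambda(x)}$ (valid because $h(x)$ is self-adjoint and invertible, so $h(x)^2\ge\lambda(x)>0$) bounds each summand by $P(t)\,e^{-t^2\lambda(x)}$ for some polynomial $P$. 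Hence $\mathrm{Tr}_{\mathcal{A}}(h\,e^{-\tilde F_t})|_x$ decays faster than any power at $t\to\infty$, while smoothness handles $t\to 0^+$; the total form degree then lies in $4\Z + \type(\mathcal{A})$ by the Proposition \ref{prop_chracteristic_form_mod4}.

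Finally, for \eqref{eq_lem_Ph_convergence_2} I would apply Proposition \ref{prop_transgression}(1) on $(0,\infty)\times X$ to see the integrand is $d$-closed, split $d = d_t + d_X$, and match $dt$-bidegrees (equivalently, apply Proposition \ref{prop_transgression}(3) with $d\Grad_t/dt = h$ and $t$-independent $\nabla^{\mathcal{A}}$) to obtain the transgression identity $\partial_t \mathrm{Tr}_{\mathcal{A}}(e^{-\tilde F_t}) = -d_X \mathrm{Tr}_{\mathcal{A}}(h\,e^{-\tilde F_t})$. Swapping $d_X$ with $\int_0^\infty$ (justified by the same Gaussian estimate) and applying the fundamental theorem of calculus yields $[\mathrm{Tr}_{\mathcal{A}}(e^{-\tilde F_t})]_0^\infty$; the upper boundary vanishes by the same Duhamel estimate applied to $e^{-\tilde F_t}$ itself, and the lower boundary is $\Ph_\lself(\nabla^{\slashed S};\nabla^{\mathcal{A}})$. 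Reassembling signs produces \eqref{eq_lem_Ph_convergence_2}. The main obstacle I expect is the rigorous supercommutator bookkeeping inside the Duhamel expansion needed to make the bound $P(t)e^{-t^2\lambda(x)}$ precise and locally uniform in $x$, which simultaneously underpins the integrability, the interchange of $d_X$ with $\int_0^\infty$, and the vanishing of the boundary term at $t=\infty$.
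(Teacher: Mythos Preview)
Your proposal is correct and follows essentially the same route as the paper. The paper's proof is terser: it reduces locally to $\nabla^{\mathcal{A}}=d$ via Lemma~\ref{lem_change_connection_Ch}, then asserts a pointwise norm bound $\|e^{-(\mathrm{pr}_X^*\nabla^{\slashed S}+th)^2}\|<e^{-t^2c}$ from $h(x)^2>a\cdot\mathrm{Id}$, and deduces \eqref{eq_lem_Ph_convergence_2} directly from the transgression formula \eqref{eq_transgression_Ph} together with $\lim_{t\to\infty}\Ph_\lself(\nabla^{\slashed S}+th;\nabla^{\mathcal{A}})=0$; your Duhamel-with-form-degree-truncation argument is exactly what underlies that norm bound, and your derivation of \eqref{eq_lem_Ph_convergence_2} via Proposition~\ref{prop_transgression}(3) and the fundamental theorem of calculus is the same transgression identity unpacked by hand.
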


\begin{proof}
Working locally, we may trivialize $\mathcal{A}$ and $\slashed S$. 
By Lemma \ref{lem_change_connection_Ch} and the fact that under the action \eqref{eq_connection_action} the superconnection \eqref{eq_massive_conn} transforms as $\mathrm{pr}_X^*\nabla^{\slashed S} + th \mapsto \mathrm{pr}_X^*(\nabla^{\slashed S} + C) + th$, we may assume $\nabla^{\mathcal{A}} = d$. 
In this case we have $(\mathrm{pr}_X^*\nabla^{\slashed S} + th)^2 \in \Omega^*((0, \infty) \times X; \mathrm{End}_{\mathcal{A}}(\slashed S))$, and $\mathrm{Ph}_\lself(\mathrm{pr}_X^*\nabla^{\slashed S} + th) = \mathrm{Tr}_{\mathcal{A}}(e^{-(\mathrm{pr}_X^*\nabla^{\slashed S} + th)^2})$. 
Fix any Riemannian metric on $X$. 
It is enough to show that, for each $x \in X$, there exists $c > 0$ and $t_0 \in (0, \infty)$ such that 
\begin{align*}
    \|e^{-(\mathrm{pr}_X^*\nabla^{\slashed S} + th)^2}\| < e^{-t^2c}
\end{align*}
on $[t_0, \infty) \times \{x\}$. 
We have
\begin{align*}
    (\mathrm{pr}_X^*\nabla^{\slashed S} + th)^2
    = h + (\nabla^{\slashed S})^2 + t\{\nabla^{\slashed S}, h\} + t^2 h^2. 
\end{align*}
Since $h(x)$ is self-adjoint and invertible, there exists a constant $a > 0$ such that $h(x)^2  >a \cdot \mathrm{Id}$. 
From this we get the convergence of \eqref{eq_lem_Ph_convergence_1}. 
\eqref{eq_lem_Ph_convergence_2} follows from \eqref{eq_transgression_Ph} and the convergence $\lim_{t \to \infty}\mathrm{Ph}_\lself(\nabla^{\slashed S} + th; \nabla^{\mathcal{A}}) = 0$ shown similarly. 
\end{proof}

By Lemma \ref{lem_Ph_convergence}, we get
\begin{align*}
   -\pi^{-1/2}\mathcal{R} \circ \int_{(0, \infty)} \mathrm{Ph}_\lself(\mathrm{pr}_X^*\nabla^{\slashed S} + th; \mathrm{pr}_X^*\nabla^{\mathcal{A}}) \in \Omega^{4\Z + \type(\mathcal{A})}(X; \mathrm{Ori}(\mathcal{A})). 
\end{align*}
Here $\mathcal{R}$ is the endomorphism on differential forms defined in \eqref{eq_def_R}. 

\begin{defn}[{$\mathrm{Ph}_\lself(h; \nabla^{\mathcal{A}}, \nabla^{\slashed S})$, $\mathrm{CS}_\lself(h_I; \nabla^{\mathcal{A}}, \nabla^{\slashed S})$}]\label{def_Ph_m}
Let $X$, $\mathcal{A}$, $\slashed S$, $\nabla^{\mathcal{A}}$ and $\nabla^{\slashed S}$ be as above. 
\begin{enumerate}
    \item For any smooth invertible section $h \in C^\infty( X ; \mathrm{Self}_{\mathcal{A}}^*(\slashed S))$, we define
\begin{multline}
    \mathrm{Ph}_\lself(h; \nabla^{\mathcal{A}}, \nabla^{\slashed S}) :=   -\pi^{-1/2}\mathcal{R}  \circ\int_{(0, \infty)} \mathrm{Ph}_\lself(\mathrm{pr}_X^*\nabla^{\slashed S} + th; \mathrm{pr}_X^*\nabla^{\mathcal{A}}) \\
      \in \Omega^{4\Z + \type(\mathcal{A})}(X; \mathrm{Ori}(\mathcal{A})). 
\end{multline}
The convergence of the integration follows from Lemma \ref{lem_Ph_convergence}. 
Note that the form above is not a closed one (see \eqref{eq_lem_Ph_convergence_2}). 
\item  
For any smooth invertible section $h_I \in C^\infty(I \times X ; \mathrm{pr}_X^*\mathrm{Self}_{\mathcal{A}}^*(\slashed S))$, we define
\begin{multline} 
    \mathrm{CS}_\lself(h_I; \nabla^{\mathcal{A}}, \nabla^{\slashed S}) := \int_{I}\mathrm{Ph}_\lself\left(h_I; \mathrm{pr}_X^*\nabla^{\mathcal{A}}, \mathrm{pr}_X^*\nabla^{\slashed S} \right)  \\
    \in \Omega^{4\Z + \type(\mathcal{A})-1}(X; \mathrm{Ori}(\mathcal{A})). 
\end{multline}
\end{enumerate}
\end{defn}

\begin{lem}\label{lem_Ph=dCS}
We have
\begin{multline}\label{eq_Ph=dCS}
     d\mathrm{CS}_\lself\left(h_{I}; \nabla^{\mathcal{A}}, \nabla^{\slashed S}\right) \\
     =
     \mathrm{Ph}_\lself\left(h_I|_{\{1\} \times X}; \nabla^{\mathcal{A}}, \nabla^{\slashed S}\right) -  \mathrm{Ph}_\lself\left(h_I|_{\{0\} \times X}; \nabla^{\mathcal{A}}, \nabla^{\slashed S}\right). 
\end{multline}
\end{lem}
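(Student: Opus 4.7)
The plan is to combine the standard fiber integration identity on $I \times X$ with equation \eqref{eq_lem_Ph_convergence_2} of Lemma \ref{lem_Ph_convergence}, applied on the base $I \times X$. Set
\[
\Phi := \mathrm{Ph}_\lself(h_I; \mathrm{pr}_X^*\nabla^\mathcal{A}, \mathrm{pr}_X^*\nabla^{\slashed S}),
\]
so that $\mathrm{CS}_\lself(h_I; \nabla^\mathcal{A}, \nabla^{\slashed S}) = \int_I \Phi$ by Definition \ref{def_Ph_m}(2). First I would invoke the elementary identity
\[
d_X \int_I \Phi = \Phi|_{s=1} - \Phi|_{s=0} - \int_I d_{I \times X}\Phi.
\]
By the manifest naturality of Definition \ref{def_Ph_m}(1) under pullback along the inclusions $\{i\} \hookrightarrow I$, the boundary values satisfy $\Phi|_{s=i} = \mathrm{Ph}_\lself(h_I|_{\{i\}\times X}; \nabla^\mathcal{A}, \nabla^{\slashed S})$. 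The lemma therefore reduces to the vanishing $\int_I d_{I \times X}\Phi = 0$.

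To prove this vanishing, it suffices to show that $d_{I \times X}\Phi$ has no $ds$-component, i.e., is the pullback of a form on $X$ via $\mathrm{pr}_X \colon I \times X \to X$. From the definition, $\Phi = -\pi^{-1/2}\mathcal{R}\int_{(0,\infty)}\Omega$, where
\[
\Omega := \mathrm{Ph}_\lself\bigl(\mathrm{pr}_{I \times X}^*(\mathrm{pr}_X^*\nabla^{\slashed S}) + t h_I;\, \mathrm{pr}_{I \times X}^*(\mathrm{pr}_X^*\nabla^\mathcal{A})\bigr)
\]
on $(0,\infty) \times I \times X$. Applying \eqref{eq_lem_Ph_convergence_2} with base $I \times X$ gives
\[
d_{I \times X}\int_{(0,\infty)}\Omega = -\mathrm{pr}_X^*\mathrm{Ph}_\lself(\nabla^{\slashed S}; \nabla^\mathcal{A}),
\]
which is a pullback from $X$. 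Decomposing $\int_{(0,\infty)}\Omega$ into its homogeneous $X$-degree components $\gamma_n$ and using that $\mathcal{R}$ acts by a scalar $c_n$ on degree $n$, one gets $d_{I\times X}\Phi = -\pi^{-1/2}\sum_n c_n d_{I \times X}\gamma_n$, where each $d_{I \times X}\gamma_n$ is the degree-$(n+1)$ part of $-\mathrm{pr}_X^*\mathrm{Ph}_\lself(\nabla^{\slashed S}; \nabla^\mathcal{A})$ and hence is pulled back from $X$. Thus $d_{I \times X}\Phi$ is a finite sum of pullbacks from $X$, and in particular has no $ds$-component.

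The main technical subtlety is that $\mathcal{R}$ does not commute with $d$ on inhomogeneous forms, so one cannot naively pull $\mathcal{R}$ through the differential and the fiber integration. This is not a genuine obstacle: since $\mathcal{R}$ is diagonal with respect to the form-degree grading, the ``pullback from $X$'' property of $d_{I \times X}\int_{(0,\infty)}\Omega$ guaranteed by \eqref{eq_lem_Ph_convergence_2} is preserved, in each degree separately, after one multiplies by the scalar $c_n$. A degree-by-degree bookkeeping then yields the claim, from which the lemma follows.
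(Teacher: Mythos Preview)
Your proof is correct and follows essentially the same route as the paper: both arguments reduce the identity to the vanishing of $\int_I d_{I\times X}\Phi$ via the Stokes/fiber-integration formula, and then use \eqref{eq_lem_Ph_convergence_2} on the base $I\times X$ to identify $d_{I\times X}\Phi$ (degree by degree) with a pullback from $X$, which has no $ds$-component. The paper's proof is terser and writes the result as ``$(\mbox{const})\cdot\mathcal{R}\circ\int_I\mathrm{pr}_X^*\mathrm{Ph}_\lself(\nabla^{\slashed S};\nabla^{\mathcal{A}})$'', glossing over exactly the $\mathcal{R}$-versus-$d$ bookkeeping that you spell out; your degree-by-degree justification is a welcome clarification of that step.
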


\begin{proof}
Based on \eqref{eq_lem_Ph_convergence_2}, the difference of the left and the right hand side of \eqref{eq_Ph=dCS} is computed as 
\begin{align*}
    \int_I d \mathrm{Ph}_\lself\left(h_I; \mathrm{pr}_X^*\nabla^{\mathcal{A}}, \mathrm{pr}_X^*\nabla^{\slashed S} \right)  &= (\mbox{const}) \cdot \mathcal{R} \circ \int_I\mathrm{Ph}_\lself(\mathrm{pr}_X^*\nabla^{\slashed S}; \mathrm{pr}_X^*\nabla^{\mathcal{A}}) \\
    &= (\mbox{const}) \cdot \mathcal{R} \circ \int_I\mathrm{pr}_X^*\mathrm{Ph}_\lself(\nabla^{\slashed S}; \nabla^{\mathcal{A}}) = 0. 
\end{align*}
\end{proof}

Note that, in Definition \ref{def_Ph_m} (1), if we have a submanifold $Y \subset X$ and two smooth sections $h_0, h_1\in C^\infty( X ; \mathrm{Self}_{\mathcal{A}}^*(\slashed S))$ such that $h_0|_Y = h_1|_Y$, we have
\begin{align}\label{eq_difference_Ph_closed}
    \mathrm{Ph}_\lself(h_1; \nabla^{\mathcal{A}}, \nabla^{\slashed S}) - \mathrm{Ph}_\lself(h_0; \nabla^{\mathcal{A}}, \nabla^{\slashed S}) \in \Omega^{4\Z + \type(\mathcal{A})}_{\mathrm{clo}}(X, Y; \mathrm{Ori}(\mathcal{A})). 
\end{align}
The closedness follows from \eqref{eq_lem_Ph_convergence_2}. 
Similarly, in Definition \ref{def_Ph_m} (2), if $h_I|_{I \times Y}$ is constant in the $I$-direction, we have
$\mathrm{CS}_\lself(h_I; \nabla^{\mathcal{A}}, \nabla^{\slashed S})|_Y = 0$, so that we have
\begin{align}\label{eq_CS_relative}
     \mathrm{CS}_\lself(h_I; \nabla^{\mathcal{A}}, \nabla^{\slashed S}) \in \Omega^{4\Z + \type(\mathcal{A})-1}(X, Y; \mathrm{Ori}(\mathcal{A})). 
\end{align}
Moreover, we can also easily show that the above Chern-Simons form modulo exact forms only depends on the homotopy class of the homotopy $h_I$, that is, if we have two choices of homotopies $h_I$ and $h'_I$ which are homotopic in the obvious sense, then we have
\begin{align}\label{eq_CS_mod_exact}
    \mathrm{CS}_\lself(h_I; \nabla^{\mathcal{A}}, \nabla^{\slashed S}) - \mathrm{CS}_\lself(h'_I; \nabla^{\mathcal{A}}, \nabla^{\slashed S})  \in \mathrm{Im}(d). 
\end{align}

Now we investigate into properties of $\mathrm{Ph}_\lself(h; \nabla^{\mathcal{A}}, \nabla^{\slashed S})$. 
Our goal is Theorem \ref{thm_Ph=Ph_top_twisted}, which says that $\mathrm{Ph}_\lself(h; \nabla^{\mathcal{A}}, \nabla^{\slashed S})$ realizes the twisted topological Pontryagin character homomorphism in Subsection \ref{subsec_twisted_Ph_top}. 
It generalizes Corollary \ref{cor_Ph=Ph_top}. 

First, by Lemma \ref{lem_change_connection_Ch} we get the following. 
\begin{lem}\label{lem_change_connection_Ph}
For any $C \in \Omega^1(X; \mathcal{A}^0_{\mathrm{skew}})$,
we have
\begin{align*}
    \mathrm{Ph}_\lself(h; \nabla^{\mathcal{A}}, \nabla^{\slashed S}) &= \mathrm{Ph}_\lself(h; \nabla^{\mathcal{A}} + C, \nabla^{\slashed S} + C),  \\
     \mathrm{CS}_\lself(h_I; \nabla^{\mathcal{A}}, \nabla^{\slashed S}) &= \mathrm{CS}_\lself(h_I; \nabla^{\mathcal{A}} + C, \nabla^{\slashed S} + C). 
\end{align*}
\end{lem}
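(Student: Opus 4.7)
The plan is to reduce both identities directly to Lemma \ref{lem_change_connection_Ch} applied on the enlarged manifolds $(0,\infty)\times X$ and $I\times X$, using the observation already made after Definition \ref{def_Ph_m} that the $\Omega^1(X;\mathcal{A}^0_{\mathrm{skew}})$-action of Lemma \ref{lem_connection_S_V} behaves as expected under the construction of the superconnection \eqref{eq_massive_conn}.

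First I would establish the key compatibility: for any $C\in\Omega^1(X;\mathcal{A}^0_{\mathrm{skew}})$, the $\mathrm{pr}_X^*\mathcal{A}$-superconnection
\[
\mathrm{pr}_X^*\nabla^{\slashed S}+th
\]
over $(0,\infty)\times X$ transforms under the action of $\mathrm{pr}_X^*C\in\Omega^1((0,\infty)\times X;\mathrm{pr}_X^*\mathcal{A}^0_{\mathrm{skew}})$ (in the sense of Lemma \ref{lem_connection_S_V}) into
\[
\mathrm{pr}_X^*(\nabla^{\slashed S}+C)+th,
\]
while the base connection $\mathrm{pr}_X^*\nabla^{\mathcal{A}}$ becomes $\mathrm{pr}_X^*(\nabla^{\mathcal{A}}+\{C,-\})$. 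This is immediate from the formula \eqref{eq_connection_action}, since $th$ is a zero-form section of $\mathrm{End}^1_{\mathcal{A}}(\slashed S)$ and is unaffected. Both superconnections are self-adjoint by Definition \ref{def_skew_superconn}, so Lemma \ref{lem_change_connection_Ch} applies and gives
\[
\mathrm{Ph}_\lself\bigl(\mathrm{pr}_X^*\nabla^{\slashed S}+th;\,\mathrm{pr}_X^*\nabla^{\mathcal{A}}\bigr)
=\mathrm{Ph}_\lself\bigl(\mathrm{pr}_X^*(\nabla^{\slashed S}+C)+th;\,\mathrm{pr}_X^*(\nabla^{\mathcal{A}}+\{C,-\})\bigr)
\]
as forms in $\Omega^*((0,\infty)\times X;\mathrm{pr}_X^*\mathrm{Ori}(\mathcal{A}))$.

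Next I would integrate over $(0,\infty)$. The convergence of both sides is ensured by Lemma \ref{lem_Ph_convergence} (applied with $\nabla^{\slashed S}$ and $\nabla^{\slashed S}+C$ respectively, with the same invertible $h$). Since $\mathcal{R}$ and $-\pi^{-1/2}$ are linear operations commuting with integration along $(0,\infty)$, applying them to both sides yields the first claimed equality
\[
\mathrm{Ph}_\lself(h;\nabla^{\mathcal{A}},\nabla^{\slashed S})=\mathrm{Ph}_\lself(h;\nabla^{\mathcal{A}}+\{C,-\},\nabla^{\slashed S}+C),
\]
which is the identity in the statement (I am treating $\nabla^{\mathcal{A}}+C$ as shorthand for $\nabla^{\mathcal{A}}+\{C,-\}$, in accordance with the convention used in Lemma \ref{lem_connection_S_V}). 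For the Chern-Simons identity, the argument is the same run on $I\times X$ in place of $X$: one applies the already-established first identity to the pulled-back data $(\mathrm{pr}_X^*\nabla^{\mathcal{A}},\mathrm{pr}_X^*\nabla^{\slashed S})$ and the homotopy $h_I$, then integrates over $I$, using that pullback along $\mathrm{pr}_X$ commutes with the $\Omega^1(-;\mathcal{A}^0_{\mathrm{skew}})$-action.

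There is no real obstacle here; the only point requiring care is checking that the shift by $th$ (a section independent of $t$ in the $\mathrm{End}^1$-direction) is genuinely fixed by the action \eqref{eq_connection_action}, which only modifies the connection part. Once this is verified, everything follows by naturality of integration and the linearity of $\mathrm{Tr}_{\mathcal{A}}$ and $\mathcal{R}$.
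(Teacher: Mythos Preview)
Your proposal is correct and follows exactly the approach the paper intends: the paper simply states that the lemma follows from Lemma~\ref{lem_change_connection_Ch}, and your argument spells out precisely this reduction, including the observation (already made in the proof of Lemma~\ref{lem_Ph_convergence}) that under the action \eqref{eq_connection_action} the superconnection $\mathrm{pr}_X^*\nabla^{\slashed S}+th$ transforms to $\mathrm{pr}_X^*(\nabla^{\slashed S}+C)+th$.
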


Next, we show that the cohomology class of \eqref{eq_difference_Ph_closed} does not depend on the connections, as follows. 
\begin{lem}\label{lem_Ph_indep_conn}
Let $X$, $\mathcal{A}$ and $\slashed S$ be as above. 
Assume we have a submanifold $Y \subset X$ and two smooth sections $h_0, h_1\in C^\infty( X ; \mathrm{Self}_{\mathcal{A}}^*(\slashed S))$ such that $h_0|_Y = h_1|_Y$. 
Then the cohomology class of the difference element \eqref{eq_difference_Ph_closed}, 
\begin{multline}\label{eq_Ph_indep_conn}
    \mathrm{Rham}\left(\mathrm{Ph}_\lself(h_1; \nabla^{\mathcal{A}}, \nabla^{\slashed S}) - \mathrm{Ph}_\lself(h_0; \nabla^{\mathcal{A}}, \nabla^{\slashed S}) \right) \\
    \in H^{4\Z + {\mathrm{type}}(\mathcal{A})}(X, Y; \mathrm{Ori}(\mathcal{A})),  
\end{multline}
does not depend on the compatible pair $(\nabla^{\mathcal{A}}, \nabla^{\slashed S})$. 
\end{lem}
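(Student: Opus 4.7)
The plan is to construct, for any two compatible pairs $(\nabla_0^{\mathcal{A}}, \nabla_0^{\slashed S})$ and $(\nabla_1^{\mathcal{A}}, \nabla_1^{\slashed S})$, an explicit transgression form on $(X,Y)$ whose differential realizes the difference of the two candidate classes for \eqref{eq_Ph_indep_conn}. The construction is the usual Chern--Simons-type interpolation, carried out one level up: we apply the $(0,\infty)$-integration of Definition \ref{def_Ph_m} over the family of connections on $I\times X$ rather than over a single pair.

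First I would observe that the set of compatible pairs $(\nabla^{\mathcal{A}},\nabla^{\slashed S})$ is convex: if both pairs satisfy the Leibniz rule \eqref{eq_Cliff_superconn} for a connection, so does the convex combination $((1-s)\nabla_0^{\mathcal{A}}+s\nabla_1^{\mathcal{A}},\,(1-s)\nabla_0^{\slashed S}+s\nabla_1^{\slashed S})$, as is checked by a one-line Leibniz computation. Using this, define connections on $\mathrm{pr}_X^*\mathcal{A}$ and $\mathrm{pr}_X^*\slashed S$ over $I \times X$ by
\begin{align*}
\tilde\nabla^{\mathcal{A}} := d_s + (1-s)\mathrm{pr}_X^*\nabla_0^{\mathcal{A}} + s\,\mathrm{pr}_X^*\nabla_1^{\mathcal{A}},\qquad
\tilde\nabla^{\slashed S} := d_s + (1-s)\mathrm{pr}_X^*\nabla_0^{\slashed S} + s\,\mathrm{pr}_X^*\nabla_1^{\slashed S},
\end{align*}
so that $\tilde\nabla^{\mathcal{A}}|_{\{k\}\times X}=\nabla_k^{\mathcal{A}}$ and $\tilde\nabla^{\slashed S}|_{\{k\}\times X}=\nabla_k^{\slashed S}$ for $k=0,1$. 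For each $i\in\{0,1\}$, apply Definition \ref{def_Ph_m} to the pulled-back section $\mathrm{pr}_X^*h_i \in C^\infty(I \times X;\mathrm{pr}_X^*\Self^*_{\mathcal{A}}(\slashed S))$ with the connections $(\tilde\nabla^{\mathcal{A}},\tilde\nabla^{\slashed S})$, producing
\[
\Phi_i := \mathrm{Ph}_\lself(\mathrm{pr}_X^*h_i;\tilde\nabla^{\mathcal{A}},\tilde\nabla^{\slashed S}) \in \Omega^{4\Z + \type(\mathcal{A})}(I\times X;\mathrm{pr}_X^*\mathrm{Ori}(\mathcal{A})),
\]
and set $\tau_i := \int_I \Phi_i \in \Omega^{4\Z + \type(\mathcal{A})-1}(X;\mathrm{Ori}(\mathcal{A}))$.

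The key computation is Stokes' formula applied to $\Phi_i$: on $I\times X$ one has
\[
d\Phi_i = \pi^{-1/2}\mathcal{R}\bigl(\mathrm{Ph}_\lself(\tilde\nabla^{\slashed S};\tilde\nabla^{\mathcal{A}})\bigr)
\]
by \eqref{eq_lem_Ph_convergence_2}, and this right-hand side depends only on $(\tilde\nabla^{\mathcal{A}},\tilde\nabla^{\slashed S})$, not on $h_i$. Fiber integration over $I$ therefore yields
\begin{align*}
d\tau_i = \pm\bigl(\mathrm{Ph}_\lself(h_i;\nabla_1^{\mathcal{A}},\nabla_1^{\slashed S}) - \mathrm{Ph}_\lself(h_i;\nabla_0^{\mathcal{A}},\nabla_0^{\slashed S})\bigr) + \Xi,
\end{align*}
where $\Xi=\pi^{-1/2}\mathcal{R}\int_I\mathrm{Ph}_\lself(\tilde\nabla^{\slashed S};\tilde\nabla^{\mathcal{A}})$ is independent of $i$. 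Subtracting the $i=0$ and $i=1$ identities, the $\Xi$-terms cancel and we obtain
\begin{align*}
\pm d(\tau_1-\tau_0) = \bigl[\mathrm{Ph}_\lself(h_1;\nabla_1^{\mathcal{A}},\nabla_1^{\slashed S}) - \mathrm{Ph}_\lself(h_0;\nabla_1^{\mathcal{A}},\nabla_1^{\slashed S})\bigr] - \bigl[\mathrm{Ph}_\lself(h_1;\nabla_0^{\mathcal{A}},\nabla_0^{\slashed S}) - \mathrm{Ph}_\lself(h_0;\nabla_0^{\mathcal{A}},\nabla_0^{\slashed S})\bigr].
\end{align*}
Finally, since $h_0|_Y = h_1|_Y$, the pulled-back sections agree on $I\times Y$, so $\tau_0|_Y=\tau_1|_Y$ and $\tau_1-\tau_0 \in \Omega^*(X,Y;\mathrm{Ori}(\mathcal{A}))$. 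Thus the two candidate differences represent the same class in $H^{4\Z + \type(\mathcal{A})}(X,Y;\mathrm{Ori}(\mathcal{A}))$.

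The main obstacle I anticipate is purely bookkeeping: fixing the precise sign in the fiber-integration Stokes formula and keeping the $\mathcal{R}$-shifts and $(0,\infty)$-integrations straight, since $\mathrm{Ph}_\lself(h;\nabla^{\mathcal{A}},\nabla^{\slashed S})$ is itself already defined as an iterated transgression on $(0,\infty)\times X$. No new conceptual input is required beyond Proposition \ref{prop_transgression}, Lemma \ref{lem_Ph_convergence}, and convexity of the space of compatible pairs.
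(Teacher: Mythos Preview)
Your proposal is correct and is essentially the same transgression argument the paper gives: connect the two compatible pairs by a homotopy (the paper just says ``are connected by a homotopy,'' you make the convexity explicit), form the Pontryagin character difference on $I\times X$, and integrate over $I$. The paper compresses all of this into one sentence, while you spell out the cancellation of the $h_i$-independent term $\Xi$ and the relative condition on $Y$; the content is the same.
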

\begin{proof}
Any two compatible pairs $(\nabla_0^V, \nabla_0^{\slashed S})$ and $(\nabla_1^V, \nabla_1^{\slashed S})$ are connected by a homotopy. 
As always, we can transgress the difference of the forms \eqref{eq_difference_Ph_closed} corresponding to those connections, by integrating the form \eqref{eq_difference_Ph_closed} constructed by using the homotopy in the $I$-direction. 
\end{proof}

Recall that twists in $KO_+$ by negligible bundles are trivial (Fact \ref{fact_isom_negligible}). 
Here we show the corresponding statement for the Pontryagin character forms. 
Let $E = E^0 \oplus E^1$ be a smooth $\Z_2$-graded vector bundle over $X$ equipped with a fiberwise positive definite inner product on each $E^i$, associating the negligible bundle $\End(E)$. 
Recall Lemma \ref{lem_periodicity_negligible_bundle}. 
Given any other $\mathcal{A}$, for an $\mathcal{A}$-module bundle $\slashed S$ with an inner product we get an $\End(E) \widehat{\otimes} \mathcal{A}$-module bundle $E \otimes\slashed S$, and we have a canonical isomorphism $\psi^E \colon \Self^*_{\mathcal{A}}(\slashed S) \simeq \Self^*_{\End(E) \widehat{\otimes} \mathcal{A}}(E \otimes\slashed S)$.

\begin{lem}\label{lem_Ph_spin}
In the above setting,
let $\nabla^\mathcal{A}$ be a connection on $\mathcal{A}$ and $\nabla^{\slashed S}$ be an $\mathcal{A}$-connection on $\slashed S$ compatible with $\nabla^\mathcal{A}$. 
Let $\nabla^E$ be any orthogonal connection on $E$, and we denote by $\nabla^{\mathrm{End}(E)}$ the induced connection on $\mathrm{End}(E)$. 
Then $\nabla^{E} \otimes 1 + 1 \otimes \nabla^{\slashed S}$ is an $\End(E) \widehat{\otimes}\mathcal{A}$-connection on $\End(E) \otimes \slashed S$ compatible with $\nabla^{\mathrm{End}(E)} \otimes 1 + 1 \otimes \nabla^{\mathcal{A}}$. Furthermore, for any $h \in C^\infty(X; \Self^*_{\mathcal{A}}(\slashed S))$, we have
\begin{multline}
    \mathrm{Ph}_\lself(h; \nabla^{\mathcal{A}}, \nabla^{\slashed S}) \\ 
    = \mathrm{Ph}_\lself(\psi^E(h); \nabla^{\mathrm{End}(E)} \otimes 1 + 1 \otimes \nabla^{\mathcal{A}}, \nabla^{E} \otimes 1 + 1 \otimes \nabla^{\slashed S}). 
\end{multline}
Here we are using $\mathrm{Ori}(\mathcal{A}) \simeq \mathrm{Ori}(\End(E) \widehat{\otimes}\mathcal{A})$ given by $u \mapsto \gamma_E \widehat{\otimes}u$. 
\end{lem}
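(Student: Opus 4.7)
The plan is to verify assertion (1) directly from the definitions, and then to reduce the Pontryagin character identity to a pointwise comparison of twisting curvatures under the isomorphism $\psi^E$.

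Setting $\widetilde{\nabla} := \nabla^E \otimes 1 + 1 \otimes \nabla^{\slashed S}$ and $\widetilde{\nabla}^{\mathcal{B}} := \nabla^{\mathrm{End}(E)} \otimes 1 + 1 \otimes \nabla^{\mathcal{A}}$ with $\mathcal{B} := \mathrm{End}(E) \widehat{\otimes} \mathcal{A}$, assertion (1) is checked by testing the defining relation \eqref{eq_Cliff_superconn} on homogeneous generators of $\mathcal{B}$ of the form $A \widehat{\otimes} 1$ and $1 \widehat{\otimes} B$. Using the action formula \eqref{eq_periodicity_negligible_module} together with $\nabla^E \gamma_E = 0$ (which holds because $\nabla^E$ preserves the $\Z_2$-grading on $E$), the $A \widehat{\otimes} 1$ case collapses to the Leibniz rule for $\nabla^E$; the $1 \widehat{\otimes} B$ case follows from the compatibility of $\nabla^{\slashed S}$ with $\nabla^{\mathcal{A}}$ once the Koszul signs of \eqref{eq_action_form} are tracked.

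For the main equality, the plan is to establish the identity of twisting curvatures
\begin{align*}
F\bigl(\widetilde{\nabla} + t\psi^E(h);\, \widetilde{\nabla}^{\mathcal{B}}\bigr) = \psi^E\bigl(F(\nabla^{\slashed S} + th;\, \nabla^{\mathcal{A}})\bigr)
\end{align*}
in $\Omega^*((0,\infty) \times X;\, \mathrm{End}_{\mathcal{B}}(E \otimes \slashed S))$, where $\psi^E$ is extended to form-valued endomorphisms by $\Omega^*(X)$-linearity. Expanding $(\widetilde{\nabla} + t\psi^E(h))^2 = \widetilde{\nabla}^2 + t\{\widetilde{\nabla}, \psi^E(h)\} + t^2 \psi^E(h)^2$ and grouping by powers of $t$ decomposes this into three sub-identities: $\widetilde{\nabla}^2 - (\widetilde{\nabla}^{\mathcal{B}})^2 = \psi^E((\nabla^{\slashed S})^2 - (\nabla^{\mathcal{A}})^2)$, $\{\widetilde{\nabla}, \psi^E(h)\} = \psi^E(\{\nabla^{\slashed S}, h\})$, and $\psi^E(h)^2 = \psi^E(h^2)$. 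The last is immediate from $\gamma_E^2 = 1$ with $h^2$ even; the middle one follows from a short Leibniz-rule computation on pure tensors using $\psi^E(h) = \gamma_E \otimes h$ and $\nabla^E \gamma_E = 0$.

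Given these identities, the form-valued extension of Lemma \ref{lem_tr_u_negligible} (immediate from its pointwise version by $\Omega^*(X)$-linearity of both $\mathrm{Tr}_{\mathcal{A}}$ and $\psi^E$) yields
\begin{align*}
\mathrm{Tr}_{\mathcal{B}}\bigl(e^{-F(\widetilde{\nabla} + t\psi^E(h);\, \widetilde{\nabla}^{\mathcal{B}})}\bigr) = \mathrm{Tr}_{\mathcal{A}}\bigl(e^{-F(\nabla^{\slashed S} + th;\, \nabla^{\mathcal{A}})}\bigr)
\end{align*}
as forms on $(0,\infty) \times X$, under the identification $\mathrm{Ori}(\mathcal{B}) \simeq \mathrm{Ori}(\mathcal{A})$ via $\gamma_E \widehat{\otimes} u \leftrightarrow u$. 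Integrating in the $t$-direction and applying $-\pi^{-1/2}\mathcal{R}$ then delivers the claimed equality of $\mathrm{Ph}_\lself$. The main obstacle lies in the first of the three sub-identities: one must compute $\widetilde{\nabla}^2 = R^E \otimes 1 + 1 \otimes R^{\slashed S}$ and then show that, when paired against the $\mathcal{B}$-action \eqref{eq_periodicity_negligible_module} of $(\widetilde{\nabla}^{\mathcal{B}})^2$ on $E \otimes \slashed S$, the $R^E$ contribution cancels exactly against the contribution of $R^{\mathrm{End}(E)} = [R^E, \cdot]$ sitting inside $(\widetilde{\nabla}^{\mathcal{B}})^2$, leaving only the $\slashed S$-part mapped through $\psi^E$. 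This is a routine but nontrivial sign-tracking exercise; no essentially new idea is required, but the interaction of the graded and ungraded tensor structures must be handled carefully.
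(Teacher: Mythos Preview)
Your approach is correct and ends up at the same destination as the paper---reducing the equality of Pontryagin character forms to the trace compatibility of Lemma \ref{lem_tr_u_negligible}---but you take a more laborious path to get there. The paper's proof is two lines: locally write $\nabla^E = d + C^E$ with $C^E$ an $\End(E)^0_{\mathrm{skew}}$-valued one-form, observe that $\nabla^{\End(E)} = d + \{C^E, -\}$, and then invoke the gauge-invariance Lemma \ref{lem_change_connection_Ph} with $C = C^E \widehat{\otimes} 1 \in \Omega^1(X; \mathcal{B}^0_{\mathrm{skew}})$ to shift $(\widetilde{\nabla}^{\mathcal{B}}, \widetilde{\nabla})$ to the pair where $\nabla^E$ is trivial. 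In that gauge the $R^E$-contribution vanishes identically, so your ``first sub-identity'' becomes automatic and the remaining comparison is exactly Lemma \ref{lem_tr_u_negligible} (essentially the trivial-bundle case, Lemma \ref{lem_Ph_negligible_triv}). What you gain by your direct computation is an explicit check that the $R^E \otimes 1$ term in $\widetilde{\nabla}^2$ cancels against the action of $R^E \widehat{\otimes} 1$ coming from $(\widetilde{\nabla}^{\mathcal{B}})^2$; what the paper gains is that this cancellation is absorbed into a lemma already proved. Note that both arguments tacitly use that $\nabla^E$ preserves the $\Z_2$-grading on $E$ (so that $C^E$ is $\End(E)^0$-valued, equivalently $\nabla^E\gamma_E = 0$); without this, $\widetilde{\nabla}^{\mathcal{B}}$ would not be a connection on $\mathcal{B}$ in the paper's sense.
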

\begin{proof}
Locally we can trivialize $E$ and write $\nabla^E = d + C^E$ for some $\End(E)^0$-valued one-form $C^E$. 
Then we have $\nabla^{\End(E)} = d + \{C^E, -\}$, so the result follows from Lemma \ref{lem_change_connection_Ph} and Lemma \ref{lem_tr_u_negligible}. 
\end{proof}

Now we turn to the relation with the twisted topological Pontryagin character homomorphism in Subsection \ref{subsec_twisted_Ph_top}. 
For this, we use the explanation of the twisted $KO$-theory in Subsection \ref{subsec_twist} in terms of $\Self_{A}^*(-)$-bundles.  

Recall that in the untwisted case, Pontryagin character forms can be regarded as the pullbacks of the universal Pontryagin form \eqref{eq_univ_Ph}. 
Here in the twisted case, we also have the corresponding universal form. 
Let $X$, $\mathcal{A}$ and $\slashed S$ be as before, and consider the fiber bundle $\pi \colon \Self_{\mathcal{A}}^*(\slashed S) \to X$. 
We consider the pullback of $\mathcal{A}$ and $\slashed S$ on $\Self_{\mathcal{A}}^*(\slashed S)$. 
Then we have the {\it universal gradation}, which is given by the tautological section, 
\begin{align*}
    h^{\mathcal{A}, \slashed S}_{\mathrm{univ}} \in C^\infty(\Self_{\mathcal{A}}^*(\slashed S); \Self_{\pi^*\mathcal{A}}^*(\pi^*\slashed S)). 
\end{align*}
Given a pair $(\nabla^{\mathcal{A}}, \nabla^{\slashed S})$ as before, we pull them back by $\pi$, and get the {\it universal Pontryagin form}, 
\begin{align}\label{eq_univ_Ph_twisted}
    \mathrm{Ph}_\lself\left(h^{\mathcal{A}, \slashed S}_{\mathrm{univ}}; \pi^* \nabla^{\mathcal{A}}, \pi^* \nabla^{\slashed S}\right) \in \Omega^{4\Z + \type(\mathcal{A})}(\Self_{\mathcal{A}}^*(\slashed S); \mathrm{Ori}(\mathcal{A})), 
\end{align}
where we abuse the notation to write the pullback of $\mathrm{Ori}(\mathcal{A})$ by the same symbol. 
Note that it is not closed \eqref{eq_lem_Ph_convergence_2}. 
It is universal in the following sense. 
Any section $h \in  C^\infty( X ; \mathrm{Self}_{\mathcal{A}}^*(\slashed S))$ is in particular a smooth map from $X$ to $\Self_{\mathcal{A}}^*(\slashed S)$, and the gradation $h$ is the pullback of the universal gradation $h^{\mathcal{A}, \slashed S}_{\mathrm{univ}}$ by this map. 
By the obvious naturality of the Pontryagin character forms, we see that
\begin{align*}
   \mathrm{Ph}_\lself(h; \nabla^{\mathcal{A}}, \nabla^{\slashed S}) = h^* \mathrm{Ph}_\lself\left(h^{\mathcal{A}, \slashed S}_{\mathrm{univ}}; \pi^* \nabla^{\mathcal{A}}, \pi^* \nabla^{\slashed S}\right). 
\end{align*}
Recall that, as explained in Subsection \ref{subsec_twist}, twisted $KO$-theory groups are given in terms of sections of bundles of based spaces. 
As explained around \eqref{eq_KO_m=KO_twisted}, it corresponds to considering pairs $(\slashed S, h_0)$ of an $\mathcal{A}$-module bundle and a section $h_0 \in C^\infty(X; \mathrm{Self}_{\mathcal{A}}^*(\slashed S))$. 
For such a pair, we get a closed form, 
\begin{multline}\label{eq_twisted_Ph_univ_form}
    \mathrm{Ph}_\lself\left(h^{\mathcal{A}, \slashed S}_{\mathrm{univ}}; \pi^* \nabla^{\mathcal{A}}, \pi^* \nabla^{\slashed S}\right) - \mathrm{Ph}_\lself\left(\pi^*h_0; \pi^* \nabla^{\mathcal{A}}, \pi^* \nabla^{\slashed S}\right) \\
    \in \Omega_{\mathrm{clo}}^{4\Z + \type(\mathcal{A})}(\Self_{\mathcal{A}}^*(\slashed S), \{h_0\}; \mathrm{Ori}(\mathcal{A}) ). 
\end{multline}
The cohomology class of the form \eqref{eq_twisted_Ph_univ_form} is independent of $(\nabla^{\mathcal{A}}, \nabla^{\slashed S})$ by Lemma \ref{lem_Ph_indep_conn}, which we denote by
\begin{align}\label{eq_twisted_Ph_univ_rham}
    \mathrm{Ph}_\lself(\mathcal{A}, \slashed S, h_0) \in H^{4\Z +{\mathrm{type}}(\mathcal{A})}(\Self_{\mathcal{A}}^*(\slashed S), \{h_0\}; \mathrm{Ori}(\mathcal{A})). 
\end{align}

\begin{thm}\label{thm_Ph=Ph_top_twisted}
Let $X$ be connected. 
The class \eqref{eq_twisted_Ph_univ_rham} is the pullback of the universal topological Pontryagin character class
\begin{align*}
    [\mathrm{Ph}_{\mathrm{top}}] \in H^{4\Z + {\mathrm{type}}(\mathcal{A})}((\mathcal{KO}_{\mathrm{type}(\mathcal{A})})_\iota, \{\beta\};E\Z_2 \times_{\Z_2} \R)
\end{align*}
in \eqref{eq_univ_twisted_Ph} under the map given as the composition
\begin{align*}
    (\Self_{\mathcal{A}}^*(\slashed S), \{h_0\}) \hookrightarrow (\mathcal{KO}_{\mathcal{A}}, \{\beta\}) \to ((\mathcal{KO}_{\mathrm{type}(\mathcal{A})})_{\iota}, \{\beta\}), 
\end{align*}
where the first inclusion map is \eqref{eq_emb_Skew_bundle} and the second map comes from the composition of the classifying map $X \to B\mathrm{Aut}_{*, \Z_2}(A)$ of $\mathcal{A}$ and the map $(w_1, w_2) \colon B\mathrm{Aut}_{*, \Z_2}(A) \to B\mathrm{O}\langle 0, 1, 2 \rangle$. 
\end{thm}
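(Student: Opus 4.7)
The plan is to reduce to a universal situation and then invoke Lemma \ref{lem_twisted_Ph_characterization}, using the untwisted comparison of Theorem \ref{thm_Ph=Ph_top} for the fiberwise input. Both classes at issue live on $(\Self_\mathcal{A}^*(\slashed S),\{h_0\})$ and are natural in $(X,\mathcal{A},\slashed S,h_0)$: the pulled-back topological class by functoriality, and the differential-form class $\mathrm{Ph}_\lself(\mathcal{A},\slashed S,h_0)$ by Lemma \ref{lem_Ph_indep_conn} (independence of connections) together with the elementary naturality of the Pontryagin-form construction. In addition, the form class enjoys two stability properties that will be essential: (i) it is compatible with stabilization $(\slashed S,h_0)\hookrightarrow (\slashed S\oplus\Sigma^{0,1}\mathcal{A},h_0\oplus\beta)$, since $\mathrm{Tr}_{\mathcal{A}}$ is additive under direct sums and the $\beta$-summand contributes only a constant form that is killed upon passage to relative cohomology; and (ii) by Lemma \ref{lem_Ph_spin} it is invariant under tensoring by negligible bundles $\mathcal{A}\mapsto\End(E)\widehat{\otimes}\mathcal{A}$, so that it depends only on the $\mathrm{GBrO}$-class of $\mathcal{A}$ via the isomorphism \eqref{eq_isom_GBrO}.

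Next I would assemble a universal class. Choose a cofinal system $\{\slashed S_n\}_n$ of $\Sigma^{0,1}\mathcal{A}$-module bundles in analogy with \eqref{S_n_choice}. Lemma \ref{lem_univ_Skew_bundle} identifies the direct limit of the bundles $(\Self_\mathcal{A}^*(\slashed S_n),\{\beta\})$ with $(\mathcal{KO}_{\mathcal{A}},\{\beta\})$, and the compatibility (i) promotes the family $\{\mathrm{Ph}_\lself(\mathcal{A},\slashed S_n,\beta)\}_n$ to a single class $\widetilde{\mathrm{Ph}}_\lself^{\mathcal{A}}\in H^{4\Z+\type(\mathcal{A})}(\mathcal{KO}_{\mathcal{A}},\{\beta\};\mathrm{Ori}(\mathcal{A}))$. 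Since (ii) makes $\widetilde{\mathrm{Ph}}_\lself^{\mathcal{A}}$ depend only on the composition $X\to B\mathrm{Aut}_{*,\Z_2}(A)\to BO\langle 0,1,2\rangle$, letting $(X,\mathcal{A})$ range over all smooth data and invoking the naturality in Step 1, the classes $\widetilde{\mathrm{Ph}}_\lself^{\mathcal{A}}$ glue to a single universal class
\[
\widetilde{\mathrm{Ph}}_\lself^{\mathrm{univ}}\in H^{4\Z+\type(\mathcal{A})}\bigl((\mathcal{KO}_{\type(\mathcal{A})})_\iota,\{\beta\};E\Z_2\times_{\Z_2}\R\bigr).
\]

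The conclusion is then immediate: by Theorem \ref{thm_Ph=Ph_top}, the restriction of $\widetilde{\mathrm{Ph}}_\lself^{\mathrm{univ}}$ to each fiber $(KO_{\type(\mathcal{A})},\{*\})$ is the ordinary topological Pontryagin class $[\mathrm{Ph}_{\mathrm{top}}]$, so Lemma \ref{lem_twisted_Ph_characterization} forces $\widetilde{\mathrm{Ph}}_\lself^{\mathrm{univ}}=[\mathrm{Ph}_{\mathrm{top}}]$. Pulling this equality back along the composition
\[
(\Self_\mathcal{A}^*(\slashed S),\{h_0\})\hookrightarrow(\mathcal{KO}_{\mathcal{A}},\{\beta\})\to\bigl((\mathcal{KO}_{\type(\mathcal{A})})_\iota,\{\beta\}\bigr)
\]
recovers $\mathrm{Ph}_\lself(\mathcal{A},\slashed S,h_0)$ on the left (by the very construction of $\widetilde{\mathrm{Ph}}_\lself^{\mathcal{A}}$ as the limit along \eqref{eq_emb_Skew_bundle}) and the pulled-back topological class on the right, yielding the theorem.

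The main obstacle is making the universal assembly rigorous: $BO\langle 0,1,2\rangle$ is not a finite-dimensional manifold, so the differential-form expressions do not literally define a de Rham class there. One must verify that the de Rham classes attached to arbitrary smooth $(X,\mathcal{A},\slashed S,h_0)$-data are mutually compatible under pullbacks, suspension-type stabilizations of $\slashed S$, and negligible-bundle tensorings of $\mathcal{A}$, and deduce that this coherent family defines a single singular cohomology class on the topological universal bundle. Lemma \ref{lem_univ_Skew_bundle} handles the $\slashed S$-limit, Lemma \ref{lem_Ph_spin} with \eqref{eq_isom_GBrO} handles the descent to $BO\langle 0,1,2\rangle$, and the cohomological characterization in Lemma \ref{lem_twisted_Ph_characterization} then pins the assembled class down uniquely.
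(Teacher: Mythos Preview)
Your proposal is correct and follows essentially the same route as the paper: assemble the form-level classes into a universal class on $(\mathcal{KO}_{\type(\mathcal{A})})_\iota$ via naturality, Lemma~\ref{lem_univ_Skew_bundle}, and Lemma~\ref{lem_Ph_spin}, then identify it with $[\mathrm{Ph}_{\mathrm{top}}]$ by checking the fiberwise restriction using Theorem~\ref{thm_Ph=Ph_top} and invoking Lemma~\ref{lem_twisted_Ph_characterization}. The paper resolves the obstacle you flag by realizing $B\mathrm{Aut}_{*,\Z_2}(A)$ as a direct limit of closed manifolds, so that the coherent family of de~Rham classes over smooth $(X,\mathcal{A})$ indeed defines a singular class at the universal level.
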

\begin{proof}
We use the notations in Subsection \ref{subsec_twisted_Ph_top}. 
By the naturality of the Pontryagin character forms, the class \eqref{eq_twisted_Ph_univ_rham} gives an element
\begin{align}\label{eq_Ph_V_rham}
\{\mathrm{Ph}_\lself(V, \slashed S, h_0)\}_{(S, h_0)} &\in \varprojlim_{(\slashed S, h_0)}H^{4\Z + \type(\mathcal{A})}\left(\Self_{\mathcal{A}}^*(\slashed S), \{h_0\}; \mathrm{Ori}(\mathcal{A})  \right)\\
&\simeq H^{4\Z + \type(\mathcal{A})}(\mathcal{KO}_{\mathcal{A}}, \{\beta\}; \mathrm{Ori}(\mathcal{A})  ). \notag
\end{align}
Here the inverse limit runs over the pairs $(\slashed S, h_0)$ of $\mathcal{A}$-module bundles with inner product and sections $h_0 \in C^\infty(X; \Self_{\mathcal{A}}^\dagger(\slashed S))$ with respect to their inclusions. 
The isomorphism in \eqref{eq_Ph_V_rham} is given by Lemma \ref{lem_univ_Skew_bundle}. 
Moreover, by the same naturality, we see that the class \eqref{eq_Ph_V_rham} is natural in $(X, \mathcal{A})$. 
Since $B\mathrm{Aut}_{*, \Z_2}(A)$ can be realized as a direct limit of closed manifolds, we get the element
\begin{align}\label{eq_Ph_EO}
    \{\mathrm{Ph}(V, \slashed S, h_0)\}_{(X, V, S, h_0)} \in H^{4\Z + \type(\mathcal{A})}(\mathcal{KO}_{E\mathrm{Aut}_{*, \Z_2}(A)}, \{\beta\}; \mathrm{Ori}(\mathcal{A})  ).
\end{align}
Here we abuse the notation to denote by $\mathcal{KO}_{E\mathrm{Aut}_{*, \Z_2}(A)}$ the bundle over $B\mathrm{Aut}_{*, \Z_2}(A)$ defined by applying
\eqref{eq_Skew_bundle_V} to $P = E\mathrm{Aut}_{*, \Z_2}(A)$. 
In turn, by Lemma \ref{lem_Ph_spin} this element is the pullback under $(w_1, w_2) \colon \mathcal{KO}_{E\mathrm{Aut}_{*, \Z_2}(A)} \to (\mathcal{KO}_{\type(A)})_{\iota}$ of an element on $(\mathcal{KO_{\type(\mathcal{A})}})_{\iota}$ denoted temporarily as
\begin{align}\label{eq_Ph'}
    [\mathrm{Ph}'] \in H^{4\Z + \type(\mathcal{A})}((\mathcal{KO}_{\type(A))})_{\iota}, \{\beta\}; E\Z_2 \times_{\Z_2} \R ). 
\end{align}

Moreover, the element \eqref{eq_Ph'} restricts to the universal class for the untwisted topological Pontryagin character 
\[
[\mathrm{Ph}_{\mathrm{top}}] \in H^{4\Z + \type(\mathcal{A})}(KO_{\type(A)}, \{*\}; \mathrm{Ori}(\mathcal{A}) )
\] 
on each fiber. 
To see this, it is enough to show the corresponding statement for any of $\mathcal{KO}_{\mathcal{A}}$. 
When $X = \pt $, the element \eqref{eq_twisted_Ph_univ_rham} coincides with the untwisted universal Pontryagin form in \eqref{eq_univ_Ph}. 
Using Theorem \ref{thm_Ph=Ph_top} and taking the inverse limit, we get the desired claim. 

By Lemma \ref{lem_twisted_Ph_characterization}, this property establishes
\begin{align*}
    [\mathrm{Ph}'] = [\mathrm{Ph}_{\mathrm{top}}], 
\end{align*}
which leads to the theorem. 
\end{proof}

Recall that, for a CW-pair $(X, Y)$, given $\mathcal{A}$ over $X$, we have \eqref{eq_twisted_Ph_top}
\begin{align*}
    \mathrm{Ph}_{\mathrm{top}} \colon KO^{\mathcal{A}}_+(X, Y) \to H^{4\Z + \type(\mathcal{A})}(X, Y; \mathrm{Ori}(\mathcal{A}) ). 
\end{align*}
Theorem \ref{thm_Ph=Ph_top_twisted} and the definition of the isomorphism \eqref{eq_KO_m=KO_twisted} imply the following. 
\begin{cor}\label{cor_Ph=Ph_top_twisted}
In the setting of Definition \ref{def_Ph_m}, 
let $(X, Y)$ be an object of $\mathrm{MfdPair}_f$. 
Suppose we have two sections $h_0, h_1 \in C^\infty( X ; \mathrm{Self}_{\mathcal{A}}^*(\slashed S))$ such that $h_0|_Y = h_1|_Y$. 
Then we have
\begin{align} 
    \mathrm{Ph}_{\mathrm{top}}([\slashed S, h_0, h_1]) = \mathrm{Rham}\left( \mathrm{Ph}_\lself(h_1; \nabla^{\mathcal{A}}, \nabla^{\slashed S}) - \mathrm{Ph}_\lself(h_0; \nabla^{\mathcal{A}}, \nabla^{\slashed S})\right). 
\end{align}
Here we note that the right hand side is well-defined by \eqref{eq_difference_Ph_closed}. 
\end{cor}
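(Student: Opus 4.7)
The plan is to unwind the isomorphism \eqref{eq_KO_m=KO_twisted} and then invoke the universal statement of Theorem \ref{thm_Ph=Ph_top_twisted}. Given a triple $(\slashed S, h_0, h_1)$ representing an element of $KO_+^\mathcal{A}(X, Y)$, the construction of the isomorphism into $\pi_0(\Gamma(X, Y; \mathcal{KO}_\mathcal{A}, \{\beta\}))$ proceeds by viewing $h_1$ as a smooth map $X \to \Self_\mathcal{A}^*(\slashed S)$, identifying $(\Self_\mathcal{A}^*(\slashed S), \{h_0\})$ as a sub-bundle of based spaces inside $(\mathcal{KO}_\mathcal{A}, \{\beta\})$ via the embedding \eqref{eq_emb_Skew_bundle}, and composing. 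The condition $h_0|_Y = h_1|_Y$ is precisely what makes this into a relative section sending $Y$ into the basepoint fiber.

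First I would observe that, by the naturality of the Pontryagin character forms with respect to smooth pullbacks (which is evident from Definition \ref{def_Ph_m}), and because $h_1^* h_{\mathrm{univ}}^{\mathcal{A}, \slashed S} = h_1$ and $h_1^* (\pi^* h_0) = h_0$, the pullback under $h_1 \colon X \to \Self_\mathcal{A}^*(\slashed S)$ of the universal difference form \eqref{eq_twisted_Ph_univ_form} is exactly
\[
\mathrm{Ph}_\lself(h_1; \nabla^\mathcal{A}, \nabla^{\slashed S}) - \mathrm{Ph}_\lself(h_0; \nabla^\mathcal{A}, \nabla^{\slashed S}).
\]
This form is closed and relative to $Y$ by \eqref{eq_difference_Ph_closed}, so its de Rham class lives in $H^{4\Z + \mathrm{type}(\mathcal{A})}(X, Y; \mathrm{Ori}(\mathcal{A}))$. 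Next I would apply Theorem \ref{thm_Ph=Ph_top_twisted}, which identifies the de Rham class of the universal difference form with the pullback of $[\mathrm{Ph}_{\mathrm{top}}]$ along the classifying composition $(\Self_\mathcal{A}^*(\slashed S), \{h_0\}) \hookrightarrow (\mathcal{KO}_\mathcal{A}, \{\beta\}) \to ((\mathcal{KO}_{\mathrm{type}(\mathcal{A})})_\iota, \{\beta\})$. Combining this with the description of the twisted topological homomorphism $\mathrm{Ph}_{\mathrm{top}}$ in Subsection \ref{subsec_twisted_Ph_top} as pullback of the universal class along the classifying section of the element in question yields the claimed identity.

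I do not expect any serious obstacle: the content is essentially the naturality assertion that the pullback along $h_1$ of the universal class agrees with the Pontryagin form built from $h_1$, and all the substantive input (the identification between the analytic universal form and the topological universal class) has already been carried out in Theorem \ref{thm_Ph=Ph_top_twisted}. The only delicate point is to handle the basepoint $h_0$ consistently, ensuring the relative structure over $Y$ is preserved throughout the chain of identifications.
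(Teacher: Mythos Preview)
Your proposal is correct and takes essentially the same approach as the paper: the paper simply states that Theorem~\ref{thm_Ph=Ph_top_twisted} together with the definition of the isomorphism \eqref{eq_KO_m=KO_twisted} imply the corollary, and you have spelled out precisely how these two ingredients combine via naturality of the universal Pontryagin form under pullback by $h_1$.
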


\section{The definition of differential \texorpdfstring{$KO$}{KO}-theory \texorpdfstring{$\widehat{KO}_+$}{}}\label{sec_diff_KO}
In this section we define our model of differential $KO$ groups. 
We deal with the untwisted case in Subsection \ref{subsec_untwisted_KO_hat}, and the twisted case in Subsection \ref{subsec_twisted_KO_hat}. 
In this section we work over $\R$. 

\subsection{The untwisted groups \texorpdfstring{$\widehat{KO}^{A}_+$}{}}\label{subsec_untwisted_KO_hat}
In this subsection we give a model $\widehat{KO}^{A}_+$ for the untwisted differential $KO$-theory groups, by refining Definition \ref{def_karoubi_untwisted_KO}. 
In this subsection we use the notations in Subsection \ref{subsec_superconn_triv}. 

\begin{defn}\label{def_KO_hat_quadruple}
Let $A$ be a nondegenerate simple central graded $*$-algebra. 
Let $(X, Y)$ be an object in $\mathrm{MfdPair}_f$. 
\begin{itemize}
    \item A {\it $\widehat{KO}_+$-cocycle} $(S, h_0, h_1, \eta)$ on $(X, Y)$ consists of an $A$-module $S$ with an inner product, two smooth maps $h_0, h_1 \in C^\infty( X , \mathrm{Self}_{A}^*(S))$ such that $h_0|_Y = h_1|_Y$, and an element
    \[
        \eta \in \Omega^{4\Z + \type(A)-1}(X, Y; \Ori(A)) / \mathrm{Im}(d).
   \]
    \item Two $\widehat{KO}_+$-cocycles $(S, h_0, h_1, \eta)$ and $(S', h'_0, h'_1, \eta')$ are {\it isomorphic}
    if there exists an isometric isomorphism of $A$-modules $f \colon S \simeq  S'$ such that $f\circ h_i = h'_i \circ f$ for $i = 0, 1$, and we have $\eta = \eta'$. 
\end{itemize}
\end{defn}

\begin{defn}[{$\widehat{KO}_+^{A}(X, Y)$}]\label{def_untwisted_KO_hat_m}
Let $A$ and $(X, Y)$ be as in Definition \ref{def_KO_hat_quadruple}
\begin{itemize}
    \item
We introduce an abelian monoid structure on the set $\widehat{M}^{A}_+(X, Y)$ of isomorphism classes of $\widehat{KO}_+$-cocycles $(S, h_0, h_1, \eta)$ on $(X, Y)$ by
\begin{align*}
    [S, h_0, h_1, \eta] + [S', h'_0, h'_1, \eta']
    = [S \oplus  S', h_0 \oplus h'_0,h_1 \oplus h'_1, \eta +\eta']. 
\end{align*}
\item We define $\widehat{Z}^{A}_+(X, Y)$ to be the submonoid of $\widehat{M}^{A}_+(X, Y)$ consisting of elements of the form
\begin{align*}
    [S, h_0, h_1, \mathrm{CS}_\lself(h_I)], 
\end{align*}
where $h_I$ is a smooth homotopy between $h_0$ and $h_1$ which is constant on $Y$, i.e., a smooth map $h_I \in C^\infty(I \times X, \Self^*_{A}(S))$ with $h_I|_{\{i\} \times X} = h_i$ for $i = 0, 1$ and $h_I|_{\{t\} \times Y} = h_0|_Y$ for all $t \in I$. 
\item We define $\widehat{KO}^{A}_+(X, Y):=\widehat{M}^{A}_+(X, Y) / \widehat{Z}^{A}_+(X, Y)$. 
\end{itemize}
\end{defn}

\begin{lem}\label{lem_untwisted_additive_inverse}
$\widehat{KO}_+^{A}(X, Y)$ is an abelian group. 
\end{lem}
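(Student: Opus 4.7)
The plan is to show that for any class $[S, h_0, h_1, \eta] \in \widehat{KO}_+^{A}(X, Y)$, the class $[S, h_1, h_0, -\eta]$ serves as its additive inverse, in direct analogy with the topological case after Definition~\ref{def_karoubi_untwisted_KO}. Their sum is represented by $(S \oplus S, h_0 \oplus h_1, h_1 \oplus h_0, 0)$, so the task reduces to showing that this cocycle lies in $\widehat{Z}^{A}_+(X, Y)$, i.e.\ exhibiting a smooth homotopy $h_I$ from $h_0 \oplus h_1$ to $h_1 \oplus h_0$ which is constant on $Y$ and whose associated Chern--Simons form $\mathrm{CS}_\lself(h_I)$ vanishes (not just modulo exact forms, but on the nose as chosen).

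I would use the standard rotation homotopy on $S \oplus S$: define
\begin{align*}
    h_I(t,x) := R(\tfrac{\pi t}{2})^{-1}\bigl(h_0(x) \oplus h_1(x)\bigr)R(\tfrac{\pi t}{2}),
\qquad R(\theta) = \begin{pmatrix} \cos\theta & \sin\theta \\ -\sin\theta & \cos\theta \end{pmatrix},
\end{align*}
where $R(\theta)$ acts by mixing the two summands of $S \oplus S$. Since $R(\theta)$ is $A$-linear, orthogonal, and even, $h_I(t,\cdot)$ lands in $\Self^*_A(S \oplus S)$ for each $t$. A direct computation shows $h_I(0) = h_0 \oplus h_1$ and $h_I(1) = h_1 \oplus h_0$. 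On $Y$ we have $h_0|_Y = h_1|_Y$, so $(h_0 \oplus h_1)|_Y$ is scalar with respect to the matrix action of $R$ and commutes with $R(\theta)$, hence $h_I|_{I \times Y}$ is constant.

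It remains to verify $\mathrm{CS}_\lself(h_I) = 0$. The key is the gauge invariance of the Pontryagin character form. Viewing the smooth map $g(t,x) := R(\tfrac{\pi t}{2})$ as a gauge transformation on $I \times X$, we have $h_I = g^{-1}\tilde{h}g$ where $\tilde{h} := \mathrm{pr}_X^*(h_0 \oplus h_1)$. Consequently the superconnection $d_{I \times X} + s\, h_I$ on $(0,\infty)_s \times I \times X$ is conjugate by $g$ to the operator $d_{I \times X} + g^{-1}d_{I \times X}g + s\,\tilde h$, but what matters is that its twisting curvature $F$ is $g^{-1}F(d_{I\times X} + s\tilde h)g$, and $\mathrm{Tr}_A$ is conjugation-invariant (since $g$ is $A$-linear and even). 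Thus
\begin{align*}
    \mathrm{Ph}_\lself(d_{I \times X} + s\, h_I) = \mathrm{Ph}_\lself(d_{I \times X} + s\,\tilde h)
\end{align*}
as forms on $(0,\infty) \times I \times X$. Integrating over $(0,\infty)$ and applying $\pi^{-1/2}\mathcal{R}$ (Definition~\ref{def_Ph_m_triv}) gives $\mathrm{Ph}_\lself(h_I) = \mathrm{Ph}_\lself(\tilde h)$ on $I \times X$. Since $\tilde h$ is pulled back from $X$, the right-hand side has no $dt$-component, so fiber integration over $I$ yields $\mathrm{CS}_\lself(h_I) = \int_I \mathrm{Ph}_\lself(h_I) = 0$, as required.

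Putting these pieces together, $(S \oplus S, h_0 \oplus h_1, h_1 \oplus h_0, 0) \in \widehat{Z}^{A}_+(X, Y)$, so $[S, h_0, h_1, \eta] + [S, h_1, h_0, -\eta] = 0$ in $\widehat{KO}_+^{A}(X, Y)$. The main technical point is the gauge-invariance step; once it is set up, the vanishing of $\mathrm{CS}_\lself(h_I)$ is automatic because the gauge transformation depends only on the homotopy parameter and not on $X$.
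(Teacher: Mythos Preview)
Your gauge-invariance step is where the argument breaks. You assert that the twisting curvature of $d + s h_I$ is $g^{-1}F(d + s\tilde h)\,g$, but this is false because $g = R(\tfrac{\pi t}{2})$ depends on the homotopy parameter $t$. What conjugation actually gives is
\[
g\,(d + s h_I)\,g^{-1} \;=\; d - (dg)g^{-1} + s\tilde h,
\]
so $F(d + sh_I)$ is conjugate to $F\bigl(d + s\tilde h - (dg)g^{-1}\bigr)$, not to $F(d + s\tilde h)$. The extra term $(dg)g^{-1} = \tfrac{\pi}{2}J\,dt$ (with $J$ the infinitesimal rotation) lies in $\Omega^1(I;\End^0_A(S\oplus S))$, not in $\Omega^1(I;A^0_{\mathrm{skew}})$, so Lemma~\ref{lem_change_connection_Ch} does not absorb it. A short computation shows this term contributes $\tfrac{s\pi}{2}\,dt\otimes[J,\tilde h]$ to the curvature, and $[J,h_0\oplus h_1] = \begin{pmatrix}0 & h_1-h_0\\ h_1-h_0 & 0\end{pmatrix}$ is nonzero in general. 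Hence $\mathrm{Ph}_\lself(h_I)$ genuinely differs from $\mathrm{Ph}_\lself(\tilde h)$ in its $dt$-component, and your conclusion $\mathrm{CS}_\lself(h_I)=0$ does not follow from this line of reasoning.

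The paper's proof sidesteps this entirely: it does not attempt to make $\mathrm{CS}_\lself(h_I)$ vanish. It simply observes that for \emph{any} homotopy $h_I$ between $h_0\oplus h_1$ and $h_1\oplus h_0$ (constant on $Y$), the cocycle $\bigl(S\oplus S,\,h_0\oplus h_1,\,h_1\oplus h_0,\,\mathrm{CS}_\lself(h_I)\bigr)$ lies in $\widehat Z^A_+(X,Y)$ by definition, and therefore the inverse is
\[
-[S,h_0,h_1,\eta] \;=\; \bigl[S,\,h_1,\,h_0,\,-\eta + \mathrm{CS}_\lself(h_I)\bigr].
\]
This is all one needs for the group property; no vanishing is required. (Incidentally, for your specific rotation homotopy one can show $\mathrm{CS}_\lself(h_I)=0$ by a different argument: conjugation by the constant swap $\tau$ satisfies $\tau h_I(t)\tau^{-1}=h_I(1-t)$, so $\mathrm{Ph}_\lself(h_I)$ is invariant under $t\mapsto 1-t$ by naturality and by conjugation-invariance of $\mathrm{Tr}_A$, whence $\int_I\mathrm{Ph}_\lself(h_I)=-\int_I\mathrm{Ph}_\lself(h_I)=0$. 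But this symmetry argument is not the one you gave.)
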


\begin{proof}
The additive inverse of an element $ [S, h_0, h_1, \eta]\in \widehat{KO}_+^{A}(X, Y)$ is given as follows. 
Consider the $A$-module bundle $ S \oplus S$, and note that $h_0 \oplus h_1$ and $ h_1 \oplus h_0$ are homotopic relative to $Y$ in $C^\infty(X; \mathrm{Self}_{A}^*(S \oplus S))$. 
Take any smooth homotopy $h_I$ between them, which is constant on $Y$. 
Then we have the following equality in $\widehat{KO}_+^{A}(X, Y)$. 
\begin{align*}
   \left[ S \oplus  S, h_0 \oplus h_1, h_1 \oplus h_0, \mathrm{CS}_\lself\left(h_I\right)\right] = 0. 
\end{align*}
This means that we have
\begin{align} 
    -[ S,  h_0, h_1, \eta] = \left[ S, h_1 , h_0, -\eta + \mathrm{CS}_\lself\left(h_I\right)\right]. 
\end{align}
\end{proof}

The abelian group $\widehat{KO}_+^{A}(X, Y)$ has the obvious naturality in $(X, Y)$, so we regard $\widehat{KO}_+^{A}$ as a contravariant functor, 
\begin{align}
    \widehat{KO}_+^{A} \colon \mathrm{MfdPair}^{\mathrm{op}}_f \to \mathrm{Ab}. 
\end{align}
The remainder of this subsection is devoted to showing that $\widehat{KO}_+^A$ satisfies the axioms of differential extension as developed by Bunke and Schick \cite{BunkeSchicksmoothK, BunkeSchickDiffKsurvey}. First of all, we define the structure homomorphisms for $\widehat{KO}^{A}_+$ as follows. 

\begin{defn}[{Structure homomorphisms for $\widehat{KO}^{A}_+$}]\label{def_str_hom_untwisted}
We define the following structure homomorphisms, which are natural in $(X, Y)$. 
\begin{align*}
        R &\colon \widehat{KO}_+^{A}(X, Y) \to \Omega_{\mathrm{clo}}^{4\Z + \type(A)}(X, Y; \Ori(A))\\
       &[S, h_0, h_1, \eta] \mapsto \mathrm{Ph}_\lself(h_1) - \mathrm{Ph}_\lself(h_0) + d\eta.\\
        I &\colon \widehat{KO}_+^{A}(X, Y) \to KO_+^{A}(X, Y) \\
       & [S, h_0, h_1, \eta] \mapsto [S, h_0, h_1]. \\
        a &\colon \Omega^{4\Z + \type(A) - 1}(X, Y; \Ori(A)) / \mathrm{Im}(d) \to \widehat{KO}_+^{A}(X, Y)  \\
       & \eta \mapsto [0, 0, 0, \eta]. 
\end{align*}
The well-definedness of $R$ follows from \eqref{eq_Ph=dCS_triv}.
\end{defn}

Now we check that the functor $\widehat{KO}_+^{A}$ satisfies the axioms for the differential $KO^{\type(A)}$.  
\begin{thm}\label{thm_axiom_diffcoh_untwisted_KO}
In the notations of Definition \ref{def_str_hom_untwisted}, we have the following. 
\begin{enumerate}
\item We have $R \circ a = d$. 
\item The following diagram commutes. 
\begin{align*}
    \xymatrix{
    \widehat{KO}_+^{A}(X, Y)\ar[r]^-{R} \ar[d]^I & \Omega^{4\Z + {\type}(A)}_{\mathrm{clo}}(X, Y;\Ori(A)) \ar[d]^{\mathrm{Rham}} \\
    KO_+^{A}(X, Y)\ar[r]^-{\mathrm{Ph}_{\mathrm{top}}} & H^{4\Z + {\type}(A)}(X, Y; \Ori(A)).
    } 
\end{align*}

    \item The following sequence is exact. 
\begin{align}\label{eq_prop_axiom_untwisted}
   \xymatrix@R=10pt{
   KO_+^{\Sigma^{0,1}A}(X, Y) 
   \ar[r]^-{\mathrm{Ph}_{\mathrm{top}}} & 
    \Omega^{4\Z + \type(A) - 1}(X, Y; \Ori(A)) / \mathrm{Im}(d) 
    \ar[ld]_{a} \\
    \widehat{KO}_+^{A}(X, Y)
    \ar[r]_{I} &
    KO_+^{A}(X, Y) \ar[r]
    & 0. 
    }
\end{align}
\end{enumerate}

\end{thm}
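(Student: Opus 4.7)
Parts (1) and (2) are immediate. For (1), $R \circ a(\eta) = \mathrm{Ph}_\lself(0) - \mathrm{Ph}_\lself(0) + d\eta = d\eta$, using that $\mathrm{Ph}_\lself$ vanishes on the zero module. For (2), Corollary \ref{cor_Ph=Ph_top} gives $\mathrm{Ph}_{\mathrm{top}}(I[S, h_0, h_1, \eta]) = \mathrm{Rham}(\mathrm{Ph}_\lself(h_1) - \mathrm{Ph}_\lself(h_0))$, which coincides with $\mathrm{Rham}(R[S, h_0, h_1, \eta])$ after discarding the exact contribution $d\eta$.

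For (3), surjectivity of $I$ is immediate via $[S, h_0, h_1] \mapsto [S, h_0, h_1, 0]$. The inclusion $\mathrm{Im}(a) \subseteq \ker(I)$ is obvious; conversely, if $I[S, h_0, h_1, \eta] = 0$, the quotient-monoid definition $KO^A_+(X, Y) = M^A_+/Z^A_+$ furnishes triples $(T', k'_0, k'_1), (T'', k''_0, k''_1) \in Z^A_+(X, Y)$ with $(S, h_0, h_1) \oplus (T', k'_0, k'_1) \cong (T'', k''_0, k''_1)$ in $M^A_+(X,Y)$. Choosing homotopies $k'_I, k''_I$ (constant on $Y$) witnessing their $Z^A_+$-membership, the cocycle relations $[T', k'_0, k'_1, \mathrm{CS}_\lself(k'_I)] = 0 = [T'', k''_0, k''_1, \mathrm{CS}_\lself(k''_I)]$ hold in $\widehat{KO}^A_+(X,Y)$, and transporting the isomorphism through the additive structure yields $[S, h_0, h_1, \eta] = a(\eta + \mathrm{CS}_\lself(k'_I) - \mathrm{CS}_\lself(k''_I)) \in \mathrm{Im}(a)$.

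The crux is exactness at $\Omega^{4\Z + \type(A) - 1}(X, Y; \Ori(A))/\mathrm{Im}(d)$, where $\mathrm{Ph}_{\mathrm{top}}$ is the form-level lift sending $[\widetilde S, g_0, g_1] \in KO_+^{\Sigma^{0,1}A}$ to $\mathrm{Ph}_\lself(g_1) - \mathrm{Ph}_\lself(g_0)$ modulo exact (the degrees match because $\type(\Sigma^{0,1}A) = \type(A) - 1$). To show $a \circ \mathrm{Ph}_{\mathrm{top}} = 0$, I apply Proposition \ref{prop_int_Ph_m} to rewrite $\mathrm{Ph}_\lself(g_i) = \mathrm{CS}_\lself(\widetilde g_i)$ for the $A$-gradation homotopy $\widetilde g_i(x, \theta) = \beta\cos\pi\theta + g_i(x)\sin\pi\theta$ from $\beta$ to $-\beta$. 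The concatenation $c := \widetilde g_0 * \widetilde g_1^{-1}$ is a self-homotopy of $\beta$ on $X$ with $\mathrm{CS}_\lself(c) = -(\mathrm{Ph}_\lself(g_1) - \mathrm{Ph}_\lself(g_0))$; its restriction to $Y$ is the null-homotopic loop $\widetilde g_0|_Y * (\widetilde g_0|_Y)^{-1}$ (using $g_0|_Y = g_1|_Y$). Using the homotopy extension property of the CW-pair $(X, Y)$, I deform $c$ through self-homotopies of $\beta$ to a self-homotopy $k_I$ constant on $Y$; by \eqref{eq_CS_mod_exact} this modifies $\mathrm{CS}_\lself$ only by an exact form, so $[\widetilde S, \beta, \beta, \mathrm{CS}_\lself(k_I)] = 0 = [\widetilde S, \beta, \beta, 0]$ in $\widehat{KO}^A_+$, and subtracting gives $a(\mathrm{Ph}_\lself(g_1) - \mathrm{Ph}_\lself(g_0)) = 0$.

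Conversely, given $a(\eta) = 0$, unpacking $\widehat{M}/\widehat{Z}$ and identifying isomorphic triples produces two homotopies $k'_I, k''_I$ in $\Self^*_A(T)$ between common gradations $k_0, k_1$, both constant on $Y$, with $\eta \equiv \mathrm{CS}_\lself(k''_I) - \mathrm{CS}_\lself(k'_I)$ mod exact. Their concatenation $k_I := k'_I * (k''_I)^{-1}$ is a self-homotopy of $k_0$ constant on $Y$, so the triple $(T, k_I, k_0)$ represents a class in $KO_+^A(X \times I, X \times \partial I \cup Y \times I)$; by Fact \ref{fact_susp_twisted_+} this corresponds to some $\alpha \in KO_+^{\Sigma^{0,1}A}(X, Y)$. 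Since $\mathrm{Ph}_{\mathrm{top}}$ commutes with the suspension isomorphism (which is integration over $I$ at the level of cohomology) and $\int_I \mathrm{Ph}_\lself(k_I) = \mathrm{CS}_\lself(k_I) = -\eta$ mod exact, I conclude $\mathrm{Ph}_{\mathrm{top}}(\alpha) \equiv \eta$ mod exact, so $\eta \in \mathrm{Im}(\mathrm{Ph}_{\mathrm{top}})$. The main technical obstacle throughout (3) is the HEP-based modification in the $a \circ \mathrm{Ph}_{\mathrm{top}} = 0$ direction, which requires extending a null-homotopy of loops on $Y$ to all of $X$ compatibly with the self-homotopy structure of $c$.
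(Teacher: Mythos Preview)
Your proof is essentially correct, and your treatment of (1), (2), and the exactness at $\widehat{KO}_+^A(X,Y)$ parallels the paper's (the paper shortcuts your $\ker I\subseteq\mathrm{Im}(a)$ argument by saying ``it is enough to consider the case $(S,h_0,h_1)\in Z_+^A(X,Y)$'', but your more careful unpacking of the quotient monoid is equally valid).

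For exactness at $\Omega^{4\Z+\type(A)-1}/\mathrm{Im}(d)$ you take a genuinely different route. The paper introduces an auxiliary object $\widehat{KO}_+^A(I\times X,\overline{U})$, where $\overline{U}$ is a manifold neighborhood of the corner $\partial I\times X\cup I\times Y$ (needed because that corner is not a submanifold), and reformulates the composition $a\circ\Ph_{\mathrm{top}}$ as $a\circ\mathrm{Rham}\circ\int_I\circ R$ via the suspension square; both directions are then handled by producing explicit elements in this differential group on the cylinder. You instead bypass the cylinder-level differential group entirely: for $a\circ\Ph_{\mathrm{top}}=0$ you invoke Proposition~\ref{prop_int_Ph_m} to rewrite $\Ph_\lself(g_i)=\CS_\lself(\widetilde g_i)$ and then use the homotopy extension property; for the converse you pass to the topological group $KO_+^A(X\times I,\ldots)$ and use Fact~\ref{fact_susp_twisted_+} plus compatibility of $\Ph_{\mathrm{top}}$ with suspension. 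Your approach is arguably more direct and avoids the neighborhood $\overline{U}$, at the cost of relying on the form-level identity of Proposition~\ref{prop_int_Ph_m} and an HEP argument.

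Two minor technical points to tighten. First, Proposition~\ref{prop_int_Ph_m} is stated for $g_i\in\Self^\dagger_{\Sigma^{0,1}A}(S)$ (i.e.\ $g_i^2=1$), not for general $g_i\in\Self^*$; you should first replace $g_i$ by homotopic elements in $\Self^\dagger$ (a deformation retract), which alters $\Ph_\lself(g_i)$ only by exact forms. Second, the HEP you need is for the pair $(X\times I,\ X\times\partial I\cup Y\times I)$ rather than for $(X,Y)$: you must extend the null-homotopy of $c|_{Y\times I}$ to $X\times I$ while keeping the endpoints at $\beta$. This pair is still a CW-pair, so HEP holds, but the phrasing should be adjusted.
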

\begin{proof}
(1) is obvious. 
(2) follows from Corollary \ref{cor_Ph=Ph_top}. 
We prove (3). 
The exactness of \eqref{eq_prop_axiom_untwisted} at ${KO}_+^{A}(X, Y)$, i.e., the surjectivity of $I$, is obvious. 
We then check the exactness at $\Omega^{4\Z + \type(A) - 1}(X, Y; \Ori(A)) / \mathrm{Im}(d)$. 
We have the following commutative diagram. 
\begin{align}\label{diag_proof_axiom_1_triv}
    \xymatrix{
    KO_+^{A}(I \times X, \del I \times X \cup I \times Y)\ar[r]^-{\mathrm{Ph}_{\mathrm{top}}}\ar[d]^-{\mathrm{susp}}_-{\simeq} & H^{4\Z + \type(A)}(I \times X, \del I \times X \cup I \times Y; \Ori(A))\ar[d]^-{\mathrm{susp}}_-{\simeq} \\
    KO_+^{\Sigma^{0, 1}A}(X, Y)\ar[r]^-{\mathrm{Ph}_{\mathrm{top}}} & H^{4\Z + \type(A)-1}(X, Y; \Ori(A))
    }
\end{align}
We would like to apply (2) to the top row of the above diagram, but a technical point is that the subset $\del I \times X \cup I \times Y$ is not a manifold. 
So we choose smooth neighborhood $U$ of $\del I \times X \cup I \times Y$ in $I \times X$ which has a deformation retraction to $\del I \times X \cup I \times Y$. 
Then by (2) and the surjectivity of $I$, we have the following commutative diagram. 
\begin{align}\label{diag_proof_axiom_2_triv}
    \xymatrix{
     \widehat{KO}_+^{A}(I \times X, \overline{U})\ar[r]^-{R} \ar@{->>}[d]^I & \Omega_{\mathrm{clo}}^{4\Z + \type(A)}(I \times X, \overline{U}; \Ori(A)) \ar[d]^{\mathrm{Rham}} \\
    KO_+^{A}(I \times X, \overline{U})\ar[r]^-{\mathrm{Ph}_{\mathrm{top}}} \ar[d]^-{\simeq}& H^{4\Z + \type(A)}(I \times X, \overline{U}; \Ori(A)) \ar[d]^-{\simeq}\\
     KO_+^{A}(I \times X, \del I \times X \cup I \times Y)\ar[r]^-{\mathrm{Ph}_{\mathrm{top}}}& H^{4\Z + \type(A)}(I \times X, \del I \times X \cup I \times Y; \Ori(A))
    }
\end{align}
Thus, noting that the suspension in the de Rham cohomology is given by the integration $\int_I$ on closed forms, the desired exactness is equivalent to the exactness of the following sequence. 
\begin{align}\label{eq_proof_axioh_1_triv}
    \widehat{KO}_+^{A}(I \times X, \overline{U}) \xrightarrow{\mathrm{Rham} \circ \int_I \circ R}\Omega^{4\Z + \type(A) - 1}(X, Y; \Ori(A)) / \mathrm{Im}(d) \xrightarrow{a}\widehat{KO}_+^{A}(X, Y). 
\end{align}
Now we prove that the composition $a \circ\mathrm{Rham} \circ  \int_I \circ R$ in \eqref{eq_proof_axioh_1_triv} is zero. 
By the commutativity of the upper square of \eqref{diag_proof_axiom_2_triv}, the image under $\mathrm{Rham} \circ \int_I \circ R$ of any element in $\widehat{KO}_+^{A}(I \times X, \overline{U})$ only depends on its image in $KO_+^{A}(I \times X, \overline{U})$ under $I$. 
Moreover, by the excision, we know that the group $KO_+^{A}(I \times X, \overline{U})$ is generated by triples of the form 
\begin{align}
    (S, h_0, h_1), 
\end{align}
where $h_0, h_1$ are smooth and $h_0|_{I \times Y} = h_1|_{I \times Y}$ is {\it constant} in the $I$-direction, i.e., there exists a smooth map $h_Y \in C^\infty(Y, \Self_{A}^*( S))$ such that $h_i|_{\{t\} \times Y} = h_Y$ for all $t \in [0, 1]$, $i = 0, 1$. 
It is enough to prove that the element 
\begin{align*}
    [ S, h_0, h_1, 0] \in \widehat{KO}_+^{A}(I \times X, \overline{U})
\end{align*}
maps to zero under the composition \eqref{eq_proof_axioh_1_triv}. 
We have
\begin{align*}
    \int_I \circ R [ S, h_0, h_1, 0] = \int_I \mathrm{Ph}_\lself(h_1) - \int_I \mathrm{Ph}_\lself(h_0). 
\end{align*}
So we have
\begin{align*}
   a \circ \mathrm{Rham} \int_I \circ R [ S, h_0, h_1, 0] = \left[0, 0, 0,  \int_I \mathrm{Ph}_\lself(h_1) - \int_I \mathrm{Ph}_\lself(h_0)\right]. 
\end{align*}
To see that this element is zero, notice that, for each $i = 0, 1$ the following $\widehat{KO}^A_+$-cocycle is an element in $\widehat{Z}^{A}_+(X, Y)$, 
\begin{align*}
    [S, h_i|_{\{0\} \times X}, h_i|_{\{1\} \times X}, \mathrm{CS}_\lself(h_i)] \in \widehat{Z}^{A}_+(X, Y), 
\end{align*}
where $h_i \in C^\infty(I \times X, \Self^*_A(S))$ is regarded as a homotopy from $h_i|_{\{0\} \times X}\in C^\infty(X, \Self^*_A(S))$ to $h_i|_{\{1\} \times X} \in C^\infty(X, \Self^*_A(S))$. 
For each $i = 0, 1$ we have
\begin{align*}
    \mathrm{CS}_\lself(h_i) = \int_I \mathrm{Ph}_\lself(h_i). 
\end{align*}
Also notice that for $i = 0, 1$ we have
\begin{align*}
    h_0|_{\{i\} \times X} = h_1|_{\{i\} \times X}, 
\end{align*}
since $\del I \times X \subset \overline{U}$. 
Thus we get
\begin{align*}
&\left[0, 0, 0, \int_I \mathrm{Ph}_\lself(h_1) - \int_I \mathrm{Ph}_\lself(h_0)\right]\\
   & =[S, h_1|_{\{0\} \times X}, h_1|_{\{1\} \times X}, \mathrm{CS}_\lself(h_1)] - [ S, h_0|_{\{0\} \times X}, h_0|_{\{1\} \times X}, \mathrm{CS}_\lself(h_0)] \\
   &= 0, 
\end{align*}
as desired. 
This proves that the composition in \eqref{eq_proof_axioh_1_triv} is zero. 

To complete the proof of exactness of \eqref{eq_proof_axioh_1_triv}, take any element $\eta \in \Omega^{4\Z + \type(A) - 1}(X, Y; \Ori(A))/ \mathrm{Im}(d)$ such that $a(\eta) = [0, 0, 0,  \eta] = 0 $ in the group $\widehat{KO}_+^{A}(X, Y)$. 
This means that there exists an element
$x\in \widehat{Z}_+^{A}(X, Y)$ such that $x + a(\eta)$ is also an element in $\widehat{Z}_+^{A}(X, Y)$. 
This implies that, under the description $x = [S,  h_0, h_1, \mathrm{CS}_\lself(h_I)]$ where $h_I$ is a homotopy between $h_0$ and $h_1$, there exists another homotopy $h'_I$ between $h_0$ and $h_1$ such that 
\begin{align}\label{eq_proof_axiom_2_triv}
    \mathrm{CS}_\lself(h'_I) = \mathrm{CS}_\lself(h_I) + \eta \mod(\mathrm{Im}(d)). 
\end{align}
Notice that we have 
\begin{align*}
    h_I|_{\del I \times X \cup I \times Y} = h'_I|_{\del I \times X \cup I \times Y}. 
\end{align*}
We may replace each of $h_I$ and $h'_I$ by a homotopic one so that they satisfy 
\begin{align*}
    h_I|_{\overline{U}} = h'_I|_{\overline{U}}. 
\end{align*}
By \eqref{eq_CS_mod_exact}, the equality \eqref{eq_proof_axiom_2_triv} is preserved by this replacement. 
Now we have the element
\begin{align}\label{eq_proof_axiom_4_triv}
    [ S,  h'_I, h_I, 0] \in \widehat{KO}_+^{A}(I \times X, \overline{U}). 
\end{align}
We have
\begin{align}\label{eq_proof_axiom_3_triv}
    R[S,  h'_I, h_I, 0] = \mathrm{Ph}_\lself(h'_I)-\mathrm{Ph}_\lself(h_I). 
\end{align}
By \eqref{eq_proof_axiom_2_triv}, \eqref{eq_proof_axiom_3_triv} and the definition of $\mathrm{CS}_\lself$, we see that the element \eqref{eq_proof_axiom_4_triv} maps to $\eta$ under the left arrow in \eqref{eq_proof_axioh_1_triv}. 
This completes the proof of the exactness of \eqref{eq_prop_axiom_untwisted} at $\Omega^{4\Z + \type(A) - 1}(X, Y; \Ori(A)) / \mathrm{Im}(d) $. 

Finally we prove the exactness of \eqref{eq_prop_axiom_untwisted} at $\widehat{KO}_+^{A}(X, Y)$. 
The equality $I \circ a = 0$ is obvious. 
Take any element $[ S,  h_0, h_1, \eta] \in \widehat{KO}_+^{A}(X, Y)$ such that $I[ S,  h_0, h_1, \eta] = [S, h_0, h_1] = 0$ in ${KO}_+^{A}(X, Y)$. 
It is enough to consider the case $(S, h_0, h_1) \in Z_+^A(X, Y)$. 
Then there exists a homotopy $h_I$ between $h_0$ and $h_1$ which is constant on $Y$. 
Moreover, we can choose $h_I$ so that it is smooth. 
Then we have
\begin{align*}
    [ S,  h_0, h_1, \eta]
    = [ S,  h_0, h_1, \mathrm{CS}_\lself(h_I)] + a(\eta - \mathrm{CS}_\lself(h_I)) 
    = a(\eta - \mathrm{CS}_\lself(h_I)). 
\end{align*}
This completes the proof of the exactness of \eqref{eq_prop_axiom_untwisted} at $\widehat{KO}_+^{A}(X, Y)$ and finishes the proof of the theorem. 
\end{proof}

Thus, together with Fact \ref{fact_karoubi_KO} we get the following. 
\begin{thm}\label{thm_KO_hat_untwisted}
The quadruple $\left(\widehat{KO}^A_+, R, I, a\right)$ is a differential extension of the $KO$-theory $KO^A_+ \simeq KO^{\type(A)}$ on $\mathrm{MfdPair}_f$. 
\end{thm}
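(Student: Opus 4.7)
The plan is essentially to assemble what has already been established. The Bunke-Schick definition of a differential extension of a multiplicative generalized cohomology theory $E$ on the category of manifold pairs consists of a contravariant functor $\widehat{E}^*$ to abelian groups together with three natural transformations $R$ (curvature), $I$ (underlying topological class), and $a$ (action of forms modulo exact ones), subject to three axioms: (i) $R \circ a = d$; (ii) the square relating $R$ and $I$ via the Chern--Dold homomorphism commutes; and (iii) an exact sequence
\[
E^{n-1}(X,Y) \otimes \R \;\longrightarrow\; \Omega^{n-1}(X,Y)/\mathrm{Im}(d) \;\xrightarrow{a}\; \widehat{E}^n(X,Y) \;\xrightarrow{I}\; E^n(X,Y) \;\longrightarrow\; 0
\]
holds, where the leftmost map factors through the Chern--Dold image. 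The functoriality of $\widehat{KO}_+^A$ on $\mathrm{MfdPair}_f$ and the naturality of $R$, $I$, $a$ are immediate from their definitions.

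First I would observe that the identification $KO_+^A \simeq KO^{\type(A)}$ is already given by Fact \ref{fact_karoubi_KO}, so the relevant ``topological floor'' is correctly realized. Next I would verify each axiom in turn by appealing to Theorem \ref{thm_axiom_diffcoh_untwisted_KO}: axiom (i) is part (1), axiom (ii) is part (2), and axiom (iii) is part (3). Note that the domain of the transition map in axiom (iii), namely $KO_+^{\Sigma^{0,1}A}(X,Y)$, corresponds via the suspension isomorphism (Fact \ref{fact_susp_twisted_+}) to $KO_+^A$ shifted by one degree, so that the Pontryagin character hits exactly $\Omega^{4\Z + \type(A)-1}(X,Y;\Ori(A))/\mathrm{Im}(d)$, matching the degree conventions we have fixed.

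Since all three axioms were established in Theorem \ref{thm_axiom_diffcoh_untwisted_KO} with full proofs (including the exact sequence at all three positions), the only remaining task is to translate from ``the axioms of Bunke-Schick hold'' to the statement ``$(\widehat{KO}_+^A, R, I, a)$ is a differential extension.'' In effect the theorem is a naming: there is no computation left to do. I do not anticipate any genuine obstacle; the only minor point to watch is that when the Bunke-Schick axioms are stated for a \emph{multiplicative} differential extension, one sometimes needs to check $S^1$-integration/compatibility with the product, but for the present statement (existence of a differential extension, not of a multiplicative one) these do not enter, and the cited theorem supplies exactly what is needed.
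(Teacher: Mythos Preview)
Your proposal is correct and mirrors the paper's own argument: the paper simply states that Theorem \ref{thm_KO_hat_untwisted} follows from Theorem \ref{thm_axiom_diffcoh_untwisted_KO} together with Fact \ref{fact_karoubi_KO}, exactly as you outline.
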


Let $V := V^0 \oplus V^1$ be a $\Z_2$-graded real vector space with a positive definite inner product on each $V^i$. 
We denote the $\Z_2$-grading operator on $V$ by $\gamma_V$. 
Since the algebra $\End(V)$ is negligible, 
Theorem \ref{thm_KO_hat_untwisted} in particular means that $\widehat{KO}_+^{A}$ and $\widehat{KO}_+^{\End(V) \widehat{\otimes}A}$ are both differential extensions of $KO^{\type(A)}$. 
Actually, we have a canonical isomorphism $\widehat{KO}_+^{A} \simeq \widehat{KO}_+^{\End(V) \widehat{\otimes}A}$ as follows. 
\begin{prop}\label{prop_isom_negligible_untwisted}
In the above settings, we have a natural isomorphism
\begin{align}\label{eq_isom_negligible_hat_untwisted}
    \widehat{KO}^{A}_+ \simeq \widehat{KO}^{\mathrm{End}(V) \widehat{\otimes}A}_+
\end{align}
which is compatible with the corresponding isomorphisms of other functors appearing in Definition \ref{def_str_hom_untwisted} via the structure homomorphisms (under the isomorphism $\Ori(A) \simeq \Ori(\End(V) \widehat{\otimes} A)$ given by $u \mapsto \gamma_V \widehat{\otimes} u$). 
\end{prop}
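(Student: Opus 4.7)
The plan is to lift the topological isomorphism of Fact \ref{fact_isom_negligible} (given by the assignment \eqref{eq_isom_negligible_map}) to the differential level. Define a map at the cocycle level by
\begin{align*}
    \Phi \colon \widehat{M}^A_+(X,Y) &\to \widehat{M}^{\mathrm{End}(V)\widehat{\otimes}A}_+(X,Y), \\
    [S, h_0, h_1, \eta] &\mapsto [V \otimes S, \psi^V(h_0), \psi^V(h_1), \eta],
\end{align*}
where we use the identification $\Ori(A) \simeq \Ori(\End(V)\widehat{\otimes}A)$ via $u \mapsto \gamma_V \widehat{\otimes} u$ to regard $\eta$ as taking values in $\Ori(\End(V)\widehat{\otimes}A)$ (the identification is consistent with the degree shift, since $\End(V)$ is of type $0$ in $\Z_8$, so $\type(\End(V)\widehat{\otimes}A) = \type(A)$). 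Well-definedness on isomorphism classes is immediate from the naturality of $\psi^V$.

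The first main step is to show that $\Phi$ descends to $\widehat{KO}^A_+(X,Y)$, i.e.\ sends $\widehat{Z}^A_+$ into $\widehat{Z}^{\End(V)\widehat{\otimes}A}_+$. Given a trivial cocycle $[S, h_0, h_1, \mathrm{CS}_\lself(h_I)]$ with $h_I$ a smooth homotopy constant on $Y$, its image is $[V \otimes S, \psi^V(h_0), \psi^V(h_1), \mathrm{CS}_\lself(h_I)]$; and $\psi^V(h_I)$ is a corresponding homotopy. By Lemma \ref{lem_Ph_negligible_triv} applied to the family over $(0,\infty) \times I \times X$, we have $\mathrm{Ph}_\lself(\psi^V(h_I)) = \mathrm{Ph}_\lself(h_I)$, and integrating in the $I$-direction gives $\mathrm{CS}_\lself(\psi^V(h_I)) = \mathrm{CS}_\lself(h_I)$, so the image is indeed in $\widehat{Z}^{\End(V)\widehat{\otimes}A}_+$. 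Additivity and naturality in $(X,Y)$ are clear from the definitions.

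Next I would verify compatibility with the structure homomorphisms of Definition \ref{def_str_hom_untwisted}: compatibility with $R$ is again Lemma \ref{lem_Ph_negligible_triv}; compatibility with $I$ is precisely the content of the proof of Fact \ref{fact_isom_negligible}, namely \eqref{eq_isom_negligible_map}; and compatibility with $a$ is immediate since $a$ only touches the form component.

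Finally, to prove $\Phi$ is an isomorphism, I would perform a diagram chase against the exact sequence of Theorem \ref{thm_axiom_diffcoh_untwisted_KO}(3) and its analogue for $\End(V)\widehat{\otimes}A$. On $KO^A_+$ and $KO^{\Sigma^{0,1}A}_+$, the corresponding horizontal maps are the topological isomorphisms of Fact \ref{fact_isom_negligible}, while on $\Omega^{4\Z+\type(A)-1}/\mathrm{Im}(d)$ the map is the identity under the identification of orientation bundles. Surjectivity of $\Phi$ then follows by lifting topologically and correcting the residual difference by $a$ applied to a form; injectivity follows because $\Phi(\hat{x}) = 0$ forces $I(\hat{x}) = 0$, hence $\hat{x} = a(\omega)$ for some $\omega$, and then the exact-sequence analysis on the other side shows $\omega$ lies in the image of $\mathrm{Ph}_{\mathrm{top}}$ on $KO^{\Sigma^{0,1}A}_+$, so $\hat{x} = 0$ already in $\widehat{KO}^A_+$. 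The main thing to watch carefully is the bookkeeping of the identification $\Ori(A) \simeq \Ori(\End(V)\widehat{\otimes}A)$ and the matching of types mod $8$, so that all vertical arrows in the five-lemma diagram really are the identity on the de Rham side and the stated isomorphisms topologically; modulo this bookkeeping, the proof is a standard uniqueness argument for differential extensions.
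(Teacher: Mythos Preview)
Your proof is correct and follows essentially the same approach as the paper: define the map on cocycles via $\psi^V$, use the invariance of the Pontryagin character forms under tensoring by negligible modules to get well-definedness and compatibility with $R$, $I$, $a$, and then conclude by a five-lemma argument against the exact sequence of Theorem~\ref{thm_axiom_diffcoh_untwisted_KO}(3). The only cosmetic differences are that the paper cites the more general Lemma~\ref{lem_Ph_spin} (which specializes to your Lemma~\ref{lem_Ph_negligible_triv} in the untwisted case) and invokes the five lemma in one line rather than spelling out the diagram chase.
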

\begin{proof}
The natural isomorphism \eqref{eq_isom_negligible_hat_untwisted} on an object $(X, Y)$ in $\mathrm{MfdPair}_f$ is given by the following refinement of the untwisted version of the map \eqref{eq_isom_negligible_map}, 
\begin{align}\label{eq_isom_negligible_map_hat_untwisted}
    [S,  h_0, h_1, \eta] \mapsto [E \otimes  S, \psi^E(h_0), \psi^E(h_1), \eta]. 
\end{align}
The well-definedness and the compatibility with the structure maps follow from Lemma \ref{lem_Ph_spin}. 
By the exactness of \eqref{eq_prop_axiom_untwisted} and the five lemma, we see that \eqref{eq_isom_negligible_map_hat_untwisted} defines an isomorphism, as desired. 
\end{proof}

In particular we get
\begin{cor}\label{cor_KO_degree_classification_hat}
The isomorphism class of the differential extension \[\left(\widehat{KO}_+^{A}, R, I, a\right)\] of $KO^{\type(A)}$ only depends on the type of $A$.  
\end{cor}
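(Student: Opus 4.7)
The plan is to reduce the corollary to Proposition \ref{prop_isom_negligible_untwisted} by invoking the classification of simple central graded $*$-algebras modulo negligible ones. First, I will observe that the construction of the quadruple $(\widehat{KO}_+^{A}, R, I, a)$ is visibly functorial in $A$ with respect to isomorphisms of simple central graded $*$-algebras: any isomorphism $\phi \colon A \xrightarrow{\sim} A'$ transports an $A$-module $S$ with inner product to an $A'$-module with inner product (via $\phi^{-1}$), preserves $\Self^*$, commutes with the formation of $\mathrm{Ph}_\lself$ and $\mathrm{CS}_\lself$ (since these are defined intrinsically from the Clifford action), and thus induces an isomorphism of differential extensions compatible with $R$, $I$, $a$ (using the canonical identification $\Ori(A) \simeq \Ori(A')$ induced by $\phi$).

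Next, I would use the classification \eqref{eq_isom_GBrO} specialized to $X = \pt$: since $HO(\pt) = 0$, we have $\mathrm{GBrO}(\pt) \simeq H^0(\pt; \Z_8) = \Z_8$, so two simple central graded $*$-algebras $A, A'$ of the same type represent the same class in the graded Brauer group. By definition of this group, there exist $\Z_2$-graded real vector spaces $V = V^0 \oplus V^1$ and $V' = (V')^0 \oplus (V')^1$, each equipped with positive definite inner products on the homogeneous components, and an isomorphism
\begin{align*}
\phi \colon \End(V) \widehat{\otimes} A \xrightarrow{\sim} \End(V') \widehat{\otimes} A'
\end{align*}
of simple central graded algebras. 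By Lemma \ref{lem_star_aut_A}, we may homotope $\phi$ within $\mathrm{Aut}_{\Z_2}$ to lie inside $\mathrm{Aut}_{*, \Z_2}$, i.e., we may assume $\phi$ is $*$-preserving.

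Then, assembling the pieces, I would form the chain of natural isomorphisms
\begin{align*}
\widehat{KO}_+^{A} \xrightarrow[\simeq]{\text{Prop.\ \ref{prop_isom_negligible_untwisted}}} \widehat{KO}_+^{\End(V) \widehat{\otimes} A} \xrightarrow[\simeq]{\phi_*} \widehat{KO}_+^{\End(V') \widehat{\otimes} A'} \xrightarrow[\simeq]{\text{Prop.\ \ref{prop_isom_negligible_untwisted}}} \widehat{KO}_+^{A'},
\end{align*}
each of which is compatible with $R$, $I$, and $a$ (under the induced identifications of $\Ori$ which compose to the canonical one between $\Ori(A)$ and $\Ori(A')$ for algebras of the same type). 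Composing gives the desired isomorphism of differential extensions.

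There is no serious obstacle here; the only point requiring care is the $*$-compatibility of the intermediate isomorphism $\phi$, which is exactly what Lemma \ref{lem_star_aut_A} provides, and the bookkeeping of orientation bundles, which is straightforward since $\Ori(\End(V) \widehat{\otimes} A) \simeq \Ori(A)$ via $\gamma_V \widehat{\otimes} u \mapsto u$.
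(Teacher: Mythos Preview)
Your plan is correct and matches the paper's (entirely implicit) argument: the paper simply writes ``In particular we get'' after Proposition \ref{prop_isom_negligible_untwisted}, so the intended proof is exactly the Brauer-group reduction you sketch.

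One small technical slip: Lemma \ref{lem_star_aut_A} concerns the inclusion $\mathrm{Aut}_{*, \Z_2}(A) \hookrightarrow \mathrm{Aut}_{\Z_2}(A)$ of \emph{automorphism} groups, whereas your $\phi$ is an isomorphism between two distinct algebras, so ``homotope $\phi$ within $\mathrm{Aut}_{\Z_2}$'' is not literally well-posed. The cleanest fix is to bypass the lemma entirely: by the paper's convention, a simple central graded $*$-algebra is \emph{by definition} one isomorphic (as a $\Z_2$-graded $*$-algebra) to a standard entry in Table \ref{table:gscR}, and those entries are classified by type and size. Since $\End(V)\widehat{\otimes}A$ and $\End(V')\widehat{\otimes}A'$ are isomorphic as $\Z_2$-graded algebras, they have the same type and size, hence are already isomorphic as $\Z_2$-graded $*$-algebras --- no deformation needed. (Alternatively, fix any $*$-preserving reference isomorphism to the standard model and then apply Lemma \ref{lem_star_aut_A} to the resulting automorphism.) Also, drop the word ``canonical'' when speaking of the identification $\Ori(A)\simeq\Ori(A')$: there is no canonical one (volume elements are determined only up to sign), but any choice yields an isomorphism of differential extensions, which is all the corollary asserts.
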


\subsection{The twisted groups \texorpdfstring{$\widehat{KO}^{\mathcal{A}}_+$}{}}\label{subsec_twisted_KO_hat}
In this subsection we give a model $\widehat{KO}^\mathcal{A}_+$ for the twisted differential $KO$-theory groups, by refining Definition \ref{def_karoubi_twisted_KO}.  
First we introduce the category $\Tw^2_{\widehat{KO}_+}$ of twists on $\widehat{KO}_+$. 

\begin{defn}[{$\hatTw$}]\label{def_twist_cat_hat_KO_+}
\begin{enumerate}
    \item For each object $X \in \mathrm{Mfd}_f$, we define $\Tw_{\widehat{KO}_+, X}$ to be the groupoid of smooth bundles of nondegenerate simple central graded $*$-algebras $\mathcal{A}$ over $X$, where morphisms are ismorphisms between such bundles. 
    \item For each morphism $f \colon X \to X'$ in $\mathrm{Mfd}_f$, we define a functor $f^* \colon \Tw_{\widehat{KO}_+, X'} \to \Tw_{\widehat{KO}_+, X}$ by the pullback. 
    \item We define the category $\hatTw$ such that an object $(X, Y, \mathcal{A})$ consists of $(X, Y) \in \mathrm{MfdPair}_f$ and $\mathcal{A} \in \Tw_{\widehat{KO}_+, X}$ and a morphism from $(X, Y, \mathcal{A})$ to $(X', Y', \mathcal{A}')$ consists of a morphism $f \colon (X, Y) \to (X', Y')$ in $\mathrm{MfdPair}_f$ and an isomorphism $\mathcal{A} \simeq f^*\mathcal{A}'$. 
\end{enumerate}
\end{defn}

We have the forgetful functor
\begin{align*}
   \hatTw \to \Tw_{KO_+}^2 . 
\end{align*}

\begin{defn}
Let $(X, Y, \mathcal{A}) \in \hatTw$. 
\begin{itemize}
    \item A {\it $\widehat{KO}_+$-cocycle} $(\slashed S, \nabla^{\mathcal{A}}, \nabla^{\slashed S}, h_0, h_1, \eta)$ on $(X, Y, \mathcal{A})$ consists of an $\mathcal{A}$-module bundle $\slashed S$ with an inner product, a connection $\nabla^{\mathcal{A}}$ on $\mathcal{A}$, a self-adjoint $\mathcal{A}$-connnection $\nabla^{\slashed S}$ on $\slashed S$ compatible with $\nabla^{\mathcal{A}}$, two smooth sections $h_0, h_1 \in C^\infty( X ; \mathrm{Self}_{\mathcal{A}}^*(\slashed S))$ such that $h_0|_Y = h_1|_Y$, and an element
    $
        \eta \in \Omega^{4\Z + \type(\mathcal{A}) - 1}(X, Y; \Ori(\mathcal{A})) / \mathrm{Im}(d) $. 
    \item Two $\widehat{KO}_+$-cocycle $(\slashed S, \nabla^{\mathcal{A}}, \nabla^{\slashed S}, h_0, h_1, \eta)$ and $(\slashed S',(\nabla^{\mathcal{A}})', \nabla^{\slashed S'}, h'_0, h'_1, \eta')$ are {\it isomorphic}
    if we have $\nabla^{\mathcal{A}} = (\nabla^{\mathcal{A}})'$ and $\eta = \eta'$, and there exists an isometric isomorphism of smooth $\mathcal{A}$-module bundles $f \colon \slashed S \simeq \slashed S'$ such that $f^* \nabla^{\slashed S'} = \nabla^{\slashed S}$ and $f\circ h_i = h'_i \circ f$ for $i = 0, 1$. 
\end{itemize}
\end{defn}

\begin{defn}[{$\widehat{KO}_+^{\mathcal{A}}(X, Y)$}]\label{def_twisted_KO_hat_m}
Let $(X, Y, \mathcal{A}) \in \hatTw$. 
\begin{itemize}
    \item
We introduce an abelian monoid structure on the set $\widehat{M}^{\mathcal{A}}_+(X, Y)$ of isomorphism classes of $\widehat{KO}_+$-cocycles $(\slashed S, \nabla^{\mathcal{A}}, \nabla^{\slashed S}, h_0, h_1, \eta)$ by
\begin{align*}
    &[\slashed S, \nabla^{\mathcal{A}}, \nabla^{\slashed S}, h_0, h_1, \eta] + [\slashed S',(\nabla^{\mathcal{A}})', \nabla^{\slashed S'}, h'_0, h'_1, \eta']\\
   & \quad = [\slashed S \oplus \slashed S', \nabla^{\mathcal{A}} \oplus (\nabla^{\mathcal{A}})',\nabla^{\slashed S} \oplus \nabla^{\slashed S'} , h_0 \oplus h'_0,h_1 \oplus h'_1, \eta +\eta']. 
\end{align*}
\item We define $\widehat{Z}^{\mathcal{A}}_+(X, Y)$ to be the submonoid of $\widehat{M}^{\mathcal{A}}_+(X, Y)$ consisting of elements of the form
\begin{align*}
    [\slashed S, \nabla^{\mathcal{A}}, \nabla^{\slashed S}, h_0, h_1, \mathrm{CS}_\lself(h_I; \nabla^{\mathcal{A}}, \nabla^{\slashed S})], 
\end{align*}
where $h_I$ is a smooth homotopy between $h_0$ and $h_1$ which is constant on $Y$, i.e., a smooth section $h_I \in C^\infty(I \times X; \mathrm{pr}_X^*\Self^*_{\mathcal{A}}(\slashed S))$ with $h_I|_{\{i\} \times X} = h_i$ for $i = 0, 1$ and $h_I|_{\{t\} \times Y} = h_0|_Y$ for all $t \in I$ (see \eqref{eq_CS_relative}). 
\item We define $\widehat{KO}_+^{\mathcal{A}}(X, Y):=\widehat{M}^{\mathcal{A}}_+(X, Y) / \widehat{Z}^{\mathcal{A}}_+(X, Y)$. 
\end{itemize}
\end{defn}


\begin{lem}
$\widehat{KO}_+^{\mathcal{A}}(X, Y)$ is an abelian group. 
\end{lem}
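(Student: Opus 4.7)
The plan is to mimic the argument of Lemma \ref{lem_untwisted_additive_inverse} (the untwisted case) while carefully handling the extra connection data that must be supplied in a $\widehat{KO}_+$-cocycle.

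First I would construct the candidate inverse. Given a cocycle $(\slashed S, \nabla^{\mathcal{A}}, \nabla^{\slashed S}, h_0, h_1, \eta)$, consider the doubled $\mathcal{A}$-module bundle $\slashed S \oplus \slashed S$ with the diagonal connection $\nabla^{\slashed S} \oplus \nabla^{\slashed S}$, which is again a self-adjoint $\mathcal{A}$-connection compatible with $\nabla^{\mathcal{A}}$. I would then exhibit an explicit smooth homotopy
\[
    h_I \in C^\infty(I \times X; \mathrm{pr}_X^*\Self_{\mathcal{A}}^*(\slashed S \oplus \slashed S))
\]
from $h_0 \oplus h_1$ to $h_1 \oplus h_0$ which is constant along $I \times Y$, using the standard rotation in the two-dimensional external factor:
\[
    R_t := \begin{pmatrix} \cos(\pi t/2) & -\sin(\pi t/2) \\ \sin(\pi t/2) & \cos(\pi t/2) \end{pmatrix}, \qquad h_I(t, x) := R_t \cdot (h_0(x) \oplus h_1(x)) \cdot R_t^{-1}.
\]
Conjugation by $R_t$ acts on the external $\R^2$-factor only, so it preserves the inner product, commutes with the $\mathcal{A}$-action, and preserves both self-adjointness and invertibility; hence $h_I$ lands in $\Self_{\mathcal{A}}^*(\slashed S \oplus \slashed S)$. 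Since $h_0|_Y = h_1|_Y$, the restriction $(h_0 \oplus h_1)|_Y$ equals $h_0|_Y \oplus h_0|_Y$, a scalar on the external $\R^2$-factor, and is therefore fixed by the conjugation; so $h_I$ is constant on $I \times Y$ as required.

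By the definition of $\widehat{Z}^{\mathcal{A}}_+(X, Y)$ this yields the relation
\[
    \left[\slashed S \oplus \slashed S,\, \nabla^{\mathcal{A}},\, \nabla^{\slashed S} \oplus \nabla^{\slashed S},\, h_0 \oplus h_1,\, h_1 \oplus h_0,\, \mathrm{CS}_\lself(h_I;\nabla^{\mathcal{A}}, \nabla^{\slashed S} \oplus \nabla^{\slashed S})\right] = 0
\]
in $\widehat{KO}^{\mathcal{A}}_+(X, Y)$, where the Chern-Simons form lies in the correct relative space by \eqref{eq_CS_relative}. Reading this equality through the monoid addition, we obtain
\[
    -[\slashed S, \nabla^{\mathcal{A}}, \nabla^{\slashed S}, h_0, h_1, \eta] = \left[\slashed S, \nabla^{\mathcal{A}}, \nabla^{\slashed S}, h_1, h_0, -\eta + \mathrm{CS}_\lself(h_I;\nabla^{\mathcal{A}}, \nabla^{\slashed S} \oplus \nabla^{\slashed S})\right],
\]
so every class admits an additive inverse and $\widehat{KO}^{\mathcal{A}}_+(X, Y)$ is a group.

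The routine verifications are that $h_I$ is smooth and takes values in $\Self_{\mathcal{A}}^*$, which follow from the observations above, and that the sum of the two cocycles on the right-hand side of the inverse formula is indeed the trivial cocycle exhibited above, which is a direct unpacking of the monoid structure in Definition \ref{def_twisted_KO_hat_m}. The only genuine subtlety, and what I expect to be the main point requiring care, is ensuring that the rotation homotopy respects all the structural constraints simultaneously (bundle action, inner product, self-adjointness, invertibility, and constancy on $Y$); this is exactly why the \emph{external} rotation $R_t$ on the two-dimensional summand decomposition is the natural choice rather than an internal homotopy inside $\mathrm{End}_{\mathcal{A}}(\slashed S)$, which need not connect $h_0 \oplus h_1$ to $h_1 \oplus h_0$ through invertible self-adjoint sections when $h_0 \neq h_1$.
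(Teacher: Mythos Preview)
Your proposal is correct and follows the same approach as the paper: double the bundle, observe that $h_0\oplus h_1$ and $h_1\oplus h_0$ are homotopic relative to $Y$ through invertible self-adjoint sections, and read off the inverse from the resulting element of $\widehat{Z}^{\mathcal{A}}_+(X,Y)$. The paper's proof is terser---it simply invokes the untwisted case and records the resulting formula without writing down the rotation homotopy---whereas you supply the explicit $R_t$ and verify the relevant compatibilities; this extra detail is harmless and arguably clarifying.
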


\begin{proof}
The proof is similar to the untwisted case (Lemma \ref{lem_untwisted_additive_inverse}). 
The additive inverse of an element $[\slashed S, \nabla^{\mathcal{A}}, \nabla^{\slashed S}, h_0, h_1, \eta] \in \widehat{KO}_+^{\mathcal{A}}(X, Y)$ is given by
\begin{multline} \label{eq_twisted_diff_KO_additive_inv}
    -[\slashed S, \nabla^{\mathcal{A}}, \nabla^{\slashed S}, h_0, h_1, \eta] \\
    = \left[\slashed S,\nabla^{\mathcal{A}}, \nabla^{\slashed S}, h_1 , h_0, -\eta + \mathrm{CS}_\lself\left(h_I; \nabla^{\mathcal{A}} \oplus  \nabla^{\mathcal{A}}, \nabla^{\slashed S}\oplus \nabla^{\slashed S}\right)\right]. 
\end{multline}
Here $h_I$ is a homotopy between $h_0 \oplus h_1$ and $h_1 \oplus h_0$ on $\slashed S \oplus \slashed S$. 
\end{proof}

Thus we get the functor
\begin{align}\label{eq_hat_KO_+_functor}
    \widehat{KO}_+ \colon \hatTw \to \mathrm{Ab}. 
\end{align}

The structure homomorphisms in the twisted case are given as follows. 

\begin{defn}[{Structure homomorphisms for $\widehat{KO}^\mathcal{A}_+$}]\label{def_str_hom}
We define the following structure homomorphisms, which are natural in $(X, Y, \mathcal{A}) \in \hatTw$. 
\begin{align*}
        R &\colon \widehat{KO}_+^\mathcal{A}(X, Y) \to \Omega^{4\Z + \type(\mathcal{A})}_{\mathrm{clo}}(X, Y; \Ori(\mathcal{A})) \\
       &[\slashed S, \nabla^\mathcal{A}, \nabla^{\slashed S}, h_0, h_1, \eta] \mapsto \mathrm{Ph}_\lself(h_1; \nabla^\mathcal{A}, \nabla^{\slashed S}) - \mathrm{Ph}_\lself(h_0; \nabla^\mathcal{A}, \nabla^{\slashed S}) + d\eta.\\
        I &\colon \widehat{KO}_+^\mathcal{A}(X, Y) \to KO_+^\mathcal{A}(X, Y) \\
       & [\slashed S, \nabla^\mathcal{A}, \nabla^{\slashed S}, h_0, h_1, \eta] \mapsto [\slashed S, h_0, h_1]. \\
        a &\colon \Omega^{4\Z + \type(\mathcal{A}) - 1}(X, Y; \Ori(\mathcal{A})) / \mathrm{Im}(d) \to \widehat{KO}_+^\mathcal{A}(X, Y)  \\
       & \eta \mapsto [0, 0, 0, 0, 0, \eta]. 
\end{align*}
The well-definedness of $R$ follows from \eqref{eq_Ph=dCS} and \eqref{eq_difference_Ph_closed}. 
The well-definedness of $a$ and $I$ is obvious. 
\end{defn}

Recall that, by \eqref{eq_connection_action}, 
$\Omega^1(X; \mathcal{A}^0_{\mathrm{skew}})$ acts on the set of pairs $(\nabla^\mathcal{A}, \nabla^{\slashed S})$. 
\begin{lem}\label{lem_twisted_KO_connection_V}
For any $KO^\mathcal{A}_+$-cycle $(\slashed S, \nabla^\mathcal{A}, \nabla^{\slashed S}, h_0, h_1, \eta)$ and any element $C \in \Omega^1(X; \mathcal{A}^0_{\mathrm{skew}})$, we have the following equality in $\widehat{KO}_+^\mathcal{A}(X, Y)$. 
\begin{align*}
    [\slashed S, \nabla^\mathcal{A}, \nabla^{\slashed S}, h_0, h_1, \eta]
    = [\slashed S, \nabla^\mathcal{A} + C, \nabla^{\slashed S} + C, h_0, h_1, \eta].
\end{align*}
In particular, if we choose and fix a connection $\nabla^\mathcal{A}$ on $\mathcal{A}$, then any element in $\widehat{KO}^\mathcal{A}_+$ is represented by a $\widehat{KO}_+$-cocycle of the form $(\slashed S, \nabla^\mathcal{A}, \nabla^{\slashed S}, h_0, h_1, \eta)$. 
\end{lem}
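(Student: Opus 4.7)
The plan is to interpolate between the two connection pairs by a cylinder construction and appeal to Lemma \ref{lem_change_connection_Ph}. Writing $\pi \colon I \times X \to X$ for the projection with coordinate $s \in I$, I will define on the pullback bundle $\pi^*\mathcal{A}$ the connection
\begin{align*}
    \widetilde\nabla^\mathcal{A} := \pi^*\nabla^\mathcal{A} + \{s\,\pi^*C, -\}
\end{align*}
and on $\pi^*\slashed S$ the compatible $\pi^*\mathcal{A}$-connection
\begin{align*}
    \widetilde\nabla^{\slashed S} := \pi^*\nabla^{\slashed S} + s\,\pi^*C,
\end{align*}
whose restrictions to $\{0\}\times X$ and $\{1\}\times X$ recover the two pairs $(\nabla^\mathcal{A},\nabla^{\slashed S})$ and $(\nabla^\mathcal{A}+\{C,-\},\nabla^{\slashed S}+C)$ in the statement. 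This assembles into an interpolating $\widehat{KO}_+$-cocycle
\begin{align*}
    \widetilde\zeta := (\pi^*\slashed S, \widetilde\nabla^\mathcal{A}, \widetilde\nabla^{\slashed S}, \pi^*h_0, \pi^*h_1, \pi^*\eta)
\end{align*}
on $(I\times X, I\times Y, \pi^*\mathcal{A})$ whose boundary restrictions $i_0^*\widetilde\zeta$ and $i_1^*\widetilde\zeta$ are precisely the two cocycles to be compared.

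The decisive input comes from Lemma \ref{lem_change_connection_Ph} applied on the cylinder: since $s\,\pi^*C$ lies in $\Omega^1(I\times X;(\pi^*\mathcal{A})^0_{\mathrm{skew}})$, I obtain
\begin{align*}
    \mathrm{Ph}_\lself(\pi^*h; \widetilde\nabla^\mathcal{A}, \widetilde\nabla^{\slashed S}) = \mathrm{Ph}_\lself(\pi^*h; \pi^*\nabla^\mathcal{A}, \pi^*\nabla^{\slashed S}) = \pi^*\mathrm{Ph}_\lself(h;\nabla^\mathcal{A},\nabla^{\slashed S})
\end{align*}
for all $h\in C^\infty(X;\Self^*_\mathcal{A}(\slashed S))$, and the analogous identity holds for the Chern--Simons forms of pullback homotopies $\pi^*h_I$. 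Consequently all characteristic forms associated to $\widetilde\zeta$ are pullbacks from $X$ and carry no $ds$-component. The equality of the two classes then follows from the standard bordism formula
\begin{align*}
    [i_1^*\widetilde\zeta] - [i_0^*\widetilde\zeta] = a\Bigl(\int_I R(\widetilde\zeta)\Bigr),
\end{align*}
since $R(\widetilde\zeta)=\pi^*R(\zeta_0)$ is a $\pi$-pullback and the fiber integral therefore vanishes.

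The main obstacle is that the bordism formula displayed above, though a standard consequence of the differential-extension axioms, is not established in the present excerpt: it requires the twisted analog of Theorem \ref{thm_axiom_diffcoh_untwisted_KO} for $\widehat{KO}^\mathcal{A}_+$, which is not yet available at this stage of the paper. The underlying reason that a more elementary cocycle-level argument is unavailable is that the abelian-monoid addition on $\widehat M^\mathcal{A}_+$ requires the $\mathcal{A}$-connections of the two summands to match for the direct-sum connection $\nabla^\mathcal{A}\oplus(\nabla^\mathcal{A})'$ to be well-defined on $\mathcal{A}$, so the putative difference $[\zeta_0]-[\zeta_1]$ cannot be realized directly as a single cocycle. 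The cylinder construction is the natural way to circumvent this, and the conclusion ultimately rests on the connection-invariance of the Pontryagin and Chern--Simons forms supplied by Lemma \ref{lem_change_connection_Ph}. The final sentence of the statement, asserting that any element of $\widehat{KO}^\mathcal{A}_+$ admits a representative with any prescribed $\nabla^\mathcal{A}$, then follows by applying the first part to $C=\nabla^\mathcal{A}_{\mathrm{given}}-\nabla^\mathcal{A}_{\mathrm{original}}\in\Omega^1(X;\mathcal{A}^0_{\mathrm{skew}})$.
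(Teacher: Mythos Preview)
The gap you flag is real and fatal to the argument as written: the bordism formula $[i_1^*\widetilde\zeta]-[i_0^*\widetilde\zeta]=a\bigl(\int_I R(\widetilde\zeta)\bigr)$ is a consequence of the twisted differential-extension axioms (Theorem \ref{thm_axiom_diffcoh_KO}), which are proved only \emph{after} this lemma in the paper, so the reasoning is circular.

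More importantly, your premise that a cocycle-level argument is unavailable rests on a misreading of Definition \ref{def_twisted_KO_hat_m}: the monoid addition on $\widehat M^\mathcal{A}_+$ does \emph{not} require the two $\mathcal{A}$-connections to coincide; the sum is recorded with the data $\nabla^\mathcal{A}\oplus(\nabla^\mathcal{A})'$. The paper exploits exactly this and gives the direct argument you thought impossible. Using the additive-inverse formula \eqref{eq_twisted_diff_KO_additive_inv}, one writes the difference of the two classes as the single cocycle
\[
\bigl[\slashed S\oplus\slashed S,\,(\nabla^\mathcal{A}+C)\oplus\nabla^\mathcal{A},\,(\nabla^{\slashed S}+C)\oplus\nabla^{\slashed S},\,h_0\oplus h_1,\,h_1\oplus h_0,\,\mathrm{CS}_\lself(h_I;\nabla^\mathcal{A}\oplus\nabla^\mathcal{A},\nabla^{\slashed S}\oplus\nabla^{\slashed S})\bigr],
\]
where $h_I$ is the rotation homotopy on $\slashed S\oplus\slashed S$ from $h_0\oplus h_1$ to $h_1\oplus h_0$. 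Lemma \ref{lem_change_connection_Ph} then gives
\[
\mathrm{CS}_\lself\bigl(h_I;\nabla^\mathcal{A}\oplus\nabla^\mathcal{A},\nabla^{\slashed S}\oplus\nabla^{\slashed S}\bigr)=\mathrm{CS}_\lself\bigl(h_I;(\nabla^\mathcal{A}+C)\oplus\nabla^\mathcal{A},(\nabla^{\slashed S}+C)\oplus\nabla^{\slashed S}\bigr),
\]
so the cocycle is visibly of $\widehat Z^\mathcal{A}_+$-type. Thus the same invariance lemma you invoke does all the work, but applied directly to the Chern--Simons form of the rotation homotopy rather than routed through an unproven homotopy formula.
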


\begin{proof}
By \eqref{eq_twisted_diff_KO_additive_inv}, it is enough to prove that the following $\widehat{KO}_+$-cocycle is in $\widehat{Z}^\mathcal{A}_+(X, Y)$. 
\begin{align*}
    &[\slashed S, \nabla^\mathcal{A} + C, \nabla^{\slashed S} + C, h_0, h_1, \eta] \\
    &\quad + \left[\slashed S,\nabla^\mathcal{A}, \nabla^{\slashed S}, h_1 , h_0, -\eta + \mathrm{CS}_\lself\left(h_I; \nabla^\mathcal{A} \oplus  \nabla^\mathcal{A}, \nabla^{\slashed S}\oplus \nabla^{\slashed S}\right)\right] \\
    &= \left[\slashed S \oplus \slashed S,(\nabla^\mathcal{A} + C) \oplus \nabla^\mathcal{A}, (\nabla^{\slashed S} + C) \oplus \nabla^{\slashed S}, h_0 \oplus h_1 , h_1 \oplus h_0,  \right. \\ 
    & \hspace{6cm} 
    \left. \mathrm{CS}_\lself\left(h_I; \nabla^\mathcal{A} \oplus  \nabla^\mathcal{A}, \nabla^{\slashed S}\oplus \nabla^{\slashed S}\right)\right]. 
\end{align*}
Here $h_I$ is any homotopy between $h_0 \oplus h_1$ and $h_1 \oplus h_0$ on $\slashed S \oplus \slashed S$. 
Indeed, this claim follows from
\begin{multline*}
\mathrm{CS}_\lself\left(h_I; \nabla^\mathcal{A} \oplus  \nabla^\mathcal{A}, \nabla^{\slashed S}\oplus \nabla^{\slashed S}\right) \\
= \mathrm{CS}_\lself\left(h_I; (\nabla^\mathcal{A} + C) \oplus \nabla^\mathcal{A}, (\nabla^{\slashed S} + C) \oplus \nabla^{\slashed S}\right)
\end{multline*}
as a result of Lemma \ref{lem_change_connection_Ph}. 
\end{proof}

\begin{thm}\label{thm_axiom_diffcoh_KO}
Let $(X, Y, \mathcal{A}) \in \hatTw$. 
\begin{enumerate}
\item We have $R \circ a = d$. 
\item The following diagram commutes. 
\begin{align*}
    \xymatrix{
    \widehat{KO}_+^{\mathcal{A}}(X, Y)\ar[r]^-{R} \ar[d]^I & \Omega^{4\Z + \type(\mathcal{A})}_{\mathrm{clo}}(X, Y; \Ori(\mathcal{A})) \ar[d]^{\mathrm{Rham}} \\
    KO_+^{\mathcal{A}}(X, Y)\ar[r]^-{\mathrm{Ph}_{\mathrm{top}}} & H^{4\Z + \type(\mathcal{A})}(X, Y; \Ori(\mathcal{A}))
    }. 
\end{align*}

    \item The following sequence is exact. 
\begin{align}\label{eq_prop_axiom}
\xymatrix@R=10pt{
   KO_+^{\Sigma^{0,1}\mathcal{A}}(X, Y)
   \ar[r]^-{\mathrm{Ph}_{\mathrm{top}}} &
    \Omega^{4\Z + \type(\mathcal{A})-1}(X, Y; \Ori(\mathcal{A})) / \mathrm{Im}(d) \ar[ld]_{a} \\
    \widehat{KO}_+^{\mathcal{A}}(X, Y)
    \ar[r]^{I} &
    KO_+^{\mathcal{A}}(X, Y) \ar[r] & 0. 
}
\end{align}
\end{enumerate}

\end{thm}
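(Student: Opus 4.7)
\medskip
\noindent\textbf{Proof proposal.} The plan is to parallel the proof of Theorem \ref{thm_axiom_diffcoh_untwisted_KO}, with the twisted characteristic-form results of Section \ref{subsec_Ph_form} replacing their untwisted counterparts, and with Lemma \ref{lem_twisted_KO_connection_V} handling the extra data of connections that did not appear in the untwisted case.

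Item (1) is immediate from Definition \ref{def_str_hom}, since $\Ph_\lself$ evaluated on the zero data vanishes. Item (2) is exactly the content of Corollary \ref{cor_Ph=Ph_top_twisted}: the de Rham class of $\Ph_\lself(h_1;\nabla^{\mathcal{A}},\nabla^{\slashed S}) - \Ph_\lself(h_0;\nabla^{\mathcal{A}},\nabla^{\slashed S})$ computes $\mathrm{Ph}_{\mathrm{top}}[\slashed S,h_0,h_1]$, and adding $d\eta$ does not change the cohomology class.

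For item (3) I follow the three-step scheme of Theorem \ref{thm_axiom_diffcoh_untwisted_KO}. \emph{Surjectivity of $I$}: given a representative $(\slashed S,h_0,h_1)$ of a class in $KO^{\mathcal{A}}_+(X,Y)$, smoothly approximate $h_0,h_1$ within the open subbundle $\Self^*_{\mathcal{A}}(\slashed S)$, choose any connection $\nabla^{\mathcal{A}}$ on $\mathcal{A}$ and, using Lemma \ref{lem_existense_conn}, a compatible self-adjoint $\mathcal{A}$-connection $\nabla^{\slashed S}$; the resulting cocycle with $\eta=0$ lifts the class. \emph{Exactness at the differential-form term}: via the suspension isomorphism of Fact \ref{fact_susp_twisted_+} and the excision trick of Theorem \ref{thm_axiom_diffcoh_untwisted_KO}, reduce to showing that for any smooth $(\slashed S,h_0,h_1)$ on $(I\times X, \overline U)$ (with $U$ a retractable smooth neighborhood of $\partial I\times X\cup I\times Y$), the composition $a\circ \mathrm{Rham}\circ \int_I\circ R$ sends the lifted class to zero; by the commutativity in (2) this only depends on $I$ of the element, and each $h_i$ itself provides a homotopy whose Chern-Simons form equals $\int_I \Ph_\lself(h_i;\,\cdot\,,\,\cdot\,)$, making the relevant element lie in $\widehat Z^{\mathcal{A}}_+(X,Y)$. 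Conversely, given $\eta$ with $a(\eta)=0$ in $\widehat{KO}^{\mathcal{A}}_+(X,Y)$, unwinding the definition produces two smooth homotopies $h_I,h'_I$ on some $\slashed S$, equal on $\overline U$, whose Chern-Simons forms differ by $\eta$ modulo exact forms (invoking \eqref{eq_CS_mod_exact}); these assemble into a lifted class in $\widehat{KO}^{\mathcal{A}}_+(I\times X,\overline U)$ whose image under $\mathrm{Rham}\circ \int_I\circ R$ is $\eta$. \emph{Exactness at $\widehat{KO}^{\mathcal{A}}_+(X,Y)$}: the composite $I\circ a=0$ is obvious; conversely if $I[\slashed S,\nabla^{\mathcal{A}},\nabla^{\slashed S},h_0,h_1,\eta]=0$, smoothly approximate a constant-on-$Y$ homotopy $h_I$ between $h_0$ and $h_1$ and rewrite the class as $a(\eta - \CS_\lself(h_I;\nabla^{\mathcal{A}},\nabla^{\slashed S}))$.

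The main obstacle relative to the untwisted proof is bookkeeping of the connection data when forming direct sums, passing to $I\times X$, and comparing cocycles; this is where Lemma \ref{lem_twisted_KO_connection_V} is crucial, as it allows us to normalize to a fixed choice of $\nabla^{\mathcal{A}}$ on the nose without changing the differential class, and Lemma \ref{lem_change_connection_Ph} guarantees that Pontryagin and Chern-Simons forms behave consistently under the resulting identifications. A minor additional point is to verify that smooth approximations of sections of $\Self_{\mathcal{A}}^*(\slashed S)$ and of homotopies between them can be performed while keeping the invertibility condition, which is routine since $\Self_{\mathcal{A}}^*(\slashed S)\subset \Self_{\mathcal{A}}(\slashed S)$ is open.
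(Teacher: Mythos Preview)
Your proposal is correct and follows essentially the same approach as the paper: the paper's own proof says only that (1) is obvious, (2) follows from Theorem~\ref{thm_Ph=Ph_top_twisted} (of which your cited Corollary~\ref{cor_Ph=Ph_top_twisted} is the immediate consequence), and (3) is analogous to the untwisted case in Theorem~\ref{thm_axiom_diffcoh_untwisted_KO}, leaving the details to the reader. You have carried out exactly those details, and your discussion of Lemma~\ref{lem_twisted_KO_connection_V} as the tool for normalizing the connection data is precisely the extra ingredient the twisted case requires.
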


\begin{proof}
(1) is obvious. 
(2) follows from Theorem \ref{thm_Ph=Ph_top_twisted}. 
The proof of (3) is analogous to the untwisted case in Theorem \ref{thm_axiom_diffcoh_untwisted_KO} (3), so we leave the details to the reader. 
\end{proof}

We use the axiom of differential extensions of twisted generalized cohomology theories given in \cite{BunkeSchickDiffKsurvey}. 
By Theorem \ref{thm_axiom_diffcoh_KO}, we get the following. 
\begin{thm}\label{thm_hat_KO_twisted}
The quadruple $\left(\widehat{KO}_+, R, I, a\right)$ is a differential extension of the twisted $KO$-theory ${KO}_+$ on $\hatTw$. 
\end{thm}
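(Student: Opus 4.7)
The plan is to invoke the axiomatic framework of differential extensions of twisted generalized cohomology theories from \cite{BunkeSchickDiffKsurvey}, and show that every requirement of that framework is either built into our definitions or has already been verified in Theorem \ref{thm_axiom_diffcoh_KO}. The first step is to identify the underlying twisted cohomology theory on $\hatTw$ as the composition of the forgetful functor $\hatTw \to \Tw^2_{KO_+}$ with the twisted $KO$-theory functor $KO_+$ of \eqref{eq_twisted_KO_+_functor}. That this composition is indeed a model for twisted $KO$-theory on $\hatTw$ follows from Fact \ref{fact_karoubi_KO} (in the untwisted case) together with the identification \eqref{eq_KO_m=KO_twisted} and Corollary \ref{cor_KO_twist_classification}, which show that $KO_+^{\mathcal{A}}(X,Y)$ coincides with the section-of-spectrum-bundle definition of twisted $KO$-theory and depends only on the class of $\mathcal{A}$ in $\mathrm{GBrO}(X)$.

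Next, one observes that $\widehat{KO}_+$ is a well-defined functor $\hatTw \to \mathrm{Ab}$, as recorded in \eqref{eq_hat_KO_+_functor}, since every ingredient of a $\widehat{KO}_+$-cocycle (the bundle $\slashed S$ with inner product, the connections $\nabla^{\mathcal{A}}$ and $\nabla^{\slashed S}$, the gradations $h_i$, and the form $\eta$ with values in $\Ori(\mathcal{A})$) pulls back under any morphism $(f, \phi) \colon (X, Y, \mathcal{A}) \to (X', Y', \mathcal{A}')$ of $\hatTw$, and the submonoid $\widehat Z^{\mathcal{A}}_+(X, Y)$ is preserved because Chern-Simons forms are natural under pullback. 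The three structure homomorphisms $R$, $I$, $a$ of Definition \ref{def_str_hom} are natural in $(X, Y, \mathcal{A})$ for the same reason, in particular because the Pontryagin character form $\mathrm{Ph}_\lself(h; \nabla^{\mathcal{A}}, \nabla^{\slashed S})$ of Definition \ref{def_Ph_m} and its Chern-Simons primitive pull back as expected.

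It then remains to match the three conditions required of a differential extension of the twisted cohomology theory $KO_+$ with curvature complex $(\Omega^{4\Z+\type(\mathcal{A})}(X,Y;\Ori(\mathcal{A})), d)$. These are exactly:
\begin{enumerate}
\item $R \circ a = d$,
\item the square expressing compatibility of $R$ and $I$ via the de Rham map and the twisted topological Pontryagin character $\mathrm{Ph}_{\mathrm{top}}$ of Subsection \ref{subsec_twisted_Ph_top} commutes,
\item the sequence
\[
KO^{\Sigma^{0,1}\mathcal{A}}_+(X,Y) \xrightarrow{\mathrm{Ph}_{\mathrm{top}}} \Omega^{4\Z+\type(\mathcal{A})-1}(X,Y;\Ori(\mathcal{A}))/\mathrm{Im}(d) \xrightarrow{a} \widehat{KO}_+^{\mathcal{A}}(X,Y) \xrightarrow{I} KO_+^{\mathcal{A}}(X,Y) \to 0
\]
is exact.
\end{enumerate}
All three of these are exactly the content of Theorem \ref{thm_axiom_diffcoh_KO}.

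The only step that requires any thought rather than direct citation is the bookkeeping of the coefficient system: one must check that the curvature target $\Omega^{4\Z+\type(\mathcal{A})}_{\mathrm{clo}}(X,Y;\Ori(\mathcal{A}))$, with its de Rham cohomology $H^{4\Z+\type(\mathcal{A})}(X,Y;\Ori(\mathcal{A}))$, is precisely the de Rham model for the twisted real cohomology in degree matching $\type(\mathcal{A})$ which receives the twisted topological Pontryagin character \eqref{eq_twisted_Ph_top}. This identification is guaranteed because, under our conventions, $\Ori(\mathcal{A})$ is the real line bundle associated to $w_1(\mathcal{A})$, and this is the same local system that appears as the coefficients for $\mathrm{Ph}_{\mathrm{top}}$ by Lemma \ref{lem_twisted_Ph_characterization}. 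With that matching in place, the theorem follows formally, as was the case for Theorem \ref{thm_KO_hat_untwisted}; I do not anticipate a serious obstacle beyond verifying that our naturality and coefficient conventions line up with those of \cite{BunkeSchickDiffKsurvey}.
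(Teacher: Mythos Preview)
Your proposal is correct and takes essentially the same approach as the paper: the paper's proof is a single sentence stating that the axioms of a differential extension of a twisted cohomology theory from \cite{BunkeSchickDiffKsurvey} have been verified in Theorem~\ref{thm_axiom_diffcoh_KO}, and your proposal simply unpacks this, spelling out the functoriality, naturality, and coefficient-matching that the paper leaves implicit.
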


Now we investigate into the dependence of $\widehat{KO}^{\mathcal{A}}_+$ on the bundle $\mathcal{A}$. 
Recall that the twist of topological $KO$-theory is classified by $(\type(\mathcal{A}), w(\mathcal{A})) \in \mathrm{GBrO}(X) = \Z_8 \times HO(X)$ (Corollary \ref{cor_KO_twist_classification}). 
Actually the same classification holds for our differential refinement. 
We start with the following Proposition corresponding to Fact \ref{fact_isom_negligible}. 

\begin{prop}\label{prop_isom_negligible}
Let $(X, Y)$ be an object of $\mathrm{MfdPair}_f$. 
Let $E = E^0 \oplus E^1$ be a $\Z_2$-graded vector bundle over $X$ with fiberwise positive definite inner product on each $E^i$.
For any smooth bundle of nondegenerate simple central graded $*$-algebras $\mathcal{A}$, we have a canonical isomorphism
\begin{align}\label{eq_isom_spin_hat}
    \widehat{KO}^{\mathcal{A}}_+(X, Y) \simeq \widehat{KO}^{\End(E) \widehat{\otimes} \mathcal{A}}_+(X, Y)
\end{align} 
which is compatible with the corresponding isomorphisms of other functors appearing in Definition \ref{def_str_hom} via the structure homomorphisms 
(under the isomorphism $\mathrm{Ori}(\mathcal{A}) \simeq \mathrm{Ori}(\End(E) \widehat{\otimes}\mathcal{A})$ given by $u \mapsto \gamma_E \widehat{\otimes}u$). 
\end{prop}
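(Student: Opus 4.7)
The plan is to adapt the argument of Proposition \ref{prop_isom_negligible_untwisted}, enhanced with the connection data that the twisted differential picture requires. I would choose an auxiliary orthogonal connection $\nabla^E$ on $E$ and define the map on representatives by
\begin{align*}
    (\slashed S, \nabla^{\mathcal{A}}, \nabla^{\slashed S}, h_0, h_1, \eta) \longmapsto \bigl(E \otimes \slashed S, \widetilde{\nabla}^{\mathcal{A}}, \widetilde{\nabla}^{\slashed S}, \psi^E(h_0), \psi^E(h_1), \eta\bigr),
\end{align*}
where $\widetilde{\nabla}^{\mathcal{A}} := \nabla^{\End(E)} \otimes 1 + 1 \otimes \nabla^{\mathcal{A}}$ and $\widetilde{\nabla}^{\slashed S} := \nabla^{E} \otimes 1 + 1 \otimes \nabla^{\slashed S}$, using Lemma \ref{lem_periodicity_negligible_bundle} for the $\End(E) \widehat{\otimes} \mathcal{A}$-module structure on $E \otimes \slashed S$ and Lemma \ref{lem_Ph_spin} to see that $\widetilde{\nabla}^{\slashed S}$ is an $\End(E) \widehat{\otimes} \mathcal{A}$-connection compatible with $\widetilde{\nabla}^{\mathcal{A}}$. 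The form $\eta$ passes over unchanged under the identification $\mathrm{Ori}(\mathcal{A}) \simeq \mathrm{Ori}(\End(E) \widehat{\otimes} \mathcal{A})$, $u \mapsto \gamma_E \widehat{\otimes} u$.

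For well-definedness on $\widehat{KO}_+^{\mathcal{A}}(X, Y)$, I need to check three things. First, that $\widehat{Z}^{\mathcal{A}}_+(X, Y)$ lands in $\widehat{Z}^{\End(E) \widehat{\otimes} \mathcal{A}}_+(X, Y)$: given a homotopy $h_I$ witnessing the vanishing on the source, $\psi^E(h_I)$ witnesses it on the target, and the equality $\mathrm{CS}_\lself(h_I; \nabla^{\mathcal{A}}, \nabla^{\slashed S}) = \mathrm{CS}_\lself(\psi^E(h_I); \widetilde{\nabla}^{\mathcal{A}}, \widetilde{\nabla}^{\slashed S})$ follows by integrating the Pontryagin identity of Lemma \ref{lem_Ph_spin} over $I$. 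Second, independence from the auxiliary connection $\nabla^E$: given two such connections $\nabla^E_0, \nabla^E_1$, interpolate them by a family $\nabla^E_I$ on $I \times X$, apply the analogous Pontryagin identity upstairs, and observe that the resulting correction term is an honest Chern-Simons form of a constant-in-$I$ homotopy of gradations, hence manifestly in $\widehat{Z}^{\End(E) \widehat{\otimes} \mathcal{A}}_+(X, Y)$. Third, compatibility with the structure homomorphisms follows directly: for $R$ from Lemma \ref{lem_Ph_spin}, for $I$ from Fact \ref{fact_isom_negligible} together with the explicit formula \eqref{eq_isom_negligible_map}, and for $a$ tautologically under the $\mathrm{Ori}$-identification.

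Finally, to upgrade the resulting natural transformation to an isomorphism, I apply the five lemma to the two exact sequences supplied by Theorem \ref{thm_axiom_diffcoh_KO} (3) for the twists $\mathcal{A}$ and $\End(E) \widehat{\otimes} \mathcal{A}$; the $KO_+^{\mathcal{A}}$- and $KO_+^{\Sigma^{0,1}\mathcal{A}}$-entries match by Fact \ref{fact_isom_negligible} (noting $\Sigma^{0,1}(\End(E) \widehat{\otimes} \mathcal{A}) = \End(E) \widehat{\otimes} \Sigma^{0,1}\mathcal{A}$), and the de Rham entries match under the $\mathrm{Ori}$-identification. The main obstacle is the second well-definedness check above, the independence from $\nabla^E$, which requires a careful transgression argument over $I \times X$ in the presence of the curvature of $\nabla^{\End(E)}$; once that point is secured, the rest is essentially formal, bootstrapping the untwisted argument along the exact sequence.
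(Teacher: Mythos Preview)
Your proposal is correct and follows essentially the same route as the paper: define the map via an auxiliary orthogonal connection $\nabla^E$, verify well-definedness and compatibility with $R$, $I$, $a$ through Lemma~\ref{lem_Ph_spin}, and conclude the isomorphism by the five lemma applied to the exact sequence of Theorem~\ref{thm_axiom_diffcoh_KO}~(3).

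The only notable divergence is your treatment of independence from $\nabla^E$, which you flag as the ``main obstacle'' and propose to handle by a transgression argument over $I \times X$. The paper dispatches this point in one line by invoking Lemma~\ref{lem_twisted_KO_connection_V}: changing $\nabla^E$ to $\nabla^E + C^E$ for $C^E \in \Omega^1(X; \End(E)^0_{\mathrm{skew}})$ shifts the pair $(\widetilde{\nabla}^{\mathcal{A}}, \widetilde{\nabla}^{\slashed S})$ exactly by the action \eqref{eq_connection_action} of $C^E \widehat{\otimes} 1 \in \Omega^1(X; (\End(E)\widehat{\otimes}\mathcal{A})^0_{\mathrm{skew}})$, so the $\widehat{KO}_+$-class is unchanged. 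Your transgression sketch would also work once made precise, but it re-proves a special case of Lemma~\ref{lem_twisted_KO_connection_V}; since that lemma is already available and its proof (via Lemma~\ref{lem_change_connection_Ph}) already encapsulates the needed transgression, there is no obstacle left.
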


\begin{proof}
Fix an orthogonal connection $\nabla^E$ on $E$, and denote by $\nabla^{\mathrm{End}(E)}$ the induced connection on $\mathrm{End}(E)$. 
The isomorphism \eqref{eq_isom_spin_hat} is given by the following refinement of the map \eqref{eq_isom_negligible_map}, 
\begin{multline}\label{eq_isom_negligible_map_hat}
    [\slashed S, \nabla^\mathcal{A}, \nabla^{\slashed S}, h_0, h_1, \eta] \\
    \mapsto [E \otimes \slashed S, \nabla^{\mathrm{End}(E)} \otimes 1 + 1 \otimes \nabla^{\mathcal{A}}, \nabla^{E} \otimes 1 + 1 \otimes \nabla^{\slashed S}, \psi^E(h_0), \psi^E(h_1), \eta]. 
\end{multline}
The well-definedness and the compatibility with the structure maps follow from Lemma \ref{lem_Ph_spin}. 
By the exactness of \eqref{eq_prop_axiom} and the five lemma, we see that \eqref{eq_isom_negligible_map_hat} defines an isomorphism. 
Actually, the right hand side of \eqref{eq_isom_negligible_map_hat} is independent of the choice of $\nabla_E$ by Lemma \ref{lem_twisted_KO_connection_V}. 
Thus we see that \eqref{lem_twisted_KO_connection_V} defines a canonical isomorphism \eqref{eq_isom_spin_hat} independent of any additional choice, as desired. 
\end{proof}

Thus we get the following differential version of Corollary \ref{cor_KO_twist_classification}. 

\begin{cor}\label{cor_KO_twist_classification_hat}
Given an object $(X, Y, \mathcal{A}) \in \hatTw$, the isomorphism class of the twisted differential $KO$-theory group $\widehat{KO}^{\mathcal{A}}_+(X, Y)$ only depends on the class of $\mathcal{A}$ in $\mathrm{GBrO}(X)$. 
\end{cor}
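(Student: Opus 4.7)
The plan is to parallel the proof of the topological Corollary \ref{cor_KO_twist_classification}, simply replacing Fact \ref{fact_isom_negligible} with its differential refinement Proposition \ref{prop_isom_negligible}, and then invoking the functoriality of the twisted differential $KO$-theory on the category $\hatTw$.

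First, I would recall the defining relation of $\mathrm{GBrO}(X)$: two bundles $\mathcal{A}, \mathcal{A}'$ of nondegenerate simple central graded $*$-algebras over $X$ represent the same class in $\mathrm{GBrO}(X)$ if and only if there exist $\Z_2$-graded real vector bundles $E$ and $E'$ with fiberwise positive definite inner products and an isomorphism
\[
\End(E) \widehat{\otimes} \mathcal{A} \;\simeq\; \End(E') \widehat{\otimes} \mathcal{A}'
\]
as bundles of simple central graded $*$-algebras. Since $X$ has the homotopy type of a finite CW-complex and the structure groups in question are finite-dimensional Lie groups, we may arrange $E, E'$ and the isomorphism to be smooth; this is the only place where the smooth/topological distinction enters, and is handled by standard smoothing theory.

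Next, applying Proposition \ref{prop_isom_negligible} twice, we obtain the chain of canonical isomorphisms
\[
\widehat{KO}^{\mathcal{A}}_+(X,Y) \;\simeq\; \widehat{KO}^{\End(E) \widehat{\otimes} \mathcal{A}}_+(X,Y), \qquad \widehat{KO}^{\mathcal{A}'}_+(X,Y) \;\simeq\; \widehat{KO}^{\End(E') \widehat{\otimes} \mathcal{A}'}_+(X,Y).
\]
Finally, the chosen smooth isomorphism $\End(E) \widehat{\otimes} \mathcal{A} \simeq \End(E') \widehat{\otimes} \mathcal{A}'$ is precisely a morphism in the groupoid $\Tw_{\widehat{KO}_+, X}$ (covering the identity on $X$), hence by the functoriality \eqref{eq_hat_KO_+_functor} of $\widehat{KO}_+$ on $\hatTw$ it induces an isomorphism between the two middle terms. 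Composing the three isomorphisms produces the desired isomorphism $\widehat{KO}^{\mathcal{A}}_+(X,Y) \simeq \widehat{KO}^{\mathcal{A}'}_+(X,Y)$.

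There is no real obstacle here beyond the brief smoothing step mentioned above, since the nontrivial analytic content, namely the compatibility of tensoring negligible bundles with the Pontryagin character forms and Chern-Simons forms, has already been packaged into Lemma \ref{lem_Ph_spin} and thereby into Proposition \ref{prop_isom_negligible}. The only thing to be slightly careful about is that the isomorphism constructed is canonical (independent of auxiliary choices like the connection $\nabla^E$ used in Proposition \ref{prop_isom_negligible}), which follows from Lemma \ref{lem_twisted_KO_connection_V} as noted in the proof of that proposition.
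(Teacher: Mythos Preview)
Your proposal is correct and matches the paper's approach: the paper gives no explicit proof, stating the corollary as an immediate consequence of Proposition~\ref{prop_isom_negligible} in exact parallel to how Corollary~\ref{cor_KO_twist_classification} follows from Fact~\ref{fact_isom_negligible}. You have simply spelled out the implicit argument (the equivalence-relation definition of $\mathrm{GBrO}(X)$, two applications of Proposition~\ref{prop_isom_negligible}, and functoriality on $\hatTw$), together with the routine smoothing step needed to stay inside the smooth twist category.
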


\section{The model in terms of skew-adjoint operators}\label{sec_skew}
In this section, we introduce another model of (twisted) differential $KO$-theory, $\widehat{KO}_-^A$ ($\widehat{KO}_-^{\mathcal{A}}$), in terms of skew-adjoint operators. 
In this section we work over $\R$, unless otherwise stated (e.g., in Remarks \ref{rem_complex_self_skew} and \ref{rem_Karoubi_K_self_skew}). 

\subsection{Algebraic preparations}\label{subsec_self_skew}
This subsection is devoted to algebraic preparations. 

\begin{defn}[{$\widetilde{\Sigma}^{0, 1}A$}]\label{def_ungraded_suspension}
Let $A$ be a simple central graded $*$-algebra. 
We define $\widetilde{\Sigma}^{0, 1}A$ to be $\Sigma^{0, 1}A = A \widehat{\otimes} Cl_{0, 1}$ as a $*$-algebra, and introduce a $\Z_2$ grading on $\widetilde{\Sigma}^{0, 1}A$ by $1\otimes \gamma$, where $\gamma$ is the $\Z_2$-grading operator on $Cl_{0, 1}$. 
\end{defn}
Thus we have
\begin{align*}
    (\widetilde{\Sigma}^{0, 1}A)^0 &= \{a \widehat{\otimes} 1 \ | \ a \in A \}, \\
    (\widetilde{\Sigma}^{0, 1}A)^1 &= \{a \widehat{\otimes} \beta \ | \ a \in A \}. 
\end{align*}

\begin{ex}
In the case where $A = Cl_{p, q}$ for nonnegative integers $p$ and $q$, we have $\Sigma^{0, 1}A = Cl_{p, q+1}$ and $\widetilde{\Sigma}^{0, 1}A \simeq Cl_{q, p+1}$. 
An isomorphism of $*$-algebras $\Sigma^{0, 1}A \simeq \widetilde{\Sigma}^{0, 1}A$ does not preserve the $\Z_2$-grading:
\begin{align*}
    Cl_{p, q+1} &\simeq Cl_{q, p+1} \\
    \beta_0 &\mapsto \beta'_0 \\
    \alpha_i &\mapsto \beta'_0\beta'_i  \ (1 \le i \le p) \\
    \beta_j &\mapsto \beta'_0 \alpha'_j\ (1 \le j \le q)
\end{align*}
where we denoted by $\alpha_*$ ($\alpha'_*$) and $\beta_*$ ($\beta'_*$) the negative and positive Clifford generators of $Cl_{p, q+1}$ ($Cl_{q, p+1}$), respectively.  
\end{ex}

\begin{lem}\label{lem_ungraded_susp_degree}
For any simple central graded $*$-algebra $A$, the $\Z_2$-graded $*$-algebra $\widetilde{\Sigma}^{0, 1}A$ is a simple central graded $*$-algebra with
\begin{align*}
    \type(\widetilde{\Sigma}^{0, 1}A) = - \type(A)-1. 
\end{align*} 
If $u \in A$ is a volume element of $A$, the element $u \otimes \beta^{\type(A)+1} \in \widetilde{\Sigma}^{0, 1}A$ is a volume element of $\widetilde{\Sigma}^{0, 1}A$. 
\end{lem}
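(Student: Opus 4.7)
My plan is to verify directly that $v := u \otimes \beta^{t+1}$, where $t = \type(A)$, satisfies the defining properties of a volume element of a simple central graded $*$-algebra of type $-t-1$, after first checking that the new $\Z_2$-grading is compatible with the algebra and $*$-structures inherited from $\Sigma^{0,1}A$. The compatibility is essentially formal: the graded multiplication $(a \widehat{\otimes} b)(a' \widehat{\otimes} b') = (-1)^{|b||a'|}(aa') \widehat{\otimes}(bb')$ respects the coarser grading that assigns $A \otimes 1$ to degree $0$ and $A \otimes \beta$ to degree $1$, and the $*$-structure (with $\beta^* = \beta$) preserves this grading as well. The simplicity and centrality of the graded $*$-algebra will follow once the volume-element computation identifies $\widetilde{\Sigma}^{0,1}A$ with a specific entry of Table \ref{table:gscR}.

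The verification of the volume-element property will split on the parity of $t$. When $t$ is even, so that $A^0 = Z(u)$ and $A^1 = Z^*(u)$, the element $v = u \otimes \beta$ is new-odd; using $ua = (-1)^{|a|}au$ for homogeneous $a \in A$ together with the sign in the graded multiplication, a direct calculation gives $(u \otimes \beta)(a \otimes 1) = (-1)^{|a|}ua \otimes \beta = au \otimes \beta = (a \otimes 1)(u \otimes \beta)$ and $(u \otimes \beta)(1 \otimes \beta) = u \otimes 1 = (1 \otimes \beta)(u \otimes \beta)$, so $v$ is central, and $v^2 = u^2 \otimes 1 \in \{\pm 1\}$. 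When $t$ is odd, so that $u$ is central in $A$ with $|u| = 1$, the element $v = u \otimes 1$ is new-even; centrality of $u$ gives $v(a \otimes 1) = ua \otimes 1 = au \otimes 1 = (a \otimes 1)v$, while the sign $(-1)^{|u|} = -1$ gives $v(a \otimes \beta) = ua \otimes \beta$ but $(a \otimes \beta)v = -au \otimes \beta = -ua \otimes \beta$, so $v$ anticommutes with $A \otimes \beta$. Hence $Z(v) = (\widetilde{\Sigma}^{0,1}A)^0$ and $Z^*(v) = (\widetilde{\Sigma}^{0,1}A)^1$, and again $v^2 = u^2 = \pm 1$. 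In either case $v$ is a volume element of the correct parity, so $\widetilde{\Sigma}^{0,1}A$ is simple central graded of parity opposite to that of $t$.

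The last step will be to pin down the type modulo $8$. The parity of $v$ together with the sign of $v^2 = u^2$ only determine the type modulo $4$, so the final bit of information must come from identifying the underlying $*$-algebra of the degree-$0$ subspace $(\widetilde{\Sigma}^{0,1}A)^0 = A \otimes 1 \cong A$. A case-by-case inspection of Table \ref{table:gscR} confirms that, for each $t \in \{0, 1, \ldots, 7\}$, the ungraded $*$-algebra $A$ of type $t$ agrees with the degree-$0$ subalgebra $(A')^0$ of the type-$(-t-1)$ simple central graded $*$-algebra $A'$ (with an obvious doubling of the size parameter when $t$ is odd), so indeed $\type(\widetilde{\Sigma}^{0,1}A) = -t-1 \pmod 8$. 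The hard part is exactly this mod-$8$ bookkeeping: the purely algebraic data of the volume element yield only a mod-$4$ invariant, and separating, for instance, types $1$ and $5$, or $0$ and $4$, requires matching the structure of $(\widetilde{\Sigma}^{0,1}A)^0$ against the classification table.
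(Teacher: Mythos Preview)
Your approach is sound in spirit and largely parallels the paper's own suggestion of a case-by-case check against Table~\ref{table:gscR}. The parity-split verification that $v = u \otimes \beta^{t+1}$ is a volume element of the correct parity with $v^2 = u^2$ is correct and cleanly isolates what can be proved uniformly; this pins down the type modulo $4$, exactly as you say.

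There is, however, a gap in your final step. You claim that matching the degree-$0$ subalgebra $(\widetilde{\Sigma}^{0,1}A)^0 \cong A$ (as an ungraded $*$-algebra) against the $(A')^0$ column of Table~\ref{table:gscR} resolves the remaining mod-$8$ ambiguity. This fails precisely for the pair $\{2,6\}$, which is the ambiguity arising when $t \in \{1,5\}$: both type~$2$ and type~$6$ have $(A')^0 \cong M(n,\C)$ as $*$-algebras, so the degree-$0$ part alone cannot separate them. The fix is immediate: compare instead the \emph{full} ungraded $*$-algebra of $\widetilde{\Sigma}^{0,1}A$, which by definition coincides with that of $\Sigma^{0,1}A$ (of type $t-1$). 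For $t=1$ this is a real matrix algebra (type~$8$), matching type~$6$ rather than type~$2$ (quaternionic); for $t=5$ it is quaternionic (type~$4$), matching type~$2$. With this correction your argument goes through.

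The paper's proof is only a bare sketch (case-by-case, details left to the reader), but it also offers a different shortcut you do not use: the grading-preserving isomorphism $\widetilde{\Sigma}^{0,1}\Sigma^{0,1}A \simeq \Sigma^{1,0}\widetilde{\Sigma}^{0,1}A$, which reduces the entire statement to the single case $\type(A)=0$ and thereby sidesteps the mod-$8$ bookkeeping altogether.
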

\begin{proof}
The proof can be given by a case-by-case check of Table \ref{table:gscR}, and we leave the details to the reader. 
One possible simplification is to use the $\Z_2$-grading-preserving isomorphism $\widetilde{\Sigma}^{0, 1}\Sigma^{0, 1}A \simeq \Sigma^{1, 0}\widetilde{\Sigma}^{0, 1}A$ to reduce the problem to the case $\type(A) = 0$. 
\end{proof}

The following lemma can also be proved easily. 

\begin{lem}\label{lem_self_skew}
Let $A$ be a simple central graded $*$-algebra
and $S$ be a $\Sigma^{0,1}A$-module with an inner product. 
We also regard $S$ as a $\widetilde{\Sigma}^{0, 1}A$-module. 
Then we have a bijection
\begin{align}\label{eq_bij_End_self_skew}
   \psi_\beta \colon \mathrm{End}_{\Sigma^{0,1}A}(S) &\simeq
\mathrm{End}_{\widetilde{\Sigma}^{0, 1}A}(S) \\
f & \mapsto \beta^{|f|_{\Sigma^{0, 1}A}} f \notag
\end{align}
which preserves the $\Z_2$ grading. 
Here $|f|_{\Sigma^{0,1}A} \in \Z_2$ in the right hand side of \eqref{eq_bij_End_self_skew} is the $\Z_2$-grading of $f$ as an element in $\End_{\Sigma^{0, 1}A}(S)$. 
Moreover, the bijection restricts to the following bijections relating $\Self$ with $\Skew$, 
\begin{align*}
    \mathrm{Self}_{\Sigma^{0,1}A}(S) \simeq
\mathrm{Skew}_{\widetilde{\Sigma}^{0, 1}A}(S),  \quad &\mathrm{Self}^*_{\Sigma^{0,1}A}(S) \simeq
\mathrm{Skew}^*_{\widetilde{\Sigma}^{0, 1}A}(S),\\
 \mathrm{Skew}_{\Sigma^{0,1}A}(S) \simeq
\mathrm{Self}_{\widetilde{\Sigma}^{0, 1}A}(S),  \quad &\mathrm{Skew}^*_{\Sigma^{0,1}A}(S) \simeq
\mathrm{Self}^*_{\widetilde{\Sigma}^{0, 1}A}(S). 
\end{align*}
\end{lem}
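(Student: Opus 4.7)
The plan is to define $\psi_\beta$ by multiplication by $\beta^{|f|_{\Sigma^{0,1}A}}$ on each graded summand of $\End_{\Sigma^{0,1}A}(S)$, extended $\R$-linearly, and then verify the four claims by direct algebraic computation. The key observation is the relation between the two gradings on the underlying $*$-algebra $\Sigma^{0,1}A = \widetilde{\Sigma}^{0,1}A$: a homogeneous element $c = a \widehat{\otimes}\beta^k$ with $a \in A^{|a|}$ has $|c|_{\Sigma^{0,1}A} = |a|+k$ but $|c|_{\widetilde{\Sigma}^{0,1}A} = k$. I would also note at the outset that both $\Sigma^{0,1}A$ and $\widetilde{\Sigma}^{0,1}A$ are automatically nondegenerate (since $A\widehat{\otimes}1$ and $A\widehat{\otimes}\beta$ are both nonzero subspaces in each $\Z_2$-degree of the respective grading), so the $\Z_2$-graded decompositions of their endomorphism algebras are well-defined.

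First I would check grading preservation and bijectivity. For homogeneous $f$ and $c$ as above, applying in turn the commutation relation defining $f \in \End^{|f|}_{\Sigma^{0,1}A}(S)$, together with the fact that $\beta$ anticommutes with $A^1 \widehat{\otimes}1$ while commuting with $A^0 \widehat{\otimes}1$ and with $1\widehat{\otimes}\beta$, one obtains after a routine sign count
\[
\psi_\beta(f)\, c = (-1)^{|f|\, k}\, c\cdot \psi_\beta(f),
\]
where $k = |c|_{\widetilde{\Sigma}^{0,1}A}$. This places $\psi_\beta(f)$ in $\End^{|f|}_{\widetilde{\Sigma}^{0,1}A}(S)$, giving the claim about the $\Z_2$-grading. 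Bijectivity is then immediate, since $\beta^2 = 1$ makes the natural candidate inverse (multiplication by $\beta^{|g|_{\widetilde{\Sigma}^{0,1}A}}$) undo $\psi_\beta$.

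Next I would establish the correspondence between self-adjoint and skew-adjoint elements using $\beta^* = \beta$. For $f \in \Self_{\Sigma^{0,1}A}(S)$, so that $|f|_{\Sigma^{0,1}A} = 1$ and $f^* = f$, one computes
\[
\psi_\beta(f)^* = (\beta f)^* = f^*\beta = f\beta = -\beta f = -\psi_\beta(f),
\]
the penultimate equality being the anticommutation of $f$ and $\beta$ in the $\Sigma^{0,1}A$-grading. This shows $\psi_\beta(f) \in \Skew_{\widetilde{\Sigma}^{0,1}A}(S)$, and the direction $\Skew \to \Self$ is identical with one sign flipped. Finally, invertibility is transported by $\psi_\beta$ since $\beta$ itself is invertible, yielding the $*$-decorated bijections. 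The entire argument is essentially sign bookkeeping in the graded tensor product, and the only place requiring genuine care is consistently distinguishing the two gradings throughout the computation, which I do not expect to pose a real obstacle.
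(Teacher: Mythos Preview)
Your proof is correct and is precisely the direct verification intended; the paper does not actually give a proof of this lemma, only the remark that it ``can also be proved easily.'' Your sign computation $(\beta f)c = (-1)^{j+k}\beta\, cf = (-1)^{2j+k}c\,\beta f = (-1)^{k}c\,\beta f$ for $c = a\widehat{\otimes}\beta^k$ with $|a|=j$, together with the adjoint computation $(\beta f)^* = f\beta = -\beta f$, is exactly what is needed.
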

Applying Lemma \ref{lem_self_skew} fiberwise, we also get the bijection in the bundle case, 
\begin{align}\label{eq_bij_End_self_skew_bundle}
   \psi_\beta \colon \mathrm{End}_{\Sigma^{0,1}\mathcal{A}}(\slashed S) &\simeq
\mathrm{End}_{\widetilde{\Sigma}^{0, 1}\mathcal{A}}(\slashed S).  
\end{align}

\begin{rem}\label{rem_complex_self_skew}
In the $\C$-linear setting, we simply have the bijection
\begin{align}
\End^1_A(S) \simeq \End^1_A(S), \ f \mapsto \sqrt{-1}f, 
\end{align}
which transforms $\Skew$ to $\Self$. 
We use this bijection when we relate $K_+$ with $K_-$ in Section \ref{sec_hat_K}. 
\end{rem}

\subsection{The topological model \texorpdfstring{$KO_-$}{KO-}}
In this subsection we define another model of the (twisted) $KO$-theory in terms of skew-adjoint operators. 
The functor $KO_-^A$ is shown to be a model of $KO^{-\type(A) - 2}$.

We start with the untwisted groups. 
The definition of $KO_-$ is given by simply replacing $\Self$ with $\Skew$ in Definitions \ref{def_KO_karoubi_untwisted_triple} and \ref{def_karoubi_untwisted_KO} of untwisted $KO_+$, as follows.  
\begin{defn}
Let $(X, Y)$ be a finite CW-pair and $A$ be a simple central graded $*$-algebra. 
\begin{itemize}
\item
A {\it triple} $( S, m_0, m_1)$ on $(X, Y)$ consists of an $A$-module $ S$ with an inner product and two continuous maps $m_0, m_1 \in \mathrm{Map} (X,  \Skew_{A}^*(S))$ with $m_0|_Y = m_1|_Y$.  
Such $m_i$'s are called {\it mass terms} on $S$ (c.f. Subsubsection \ref{subsubsec_intro_phys_interpretation}). 
\item 
Triples $( S, m_0, m_1)$ and $( S', m'_0, m'_1)$ on $(X, Y)$ are {\it isomorphic} if there exists an isometric isomorphism of $A$-modules $f \colon  S \simeq  S'$ such that $f \circ m_i = m'_i\circ f$ for $i = 0, 1$. 
\end{itemize}
\end{defn}

\begin{defn}[{$KO_-^{A}(X, Y)$}]\label{def_mass_untwisted_KO}
Let $(X, Y)$ and $A$ be as above. 
\begin{itemize}
\item
On the set $M^{p,q}_-(X, Y)$ of isomorphism classes of triples $( S, m_0, m_1)$ on $(X, Y)$, we introduce an abelian monoid structure by the direct sum. 

\item
We define $Z^{A}_-(X, Y)$ to be the submonoid of $M^{A}_-(X, Y)$ consisting of isomorphism classes of triples $( S, m_0, m_1)$ on $(X, Y)$ such that there exists a homotopy between $m_0$ and $m_1$ which is constant on $Y$, i.e., a map $m_I \in \mathrm{Map}(I \times X, \Skew^*_{A}( S))$ with $m_I|_{\{i\} \times X} = m_i$ for $i = 0, 1$ and $m_I|_{\{t\} \times Y} = m_0|_Y$ for all $t \in I$. 

\item
We define $KO^{A}_-(X, Y) = M^{A}_-(X, Y)/Z^{A}_-(X, Y)$ to be the quotient monoid.

\end{itemize}
\end{defn}

Next we define twisted groups. 
The category of twists $\Tw^2_{{KO}_-}$ in ${KO}_-$ is just the same as that for ${KO}_+$ in Definition \ref{def_twist_cat_KO_+}, 
\begin{align}
    \Tw^2_{{KO}_-} := \Tw^2_{{KO}_+}. 
\end{align}
The definition of ${KO}_-$ is given by modifiying Definition \ref{def_karoubi_twisted_KO} for twisted $KO_+$ in the same way, so we go briefly. 
\begin{defn}[{$KO^\mathcal{A}_-(X, Y)$}]\label{def_mass_twisted_KO}
Let $(X, Y, \mathcal{A}) \in \Tw^2_{{KO}_-}$.  
By replacing $\Self$ to $\Skew$ in Definition \ref{def_karoubi_twisted_KO}, we define the abelian group $KO^\mathcal{A}_-(X, Y)$. 
\end{defn}
Thus elements in $KO^\mathcal{A}_-(X, Y)$ is of the form $[\slashed S, m_0, m_1]$ with $m_0, m_1 \in \Gamma( X ,  \Skew_\mathcal{A}^*(S))$ such that $m_0|_Y = m_1|_Y$. 
We have $[\slashed S, m_0, m_1] = 0$ if $m_0$ and $m_1$ are homotopic relative to $Y$.

The result corresponding to Fact \ref{fact_isom_negligible} also holds for $KO_-$. 
For $E = E^0 \oplus E^1$ and any $\mathcal{A}$ as in the setting there, we have a canonical isomorphism using Lemma \ref{lem_periodicity_negligible_bundle}, 
\begin{align}\label{eq_isom_negligible_-}
    KO^{\mathcal{A}}_-(X, Y) \simeq KO^{\mathrm{End}(E) \widehat{\otimes} \mathcal{A}}_-(X, Y). 
\end{align}

We can relate $KO_-$ with $KO_+$ using the results in Subsection \ref{subsec_self_skew}. 

\begin{prop}\label{prop_KO_skew=self}
Let $(X, Y, \mathcal{A}) \in \Tw^2_{{KO}_-}$. 
We have an isomorphism
\begin{align}\label{eq_prop_KO_skew=self}
    KO_-^{\Sigma^{0, 1}\mathcal{A}}(X, Y) \simeq KO_+^{\widetilde{\Sigma}^{0, 1}\mathcal{A}}(X, Y),  
\end{align}
which is natural in $(X, Y)$ and $\mathcal{A}$. 
\end{prop}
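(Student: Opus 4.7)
The plan is to realise \eqref{eq_prop_KO_skew=self} directly by applying the fiberwise bijection $\psi_\beta$ of Lemma \ref{lem_self_skew} to cocycles, with no passage through a spectrum-level identification.

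First I would make explicit that the bundles $\Sigma^{0,1}\mathcal{A}$ and $\widetilde{\Sigma}^{0,1}\mathcal{A}$ coincide as bundles of $*$-algebras, differing only in the $\Z_2$-grading of each fibre; in particular the structure group $\mathrm{Aut}_{*,\Z_2}(A)$ of $\mathcal{A}$ acts through $\phi\mapsto \phi\widehat{\otimes}\mathrm{id}_{Cl_{0,1}}$, which preserves both gradings simultaneously. As a consequence, an (ungraded) module bundle with inner product is the same datum for the two bundles of $*$-algebras, so a single $\mathcal{A}$-module bundle $\slashed S$ with inner product can be regarded at once as a $\Sigma^{0,1}\mathcal{A}$-module bundle and as a $\widetilde{\Sigma}^{0,1}\mathcal{A}$-module bundle.

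Next I would define the candidate map at the level of triples by
\[
\Psi_\beta\colon [\slashed S, m_0, m_1] \longmapsto [\slashed S, \psi_\beta(m_0), \psi_\beta(m_1)],
\]
using the fiberwise bijection \eqref{eq_bij_End_self_skew_bundle}. Since $m_i\in\Gamma(X;\Skew^*_{\Sigma^{0,1}\mathcal{A}}(\slashed S))\subset\Gamma(X;\End^1_{\Sigma^{0,1}\mathcal{A}}(\slashed S))$, Lemma \ref{lem_self_skew} gives $\psi_\beta(m_i)=\beta\cdot m_i\in\Gamma(X;\Self^*_{\widetilde{\Sigma}^{0,1}\mathcal{A}}(\slashed S))$, and the condition $m_0|_Y=m_1|_Y$ transports to $\psi_\beta(m_0)|_Y=\psi_\beta(m_1)|_Y$. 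Then I would verify that $\Psi_\beta$ descends to the quotients: an isometric $\Sigma^{0,1}\mathcal{A}$-module isomorphism $f\colon\slashed S\simeq\slashed S'$ is simultaneously a $\widetilde{\Sigma}^{0,1}\mathcal{A}$-module isomorphism, and since $f$ commutes with the action of $\beta$, the intertwining $fm_i = m_i'f$ yields $f\psi_\beta(m_i)=\psi_\beta(m_i')f$. Continuity of $\psi_\beta$ sends a homotopy $m_I\in\Gamma(I\times X;\mathrm{pr}_X^*\Skew^*_{\Sigma^{0,1}\mathcal{A}}(\slashed S))$ constant on $Y$ to a homotopy $\psi_\beta(m_I)$ in $\Gamma(I\times X;\mathrm{pr}_X^*\Self^*_{\widetilde{\Sigma}^{0,1}\mathcal{A}}(\slashed S))$ with the same property, so $Z^{\Sigma^{0,1}\mathcal{A}}_-$ is carried into $Z^{\widetilde{\Sigma}^{0,1}\mathcal{A}}_+$. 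Compatibility with direct sums is immediate.

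An inverse is furnished by the analogous construction with $\psi_\beta^{-1}$ (which in our degree conventions is again multiplication by $\beta$ on $\End^1$), so $\Psi_\beta$ is an isomorphism of abelian groups. Naturality in $(X,Y)$ is clear from the pointwise definition, and naturality in $\mathcal{A}$ follows because $\psi_\beta$ is intertwined by any isomorphism of underlying $*$-algebra bundles that preserves both $\Z_2$-gradings. I do not expect any serious obstacle here: the argument is essentially a bookkeeping exercise, and the only point that deserves care is the explicit identification, in the opening step, of $\Sigma^{0,1}\mathcal{A}$ and $\widetilde{\Sigma}^{0,1}\mathcal{A}$ as the same $*$-algebra bundle, so that the assignment $\slashed S\mapsto\slashed S$ on module bundles is well-defined and compatible with the respective $\Z_2$-graded $\End$-bundles.
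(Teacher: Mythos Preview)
Your proposal is correct and follows essentially the same approach as the paper: the paper defines the isomorphism by $[\slashed S, m_0, m_1] \mapsto [\slashed S, \psi_\beta(m_0), \psi_\beta(m_1)]$ using the bundle version of $\psi_\beta$ from Lemma~\ref{lem_self_skew}, and declares the well-definedness and the fact that it is a natural isomorphism to be obvious. Your write-up simply spells out the bookkeeping that the paper leaves implicit.
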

\begin{proof}
The map \eqref{eq_prop_KO_skew=self} is given by
\begin{align*}
    [\slashed S, m_0, m_1] \mapsto [\slashed S, \psi_\beta(m_0), \psi_\beta(m_1)], 
\end{align*}
where $\psi_\beta \colon \mathrm{Skew}^*_{\Sigma^{0,1}\mathcal{A
}}(\slashed S) \simeq
\mathrm{Self}^*_{\widetilde{\Sigma}^{0, 1}\mathcal{A}}(\slashed S)$ is the isomorphism in \eqref{eq_bij_End_self_skew_bundle}. 
The well-definedness and the fact that this map gives a natural isomorphism are obvious. 
\end{proof}

Since we have $\Sigma^{0, 1}\Sigma^{1, 0} \mathcal{A} = \mathcal{A} \hat{\otimes} Cl_{1, 1}$ and $Cl_{1, 1}$ is negligible, by Proposition \ref{prop_KO_skew=self} and \eqref{eq_isom_negligible_-}, we have a natural isomorphism
\begin{align}
    KO_-^{\mathcal{A}}(X, Y) \simeq KO_+^{\widetilde{\Sigma}^{0, 1}\Sigma^{1, 0}\mathcal{A}}(X, Y). 
\end{align}
Thus we see that the two models $KO_+$ and $KO_-$ are equivalent under explicit algebraic operations. 
In particular, in the case of untwisted groups, also using Lemma \ref{lem_ungraded_susp_degree} we have the following result corresponding to Fact \ref{fact_karoubi_KO}. 

\begin{prop}\label{prop_mass_KO}
Let $A$ be a simple central graded $*$-algebra. 
We have a natural isomorphism on the category of finite CW-pairs, 
\begin{align}
    KO_-^A \simeq KO^{-\type(A) - 2}. 
\end{align}
\end{prop}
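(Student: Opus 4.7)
The plan is to deduce Proposition \ref{prop_mass_KO} by chaining together the isomorphisms already established (or sketched) in the paragraphs preceding the statement. The desired degree $-\type(A)-2$ should appear on the nose once we push $A$ through two suspensions.

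First I would apply the $Cl_{1,1}$-negligibility isomorphism \eqref{eq_isom_negligible_-} to obtain a natural isomorphism
\begin{align*}
    KO_-^A(X, Y) \simeq KO_-^{A \,\widehat{\otimes}\, Cl_{1,1}}(X, Y) = KO_-^{\Sigma^{0,1}\Sigma^{1,0} A}(X, Y),
\end{align*}
which simply comes from tensoring modules with the $\Z_2$-graded vector space $E = \R \oplus \R$ and using Lemma \ref{lem_periodicity_negligible_bundle}. Next, I would apply Proposition \ref{prop_KO_skew=self} to the simple central graded $*$-algebra $\Sigma^{1,0} A$, which yields the natural isomorphism
\begin{align*}
    KO_-^{\Sigma^{0,1}\Sigma^{1,0} A}(X, Y) \simeq KO_+^{\widetilde{\Sigma}^{0,1}\Sigma^{1,0} A}(X, Y),
\end{align*}
given at the level of triples by $[S, m_0, m_1] \mapsto [S, \psi_\beta(m_0), \psi_\beta(m_1)]$ via the bijection \eqref{eq_bij_End_self_skew}.

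At this point I would invoke Fact \ref{fact_karoubi_KO} to identify
\begin{align*}
    KO_+^{\widetilde{\Sigma}^{0,1}\Sigma^{1,0} A}(X, Y) \simeq KO^{\type(\widetilde{\Sigma}^{0,1}\Sigma^{1,0} A)}(X, Y),
\end{align*}
naturally on finite CW-pairs. It remains to compute the type. By the standard relation $\type(\Sigma^{1,0} A) = \type(A) + 1$ and Lemma \ref{lem_ungraded_susp_degree}, we have
\begin{align*}
    \type(\widetilde{\Sigma}^{0,1}\Sigma^{1,0} A) = -\type(\Sigma^{1,0} A) - 1 = -\type(A) - 2,
\end{align*}
which supplies the correct degree.

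Composing the three natural isomorphisms above gives the desired natural isomorphism $KO_-^A \simeq KO^{-\type(A) - 2}$ on $\mathrm{CWPair}_f$. There is no real obstacle here: the statement is a formal consequence of the preceding results, and the only thing worth double-checking is that the bookkeeping of types and gradings in $\widetilde{\Sigma}^{0,1}$ versus $\Sigma^{0,1}$ really does shift the degree by $-1$ rather than $+1$, which is precisely the content of Lemma \ref{lem_ungraded_susp_degree}. All three isomorphisms in the chain are manifestly natural in $(X, Y)$, so no additional verification of naturality is required.
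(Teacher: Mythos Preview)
Your proposal is correct and follows essentially the same approach as the paper. The paper also chains together the negligibility isomorphism \eqref{eq_isom_negligible_-} for $Cl_{1,1}$, Proposition \ref{prop_KO_skew=self} applied with $\mathcal{A} = \Sigma^{1,0}A$, Fact \ref{fact_karoubi_KO}, and the type computation via Lemma \ref{lem_ungraded_susp_degree}, arriving at exactly the same degree shift $-\type(A)-2$.
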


\begin{rem}\label{rem_Karoubi_K_self_skew}
In the $\C$-linear setting, just replacing the coefficients $\R$ by $\C$ in Definitions \ref{def_mass_untwisted_KO} and \ref{def_mass_twisted_KO}
as we did in Remark \ref{rem_Karoubi_K}, 
we define groups $K_-^A(X, Y)$ and $K_-^{\mathcal{A}}(X, Y)$. 
In this case, we have an isomorphism
\begin{align}\label{eq_isom_self_skew_K}
    K_-^{\mathcal{A}} \simeq K_+^{\mathcal{A}}
\end{align}
by $[\slashed S, m_0, m_1] \mapsto [\slashed S, \sqrt{-1}m_0, \sqrt{-1}m_1]$ (see Remark \ref{rem_complex_self_skew}). 
By \eqref{eq_rem_karoubi_K} and \eqref{eq_isom_self_skew_K}, we have a natural isomorphism on the category of finite CW-pairs, 
\begin{align}
    K_-^A \simeq K^{\type(A)}. 
\end{align}
\end{rem}

\subsection{The Pontryagin character forms for mass terms}\label{subsec_Ph_form_-}
Recall that the definition of the differential model $\widehat{KO}_+^A$ used the Pontryagin character forms $\Ph_\lself(h)$ for invertible elements $h \in C^\infty(X, \Self^*_A(S))$. 
In this subsection, we define their skew-adjoint variants. 

\subsubsection{The case of trivial bundles}\label{subsubsec_Ph_triv_-}
Here we explain the skew-adjoint version of Subsubsection \ref{subsubsec_Ph_triv}. 
Let $X$ be a manifold and $A$ be nondegenerate. 
We define the {\it Pontryagin character form} $\Ph_\lskew(m)$ for an invertible element (a mass term) $m \in C^\infty(X, \Skew^*_A(S))$. 

For such $m$, consider the section $tm \in C^\infty((0, \infty) \times X, \Skew^*_A(S))$, where $t$ is the coordinate in $(0, \infty)$. 
We have 
\begin{multline}
    \Ph_\lskew(d_{(0, \infty) \times X} + tm; d)  = dt \wedge \mathrm{Tr}_A\left(me^{td_X m + t^2 m^2} \right)  \\
    \in  \Omega_{\mathrm{clo}}^{4\Z -\mathrm{type}(\mathcal{A})-1}(X; \mathrm{Ori}(\mathcal{A})). 
\end{multline}
Here the first term is defined in \eqref{eq_Ph_skew}, in which $d$ is the trivial connection on $\underline{A}$. 
Since $m^2$ is strictly negative, the convergence result analogous to Lemma \ref{lem_Ph_convergence_triv} holds and allows us to define the following. 

\begin{defn}[{$\Ph_\lskew(m)$}]\label{def_Ph_skew_triv}
For $m \in C^\infty(X, \Skew^*_{A}(S))$, its {\it Pontryagin character form} 
\begin{align*}
    \mathrm{Ph}_\lskew(m) \in \Omega^{4\Z - \type(A) - 2}_{\mathrm{clo}}(X; \Ori(A))
\end{align*}
is defined by
\begin{align}
    \mathrm{Ph}_\lskew(m) &:=   -\pi^{-1/2}\mathcal{R} \circ\int_{(0, \infty)} \mathrm{Ph}_\lskew(d_{(0, \infty) \times X} + tm) \\
    &=-\pi^{-1/2}\mathcal{R} \circ\int_{(0, \infty)} dt \wedge \mathrm{Tr}_A\left(m e^{td_X m + t^2m^2} \right). \notag
\end{align}
\end{defn}
We also define
\begin{defn}[{$\mathrm{CS}_\lskew(m_I)$}]\label{def_CS_skew_triv}
For $m_I \in C^\infty(I \times X , \Skew^*_{A}(S))$, we define its {\it Chern-Simons form} by
\begin{align} 
    \mathrm{CS}_\lskew(m_I) := \int_{I}\mathrm{Ph}_\lskew\left(m_I \right) 
    \in \Omega^{4\Z - \type(A)-3}(X;\Ori(A)). 
\end{align}
\end{defn}

\subsubsection{The general case}
Now we explain the skew-adjoint version of Subsection \ref{subsec_Ph_form}. 
Let $X$, $\mathcal{A}$, $\nabla^{\mathcal{A}}$ and $\slashed S$ with an inner product be as in that subsection, where we always assume that the fibers of $\mathcal{A}$ are nondegenerate. 
Let $\nabla^{\slashed S}$ be a skew-adjoint $\mathcal{A}$-{\it connection} on $\slashed S$ compatible with $\nabla^{\mathcal{A}}$. 
In this setting, given a smooth invertible section $m \in C^\infty( X ; \mathrm{Skew}_{\mathcal{A}}^*(\slashed S))$, we are going to define its {\it Pontryagin character form} $\mathrm{Ph}_\lskew(m; \nabla^{\mathcal{A}}, \nabla^{\slashed S})$.  

On the manifold $(0, \infty) \times X$,
We consider the skew-adjoint $\mathrm{pr}_X^*\mathcal{A}$-superconnection $\mathrm{pr}_X^*\nabla^{\slashed S} + tm$ on $\mathrm{pr}_X^*\slashed S$ compatible with $\mathrm{pr}_X^*\nabla^{\mathcal{A}}$. 
We have
\begin{align*}
   \mathrm{Ph}_\lskew(\mathrm{pr}_X^*\nabla^{\slashed S} + tm; \mathrm{pr}_X^*\nabla^{\mathcal{A}}) \in  \Omega_{\mathrm{clo}}^{4\Z-\mathrm{type}(\mathcal{A})-1}((0, \infty) \times X ; \mathrm{pr}_X^*\mathrm{Ori}(\mathcal{A})). 
\end{align*}
Using the convergence result analogous to Lemma \ref{lem_Ph_convergence}, we define the following. 

\begin{defn}[{$\mathrm{Ph}(m; \nabla^{\mathcal{A}}, \nabla^{\slashed S})$, $\mathrm{CS}(m_I; \nabla^{\mathcal{A}}, \nabla^{\slashed S})$}]\label{def_Ph_skew}
Let $X$, $\mathcal{A}$, $\nabla^{\mathcal{A}}$ and $\slashed S$ be as above.
\begin{enumerate}
    \item For any smooth invertible section $m \in C^\infty( X ; \mathrm{Skew}_{\mathcal{A}}^*(\slashed S))$, we define
\begin{multline}
    \mathrm{Ph}_\lskew(m; \nabla^{\mathcal{A}}, \nabla^{\slashed S}) :=  - \pi^{-1/2}\mathcal{R} \circ\int_{(0, \infty)} \mathrm{Ph}_\lskew(\mathrm{pr}_X^*\nabla^{\slashed S} + tm; \mathrm{pr}_X^*\nabla^{\mathcal{A}}) \\ 
    \in \Omega^{4\Z - \type(\mathcal{A}) - 2}(X; \Ori(\mathcal{A}))
\end{multline}
\item  
For any smooth invertible section $m_I \in C^\infty(I \times X ; \mathrm{pr}_X^*\mathrm{Skew}_{\mathcal{A}}^*(\slashed S))$, we define
\begin{multline} 
    \mathrm{CS}_\lskew(m_I; \nabla^{\mathcal{A}}, \nabla^{\slashed S}) := \int_{I}\mathrm{Ph}_\lskew\left(m_I; \mathrm{pr}_X^*\nabla^{\mathcal{A}}, \mathrm{pr}_X^*\nabla^{\slashed S} \right) \\
    \in \Omega^{4\Z - \type(\mathcal{A})-3}(X; \Ori(\mathcal{A})). 
\end{multline}
\end{enumerate}
\end{defn}

\subsubsection{The relation between $\Ph_\lskew$ and $\Ph_\lself$}\label{subsubsec_Ph_self_skew}
In Subsection \ref{subsec_self_skew}, we saw that we have an isomorphism $\psi_\beta \colon \mathrm{Skew}^*_{\Sigma^{0,1}\mathcal{A}}(\slashed S) \simeq
\mathrm{Self}^*_{\widetilde{\Sigma}^{0, 1}\mathcal{A}}(\slashed S)$. 
In this subsection, we show that the Pontryagin character forms $\Ph_\lskew$ and $\Ph_\lself$ are also related under this isomorphism. 

Let $(\mathcal{A}, \nabla^{\mathcal{A}})$ be a bundle of simple central graded $*$-algebras equipped with a connection. 
The induced connection on $\Sigma^{0,1}\mathcal{A}$ and $\widetilde{\Sigma}^{0,1}\mathcal{A}$ are denoted by $\Sigma^{0,1}\nabla^{\mathcal{A}}$ and $\widetilde{\Sigma}^{0,1}\nabla^{\mathcal{A}}$, respectively. 
Assume that a $\Sigma^{0, 1}\mathcal{A}$-module bundle $\slashed S$ is equipped with an inner product and let $\Grad^{\slashed S}$ be a $\Sigma^{0, 1}\mathcal{A}$-superconnection compatible with $\Sigma^{0,1}\nabla^{\mathcal{A}}$. 
Then decompose $\Grad^{\slashed S}$ as in \eqref{eq_superconn_sum}, 
\begin{align*}
    \Grad^{\slashed S} = \nabla + \sum_j \omega_j \widehat{\otimes}\xi_j , 
\end{align*}
where $\nabla$ is an $\mathcal{A}$-connection, $\omega_j \in \Omega^j(X)$ and $\xi_j \in C^\infty(X; \mathrm{End}_{\Sigma^{0,1}\mathcal{A}}(\slashed S))$ with $|\omega_j| + |\xi_j| = 1 \in \Z_2$. 
Then consider the following operator on $\Omega^*(X; \slashed S)$ 
\begin{align}\label{eq_def_psi_beta_superconn}
    \psi_\beta (\Grad^{\slashed S}): = \nabla + \sum_j (-1)^{\nu(j)} \omega_j \widehat{\otimes}\psi_\beta(\xi_j ), 
\end{align}
where 
\begin{align}\label{eq_def_nu}
    \nu(j) := \begin{cases}
    0 & j \equiv 0, 1 \pmod 4 \\
    1 & j \equiv 2,3 \pmod 4. 
    \end{cases}
\end{align}
The expression \eqref{eq_def_psi_beta_superconn} does not depend on the decomposition \eqref{eq_superconn_sum}. 

\begin{lem}\label{lem_psi_beta_superconn}
The operator $\psi_\beta (\Grad^{\slashed S})$ in \eqref{eq_def_psi_beta_superconn} is a $\widetilde{\Sigma}^{0,1}\mathcal{A}$-superconnection on $\slashed S$ compatible with $\widetilde{\Sigma}^{0,1}\nabla^{\mathcal{A}}$. 
Moreover, if $\Grad^{\slashed S} $ is a skew-adjoint superconnection, then $\psi_\beta (\Grad^{\slashed S})$ is a self-adjoint superconnection. 
\end{lem}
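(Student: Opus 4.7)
The plan is to verify both statements by a local sign-tracking computation, using Lemma~\ref{lem_self_skew} (the $\Z_2$-grading-preserving bijectivity of $\psi_\beta$) as the central input. First I would trivialize $\mathcal{A} \simeq \underline{A}$ and $\slashed S \simeq \underline{S}$ on a small open set, and apply Lemma~\ref{lem_connection_S_V} to reduce to the case $\nabla^{\mathcal{A}} = d$---a reduction that is simultaneously legitimate for both $\Sigma^{0,1}\mathcal{A}$ and $\widetilde{\Sigma}^{0,1}\mathcal{A}$ because these bundles share the same underlying $*$-algebra and in particular the same skew-adjoint even part $\mathcal{A}^0_{\mathrm{skew}}$. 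Decomposing $\Grad^{\slashed S} = \nabla + \sum_j \omega_j \widehat{\otimes}\xi_j$ with $|\xi_j|_{\Sigma^{0,1}\mathcal{A}} \equiv |\omega_j|+1 \pmod 2$, I would then treat the $\nabla$-part and the higher-degree summands of $\psi_\beta(\Grad^{\slashed S})$ separately.

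For part (1), the key observation for the connection part is that $\nabla$ locally takes the form $d + B$ with $B \in \Omega^1(X; \End^0_{\Sigma^{0,1}\mathcal{A}}(\slashed S))$, and $\End^0_{\Sigma^{0,1}\mathcal{A}}(\slashed S) = \End^0_{\widetilde{\Sigma}^{0,1}\mathcal{A}}(\slashed S)$ as sets (both equal the full centralizer of $A$, defined by ordinary commutation). This reduces the defining relation \eqref{eq_Cliff_superconn} for $\nabla$ to a direct check of $[\nabla, c \cdot_{\widetilde{\Sigma}}]_{\widetilde{\Sigma}} = (dc) \cdot_{\widetilde{\Sigma}}$ for $c$ in a generating set of $\widetilde{\Sigma}^{0,1}\mathcal{A}$---namely elements of $\mathcal{A}$ and the element $c = \beta$. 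For each higher summand, Lemma~\ref{lem_self_skew} places $\psi_\beta(\xi_j)$ in $\End^{|\xi_j|_{\Sigma^{0,1}\mathcal{A}}}_{\widetilde{\Sigma}^{0,1}\mathcal{A}}(\slashed S)$, so by the very definition of $\End_{\widetilde{\Sigma}^{0,1}\mathcal{A}}$ the form $\omega_j \widehat{\otimes}\psi_\beta(\xi_j)$ graded-commutes in the $\widetilde{\Sigma}^{0,1}$-sense with all of $\Omega^*(X;\widetilde{\Sigma}^{0,1}\mathcal{A})$ and therefore contributes $0$ to the graded commutator, regardless of the scalar $(-1)^{\nu(|\omega_j|)}$.

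For part (2), since $(-1)^{\nu(|\omega_j|)}$ is a scalar that preserves self- and skew-adjointness, it suffices to check that $\psi_\beta(\xi_j) = \beta^{|\xi_j|_{\Sigma^{0,1}\mathcal{A}}}\xi_j$ satisfies the self-adjoint clause of Definition~\ref{def_skew_superconn}(1) whenever $\xi_j$ satisfies the skew-adjoint clause of Definition~\ref{def_skew_superconn}(2). Using $\beta^* = \beta$ and the anticommutation $\xi_j\beta = (-1)^{|\xi_j|_{\Sigma^{0,1}\mathcal{A}}}\beta\xi_j$, I would compute $(\beta^k\xi_j)^* = (-1)^k \beta^k\xi_j^*$; hence multiplication by $\beta$ swaps self- and skew-adjointness exactly when $|\xi_j|_{\Sigma^{0,1}\mathcal{A}}$ is odd, i.e., when $|\omega_j|$ is even. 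A case analysis over the four residues $|\omega_j| \pmod 4$ then matches each of the four skew-adjointness clauses to the corresponding self-adjointness clause.

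The hard part will be the connection-part verification in (1): showing that one and the same underlying operator $\nabla$ simultaneously satisfies the $\Sigma^{0,1}\mathcal{A}$- and $\widetilde{\Sigma}^{0,1}\mathcal{A}$-superconnection conditions, despite the fact that both the $\Omega^*(X)$-action rule in \eqref{eq_action_form} and the graded-commutator sign in \eqref{eq_Cliff_superconn} genuinely differ between the two gradings on elements $c$ of odd $\mathcal{A}$-component. The central bookkeeping obstacle is to see that these two sign discrepancies cancel against each other, so that the $\Sigma^{0,1}$-condition on $\nabla$ automatically implies the $\widetilde{\Sigma}^{0,1}$-condition.
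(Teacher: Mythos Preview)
Your proposal is correct and follows essentially the same approach as the paper's proof: for part~(1) the paper simply invokes that $\psi_\beta$ preserves the $\Z_2$-grading, and for part~(2) it says the result follows from a degree-wise check using Definition~\ref{def_skew_superconn}. Your write-up supplies considerably more detail---in particular the connection-part verification and the sign-cancellation argument you flag as ``the hard part''---which the paper treats as implicit; but the underlying logic is the same.
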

\begin{proof}
The first statement follows from the fact that $\psi_\beta$ preserves the $\Z_2$-grading. 
The second statement follows from a degree-wise check by using the definition of self/skew-adjointness of superconnections (Definition \ref{def_skew_superconn}). 
\end{proof}

\begin{prop}\label{prop_skew_self_characteristic_form}
In the above setting, assume that $\Grad^{\slashed S} $ is skew-adjoint. 
For any element $f \in \R[[z]]$, we have 
\begin{align}\label{eq_skew_self_characteristic_form}
    \mathrm{Tr}_{\Sigma^{0, 1}\mathcal{A}}(f(F(\Grad^{\slashed S};  \Sigma^{0, 1}\nabla^\mathcal{A})))
    = \mathrm{Tr}_{\widetilde{\Sigma}^{0, 1}\mathcal{A}}(f(-F(\psi_\beta(\Grad^{\slashed S});  \widetilde{\Sigma}^{0, 1}\nabla^\mathcal{A}))) \\
    \in \Omega_\mathrm{clo}^{4\Z - \type(\mathcal{A})}(X; \Ori(\mathcal{A})), \notag
\end{align}
under the following identifications of orientation bundles,
\begin{align*}
\Ori(\mathcal{A}) &\simeq \Ori(\Sigma^{0, 1}\mathcal{A}), &
u &\mapsto u \widehat{\otimes} \beta, \\
\Ori(\mathcal{A}) &\simeq \Ori(\widetilde{\Sigma}^{0, 1}\mathcal{A}), & 
u &\mapsto (-1)^{\nu(\type(\mathcal{A}))} u \widehat{\otimes} \beta^{\type(\mathcal{A})+1},
\end{align*}
where $\nu$ is given by \eqref{eq_def_nu}. 
\end{prop}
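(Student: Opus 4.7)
The strategy is a direct local algebraic calculation. Since both sides of \eqref{eq_skew_self_characteristic_form} are natural with respect to pullbacks and local on $X$, we reduce to the case of trivial bundles $\mathcal{A} = \underline{A}$ and $\slashed S = \underline{S}$. By Lemma \ref{lem_change_connection_Ch}, we may assume the base connection is $\nabla^\mathcal{A} = d$. Decompose $\Grad^{\slashed S} = d + B$ where $B = \sum_j \omega_j \widehat{\otimes}\xi_j \in \Omega^*(X; \End^1_{\Sigma^{0,1}A}(S))$ with $|\omega_j| + |\xi_j|_{\Sigma^{0,1}A} \equiv 1 \pmod 2$. Then \eqref{eq_curv_local} gives $F_1 = dB + B^2$, while Lemma \ref{lem_psi_beta_superconn} yields $\psi_\beta(\Grad^{\slashed S}) = d + \psi_\beta(B)$ with $\psi_\beta(B) = \sum_j (-1)^{\nu(j)}\omega_j \widehat{\otimes}\beta^{|\omega_j|+1}\xi_j$, and hence $F_2 = d\psi_\beta(B) + \psi_\beta(B)^2$.

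The heart of the proof is an operator identity on $\Omega^*(X; S)$. Using the anti-commutation $\beta\xi_j = (-1)^{|\omega_j|+1}\xi_j\beta$ together with $\beta^2 = 1$, one can systematically move all occurrences of $\beta$ appearing in $\psi_\beta(B)$ (and hence in the powers $F_2^n$) past the $\xi$'s and $\omega$'s, collecting signs governed by the Koszul rule and by $\nu$. A term-by-term comparison then yields an equality of the form
\begin{equation*}
(u \widehat{\otimes} \beta^{\type(A)+1}) \cdot f(-F_2) = (-1)^{\nu(\type\mathcal{A})}\, (u\widehat{\otimes}\beta) \cdot f(F_1)
\end{equation*}
as operators on $S$. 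The key point is that the sign $(-1)^{\nu(j)}$ built into the very definition of $\psi_\beta$ is designed exactly to balance the signs produced when moving $\beta$'s past form components of degree $\equiv 0,1 \pmod 4$ versus those of degree $\equiv 2,3 \pmod 4$, so that the net sign produced in $f(-F_2)$ matches the prescribed $(-1)^{\nu(\type\mathcal{A})}$ after the volume element is changed from $u\widehat{\otimes}\beta$ to $u\widehat{\otimes}\beta^{\type(A)+1}$.

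Applying $\mathrm{Tr}_S$ to both sides, noting that $\dim_\R(\Sigma^{0,1}A) = \dim_\R(\widetilde{\Sigma}^{0,1}A)$ so the normalization factors in Definition \ref{def_u_str} coincide, and invoking the cases of Definition \ref{def_u_str} according to the parities of $\type(\Sigma^{0,1}A)$ and $\type(\widetilde{\Sigma}^{0,1}A)$, then delivers the stated equality under the specified identifications of orientation bundles.

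The principal obstacle is the careful sign bookkeeping: several sources of signs (the graded-commutativity of forms, the two distinct $\Z_2$-gradings on the underlying $*$-algebra, the $(-1)^{\nu(j)}$ in $\psi_\beta$, the $(-1)^n$ arising from $f(-z)$, and the change of volume element from $u\widehat{\otimes}\beta$ to $u\widehat{\otimes}\beta^{\type(A)+1}$) must all conspire to produce the single clean factor $(-1)^{\nu(\type\mathcal{A})}$. The most transparent way to organize the calculation is to split into cases according to $\type(A) \pmod 4$, treating even and odd types separately so that $\beta^{\type(A)+1}$ reduces either to $\beta$ or to $1$ and one is reduced to a genuinely finite verification; an alternative, slightly less bare-hands approach is to induct on the number of summands in the decomposition of $B$, treating the base case where $B = \omega_j \widehat{\otimes}\xi_j$ is a single homogeneous term and then using the binomial expansion of $(F_1+F'_1)^n$ versus $(F_2+F'_2)^n$ for the inductive step.
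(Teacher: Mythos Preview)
Your overall setup (localize, trivialize, take $\nabla^{\mathcal{A}}=d$, write $\Grad^{\slashed S}=d+B$, reduce to $f(z)=z^n$) matches the paper. But the proposal has a genuine gap at its ``heart'': the displayed operator identity
\[
(u \widehat{\otimes} \beta^{\type(A)+1}) \cdot f(-F_2) \;=\; (-1)^{\nu(\type\mathcal{A})}\, (u\widehat{\otimes}\beta) \cdot f(F_1)
\]
is simply false as an equality of $\End(S)$-valued forms. The relation between $F_2$ and $F_1$ (and their powers) depends on the form-degree modulo $4$: what one actually has is, in the paper's notation,
\[
((-F_2)^n)_0=(F_1^n)_0,\quad ((-F_2)^n)_1=-\beta\cdot(F_1^n)_1,\quad ((-F_2)^n)_2=-(F_1^n)_2,\quad ((-F_2)^n)_3=\beta\cdot(F_1^n)_3,
\]
so no single scalar sign times a $\beta$-shift of the volume element works uniformly across degrees. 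The equality \eqref{eq_skew_self_characteristic_form} only emerges \emph{after} one (i) uses Proposition~\ref{prop_chracteristic_form_mod4} to know that both traces live in degree $\equiv -\type(\mathcal{A})\pmod 4$, and (ii) evaluates the above relations at that single residue class. Your proposal never isolates this degree-mod-$4$ mechanism, and without it the ``systematically move all occurrences of $\beta$'' step has no target to aim at.

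The organizational strategies you suggest do not really substitute for this. Splitting by $\type(A)\bmod 4$ tells you which residue class survives the trace, but you still need the degreewise relations above to compare $(F_1^n)_{-\type(\mathcal{A})}$ with $((-F_2)^n)_{-\type(\mathcal{A})}$; that requires exactly the induction on $n$ via the mod-$4$ decomposition that the paper carries out (first checking $\widetilde{F}_0=-F_0$, $\widetilde{F}_1=\beta F_1$, $\widetilde{F}_2=F_2$, $\widetilde{F}_3=-\beta F_3$ from $\widetilde{B}=\beta B_0+B_1-\beta B_2-B_3$, then propagating to $(-\widetilde{F})^n$). The alternative ``induct on the number of summands in $B$'' is not well-suited: since $F=dB+B^2$ is quadratic in $B$ and you then take $F^n$, adding one summand to $B$ does not interact with $F^n$ through any clean binomial identity, so there is no usable inductive step. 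Replace your operator identity with the form-degree-mod-$4$ relations and the induction on $n$, and the proof goes through.
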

\begin{proof}
The both sides of \eqref{eq_skew_self_characteristic_form} are closed forms of degree $-\type(\mathcal{A})\pmod 4$ by Proposition \ref{prop_chracteristic_form_mod4} and Lemma \ref{lem_ungraded_susp_degree}. 

In this proof, we denote 
\begin{align*}
F &:= F(\Grad^{\slashed S};  \Sigma^{0, 1}\nabla^\mathcal{A}),
&
\widetilde{F} &:= F(\psi_\beta(\Grad^{\slashed S});  \widetilde{\Sigma}^{0, 1}\nabla^\mathcal{A}).
\end{align*}
Also, for elements $\Xi \in \Omega^*(X; \End(\slashed S))$, we denote by $\Xi = \sum_{j \in \Z_4}\Xi_j$ the decomposition with respect to the form-degree modulo $4$, in which $\Xi_j \in \Omega^{4\Z + j}(U; \End(\slashed S))$. 

It is enough to prove in the case that $f(z) = z^n$ for each $n$. 
We may work locally, and use the local expression as in \eqref{eq_superconn_local}. 
As explained there, we may write $\nabla^{\mathcal{A}} = d + C$ for some $C \in \Omega^1(U; A^0_{\mathrm{skew}})$, and $\Grad^{\slashed S} = d + C + B$ for some $B \in \Omega^*(U; \mathrm{End}_{\Sigma^{0, 1}\mathcal{A}}(\slashed S))^1$. 
Then we have
\begin{align*}
    \psi_\beta(\Grad^{\slashed S}) = d + C + \beta \cdot B_0 + B_1 - \beta \cdot B_2 - B_3. 
\end{align*}
Here we denote $\beta \cdot (\omega \widehat{\otimes}\xi) := \omega \widehat{\otimes}( \beta \xi)$ for $\omega \widehat{\otimes}\xi \in \Omega^*(U; \mathrm{End}_{\Sigma^{0, 1}\mathcal{A}}(\slashed S))$. 
Let us denote $\widetilde{B} := \beta \cdot B_0 + B_1 - \beta \cdot B_2 - B_3$. 
Then we have \eqref{eq_curv_local}
\begin{align*}
    F &= dB + B^2, &
    \widetilde{F}
    &= d\widetilde{B} + \widetilde{B}^2. 
\end{align*}
Using $\beta \cdot B_j \cdot = (-1)^{|j|+1}B_j \cdot \beta \cdot$ as operators on $\Omega^*(U; \mathrm{End}_{\Sigma^{0, 1}\mathcal{A}}(\slashed S))$, we can easily check that
\begin{align*}
    \widetilde{F}_0 &= -F_0, & 
    \widetilde{F}_1 &= \beta \cdot F_1, &
    \widetilde{F}_2 &= F_2, & 
    \widetilde{F}_3 &= -\beta \cdot  F_3. 
\end{align*}
It holds that $\beta \cdot F_j \cdot = (-1)^{|j|}F_j \cdot \beta \cdot$ since $F \in \Omega^*(U; \mathrm{End}_{\Sigma^{0, 1}\mathcal{A}}(\slashed S))^0$.
Using this formula, we can check that
\begin{align*}
    ((-\widetilde{F})^n)_0 &= (F^n)_0, & 
    ((-\widetilde{F})^n)_1 &= -\beta \cdot (F^n)_1, \\ ((-\widetilde{F})^n)_2 &= - (F^n)_2, & ((-\widetilde{F})^n)_3 &= \beta\cdot (F^n)_3,
\end{align*}
by an induction on $n$. 
As we have checked at the beginning of the proof, the both sides of \eqref{eq_skew_self_characteristic_form} are forms of degree $-\type(\mathcal{A})\pmod 4$. Using this fact, we see that
\begin{align*}
    \mathrm{Tr}_{u\widehat{\otimes} \beta}(F^n) &= \mathrm{Tr}_{u \widehat{\otimes} \beta}((F^n)_{-\type(\mathcal{A})}) \\
    &= \mathrm{Tr}_{(-1)^{\nu(\type(\mathcal{A}))} u \widehat{\otimes} \beta^{\type(\mathcal{A})+1}}(((-\widetilde{F})^n)_{-\type(\mathcal{A})}) \\
    &= \mathrm{Tr}_{(-1)^{\nu(\type(\mathcal{A}))} u \widehat{\otimes} \beta^{\type(\mathcal{A})+1}}((-\widetilde{F})^n), 
\end{align*}
as desired. 
\end{proof}

By Proposition \ref{prop_skew_self_characteristic_form}, we get the following. 
\begin{cor}\label{cor_Ph_self_skew}
Let $X$ and $(\mathcal{A}, \nabla^{\mathcal{A}})$ be as above and $\slashed S$ be a $\Sigma^{0, 1}\mathcal{A}$-module bundle with inner product. 
Then for any skew-adjoint $\Sigma^{0, 1}\mathcal{A}$-superconnection $\Grad^{\slashed S}$ compatible with $\Sigma^{0, 1}\nabla^\mathcal{A}$, we have
\begin{align}
    \Ph_\lskew(\Grad^{\slashed S}; \Sigma^{0, 1}\nabla^\mathcal{A}) = \Ph_\lself (\psi_\beta(\Grad^{\slashed S}); \widetilde{\Sigma}^{0, 1}\nabla^\mathcal{A}). 
\end{align}
On the manifold $I \times X$, for any skew-adjoint $\mathrm{pr}_X^*(\Sigma^{0, 1}\mathcal{A})$-superconnection $\Grad^{\slashed S, I}$ compatible with $\mathrm{pr}_X^*(\Sigma^{0, 1}\nabla^\mathcal{A})$, we have
\begin{align}
    \CS_\lskew(\Grad^{\slashed S, I}; \Sigma^{0, 1}\nabla^\mathcal{A}) = \CS_\lself (\psi_\beta(\Grad^{\slashed S, I}); \widetilde{\Sigma}^{0, 1}\nabla^\mathcal{A}). 
\end{align}
Here the orientation bundles are identified as in Proposition \ref{prop_skew_self_characteristic_form}. 
\end{cor}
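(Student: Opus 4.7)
The plan is to obtain both equalities as direct consequences of Proposition \ref{prop_skew_self_characteristic_form}, specialized to the formal power series $f(z) = e^z$. Since Corollary \ref{cor_Ph_self_skew} is stated as a corollary of that proposition, the only real work is matching signs and recognizing the definitions of $\mathrm{Ph}_\lself, \mathrm{Ph}_\lskew, \mathrm{CS}_\lself, \mathrm{CS}_\lskew$ on both sides.

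First I would note that by Lemma \ref{lem_psi_beta_superconn}, the hypotheses of Proposition \ref{prop_skew_self_characteristic_form} are satisfied: since $\Grad^{\slashed S}$ is a skew-adjoint $\Sigma^{0,1}\mathcal{A}$-superconnection compatible with $\Sigma^{0,1}\nabla^{\mathcal{A}}$, its image $\psi_\beta(\Grad^{\slashed S})$ is a self-adjoint $\widetilde{\Sigma}^{0,1}\mathcal{A}$-superconnection compatible with $\widetilde{\Sigma}^{0,1}\nabla^{\mathcal{A}}$. Now I apply Proposition \ref{prop_skew_self_characteristic_form} with $f(z) = e^z \in \R[[z]]$ to obtain
\begin{align*}
   \mathrm{Tr}_{\Sigma^{0,1}\mathcal{A}}\bigl( e^{F(\Grad^{\slashed S};\,\Sigma^{0,1}\nabla^{\mathcal{A}})} \bigr)
   = \mathrm{Tr}_{\widetilde{\Sigma}^{0,1}\mathcal{A}}\bigl( e^{-F(\psi_\beta(\Grad^{\slashed S});\,\widetilde{\Sigma}^{0,1}\nabla^{\mathcal{A}})} \bigr),
\end{align*}
under the identifications of the orientation bundles given there. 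By the definitions in Definition \ref{def_Ch}, the left-hand side is exactly $\mathrm{Ph}_\lskew(\Grad^{\slashed S};\,\Sigma^{0,1}\nabla^{\mathcal{A}})$ (cf.\ \eqref{eq_Ph_skew}) and the right-hand side is exactly $\mathrm{Ph}_\lself(\psi_\beta(\Grad^{\slashed S});\,\widetilde{\Sigma}^{0,1}\nabla^{\mathcal{A}})$. This proves the first equality.

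For the second equality, I would replicate the argument fiberwise on the cylinder $I \times X$. Applying what we just proved to the skew-adjoint $\mathrm{pr}_X^*(\Sigma^{0,1}\mathcal{A})$-superconnection $\Grad^{\slashed S, I}$ (with the naturality of the construction $\psi_\beta$ under pullback along $\mathrm{pr}_X$, which is immediate from the local formula \eqref{eq_def_psi_beta_superconn}) gives
\begin{align*}
   \mathrm{Ph}_\lskew\bigl(\Grad^{\slashed S, I};\,\mathrm{pr}_X^*\Sigma^{0,1}\nabla^{\mathcal{A}}\bigr) = \mathrm{Ph}_\lself\bigl(\psi_\beta(\Grad^{\slashed S, I});\,\mathrm{pr}_X^*\widetilde{\Sigma}^{0,1}\nabla^{\mathcal{A}}\bigr).
\end{align*}
Integrating both sides over the fiber $I$ and invoking the definitions of the Chern-Simons forms in Definition \ref{def_Ch} then yields the second equality.

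There is no real obstacle, as the sign-tracking and orientation bundle identifications have already been absorbed into Proposition \ref{prop_skew_self_characteristic_form}. The only point worth verifying carefully is that the identification of $\mathrm{Ori}(\mathcal{A})$ with $\mathrm{Ori}(\Sigma^{0,1}\mathcal{A})$ and with $\mathrm{Ori}(\widetilde{\Sigma}^{0,1}\mathcal{A})$ specified in Corollary \ref{cor_Ph_self_skew} matches the one used in Proposition \ref{prop_skew_self_characteristic_form}, which is simply a matter of quoting the identifications $u \mapsto u\widehat{\otimes}\beta$ and $u \mapsto (-1)^{\nu(\type(\mathcal{A}))} u\widehat{\otimes}\beta^{\type(\mathcal{A})+1}$ established in Lemma \ref{lem_ungraded_susp_degree} and the preceding discussion.
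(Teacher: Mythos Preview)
Your proposal is correct and matches the paper's approach exactly: the paper states the corollary as an immediate consequence of Proposition \ref{prop_skew_self_characteristic_form} without further argument, and your specialization to $f(z)=e^z$ together with integration over $I$ is precisely the intended deduction.
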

Using this corollary, we also get the correspondence of the Pontryagin character forms for mass terms and gradations. 
\begin{cor}\label{cor_Ph_mass_gradation}
Let $X$ and $(\mathcal{A}, \nabla^{\mathcal{A}})$ be as above and $\slashed S$ be a $\Sigma^{0, 1}\mathcal{A}$-module bundle with inner product, equipped with a skew-adjoint $\Sigma^{0, 1}\mathcal{A}$-connection $\nabla^{\slashed S}$. 
Then for any element $m \in C^\infty(X; \Skew^*_{\Sigma^{0, 1}\mathcal{A}}(\slashed S))$, we have 
\begin{align}
    \mathrm{Ph}_\lskew(m; \Sigma^{0, 1}\nabla^{\mathcal{A}}, \nabla^{\slashed S})
    =\mathrm{Ph}_\lself(\psi_\beta(m); \widetilde{\Sigma}^{0, 1}\nabla^{\mathcal{A}}, \nabla^{\slashed S}),
\end{align}
in which the identity $\psi_\beta(\nabla^{\slashed S}) = \nabla^{\slashed S}$ is used. For any element $m_I \in C^\infty(I \times X ; \mathrm{pr}_X^*\mathrm{Skew}_{\Sigma^{0, 1}\mathcal{A}}^*(\slashed S))$, we also have
\begin{align}
    \mathrm{CS}_\lskew(m_I; \Sigma^{0, 1}\nabla^{\mathcal{A}}, \nabla^{\slashed S})
    =\mathrm{CS}_\lself(\psi_\beta(m_I); \widetilde{\Sigma}^{0, 1}\nabla^{\mathcal{A}}, \nabla^{\slashed S}). 
\end{align}
Here the orientation bundles are identified as in Proposition \ref{prop_skew_self_characteristic_form}. 
\end{cor}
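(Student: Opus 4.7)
The plan is to deduce this corollary directly from Corollary \ref{cor_Ph_self_skew} by recognizing the integrand in the definition of $\mathrm{Ph}_\lskew(m;-,-)$ as the $\mathrm{Ph}_\lskew$ of an explicit skew-adjoint superconnection on $(0,\infty)\times X$, and likewise for the self-adjoint side. Concretely, by Definition \ref{def_Ph_skew} (resp.\ Definition \ref{def_Ph_m}), both sides are obtained from the skew-adjoint (resp.\ self-adjoint) $\mathrm{pr}_X^*\Sigma^{0,1}\mathcal{A}$-superconnection $\Grad^{\slashed S}:=\mathrm{pr}_X^*\nabla^{\slashed S}+tm$ (resp.\ $\mathrm{pr}_X^*\nabla^{\slashed S}+t\,\psi_\beta(m)$) on $\mathrm{pr}_X^*\slashed S$ by applying $-\pi^{-1/2}\mathcal{R}\circ\int_{(0,\infty)}$ to its Pontryagin character form.

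The first key step is to check that $\psi_\beta$, as defined on superconnections in \eqref{eq_def_psi_beta_superconn}, sends $\Grad^{\slashed S}$ to $\mathrm{pr}_X^*\nabla^{\slashed S}+t\,\psi_\beta(m)$. In the decomposition \eqref{eq_superconn_sum}, the connection part $\mathrm{pr}_X^*\nabla^{\slashed S}$ is pure and therefore fixed by $\psi_\beta$ (this is the identity $\psi_\beta(\nabla^{\slashed S})=\nabla^{\slashed S}$ quoted in the statement), while the extra term $t\,\widehat{\otimes}\,m$ has form-degree zero, giving $\nu(0)=0$ and sign $+1$ in \eqref{eq_def_psi_beta_superconn}. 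Lemma \ref{lem_self_skew} places $\psi_\beta(m)$ in $\Self^*_{\widetilde{\Sigma}^{0,1}\mathcal{A}}(\slashed S)$, and Lemma \ref{lem_psi_beta_superconn} ensures that $\psi_\beta(\Grad^{\slashed S})$ is a self-adjoint $\widetilde{\Sigma}^{0,1}\mathcal{A}$-superconnection compatible with $\widetilde{\Sigma}^{0,1}\nabla^{\mathcal{A}}$, so it is precisely the superconnection used in the definition of $\mathrm{Ph}_\lself(\psi_\beta(m);-,-)$.

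Next I would apply Corollary \ref{cor_Ph_self_skew} to $\Grad^{\slashed S}$ to obtain
\begin{align*}
\mathrm{Ph}_\lskew(\Grad^{\slashed S};\mathrm{pr}_X^*\Sigma^{0,1}\nabla^{\mathcal{A}})
=\mathrm{Ph}_\lself(\mathrm{pr}_X^*\nabla^{\slashed S}+t\psi_\beta(m);\mathrm{pr}_X^*\widetilde{\Sigma}^{0,1}\nabla^{\mathcal{A}})
\end{align*}
as forms on $(0,\infty)\times X$, with the orientation bundles identified as in Proposition \ref{prop_skew_self_characteristic_form}. Applying $-\pi^{-1/2}\mathcal{R}$ and integrating over $(0,\infty)$ on both sides (the convergence on the skew side being a direct analogue of Lemma \ref{lem_Ph_convergence}, using $m^2<0$) then yields the first stated equality by unwinding Definitions \ref{def_Ph_m} and \ref{def_Ph_skew}.

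For the Chern-Simons identity, the same argument works verbatim with $X$ replaced by $I\times X$ and $m$ replaced by $m_I$: one obtains the equality of the corresponding $\mathrm{Ph}$-forms on $I\times X$, then integrates over $I$ as in the definitions of $\mathrm{CS}_\lself$ and $\mathrm{CS}_\lskew$. No substantive obstacle is expected; the whole argument is a bookkeeping exercise on top of Corollary \ref{cor_Ph_self_skew}, and the only point requiring care is the sign computation $(-1)^{\nu(0)}=+1$ that makes $\psi_\beta$ fix the scaling $t$ in the extra term $tm$.
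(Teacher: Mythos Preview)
Your proposal is correct and matches the paper's approach exactly: the paper states this corollary immediately after Corollary \ref{cor_Ph_self_skew} with only the remark ``Using this corollary, we also get the correspondence of the Pontryagin character forms for mass terms and gradations,'' leaving the unwinding of definitions implicit. Your explicit verification that $\psi_\beta(\mathrm{pr}_X^*\nabla^{\slashed S}+tm)=\mathrm{pr}_X^*\nabla^{\slashed S}+t\,\psi_\beta(m)$ via $\nu(0)=0$ is precisely the bookkeeping the paper omits.
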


\subsection{The differential model}

\subsubsection{The untwisted groups $\widehat{KO}_-^A(X, Y)$}
Now we define the untwisted differential $KO$-theory $\widehat{KO}_-$. 
As in the case for topological $KO_-$, the definition is given by just replacing $\Self$ to $\Skew$ and changing the form degree from $(4\Z + \type(A) -1)$ to $(4\Z - \type(A) - 3)$ in Definitions \ref{def_KO_hat_quadruple} and \ref{def_untwisted_KO_hat_m} of $\widehat{KO}_+$, as follows. 
\begin{defn}\label{def_untwisted_KO_-}
Let $A$ be a nondegenerate simple central graded $*$-algebra. 
Let $(X, Y)$ be an object in $\mathrm{MfdPair}_f$. 
\begin{itemize}
    \item A {\it $\widehat{KO}_-$-cocycle} $(S, m_0, m_1, \eta)$ on $(X, Y)$ consists of an $A$-module $S$ with an inner product, two smooth maps $m_0, m_1 \in C^\infty( X , \mathrm{Skew}_{A}^*(S))$ such that $m_0|_Y = m_1|_Y$, and an element
    \[
        \eta \in \Omega^{4\Z - \type(A) - 3}(X, Y; \Ori(A))/ \mathrm{Im}(d).
    \] 
    \item Two $\widehat{KO}_-$-cocycles $(S, m_0, m_1, \eta)$ and $(S', m'_0, m'_1, \eta')$ are {\it isomorphic}
    if there exists an isometric isomorphism of $A$-modules $f \colon S \simeq  S'$ such that $f\circ m_i = m'_i \circ f$ for $i = 0, 1$, and we have $\eta = \eta'$. 
\end{itemize}
\end{defn}

\begin{defn}[{$\widehat{KO}_-^{A}(X, Y)$}] 
Let $A$ and $(X, Y)$ be as in Definition \ref{def_untwisted_KO_-}
\begin{itemize}
    \item
We introduce an abelian monoid structure on the set $\widehat{M}^{A}_-(X, Y)$ of isomorphism classes of $\widehat{KO}_-$-cocycles $(S, m_0, m_1, \eta)$ on $(X, Y)$ by
\begin{align*}
    [S, m_0, m_1, \eta] + [S', m'_0, m'_1, \eta']
    = [S \oplus  S', m_0 \oplus m'_0,m_1 \oplus m'_1, \eta +\eta']. 
\end{align*}
\item We define $\widehat{Z}^{A}_-(X, Y)$ to be the submonoid of $\widehat{M}^{A}_-(X, Y)$ consisting of elements of the form
\begin{align*}
    [S, m_0, m_1, \mathrm{CS}_\lskew(m_I)], 
\end{align*}
where $m_I$ is a smooth homotopy between $m_0$ and $m_1$ which is constant on $Y$, i.e., a smooth map $m_I \in C^\infty(I \times X, \Skew^*_{A}(S))$ with $m_I|_{\{i\} \times X} = m_i$ for $i = 0, 1$ and $m_I|_{\{t\} \times Y} = m_0|_Y$ for all $t \in I$. 
\item We define $\widehat{KO}^{A}_-(X, Y):=\widehat{M}^{A}_-(X, Y) / \widehat{Z}^{A}_-(X, Y)$. 
\end{itemize}
\end{defn}

In a way similar to the proof of Lemma \ref{lem_untwisted_additive_inverse}, we can show that $\widehat{KO}^{A}_-(X, Y)$ is an abelian group, and thus $\widehat{KO}^{\mathcal{A}}$ defines a functor $\mathrm{MfdPair}^{\mathrm{op}}_f \to \mathrm{Ab}$. 

\begin{defn}[{Structure homomorphisms for $\widehat{KO}^{A}_-$}]\label{def_str_hom_untwisted_-}
We define the following structure homomorphisms, which are natural in $(X, Y)$. 
\begin{align*}
        R &\colon \widehat{KO}_-^{A}(X, Y) \to \Omega^{4\Z - \type(A) - 2}_{\mathrm{clo}}(X, Y; \Ori(A))\\
       &[S, m_0, m_1, \eta] \mapsto \mathrm{Ph}_\lskew(m_1) - \mathrm{Ph}_\lskew(m_0) + d\eta.\\
        I &\colon \widehat{KO}_-^{A}(X, Y) \to KO_-^{A}(X, Y) \\
       & [S, m_0, m_1, \eta] \mapsto [S, m_0, m_1]. \\
        a &\colon \Omega^{4\Z - \type(A) - 3}(X, Y; \Ori(A)) / \mathrm{Im}(d) \to \widehat{KO}_-^{A}(X, Y)  \\
       & \eta \mapsto [0, 0, 0, \eta]. 
\end{align*}
\end{defn}

We have the natural isomorphism corresponding to Proposition \ref{prop_isom_negligible_untwisted}, 
\begin{align}\label{eq_isom_negligible_-_hat}
    \widehat{KO}^{A}_-\simeq \widehat{KO}^{\mathrm{End}(V) \widehat{\otimes} A}_-,
\end{align}
by sending $[S, m_0, m_1, \eta]$ to $[E \otimes  S, \psi^E(m_0), \psi^E(m_1), \eta] $. 

Moreover, we have the following refinement of the untwisted version of Proposition \ref{prop_KO_skew=self}. 
\begin{prop}\label{prop_hat_KO_skew=self}
We have a natural isomorphism
\begin{align}\label{eq_prop_hat_KO_skew=self}
    \widehat{KO}_-^{\Sigma^{0, 1}A}\simeq  \widehat{KO}_+^{\widetilde{\Sigma}^{0, 1}A}
\end{align}
which is compatible with the corresponding isomorphisms of other functors appearing in Definitions \ref{def_str_hom_untwisted_-} and \ref{def_str_hom_untwisted} via the structure homomorphisms. 
Here we use Proposition \ref{prop_KO_skew=self} and the identification $\Ori(\Sigma^{0, 1}A) \simeq \Ori(\widetilde{\Sigma}^{0, 1}A)$  in Proposition \ref{prop_skew_self_characteristic_form}. 
\end{prop}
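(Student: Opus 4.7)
The approach is to refine the topological isomorphism in Proposition \ref{prop_KO_skew=self} cocycle by cocycle. Concretely, I would define the proposed map
\begin{align*}
\Phi \colon \widehat{KO}_-^{\Sigma^{0,1}A}(X, Y) &\longrightarrow \widehat{KO}_+^{\widetilde{\Sigma}^{0,1}A}(X, Y), \\
[S, m_0, m_1, \eta] &\longmapsto [S, \psi_\beta(m_0), \psi_\beta(m_1), \eta],
\end{align*}
where $\psi_\beta$ is the fiberwise bijection of Lemma \ref{lem_self_skew}, which carries $\Skew^*_{\Sigma^{0,1}A}(S)$ bijectively onto $\Self^*_{\widetilde{\Sigma}^{0,1}A}(S)$, and $\eta$ is viewed in the correct degree via the orientation-bundle identification $\Ori(\Sigma^{0,1}A) \simeq \Ori(\widetilde{\Sigma}^{0,1}A)$ from Proposition \ref{prop_skew_self_characteristic_form}. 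The degree bookkeeping already matches: using $\type(\widetilde{\Sigma}^{0,1}A) = -\type(\Sigma^{0,1}A) - 2$ from Lemma \ref{lem_ungraded_susp_degree} (after unwinding $\Sigma^{0,1}$), one checks $4\Z - \type(\Sigma^{0,1}A) - 3 = 4\Z + \type(\widetilde{\Sigma}^{0,1}A) - 1$, so the form component sits in the right place.

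The nontrivial step is to verify that $\Phi$ sends $\widehat{Z}^{\Sigma^{0,1}A}_-(X, Y)$ into $\widehat{Z}^{\widetilde{\Sigma}^{0,1}A}_+(X, Y)$. Given a generator $(S, m_0, m_1, \CS_\lskew(m_I))$ with a smooth homotopy $m_I$ through $\Skew^*_{\Sigma^{0,1}A}(S)$ that is constant on $Y$, applying $\psi_\beta$ pointwise yields a smooth homotopy $\psi_\beta(m_I)$ through $\Self^*_{\widetilde{\Sigma}^{0,1}A}(S)$ that is again constant on $Y$. The key identity we need is
\begin{equation*}
\CS_\lskew(m_I) \;=\; \CS_\lself(\psi_\beta(m_I)),
\end{equation*}
which is exactly Corollary \ref{cor_Ph_mass_gradation} applied to the trivial bundle $\mathcal{A} = \underline{A}$ with $\nabla^\mathcal{A} = d$, $\slashed S = \underline{S}$, $\nabla^{\slashed S} = d$, using the orientation-bundle identification noted above. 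Hence $\Phi(S, m_0, m_1, \CS_\lskew(m_I)) = (S, \psi_\beta(m_0), \psi_\beta(m_1), \CS_\lself(\psi_\beta(m_I)))$ lies in $\widehat{Z}^{\widetilde{\Sigma}^{0,1}A}_+$ as required. Since $\psi_\beta$ is a bijection of sections and respects direct sums, the reverse map $\Psi$ defined using $\psi_\beta^{-1}$ is a two-sided inverse, so $\Phi$ is a monoid isomorphism, and hence a group isomorphism.

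Compatibility with the structure homomorphisms follows by parallel arguments. For the curvature $R$, the same Corollary \ref{cor_Ph_mass_gradation}, in its $\Ph$ form, gives $\Ph_\lskew(m) = \Ph_\lself(\psi_\beta(m))$, so
\[ R(\Phi[S, m_0, m_1, \eta]) = \Ph_\lself(\psi_\beta(m_1)) - \Ph_\lself(\psi_\beta(m_0)) + d\eta = R[S, m_0, m_1, \eta]. \]
Compatibility with $I$ is precisely the statement that $\Phi$ descends to the topological isomorphism of Proposition \ref{prop_KO_skew=self}, which is true by construction. Compatibility with $a$ is immediate since both $a$'s send $\eta$ to the class of $(0,0,0,\eta)$. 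Naturality in $(X,Y)$ is evident from the naturality of $\psi_\beta$.

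The main potential obstacle is purely bookkeeping: making sure the orientation-bundle identifications and the mod-$4$ degree shifts line up so that $\eta$ can be transported without additional signs. This has in essence already been absorbed into Proposition \ref{prop_skew_self_characteristic_form} and its Corollaries \ref{cor_Ph_self_skew} and \ref{cor_Ph_mass_gradation}, where the sign $(-1)^{\nu(\type(A))}$ and the choice of volume element $u \widehat{\otimes}\beta^{\type(A)+1}$ for $\widetilde{\Sigma}^{0,1}A$ were precisely calibrated so that $\Ph_\lskew$ and $\Ph_\lself$ agree under $\psi_\beta$. Once these identifications are used consistently, no additional computation is required beyond invoking these corollaries.
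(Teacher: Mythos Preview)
Your proposal is correct and follows essentially the same approach as the paper: define the map via $[S, m_0, m_1, \eta] \mapsto [S, \psi_\beta(m_0), \psi_\beta(m_1), \eta]$, invoke Corollary \ref{cor_Ph_mass_gradation} for well-definedness (and compatibility with $R$), and use that $\psi_\beta$ is a bijection to conclude. Your write-up is in fact more detailed than the paper's, which simply records the map and cites Corollary \ref{cor_Ph_mass_gradation} for well-definedness before noting that $\psi_\beta$ being an isomorphism makes the map a natural isomorphism.
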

\begin{proof}
The transformation \eqref{eq_prop_hat_KO_skew=self} on an object $(X, Y)$ in $\mathrm{MfdPair}_f$ is given by
\begin{align*}
    [S, m_0, m_1, \eta] \mapsto [ S, \psi_\beta(m_0), \psi_\beta(m_1), \eta], 
\end{align*}
where $\psi_\beta \colon \mathrm{Skew}^*_{\Sigma^{0,1}A}(S) \simeq
\mathrm{Self}^*_{\widetilde{\Sigma}^{0, 1}{A}}( S)$ is the isomorphism in \eqref{eq_bij_End_self_skew_bundle}. 
The well-definedness follows from Corollary \ref{cor_Ph_mass_gradation}. 
Since $\psi_\beta$ is an isomorphism, we easily see that this map gives a natural isomorphism. 
\end{proof}

Since we have $\Sigma^{0, 1}\Sigma^{1, 0} A = A \hat{\otimes} Cl_{1, 1}$ and $Cl_{1, 1}$ is negligible, by Proposition \ref{prop_KO_skew=self} and \eqref{eq_isom_negligible_-}, we have a natural isomorphism
\begin{align}\label{eq_isom_hat_KO_self_skew}
    \widehat{KO}_-^{A}\simeq \widehat{KO}_+^{\widetilde{\Sigma}^{0, 1}\Sigma^{1, 0}A}, 
\end{align}
which is compatible with the structure homomorphisms. 
Recall that by Theorem \ref{thm_KO_hat_untwisted} we know that $(\widehat{KO}^{\widetilde{\Sigma}^{0, 1}\Sigma^{1, 0}A}_+, R, I, a)$ is a differential extension of $KO^{\widetilde{\Sigma}^{0, 1}\Sigma^{1, 0}A}_+ \simeq KO^{-\type(A) -2}$. 
Thus we get the following. 
\begin{thm}\label{thm_KO_hat_untwisted_-}
The quadruple $\left(\widehat{KO}^A_-, R, I, a\right)$ is a differential extension of the $KO$-theory $KO^A_- \simeq KO^{-\type(A)-2}$ on $\mathrm{MfdPair}_f$. 
\end{thm}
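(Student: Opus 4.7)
The plan is to deduce Theorem \ref{thm_KO_hat_untwisted_-} from the already-proved untwisted result Theorem \ref{thm_KO_hat_untwisted} for $\widehat{KO}_+$ by transport along the natural isomorphism \eqref{eq_isom_hat_KO_self_skew}. Conceptually, the skew-adjoint model is just a repackaging of the self-adjoint model after suspending and using a negligible algebra, and everything relevant has been arranged in the preceding subsections to make this transport automatic.

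First I would record the structural setup. Combining Proposition \ref{prop_hat_KO_skew=self} applied to $B = \Sigma^{1,0}A$ with the negligibility isomorphism \eqref{eq_isom_negligible_-_hat} (using that $Cl_{1,1} = \End(\R\oplus\R)$ is negligible and $\Sigma^{0,1}\Sigma^{1,0}A = A \mathbin{\widehat{\otimes}} Cl_{1,1}$) yields the natural isomorphism
\begin{align*}
\widehat{KO}_-^{A} \;\simeq\; \widehat{KO}_-^{\Sigma^{0,1}\Sigma^{1,0}A} \;\simeq\; \widehat{KO}_+^{\widetilde{\Sigma}^{0,1}\Sigma^{1,0}A},
\end{align*}
which is precisely \eqref{eq_isom_hat_KO_self_skew}. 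Proposition \ref{prop_hat_KO_skew=self} already asserts that this isomorphism intertwines the structure homomorphisms $R$, $I$, $a$ on the two sides (under the identification of orientation lines given by Proposition \ref{prop_skew_self_characteristic_form} and by $u \mapsto \gamma_V \widehat{\otimes} u$), so no further compatibility check is needed beyond citing these statements.

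Next I would carry out the degree bookkeeping. Using Lemma \ref{lem_ungraded_susp_degree} twice,
\begin{align*}
\type(\widetilde{\Sigma}^{0,1}\Sigma^{1,0}A) \;=\; -\type(\Sigma^{1,0}A) - 1 \;=\; -(\type(A)+1) - 1 \;=\; -\type(A) - 2 .
\end{align*}
Thus the form degrees $4\Z + \type(\widetilde{\Sigma}^{0,1}\Sigma^{1,0}A)$ and $4\Z + \type(\widetilde{\Sigma}^{0,1}\Sigma^{1,0}A) - 1$ entering the structure maps of $\widehat{KO}_+^{\widetilde{\Sigma}^{0,1}\Sigma^{1,0}A}$ agree with $4\Z - \type(A) - 2$ and $4\Z - \type(A) - 3$ used in Definition \ref{def_str_hom_untwisted_-} for $\widehat{KO}_-^A$. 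In parallel, Proposition \ref{prop_mass_KO} gives $KO_-^A \simeq KO^{-\type(A)-2}$, which matches the topological theory underlying $\widehat{KO}_+^{\widetilde{\Sigma}^{0,1}\Sigma^{1,0}A}$ via Fact \ref{fact_karoubi_KO}.

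Finally I would conclude by invoking Theorem \ref{thm_KO_hat_untwisted}: the quadruple $(\widehat{KO}_+^{\widetilde{\Sigma}^{0,1}\Sigma^{1,0}A}, R, I, a)$ is a differential extension of $KO^{-\type(A)-2}$ on $\mathrm{MfdPair}_f$. Since the axioms of a differential extension (the relation $R \circ a = d$, the commutativity with the Pontryagin character of part (2), and the exactness of the Bunke--Schick sequence of part (3)) are preserved under the natural isomorphism of quadruples established above, the same holds for $(\widehat{KO}_-^A, R, I, a)$. The only point demanding a little care is that the identification of orientation lines used in Corollary \ref{cor_Ph_mass_gradation} carries the sign $(-1)^{\nu(\type(A))}$ from Proposition \ref{prop_skew_self_characteristic_form}; but this is a fixed sign absorbed into the chosen isomorphism $\Ori(\Sigma^{0,1}A) \simeq \Ori(\widetilde{\Sigma}^{0,1}A)$ and does not affect any of the axioms. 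I do not foresee a genuine obstacle: the entire content of the theorem has effectively been distributed into Propositions \ref{prop_hat_KO_skew=self} and \ref{prop_mass_KO} and Theorem \ref{thm_KO_hat_untwisted}, so the remaining proof is a three-line assembly.
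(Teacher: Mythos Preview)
Your proposal is correct and follows exactly the paper's own argument: transport the self-adjoint result Theorem \ref{thm_KO_hat_untwisted} across the natural isomorphism \eqref{eq_isom_hat_KO_self_skew} obtained from Proposition \ref{prop_hat_KO_skew=self} together with negligibility, with the degree matching coming from Lemma \ref{lem_ungraded_susp_degree}. One minor wording point: you invoke Lemma \ref{lem_ungraded_susp_degree} only once (for $\widetilde{\Sigma}^{0,1}$); the step $\type(\Sigma^{1,0}A) = \type(A)+1$ is just additivity of types under graded tensor product with $Cl_{1,0}$.
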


The natural isomorphism \eqref{eq_isom_hat_KO_self_skew}, being an explicit algebraic manipulation, allows us to translate results on $\widehat{KO}_+$ into corresponding ones on $\widehat{KO}_-$. 

\subsubsection{The twisted groups $\widehat{KO}_-^{\mathcal{A}}(X, Y)$}
Now we define the twisted groups $\widehat{KO}_-^{\mathcal{A}}(X, Y)$. 
Again the category $\Tw^2_{\widehat{KO}_-}$ of twists for $\widehat{KO}_-$ is just the same as that for $\widehat{KO}_+$, 
\begin{align}
    \Tw^2_{\widehat{KO}_-} := \Tw^2_{\widehat{KO}_+}. 
\end{align}
The definition of the twisted $\widehat{KO}_-$ is also a simple modification of Definition \ref{def_twisted_KO_hat_m}, so we go briefly. 

\begin{defn}[{$\widehat{KO}_-^{\mathcal{A}}(X, Y)$}]\label{def_twisted_KO_hat_-}
Let $(X, Y, \mathcal{A}) \in \Tw^2_{\widehat{KO}_-}$. 
Replacing $\Self$ by $\Skew$ and changing the form degree from $(4\Z + \type(A) -1)$ to $(4\Z - \type(A) - 3)$ in Definition \ref{def_twisted_KO_hat_m}, we define the abelian group $\widehat{KO}_-^{\mathcal{A}}(X, Y)$. 
\end{defn}

An element of $\widehat{KO}_-^{\mathcal{A}}(X, Y)$ is of the form $[\slashed S, \nabla^\mathcal{A}, \nabla^{\slashed S}, m_0, m_1, \eta]$, where $m_i \in C^\infty( X ; \mathrm{Skew}_{\mathcal{A}}^*(\slashed S))$ are such that $m_0|_Y = m_1|_Y$, and 
\[
\eta \in \Omega^{4\Z - \type(\mathcal{A}) - 3}(X, Y; \Ori(\mathcal{A})) / \mathrm{Im}(d).
\]
We have $[\slashed S, \nabla^{\mathcal{A}}, \nabla^{\slashed S}, m_0, m_1, \mathrm{CS}_\lskew(m_I; \nabla^{\mathcal{A}}, \nabla^{\slashed S})] = 0$ for a homotopy $m_I$ from $m_0$ to $m_1$ relative to $Y$.  
We clearly have the functor
\begin{align*}
    \widehat{KO}_- \colon \Tw^2_{\widehat{KO}_-} \to \mathrm{Ab}.
\end{align*}

\begin{defn}[{Structure homomorphisms for $\widehat{KO}^\mathcal{A}_-$}]\label{def_str_hom_twisted_-}
We define the structure homomorphisms 
\begin{align*}
    R &\colon \widehat{KO}_-^\mathcal{A}(X, Y) \to \Omega^{4\Z -\type(\mathcal{A})-2}_{\mathrm{clo}}(X, Y; \Ori(\mathcal{A})), \\
    I &\colon \widehat{KO}_-^\mathcal{A}(X, Y) \to KO_-^\mathcal{A}(X, Y), \\
     a &\colon \Omega^{4\Z -\type(\mathcal{A})-3}(X, Y; \Ori(\mathcal{A})) / \mathrm{Im}(d) \to \widehat{KO}_-^\mathcal{A}(X, Y)
\end{align*}
applying the same modification as in Definition  \ref{def_twisted_KO_hat_-} to Definition \ref{def_str_hom}. 
\end{defn}

As a counterprt of Proposition \ref{prop_isom_negligible}, we have an isomorphism
\begin{align}
     \widehat{KO}^{\mathcal{A}}_-(X, Y) \simeq \widehat{KO}^{\End(E) \widehat{\otimes} \mathcal{A}}_-(X, Y), 
\end{align}
which are compatible with the structure homomorphisms. 
Moreover, as in the untwisted case (Proposition \ref{prop_hat_KO_skew=self}), we get the refinement of Proposition \ref{prop_KO_skew=self} as follows. 

\begin{prop}\label{prop_hat_KO_skew=self_twisted}
We have an isomorphism
\begin{align}\label{eq_prop_hat_KO_skew=self_twisted}
    \widehat{KO}_-^{\Sigma^{0, 1}\mathcal{A}}(X, Y)\simeq  \widehat{KO}_+^{\widetilde{\Sigma}^{0, 1}\mathcal{A}}(X, Y)
\end{align}
which is compatible with the corresponding isomorphisms of other functors appearing in Definitions \ref{def_str_hom_twisted_-} and \ref{def_str_hom} via the structure homomorphisms. 
Here we use Proposition \ref{prop_KO_skew=self} and the identification $\Ori(\Sigma^{0, 1}\mathcal{A}) \simeq \Ori(\widetilde{\Sigma}^{0, 1}\mathcal{A})$ in Proposition \ref{prop_skew_self_characteristic_form}. 
\end{prop}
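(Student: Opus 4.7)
The plan is to construct an explicit isomorphism by fiberwise applying the bijection $\psi_\beta$ of \eqref{eq_bij_End_self_skew_bundle} to the mass-term data, while leaving the connection data and the form $\eta$ untouched. Concretely, on a $\widehat{KO}_-$-cocycle $(\slashed S, \nabla^{\Sigma^{0,1}\mathcal{A}}, \nabla^{\slashed S}, m_0, m_1, \eta)$ we propose the map
\begin{align*}
    [\slashed S, \nabla^{\Sigma^{0,1}\mathcal{A}}, \nabla^{\slashed S}, m_0, m_1, \eta] \longmapsto [\slashed S, \nabla^{\widetilde{\Sigma}^{0,1}\mathcal{A}}, \nabla^{\slashed S}, \psi_\beta(m_0), \psi_\beta(m_1), \eta],
\end{align*}
where $\nabla^{\widetilde{\Sigma}^{0,1}\mathcal{A}}$ denotes the same connection operator as $\nabla^{\Sigma^{0,1}\mathcal{A}}$ (the underlying $*$-algebra is identical; only the $\Z_2$-grading is changed) and $\nabla^{\slashed S}$ is likewise unaltered. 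For a pure connection the decomposition \eqref{eq_superconn_sum} is trivial, so by the specialization of \eqref{eq_def_psi_beta_superconn} we indeed have $\psi_\beta(\nabla^{\slashed S}) = \nabla^{\slashed S}$; moreover Lemma \ref{lem_psi_beta_superconn} guarantees that a skew-adjoint $\Sigma^{0,1}\mathcal{A}$-connection is self-adjoint viewed as a $\widetilde{\Sigma}^{0,1}\mathcal{A}$-connection (for pure connections these coincide with preservation of the inner product). The form-degree check $4\Z - \type(\Sigma^{0,1}\mathcal{A}) - 3 = 4\Z - \type(\mathcal{A}) - 2 = 4\Z + \type(\widetilde{\Sigma}^{0,1}\mathcal{A}) - 1$ confirms that $\eta$ lives in the correct space on both sides, using Lemma \ref{lem_ungraded_susp_degree}.

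Next I would verify well-definedness at the level of equivalence classes. The only subtle point is that cocycles of the form $[\slashed S, \nabla^{\Sigma^{0,1}\mathcal{A}}, \nabla^{\slashed S}, m_0, m_1, \CS_\lskew(m_I; \nabla^{\Sigma^{0,1}\mathcal{A}}, \nabla^{\slashed S})]$ must map to zero on the right. This is precisely Corollary \ref{cor_Ph_mass_gradation}, which gives $\CS_\lskew(m_I;\nabla^{\Sigma^{0,1}\mathcal{A}}, \nabla^{\slashed S}) = \CS_\lself(\psi_\beta(m_I);\nabla^{\widetilde{\Sigma}^{0,1}\mathcal{A}}, \nabla^{\slashed S})$ under the identification of orientation bundles from Proposition \ref{prop_skew_self_characteristic_form}, so the image is an element of $\widehat{Z}^{\widetilde{\Sigma}^{0,1}\mathcal{A}}_+(X, Y)$ via the homotopy $\psi_\beta(m_I)$. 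The independence on the choice of $\nabla^{\Sigma^{0,1}\mathcal{A}}, \nabla^{\slashed S}$ on both sides is built into the equivalence relations (cf.\ Lemma \ref{lem_twisted_KO_connection_V} and its counterpart for $\widehat{KO}_-$).

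Compatibility with the structure homomorphisms then follows mechanically: for $R$, the identity $\Ph_\lskew(m; \nabla^{\Sigma^{0,1}\mathcal{A}}, \nabla^{\slashed S}) = \Ph_\lself(\psi_\beta(m); \nabla^{\widetilde{\Sigma}^{0,1}\mathcal{A}}, \nabla^{\slashed S})$ is again Corollary \ref{cor_Ph_mass_gradation}; for $I$, the induced map on underlying topological cocycles is exactly the natural isomorphism \eqref{eq_prop_KO_skew=self} of Proposition \ref{prop_KO_skew=self}; and for $a$, the element $[0,0,0,0,0,\eta]$ obviously maps to $[0,0,0,0,0,\eta]$. An inverse is supplied by the same construction with $\psi_\beta^{-1}$, establishing bijectivity.

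I do not foresee a genuine obstacle: the entire argument is a direct transcription of the untwisted case (Proposition \ref{prop_hat_KO_skew=self}) to the bundle setting, with the fiberwise $\psi_\beta$ of \eqref{eq_bij_End_self_skew_bundle} replacing its algebraic model. The only place where care is required is bookkeeping the sign conventions in the identification $\Ori(\Sigma^{0,1}\mathcal{A}) \simeq \Ori(\widetilde{\Sigma}^{0,1}\mathcal{A})$ dictated by Proposition \ref{prop_skew_self_characteristic_form}, and ensuring that the factor $(-1)^{\nu(\type(\mathcal{A}))}$ appearing there is absorbed uniformly in both the curvature and the Chern--Simons transgression; this is automatic once one invokes Corollary \ref{cor_Ph_mass_gradation} consistently.
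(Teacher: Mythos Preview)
Your overall strategy---apply $\psi_\beta$ fiberwise to the mass terms, leave $\eta$ alone, and invoke Corollary \ref{cor_Ph_mass_gradation}---is the same as the paper's. However, there is a genuine gap in how you handle the connection on the algebra bundle. You propose to send a general cocycle $(\slashed S, \nabla^{\Sigma^{0,1}\mathcal{A}}, \nabla^{\slashed S}, m_0, m_1, \eta)$ to one with ``the same connection operator'' on $\widetilde{\Sigma}^{0,1}\mathcal{A}$. But a connection on $\Sigma^{0,1}\mathcal{A}$ (in the sense of this paper) is required to preserve the $\Sigma^{0,1}$-grading, and this does \emph{not} imply that it preserves the $\widetilde{\Sigma}^{0,1}$-grading: the even part $(\Sigma^{0,1}A)^0 = A^0\!\otimes\!1 \oplus A^1\!\otimes\!\beta$ is different from $(\widetilde{\Sigma}^{0,1}A)^0 = A\!\otimes\!1$, and a local connection form with a component in $A^1\!\otimes\!\beta$ will not preserve the latter. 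So the right-hand side of your assignment need not be a $\widehat{KO}_+$-cocycle at all. For the same reason, Corollary \ref{cor_Ph_mass_gradation} is stated only for connections of the induced form $\Sigma^{0,1}\nabla^{\mathcal{A}}$, $\widetilde{\Sigma}^{0,1}\nabla^{\mathcal{A}}$; it does not apply to a general connection on $\Sigma^{0,1}\mathcal{A}$.

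The paper repairs exactly this point: one first fixes a connection $\nabla^{\mathcal{A}}$ on $\mathcal{A}$ and uses Lemma \ref{lem_twisted_KO_connection_V} (and its $\widehat{KO}_-$-analogue) to represent every class on either side by a cocycle whose algebra connection is the \emph{induced} one, $\Sigma^{0,1}\nabla^{\mathcal{A}}$ or $\widetilde{\Sigma}^{0,1}\nabla^{\mathcal{A}}$. These induced connections do preserve both gradings (they act trivially on the $Cl_{0,1}$ factor), so the assignment $[\slashed S,\Sigma^{0,1}\nabla^{\mathcal{A}},\nabla^{\slashed S},m_0,m_1,\eta]\mapsto[\slashed S,\widetilde{\Sigma}^{0,1}\nabla^{\mathcal{A}},\nabla^{\slashed S},\psi_\beta(m_0),\psi_\beta(m_1),\eta]$ is now legitimate and Corollary \ref{cor_Ph_mass_gradation} applies verbatim. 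Independence of the choice of $\nabla^{\mathcal{A}}$ is then a separate (easy) check. In short, you invoked Lemma \ref{lem_twisted_KO_connection_V} only \emph{after} defining the map, to argue independence; it must be used \emph{before}, to make the map well-defined in the first place.
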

\begin{proof}
Take a connection $\nabla^{\mathcal{A}}$ on $\mathcal{A}$. 
Recall that, by Lemma \ref{lem_twisted_KO_connection_V}, any element in $\widehat{KO}_+^{\widetilde{\Sigma}^{0, 1}\mathcal{A}}(X, Y)$ is represented by a $\widehat{KO}_+$-cocycle of the form $(\slashed S, \widetilde{\Sigma}^{0, 1}\nabla^\mathcal{A}, \nabla^{\slashed S}, h_0, h_1, \eta)$. 
The $\widehat{KO}_-$-version of the lemma is shown by exactly the same proof, so we can represent any element in $\widehat{KO}_-^{\Sigma^{0, 1}\mathcal{A}}(X, Y)$ by a $\widehat{KO}_-$-cocycle of the form $(\slashed S, {\Sigma}^{0, 1}\nabla^\mathcal{A}, \nabla^{\slashed S}, m_0, m_1, \eta)$. 

Then, the homomorphism \eqref{eq_prop_hat_KO_skew=self_twisted} is given by
\begin{align*}
    [\slashed S, {\Sigma}^{0, 1}\nabla^\mathcal{A}, \nabla^{\slashed S}, m_0, m_1, \eta] \mapsto [\slashed S,\widetilde{\Sigma}^{0, 1}\nabla^\mathcal{A},  \nabla^{\slashed S}, \psi_\beta(m_0), \psi_\beta(m_1), \eta], 
\end{align*}
where $\psi_\beta \colon \mathrm{Skew}^*_{\Sigma^{0,1}\mathcal{A}}(\slashed S) \simeq
\mathrm{Self}^*_{\widetilde{\Sigma}^{0, 1}\mathcal{A}}(\slashed S)$ is the isomorphism in \eqref{eq_bij_End_self_skew_bundle}. 
The well-definedness follows from Corollary \ref{cor_Ph_mass_gradation}, and the resulting map is independent on the choice of $\nabla^{\mathcal{A}}$ by Lemma \ref{lem_twisted_KO_connection_V} and its $\widehat{KO}_-$-version. 
Since $\psi_\beta$ is an isomorphism, we easily see that this map gives an isomorphism. 
\end{proof}

The above results give a twisted generalization of \eqref{eq_isom_hat_KO_self_skew}
\begin{align}\label{eq_isom_hat_KO_self_skew_twisted}
    \widehat{KO}_-^\mathcal{A}(X, Y)\simeq \widehat{KO}_+^{\widetilde{\Sigma}^{0, 1}\Sigma^{1, 0}\mathcal{A}}(X, Y), 
\end{align}
which is compatible with the structure homomorphisms. 
Recall that by Theorem \ref{thm_hat_KO_twisted} we know that $(\widehat{KO}_+, R, I, a)$ is a differential extension of the twisted $KO$-theory $KO_+$. 
Thus we get the following. 
\begin{thm}\label{thm_KO_hat_twisted_-}
The quadruple $\left(\widehat{KO}_-, R, I, a\right)$ is a differential extension of the twisted $KO$-theory $KO_-$ on $ \Tw^2_{\widehat{KO}_-}$. 
\end{thm}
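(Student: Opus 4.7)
The strategy is to reduce the statement to the already-established Theorem \ref{thm_hat_KO_twisted} for $\widehat{KO}_+$ via the natural isomorphism \eqref{eq_isom_hat_KO_self_skew_twisted}, which transfers the differential extension structure intact. First, I will verify that \eqref{eq_isom_hat_KO_self_skew_twisted} lifts to a natural isomorphism of functors $\Tw^2_{\widehat{KO}_-} \to \mathrm{Ab}$. Proposition \ref{prop_hat_KO_skew=self_twisted} provides a natural isomorphism $\widehat{KO}_-^{\Sigma^{0,1}\mathcal{A}} \simeq \widehat{KO}_+^{\widetilde{\Sigma}^{0,1}\mathcal{A}}$ that intertwines $R$, $I$, $a$. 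Composing with the canonical isomorphism $\widehat{KO}_-^{\mathcal{A}} \simeq \widehat{KO}_-^{\Sigma^{0,1}\Sigma^{1,0}\mathcal{A}}$ coming from negligibility of $Cl_{1,1}$ (the $\widehat{KO}_-$-analogue of Proposition \ref{prop_isom_negligible}, proved by exactly the same $\psi^E$-formula using Lemma \ref{lem_periodicity_negligible_bundle} and Lemma \ref{lem_Ph_spin}) yields \eqref{eq_isom_hat_KO_self_skew_twisted} with full compatibility with the structure maps.

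Next, I will recall that by Theorem \ref{thm_hat_KO_twisted}, the quadruple $(\widehat{KO}_+, R, I, a)$ restricted to the twist $\widetilde{\Sigma}^{0,1}\Sigma^{1,0}\mathcal{A}$ is a differential extension of the twisted topological theory $KO_+^{\widetilde{\Sigma}^{0,1}\Sigma^{1,0}\mathcal{A}}$. On the topological side, Proposition \ref{prop_KO_skew=self} gives $KO_-^{\mathcal{A}} \simeq KO_+^{\widetilde{\Sigma}^{0,1}\Sigma^{1,0}\mathcal{A}}$ naturally in $(X, Y, \mathcal{A})$, so that the functor $I \colon \widehat{KO}_-^{\mathcal{A}} \to KO_-^{\mathcal{A}}$ of Definition \ref{def_str_hom_twisted_-} matches the $\widehat{KO}_+$-version of $I$ under \eqref{eq_isom_hat_KO_self_skew_twisted}. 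The degree shift from $(4\Z+\type(\mathcal{A})-1)$ to $(4\Z-\type(\mathcal{A})-3)$ used in Definition \ref{def_twisted_KO_hat_-} is exactly compensated by the change $\type(\mathcal{A}) \mapsto \type(\widetilde{\Sigma}^{0,1}\Sigma^{1,0}\mathcal{A}) = -\type(\mathcal{A})-2$ coming from Lemma \ref{lem_ungraded_susp_degree}, so the form-degree axioms match on the nose. The identifications of orientation bundles are those specified in Proposition \ref{prop_skew_self_characteristic_form}.

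Then I will check the three axioms of a differential extension (in the form used in \cite{BunkeSchickDiffKsurvey}) directly on $\widehat{KO}_-$: the relation $R \circ a = d$, the commutativity of the $R$--$I$--$\mathrm{Ph}_{\mathrm{top}}$--$\mathrm{Rham}$ square, and the exact sequence
\[
KO_-^{\Sigma^{0,1}\mathcal{A}}(X,Y) \xrightarrow{\mathrm{Ph}_{\mathrm{top}}} \Omega^{4\Z-\type(\mathcal{A})-3}(X,Y;\Ori(\mathcal{A}))/\mathrm{Im}(d) \xrightarrow{a} \widehat{KO}_-^{\mathcal{A}}(X,Y) \xrightarrow{I} KO_-^{\mathcal{A}}(X,Y) \to 0.
\]
Each of these is the image under the natural isomorphism \eqref{eq_isom_hat_KO_self_skew_twisted} of the corresponding statement for $\widehat{KO}_+^{\widetilde{\Sigma}^{0,1}\Sigma^{1,0}\mathcal{A}}$ in Theorem \ref{thm_axiom_diffcoh_KO}, once one confirms that the shift $\Sigma^{0,1}$ on the twist commutes with the algebraic operation $\widetilde{\Sigma}^{0,1}\Sigma^{1,0}$ on the nose (which is immediate from the definitions).

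The main subtlety, rather than an obstacle, is bookkeeping: one must check that the various sign and orientation identifications made in Corollaries \ref{cor_Ph_self_skew} and \ref{cor_Ph_mass_gradation} assemble coherently so that the transported $R$ on $\widehat{KO}_-$ really equals the $R$ given in Definition \ref{def_str_hom_twisted_-}, and similarly for $a$. Given the compatibilities already established in Subsubsection \ref{subsubsec_Ph_self_skew}, this is a routine verification, and no new analytic or homotopical input beyond Theorem \ref{thm_hat_KO_twisted} is required.
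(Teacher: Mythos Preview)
Your proposal is correct and follows essentially the same approach as the paper: reduce to Theorem \ref{thm_hat_KO_twisted} via the natural isomorphism \eqref{eq_isom_hat_KO_self_skew_twisted}, which is built from Proposition \ref{prop_hat_KO_skew=self_twisted} together with the negligibility of $Cl_{1,1}$ and is compatible with all structure maps. The paper in fact states the theorem as an immediate consequence of these ingredients without further argument, so your more detailed bookkeeping only elaborates what the paper leaves implicit.
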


Again, the natural isomorphism \eqref{eq_isom_hat_KO_self_skew_twisted} allows us to translate results on twisted $\widehat{KO}_+$ into corresponding ones on twisted $\widehat{KO}_-$. 

\section{The complex case : \texorpdfstring{$\widehat{K}_+$}{K+} and \texorpdfstring{$\widehat{K}_-$}{K-}}\label{sec_hat_K}
So far we have worked in the $\R$-linear setting. 
In this section, we explain that the $\C$-linear version of the above story works by straightforward modifications, producing differential extensions of the Karoubi's $K$-theory $K_+$ (Remark \ref{rem_Karoubi_K}). 
\subsection{The complex superconnections}\label{subsec_complex_superconn}
In this subsection we explain the $\C$-linear version of Section \ref{sec_superconn}. 
In the $\C$-linear setting, we work with simple central graded $*$-algebras over $\C$. 
The traces $\mathrm{Tr}_u$ and $\mathrm{Tr}_A$ are defined exactly as in Subsection \ref{subsec_Clifford_end}, by replacing $\dim_\R$ with $\dim_\C$. 

We explain the {\it complex superconnection formalism} by modifying Subsection \ref{subsec_superconn_general}. 
We start with a bundle of nondegenerate simple central graded $*$-algebras $\mathcal{A}$ over $\C$ and an $\mathcal{A}$-module bundle $\slashed S$ with an inner product on a manifold. 
Then we define $\mathcal{A}$-superconnections $\Grad^{\slashed S}$, the curvature $F(\Grad^{\slashed S}; \nabla^{\mathcal{A}})$, the self/skew-adjointness of superconnections exactly in the same way as in that subsection. 
We easily see that the obvious $\C$-linear version of the results until Remark \ref{rem_self_skew_Dirac} hold. 
In the $\C$-linear case, we have (recall $\type(\mathcal{A}) \in H^0(X; \Z_2)$ in the complex settings)
\begin{align}\label{eq_complex_mod_2}
    \mathrm{Tr}_{\mathcal{A}}(f(F(\Grad^{\slashed S};  \nabla^\mathcal{A}))) \in  \Omega_{\mathrm{clo}}^{2\Z +\mathrm{type}(\mathcal{A})+1}(X; \mathrm{Ori}(\mathcal{A})\otimes_\R \C). 
\end{align}
for a $\mathcal{A}$-superconnection $\Grad^{\slashed S}$, and any $f \in \C[[z]]$. 
For $\Grad^{\slashed S}$ and $\Grad^{\slashed S, I}$ self/skew-adjoint, we define
\begin{align*}
    \Ch_{\lself / \lskew}(\Grad^{\slashed S};  \nabla^\mathcal{A}) &:=   \mathrm{Tr}_{\mathcal{A}}(e^{\mp F(\Grad^{\slashed S};  \nabla^\mathcal{A})})
    \in   \Omega_{\mathrm{clo}}^{2\Z +\mathrm{type}(\mathcal{A})+1}(X; \mathrm{Ori}(\mathcal{A})\otimes_\R \C),  \\
     \mathrm{CS}_{\lself / \lskew}( \Grad^{\slashed S, I};  \nabla^\mathcal{A}) &:= \int_{I}\Ch_{\lself / \lskew}\left(\Grad^{\slashed S, I}; \mathrm{pr}_X^*  \nabla^\mathcal{A}\right)  \in   \Omega^{2\Z +\mathrm{type}(\mathcal{A})}(X; \mathrm{Ori}(\mathcal{A})\otimes_\R \C). 
\end{align*}

Now we move on to the constructions corresponding to Subsections \ref{subsec_Ph_form} and \ref{subsec_Ph_form_-}. 
Recall the endomorphism $\mathcal{R}_\C$ on $\C$-valued differential forms defined in \eqref{eq_def_R_complex}. 
Given $\nabla^{\mathcal{A}}$ and a self-adjoint (=skew-adjoint) $\mathcal{A}$-connection $ \nabla^{\slashed S}$, for an invertible section $h \in C^\infty(X; \Self_{\mathcal{A}}^*(\slashed S))$, 
we consider the manifold $(0, \infty)_t \times X$ with the self-adjoint superconnection $\mathrm{pr}_X^*\nabla^{\slashed S} + th$, and
define its {\it Chern character form} by
\begin{multline}\label{eq_def_Ch_self}
    \mathrm{Ch}_\lself(h; \nabla^{\mathcal{A}}, \nabla^{\slashed S}) :=   -\pi^{-1/2}\mathcal{R}_\C  \circ\int_{(0, \infty)} \mathrm{Ch}_\lself(\mathrm{pr}_X^*\nabla^{\slashed S} + th; \mathrm{pr}_X^*\nabla^{\mathcal{A}}) \\
     \in \Omega^{2\Z + \type(\mathcal{A})}(X; \mathrm{Ori}(\mathcal{A})). 
\end{multline}
Note that $\Ori(\mathcal{A})$ is a {\it real} line bundle. 
Thus it can be nontrivial, but can be checked directly, that the formula \eqref{eq_def_Ch_self} defines an element in $\Omega^{2\Z + \type(\mathcal{A})}(X; \mathrm{Ori}(\mathcal{A}))$. 

Similarly for $m \in C^\infty(X; \Skew_{\mathcal{A}}^*(\slashed S))$, we define
\begin{multline}\label{eq_def_Ch_skew}
    \mathrm{Ch}_\lskew(m; \nabla^{\mathcal{A}}, \nabla^{\slashed S}) :=   -\pi^{-1/2}(-\sqrt{-1})^{\deg}  \mathcal{R}_\C  \circ\int_{(0, \infty)} \mathrm{Ch}_\lskew(\mathrm{pr}_X^*\nabla^{\slashed S} + tm; \mathrm{pr}_X^*\nabla^{\mathcal{A}}) \\
    \in \Omega^{2\Z + \type(\mathcal{A})}(X; \mathrm{Ori}(\mathcal{A})). 
\end{multline}
Here $(-\sqrt{-1})^{\deg}$ is an operator on $\C$-valued differential forms which multiplies $d$-forms by $(-\sqrt{-1})^{d}$. 
For a homotopy $h_I \in C^\infty(I \times X; \mathrm{pr}_X^*\Self_{\mathcal{A}}^*(\slashed S))$, we define the {\it Chern-Simons form} by
\begin{multline}\label{eq_def_CS_self_complex}
    \mathrm{CS}_\lself(h_I; \nabla^{\mathcal{A}}, \nabla^{\slashed S}) := \int_{I}\mathrm{Ch}_\lself\left(h_I; \mathrm{pr}_X^*\nabla^{\mathcal{A}}, \mathrm{pr}_X^*\nabla^{\slashed S} \right) \\
    \in \Omega^{2\Z + \type(\mathcal{A})-1}(X; \mathrm{Ori}(\mathcal{A}) ).  
\end{multline}
Similarly for $m_I \in C^\infty(I \times X; \mathrm{pr}_X^*\Skew_{\mathcal{A}}^*(\slashed S))$, we define 
\begin{multline}\label{eq_def_CS_skew_complex}
    \mathrm{CS}_\lskew(m_I; \nabla^{\mathcal{A}}, \nabla^{\slashed S}) := \int_{I}\mathrm{Ch}_\lskew\left(m_I; \mathrm{pr}_X^*\nabla^{\mathcal{A}}, \mathrm{pr}_X^*\nabla^{\slashed S} \right) \\
    \in \Omega^{2\Z + \type(\mathcal{A})-1}(X; \mathrm{Ori}(\mathcal{A})).  
\end{multline}
As a special case of the above constructions applied to trivial bundles $\underline{A}$ and $\underline{S}$, we get the $\C$-linear version of Subsection \ref{subsec_superconn_triv} and Subsubsection \ref{subsubsec_Ph_triv_-}. 
We use the corresponding notations such as $\Ch_{\lself}(h)$ for $h \in \Skew^*_A(S)$. 

We can show that the analogous properties of Chern character forms $\Ch_{\lself}(h)$ corresponding to those listed in Subsection \ref{subsubsec_properties_Ph}. 
The invariance under tensoring negligible modules (Lemma \ref{lem_Ph_negligible_triv}) can be shown in the same way. 
The compatibility with the suspension (Proposition \ref{prop_int_Ph_m}) is also basically the same, but we need to take care of the multiplications by $(\sqrt{-1})^\bullet$ appearing in the definition \eqref{eq_def_R_complex} of $\mathcal{R}_\C$.  
The statement becomes the following. 
\begin{prop}
For any element $h \in C^\infty(X, \Self_{\Sigma A}^\dagger (S))$, we define $\widetilde{h} \in C^\infty(I \times X, \Self_A^\dagger (S))$ by the same formula as \eqref{eq_susp_gradation}. 
We have
  \begin{align*}
         \mathrm{Ch}_\lself(h) = \int_I \mathrm{Ch}_\lself(\widetilde{h})
     \end{align*}
under the identification of orientation bundles (cf.\ \eqref{eq_complex_volume})
\begin{align*}
\Ori(A) &\simeq \Ori(\Sigma A), &
u &\mapsto 
\left\{
\begin{array}{ll}
u\widehat{\otimes}\beta, & (\type(A) = 0) \\
\sqrt{-1} u\widehat{\otimes}\beta. & (\type(A) = 1)
\end{array}
\right.
\end{align*}
\end{prop}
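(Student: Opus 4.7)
The plan is to adapt the real-variable proof of Proposition \ref{prop_int_Ph_m} (given in Subsection \ref{subsec_proof_int_Ph_m}) to the complex setting, carefully tracking the difference between $\mathcal{R}$ and $\mathcal{R}_\C$ and the $\sqrt{-1}$ factor in the identification of volume elements \eqref{eq_complex_volume}. First I would unfold the right hand side $\int_I \Ch_\lself(\widetilde{h})$ into a double integral over $(0,\infty)_t \times I_\theta$ via the definition \eqref{eq_def_Ch_self}, working with the superconnection $d + t\widetilde{h}$ on $(0,\infty)_t \times I_\theta \times X$. Its twisting curvature is $F = t\,d\widetilde{h} + t^2\widetilde{h}^2 = t\,d\widetilde{h} + t^2$, since $\widetilde{h}^2 = \cos^2(\pi\theta) + \sin^2(\pi\theta) + \{\beta,h\}\cos\pi\theta\sin\pi\theta = 1$ (using $h^2 = 1$ and the fact that $h \in \End^1_{\Sigma A}$ anticommutes with $\beta$), while $d\widetilde{h} = \pi\,d\theta\,(h\cos\pi\theta - \beta\sin\pi\theta) + \sin\pi\theta \cdot d_X h$.

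Next I would extract the $d\theta$-component of $\Ch_\lself(d + t\widetilde{h})$, since only that piece survives integration over $I$, and perform the $\theta$-integral. The integrand on $(0,\infty)_t \times X$ then becomes a linear combination of monomials of the form $\beta \cdot (\text{products of } h \text{ and } d_X h)$ weighted by integrals of $\cos^k(\pi\theta)\sin^l(\pi\theta)$ over $I$. The key algebraic input, identical in structure to the real proof, is the observation that applying $\mathrm{Tr}_A$ to these $\beta$-containing monomials, using the volume element $u\widehat{\otimes}\beta^{\type(A)+1}$ of $\Sigma A$ (as prescribed by \eqref{eq_complex_volume}), reproduces exactly $\mathrm{Tr}_{\Sigma A}$ applied to the $h$-only integrand that appears in $\Ch_\lself(d+th)$ on $(0,\infty)_t \times X$. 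Because $\widetilde{h}^2 = 1$, this expansion terminates in finite alternating sums that resum cleanly.

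The final step is the comparison of normalization factors. On $X$ the form $\Ch_\lself(h)$ lives in degree $2\Z + \type(A) + 1$, whereas $\Ch_\lself(\widetilde{h})$ on $I \times X$ lives in degree $2\Z + \type(A)$; after integrating out $I$ the parity matches modulo $2$. Writing $\mathcal{R}_\C(\omega) = (-\sqrt{-1})^{\lceil |\omega|/2 \rceil}\mathcal{R}(\omega)$, the discrepancy between the two sides of the claim is a scalar depending only on the parity of $\type(A)$. A two-line check in the cases $\type(A) = 0$ and $\type(A) = 1$ shows this scalar coincides with the prefactor $(\sqrt{-1})^{\type(A)}$ dictated by \eqref{eq_complex_volume}, which is the content of the stated identification $\Ori(A) \simeq \Ori(\Sigma A)$.

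The main obstacle is the bookkeeping of $\sqrt{-1}$-factors, which enter from three independent sources: from $\mathcal{R}_\C$ via the degree-dependent normalization \eqref{eq_def_R_complex}, from the complex volume element convention \eqref{eq_complex_volume} that determines $\mathrm{Tr}_{\Sigma A}$, and from powers of $\beta$ appearing in $\mathrm{Tr}_u$-expressions. Once these three sources are consistently aligned — which amounts essentially to checking a single numerical identity in each of the two parities of $\type(A)$ — the rest of the calculation is the mechanical complex analogue of the real computation and proceeds verbatim.
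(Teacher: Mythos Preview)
Your proposal is correct and follows exactly the route the paper indicates: it explicitly says the complex suspension compatibility ``is also basically the same'' as the real computation in Subsection~\ref{subsec_proof_int_Ph_m}, ``but we need to take care of the multiplications by $(\sqrt{-1})^\bullet$ appearing in the definition~\eqref{eq_def_R_complex} of $\mathcal{R}_\C$,'' which is precisely your plan. One small slip: the volume element of $\Sigma A$ prescribed by~\eqref{eq_complex_volume} is $(\sqrt{-1})^{\type(A)}\,u\widehat{\otimes}\beta$, not $u\widehat{\otimes}\beta^{\type(A)+1}$ (the latter is the convention for $\widetilde{\Sigma}^{0,1}A$ in the real case, Lemma~\ref{lem_ungraded_susp_degree}); this only affects your bookkeeping of the scalar, not the structure of the argument.
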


To show that the Chern character forms represent the correct cohomology classes, we need the result corresponding to Theorem \ref{thm_Ph=Ph_top} about the Chern character forms for the universal (tautological) gradation $h_{\mathrm{univ}} \in C^\infty(\Self^*_A(S), \Self^*_A(S))$. 
The topological Chern character homomorphism
\eqref{eq_def_Ch_top} can be regarded as an element
\begin{align*}
    \mathrm{Ch}_{\mathrm{top}} \in  {H}^{2\Z + {\type}(A)}\left( K_{\type(A)}, \{*\}; \R\right). 
\end{align*}
We claim that the equality
\begin{align}
   \mathrm{Rham}\left( \mathrm{Ch}_\lself(h_{\mathrm{univ}})\right) = \iota_{S, u}^*  \mathrm{Ch}_{\mathrm{top}}
\end{align}
holds in
$H^{2\Z + {\type}(A)}\left( \Self_{A}^*(S), \{\beta\}; \R\right)$.
Indeed, the proof of Theorem \ref{thm_Ph=Ph_top} can be modified to the $K$-theory. 
In that proof, we used the fact that the degree-zero part of the topological Pontryagin character is realized as the Chern-Weil construction on the real Grassmannians, and used the diffeomorphism $\Self^\dagger_A(S_n) \simeq \mathrm{Gr}(\R^{2n})$ in \eqref{eq_Gr_vs_Skew} to compare $\Ph_{\lself}(h_{\mathrm{univ}})$ with $\mathcal{R}\circ \mathrm{Tr}(e^{-\nabla_{\mathrm{Gr}}^2})$. 
The complex analogue of this argument works since the topological Chern character is realized as the Chern-Weil construction on the complex Grassmannians $\mathrm{Gr}(\C^{2n})$, and we have a diffeomorphism among models of the $K$-theory spectrum corresponding to that detailed in the study of the $KO$-spectrum in Subsubsection \ref{subsubsec_KO_spectrum}. 
These results imply the following one, corresponding to Corollary \ref{cor_Ph=Ph_top}. 

\begin{cor}\label{cor_Ch=Ch_top}
Let $A$ be a simple central graded $*$-algebra over $\C$ and $(X, Y)$ be an object of $\mathrm{MfdPair}_f$. 
Represent classes of $K_+^{A}(X, Y)$ by triples $(S, h_0, h_1)$ with $h_i \in C^\infty(X; \Self_A^*(S))$ as in Remark \ref{rem_Karoubi_K}. Then the topological Chern character homomorphism
\begin{align*}
    \mathrm{Ch}_{\mathrm{top}} \colon K_+^{A}(X, Y) \to H^{2\Z + {\type}(A) }(X, Y; \Ori(A) )
\end{align*}
is given by 
\begin{align*}
    [S, h_0, h_1] \mapsto \mathrm{Rham}\left(\mathrm{Ch}_\lself(h_1) - \mathrm{Ch}_\lself(h_0)  \right). 
\end{align*}
\end{cor}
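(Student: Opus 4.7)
The plan is to deduce Corollary~\ref{cor_Ch=Ch_top} from the universal Chern character statement (the $\C$-linear analogue of Theorem~\ref{thm_Ph=Ph_top}) stated in the paragraph immediately preceding the corollary, combined with naturality of $\Ch_\lself$ and the explicit description of the isomorphism $K_+^A(X,Y) \simeq [(X,Y), \varinjlim_n(\Self^*_A(S_n),\{\beta\})]$ (the $\C$-linear analogue of \eqref{eq_fact_mass_classifying}).

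First, I would invoke the universal statement: for a cofinal sequence $\{S_n\}$ of $\Sigma A$-modules, the tautological gradation $h_{\mathrm{univ}} = \mathrm{id}$ on $\Self^*_A(S_n)$ satisfies
\[
\mathrm{Rham}\bigl(\Ch_\lself(h_{\mathrm{univ}})\bigr) = \iota_{S_n,u}^* \Ch_{\mathrm{top}} \in H^{2\Z+\type(A)}(\Self^*_A(S_n),\{\beta\};\R).
\]
The proof of this complex-linear universal statement mirrors the proof of Theorem~\ref{thm_Ph=Ph_top} outlined in Subsubsection~\ref{subsubsec_KO_spectrum}: specialize to $A$ of type $0$ or $1$, identify $\Self^\dagger_A(S_n)$ with a complex Grassmannian via the $\C$-linear version of \eqref{eq_Gr_vs_Skew} (for $A = M(n,\C)\oplus M(n,\C)$ in type $1$ one has $\Self^\dagger_A(S_n) \simeq \mathrm{Gr}(\C^{N_+}\oplus \C^{N_-})$ by sending $a$ to $P = (1-a)/2$ and then to $\mathrm{Im}(P)$), and verify that under this diffeomorphism $\Ch_\lself(h_{\mathrm{univ}})$ reduces to the standard Chern--Weil Chern character form of the tautological hermitian bundle, which represents $\Ch_{\mathrm{top}}$ by the complex analogue of \eqref{eq_Gr_vec_bdle}. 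The normalizing factor $\mathcal{R}_\C$ in \eqref{eq_def_Ch_self} is chosen precisely so that the transgression integral $-\pi^{-1/2}\mathcal{R}_\C\int_{(0,\infty)} \Ch_\lself(\mathrm{pr}^*_X d + th_{\mathrm{univ}})$ yields the standard Chern--Weil representative.

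Next, I would recall the explicit description of the isomorphism \eqref{eq_fact_mass_classifying} (see \eqref{eq_difference_class}): given a triple $(S,h_0,h_1)$ representing $[S,h_0,h_1] \in K_+^A(X,Y)$, embed $S \hookrightarrow S_n$ for some large $n$ as a direct summand of $\Sigma A$-modules, extend each $h_i$ by $\beta$ on the orthogonal complement, and observe that $h_0, h_1$ then define continuous maps $(X,Y) \to (\Self^*_A(S_n),\{\beta\})$ whose difference class represents the image of $[S,h_0,h_1]$. By naturality of the Chern character forms, which are pullbacks of the universal one, we have $\Ch_\lself(h_i) = h_i^*\Ch_\lself(h_{\mathrm{univ}})$ for $i = 0,1$; the extension by $\beta$ on the orthogonal complement contributes nothing since $\Ch_\lself(\beta)$ vanishes pointwise (the associated superconnection $d + t\beta$ has constant curvature equal to $\beta^2 \in A$ acting via the trivial representation), so passing to the difference is well-defined and independent of the embedding.

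Combining these, the topological Chern character of $[S,h_0,h_1]$ is
\[
\Ch_{\mathrm{top}}([S,h_0,h_1]) = h_1^*\iota^*\Ch_{\mathrm{top}} - h_0^*\iota^*\Ch_{\mathrm{top}} = \mathrm{Rham}\bigl(\Ch_\lself(h_1) - \Ch_\lself(h_0)\bigr),
\]
which is the assertion. The only genuine obstacle is the universal statement in the first paragraph — specifically, pinning down the normalization constants in $\mathcal{R}_\C$ and the $(-\sqrt{-1})^{\deg}$ factor so that the Chern--Weil form on the Grassmannian matches the integer lift $\Ch_{\mathrm{top}}$ under the identifications \eqref{eq_isom_HR_b}; the rest of the argument is a formal consequence of naturality and the construction of the isomorphism \eqref{eq_fact_mass_classifying}, exactly as in the proof of Corollary~\ref{cor_Ph=Ph_top}.
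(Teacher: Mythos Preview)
Your proposal is correct and follows essentially the same approach as the paper: the paper deduces this corollary from the $\C$-linear analogue of Theorem~\ref{thm_Ph=Ph_top} (the universal Chern character statement on $\Self_A^*(S_n)$, proved via complex Grassmannians) together with the difference-class description \eqref{eq_difference_class} of the isomorphism \eqref{eq_fact_mass_classifying}, exactly as Corollary~\ref{cor_Ph=Ph_top} is deduced in the real case. Your write-up in fact spells out more detail than the paper, which simply says ``These results imply the following one, corresponding to Corollary~\ref{cor_Ph=Ph_top}.''
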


Using this result on untwisted Chern character forms, we also get the corresponding statement about the twisted case. 
Recall that, for a CW-pair $(X, Y)$, we have the twisted Chern character homomorphism,
\begin{align*}
    \mathrm{Ch}_{\mathrm{top}} \colon K^{\mathcal{A}}_+(X, Y) \to H^{2\Z +{\type}(\mathcal{A})}(X, Y; \mathrm{Ori}(\mathcal{A})). 
\end{align*}
We then have the following result corresponding to Corollary \ref{cor_Ph=Ph_top_twisted}.

\begin{cor}\label{cor_Ch=Ch_top_twisted}
let $(X, Y)$ be an object of $\mathrm{MfdPair}_f$, and $\mathcal{A}$ and $\slashed S$ over $\C$ as above.  
Suppose we have two smooth sections $h_0, h_1 \in C^\infty( X ; \mathrm{Self}_{\mathcal{A}}^*(\slashed S))$ such that $h_0|_Y = h_1|_Y$. 
Then we have
\begin{align} 
    \mathrm{Ch}_{\mathrm{top}}([\slashed S, h_0, h_1]) = \mathrm{Rham}\left( \mathrm{Ch}_\lself(h_1; \nabla^{\mathcal{A}}, \nabla^{\slashed S}) - \mathrm{Ch}_\lself(h_0; \nabla^{\mathcal{A}}, \nabla^{\slashed S})\right). 
\end{align}
\end{cor}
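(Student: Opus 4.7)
The plan is to mirror the proof strategy of Corollary \ref{cor_Ph=Ph_top_twisted} in the $\R$-linear case, adapted to the $\C$-linear setting. The key structural result needed is the $\C$-linear version of Theorem \ref{thm_Ph=Ph_top_twisted}, identifying the universal Chern character form with the pullback of the universal topological Chern character class. From there, the corollary follows by pulling back along the section determined by $[\slashed S, h_0, h_1]$, exactly as in the $\R$-linear case.

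First I would define the universal Chern form: for $\pi \colon \Self^*_{\mathcal{A}}(\slashed S) \to X$ with tautological section $h^{\mathcal{A}, \slashed S}_{\mathrm{univ}}$, form the relative closed form
\[
\mathrm{Ch}_\lself(h^{\mathcal{A}, \slashed S}_{\mathrm{univ}}; \pi^*\nabla^{\mathcal{A}}, \pi^*\nabla^{\slashed S}) - \mathrm{Ch}_\lself(\pi^*h_0; \pi^*\nabla^{\mathcal{A}}, \pi^*\nabla^{\slashed S}) \in \Omega_{\mathrm{clo}}^{2\Z + \type(\mathcal{A})}\big(\Self^*_\mathcal{A}(\slashed S), \{h_0\}; \Ori(\mathcal{A})\otimes_\R \C\big),
\]
and verify a $\C$-linear analog of Lemma \ref{lem_Ph_indep_conn} via transgression on $I \times X$ to conclude that its de Rham class is independent of $(\nabla^{\mathcal{A}}, \nabla^{\slashed S})$. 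Verifying that it actually lies in the $\Ori(\mathcal{A})$-summand (not just its complexification) requires the same direct check as in \eqref{eq_def_Ch_self}, using the definition of $\mathcal{R}_\C$ and the parity behavior of $\mathrm{Tr}_u$.

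Next, I would run the universal argument of Theorem \ref{thm_Ph=Ph_top_twisted} verbatim in the complex setting. Naturality in $(X,\mathcal{A})$ produces, via the $\C$-linear analog of Lemma \ref{lem_univ_Skew_bundle}, a class on the universal bundle $\mathcal{K}_{E\mathrm{Aut}_{*,\Z_2}(A)} \to B\mathrm{Aut}_{*,\Z_2}(A)$. The complex version of Lemma \ref{lem_Ph_spin} (invariance of $\mathrm{Ch}_\lself$ under tensoring negligible bundles, proved by the same local argument using Lemma \ref{lem_tr_u_negligible} and Lemma \ref{lem_change_connection_Ph}) lets us factor this universal class through the complex classifying space of twists built from $H^0(-;\Z_2) \times H^1(-;\Z_2) \times \mathrm{Tors}(H^3(-;\Z))$. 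Since these three factors have rationally trivial higher cohomology, the $\C$-linear analog of Lemma \ref{lem_twisted_Ph_characterization} applies: a class with real coefficients on the universal $K$-theory bundle is determined by its restriction to a single fiber. That restriction is exactly $[\mathrm{Ch}_\lself(h_{\mathrm{univ}})]$ for the untwisted universal gradation, which by the untwisted Corollary \ref{cor_Ch=Ch_top} is the topological Chern character class. Deducing Corollary \ref{cor_Ch=Ch_top_twisted} then reduces to identifying $(\slashed S, h_0, h_1)$ with a section of $\mathcal{K}_\mathcal{A}$ via the $\C$-analog of the embedding \eqref{eq_emb_Skew_bundle} and the isomorphism \eqref{eq_KO_m=KO_twisted}.

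The main obstacle is bookkeeping rather than any single hard step: I need to carefully track the real orientation bundle $\Ori(\mathcal{A})$ (which is a genuine $\Z_2$-twist in the complex setting because type-$1$ volume elements satisfy $u^2 = 1$ but $\Ori(\mathcal{A}) = \R u$ can still twist globally), the $(\sqrt{-1})^{\deg}$ and $(-2\pi\sqrt{-1})^{-\deg/2}$ factors in $\mathcal{R}_\C$, and the shift in degrees modulo $2$ rather than modulo $4$. In particular, establishing the complex analog of Lemma \ref{lem_twisted_Ph_characterization} requires spelling out the $\mathcal{K}_+$-twist classification used implicitly in the statement, and checking that the Bockstein contribution from $\mathrm{Tors}(H^3(-;\Z))$ does not obstruct the universal-class-by-fiber-restriction argument; this is standard (cf.\ Atiyah--Segal \cite{AtiyahSegalTwistKandCohomology}) but needs to be stated carefully to feed into our setup.
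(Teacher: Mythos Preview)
Your proposal is correct and follows essentially the same route the paper takes: the paper states Corollary \ref{cor_Ch=Ch_top_twisted} as the direct $\C$-linear analog of Corollary \ref{cor_Ph=Ph_top_twisted}, obtained by running the proof of Theorem \ref{thm_Ph=Ph_top_twisted} in the complex setting (using the untwisted input Corollary \ref{cor_Ch=Ch_top} on the fibers and the Atiyah--Segal-style uniqueness argument for the universal class) and then pulling back along the section determined by $(\slashed S, h_0, h_1)$ via the complex analog of \eqref{eq_KO_m=KO_twisted}. Your bookkeeping concerns about $\Ori(\mathcal{A})$, the $\mathcal{R}_\C$ normalization, and the mod-$2$ grading are exactly the points the paper flags in Subsection \ref{subsec_complex_superconn} as the only modifications needed.
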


Now we turn to the discussion corresponding to Subsubsection \ref{subsubsec_Ph_self_skew}, relating $\Ch_\lself$ and $\Ch_\lskew$. 
In the $\C$-linear setting here, the argument is simpler. 
Recall that we have a bijection $\Skew^*_A(S) \simeq \Self_A^*(S)$ by $m \mapsto \sqrt{-1}m$ (Remarks \ref{rem_complex_self_skew} and \ref{rem_Karoubi_K_self_skew}). Let us decompose a skew-adjoint $\mathcal{A}$-superconnection $\Grad^{\slashed S}$ compatible with $\nabla^{\mathcal{A}}$ as \eqref{eq_superconn_local}, 
\begin{align*}
    \Grad^{\slashed S} = \nabla + \sum_j \omega_j \widehat{\otimes} \xi_j,
\end{align*}
where $\omega_j \in \Omega^j(X)$ and $\xi_j \in C^\infty(X; \mathrm{End}_{\mathcal{A}}(\slashed S))$ with $|\omega_j| + |\xi_j| = 1 \in \Z_2$. 
Then consider the following operator on $\Omega^*(X; \slashed S)$
\begin{align}\label{eq_def_psi_beta_superconn_complex}
    \psi_{\sqrt{-1}} (\Grad^{\slashed S}): = \nabla + \sum_j (-\sqrt{-1})^{j-1} \omega_j \widehat{\otimes}\xi_j .  
\end{align}
We easily see that the expression \eqref{eq_def_psi_beta_superconn_complex} is independent of the decomposition and defines a self-adjoint $\mathcal{A}$-superconnection compatible with $\nabla^{\mathcal{A}}$. 
We have the following result corresponding to Proposition \ref{prop_skew_self_characteristic_form}, whose proof is an easy computation checking the case $f(z) = z^n$ and so omitted.

\begin{prop}\label{prop_skew_self_characteristic_form_complex}
For a skew-adjoint $\mathcal{A}$-superconnection $\Grad^{\slashed S}$ compatible with $\nabla^{\mathcal{A}}$ and for any $f \in \C[[z]]$, we have the equality 
\begin{align}
   f(F(\Grad^{\slashed S};  \nabla^\mathcal{A}))
    =(-\sqrt{-1})^{\deg} \circ f(-F(\psi_{\sqrt{-1}}(\Grad^{\slashed S});  \nabla^\mathcal{A}))
\end{align}
in $\Omega^{2\Z + \type(\mathcal{A})+1}(X; \Ori(A)\otimes_\R \C)$. 
Here $(-\sqrt{-1})^{\deg}$ is the operator on differential forms which multiplies $d$-forms by $(-\sqrt{-1})^{d}$. 
\end{prop}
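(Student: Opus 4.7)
The plan is to prove the identity by a direct local computation, reducing to the monomial case $f(z) = z^n$ and then exploiting the multiplicativity of the operator $(-\sqrt{-1})^{\deg}$ with respect to the wedge product. The argument parallels the proof of Proposition \ref{prop_skew_self_characteristic_form} in the real case, but the bookkeeping is now with complex scalars rather than with the Clifford generator $\beta$ and the signs coming from $\nu$.

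First I would observe that both sides of the claimed equality are $\C$-linear in $f$ and that, on any fixed manifold, only finitely many terms of a formal power series contribute in each form-degree. It therefore suffices to check the equality for $f(z) = z^n$ for each $n \geq 0$. Next, I would pass to a trivializing neighbourhood $U \subset X$ so that $\mathcal{A} \simeq \underline{A}$ and $\slashed S \simeq \underline{S}$, write $\nabla^\mathcal{A} = d + \{C,-\}$ and $\Grad^{\slashed S} = d + C + B$ with $B \in \Omega^*(U; \End_\mathcal{A}(\slashed S))^1$ as in \eqref{eq_superconn_local}, and decompose $B = \sum_j B_j$ by form-degree. Then $\psi_{\sqrt{-1}}(\Grad^{\slashed S}) = d + C + \widetilde B$ with $\widetilde B = \sum_j (-\sqrt{-1})^{j-1} B_j$, and by \eqref{eq_curv_local} the twisting curvatures are $F = dB + B^2$ and $\widetilde F = d\widetilde B + \widetilde B^2$.

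The central calculation is the degree-by-degree comparison of $F$ and $\widetilde F$. The form-degree-$m$ piece of $d\widetilde B$ equals $(-\sqrt{-1})^{m-2}$ times $(dB)_m$, while the form-degree-$m$ piece of $\widetilde B^2$ is $\sum_{j+k=m}(-\sqrt{-1})^{j+k-2} B_j B_k = (-\sqrt{-1})^{m-2}(B^2)_m$; the crucial point is that the two factors $(-\sqrt{-1})^{j-1}$ coming from $\widetilde B_j$ and $\widetilde B_k$ combine into a single scalar that depends only on the total form-degree $m = j+k$. Consequently $\widetilde F_m$ is a specific scalar multiple of $F_m$ depending only on $m$, i.e.\ a scalar power of $-\sqrt{-1}$.

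Once this relation is in hand, the passage to powers is automatic. The operator $(-\sqrt{-1})^{\deg}$ is multiplicative on $\Omega^*(X; \End_\mathcal{A}(\slashed S))$, so it distributes across products of homogeneous forms, and the identity $f(F) = (-\sqrt{-1})^{\deg}\circ f(-\widetilde F)$ for $f(z) = z^n$ follows immediately from the degree-wise identity by multiplicativity; the general case follows by linearity. The main obstacle is not conceptual but cosmetic: one must verify the signs and powers of $\sqrt{-1}$ match precisely so that the scalar relation between $\widetilde F_m$ and $F_m$ is consistent with the operator $(-\sqrt{-1})^{\deg}$ as employed in the definition \eqref{eq_def_Ch_skew} of $\mathrm{Ch}_\lskew$. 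This is where I would slow down, writing out the cases $m \pmod 4$ explicitly as in the proof of Proposition \ref{prop_skew_self_characteristic_form}, to confirm the statement holds exactly as written.
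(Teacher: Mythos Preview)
Your proposal is correct and follows precisely the approach the paper indicates: the paper simply states that the proof ``is an easy computation checking the case $f(z)=z^n$ and so omitted,'' and your plan---reduce to monomials, work in a local trivialization, exploit the multiplicativity of the scalar $(-\sqrt{-1})^{j-1}$ under products so that $\widetilde F_m$ is a fixed scalar multiple of $F_m$, then propagate to $F^n$ by multiplicativity of $(-\sqrt{-1})^{\deg}$---is exactly that computation. Your caution about carefully matching the powers of $\sqrt{-1}$ at the end is warranted but does not indicate a gap; once one has $\widetilde F_m = (-\sqrt{-1})^{m-2} F_m$, the identity for $f(z)=z^n$ follows mechanically.
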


Thus we conclude:

\begin{cor}\label{cor_Ch_mass_gradation}
For any element $m \in C^\infty(X; \Skew^*_{\mathcal{A}}(\slashed S))$, we have 
\begin{align}
    \mathrm{Ch}_\lskew(m; \nabla^{\mathcal{A}}, \nabla^{\slashed S})
    =\mathrm{Ch}_\lself(\sqrt{-1}m; \nabla^{\mathcal{A}}, \nabla^{\slashed S}). 
\end{align}
For any element $m_I \in C^\infty(I \times X ; \mathrm{pr}_X^*\mathrm{Skew}_{\mathcal{A}}^*(\slashed S))$, we have
\begin{align}
    \mathrm{CS}_\lskew(m_I; \nabla^{\mathcal{A}}, \nabla^{\slashed S})
    =\mathrm{CS}_\lself(\sqrt{-1}(m_I); \nabla^{\mathcal{A}}, \nabla^{\slashed S}). 
\end{align}
\end{cor}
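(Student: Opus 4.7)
The plan is to derive this corollary as a direct consequence of Proposition \ref{prop_skew_self_characteristic_form_complex}, applied to the one-parameter family of superconnections that defines the Chern character forms on sections. Specifically, consider the skew-adjoint $\mathrm{pr}_X^*\mathcal{A}$-superconnection
$$\Grad^{\slashed S, I} := \mathrm{pr}_X^*\nabla^{\slashed S} + tm$$
on $(0,\infty)_t \times X$. In its canonical decomposition \eqref{eq_superconn_sum} we have $\omega_0 = t$ (degree $0$) and $\xi_0 = m \in C^\infty(X;\End^1_\mathcal{A}(\slashed S))$. Hence by \eqref{eq_def_psi_beta_superconn_complex},
$$\psi_{\sqrt{-1}}(\Grad^{\slashed S, I}) = \mathrm{pr}_X^*\nabla^{\slashed S} + (-\sqrt{-1})^{-1}\, tm = \mathrm{pr}_X^*\nabla^{\slashed S} + t(\sqrt{-1}\, m),$$
which is precisely the self-adjoint superconnection appearing in the definition of $\Ch_\lself(\sqrt{-1}m;\nabla^{\mathcal{A}},\nabla^{\slashed S})$.

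Next, I apply Proposition \ref{prop_skew_self_characteristic_form_complex} with $f(z)=e^z$ (so that $f(-z)=e^{-z}$) and take $\mathrm{Tr}_{\mathrm{pr}_X^*\mathcal{A}}$ of both sides. This yields
$$\Ch_\lskew(\mathrm{pr}_X^*\nabla^{\slashed S}+tm;\mathrm{pr}_X^*\nabla^{\mathcal{A}}) \;=\; (-\sqrt{-1})^{\deg}\!\circ\Ch_\lself\bigl(\mathrm{pr}_X^*\nabla^{\slashed S}+t(\sqrt{-1}m);\mathrm{pr}_X^*\nabla^{\mathcal{A}}\bigr)$$
as forms on $(0,\infty)\times X$. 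Then I integrate over $(0,\infty)$, apply $\mathcal{R}_\C$, and multiply by $-\pi^{-1/2}$, comparing with the definitions \eqref{eq_def_Ch_skew} and \eqref{eq_def_Ch_self}. The precomposed factor $(-\sqrt{-1})^{\deg}$ in the definition of $\Ch_\lskew(m;\nabla^{\mathcal{A}},\nabla^{\slashed S})$ is tailored exactly to absorb the $(-\sqrt{-1})^{\deg}$ appearing from Proposition \ref{prop_skew_self_characteristic_form_complex}, once one accounts for the degree shift induced by $\int_{(0,\infty)}$ and the fact that $\mathcal{R}_\C$ and $(-\sqrt{-1})^{\deg}$ are both scalar multiplications depending only on form-degree (hence mutually commuting). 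The identity $\Ch_\lskew(m;\nabla^{\mathcal{A}},\nabla^{\slashed S})=\Ch_\lself(\sqrt{-1}m;\nabla^{\mathcal{A}},\nabla^{\slashed S})$ then falls out.

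For the second equality, I run the same argument on the family parametrized by $I$, namely apply the first identity fiberwise to $m_I \in C^\infty(I\times X;\mathrm{pr}_X^*\Skew^*_{\mathcal{A}}(\slashed S))$ (so that $\sqrt{-1}m_I \in C^\infty(I\times X;\mathrm{pr}_X^*\Self^*_{\mathcal{A}}(\slashed S))$), and then integrate over the $I$ factor. By \eqref{eq_def_CS_skew_complex} and \eqref{eq_def_CS_self_complex}, the left and right sides of the $\CS$-equality are obtained by such integration, and they coincide because the integrands do.

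The main technical care needed is in paragraph two: tracking exactly how $(-\sqrt{-1})^{\deg}$ commutes with $\int_{(0,\infty)}$. Since the integration drops total form-degree by one, one has the relation $\int_{(0,\infty)}\circ(-\sqrt{-1})^{\deg} = (-\sqrt{-1})\cdot(-\sqrt{-1})^{\deg}\circ\int_{(0,\infty)}$ on forms with nontrivial $dt$-component, and the definitional factor $(-\sqrt{-1})^{\deg}$ in \eqref{eq_def_Ch_skew} has been placed precisely so that all such scalar factors telescope to $1$ on the degree range where $\Ch_\lskew$ and $\Ch_\lself$ are supported. This bookkeeping is routine but is the only step where an error would propagate invisibly, so it is where I would be most careful.
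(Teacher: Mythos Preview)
Your proposal is correct and follows precisely the route the paper has in mind: the paper gives no explicit proof, writing only ``Thus we conclude:'' after Proposition~\ref{prop_skew_self_characteristic_form_complex}, and your argument spells out exactly this deduction---applying the proposition to the superconnection $\mathrm{pr}_X^*\nabla^{\slashed S}+tm$ on $(0,\infty)\times X$, identifying $\psi_{\sqrt{-1}}$ of it with $\mathrm{pr}_X^*\nabla^{\slashed S}+t(\sqrt{-1}\,m)$, and then tracking the degree-dependent scalars through $\int_{(0,\infty)}$ and $\mathcal{R}_\C$. Your caution about the bookkeeping of $(-\sqrt{-1})^{\deg}$ through the integration is well placed, as that is indeed the only nontrivial step; the $\CS$-identity then follows by integrating over $I$ exactly as you say.
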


\subsection{The definitions of differential \texorpdfstring{$K$}{K}-theories \texorpdfstring{$\widehat{K}_+$}{K+} and \texorpdfstring{$\widehat{K}_-$}{K-}}
Now it should be obvious to the reader that the definitions of the differential extensions of $K_+$ and $K_-$ are given by just replacing the coefficient from $\R$ to $\C$ in the definitions of $\widehat{KO}_+$ and $\widehat{KO}_-$, and by using the $\C$-linear version of the characteristic forms defined in the last subsection.  

\begin{defn}[{$\widehat{K}_+^A(X, Y)$ and $\widehat{K}_-^A(X, Y)$}]\label{def_hat_K}
Let $A$ be a nondegenerate simple central graded $*$-algebra over $\C$ and let $(X, Y)$ be an object in $\mathrm{MfdPair}_f$. 
Replacing the coefficient $\R$ by $\C$ and the form degree $4\Z$ by $2\Z$ and using characteristic forms in Subsection \ref{subsec_complex_superconn}, we define $\widehat{K}_+^A(X, Y)$ and $\widehat{K}_-^A(X, Y)$, respectively. 
\end{defn}
Thus, elements of the group $\widehat{K}_+^A(X, Y)$ are of the form $[S, h_0, h_1, \eta]$, where $h_i \in C^\infty(X, \Self_A^*(S))$ are such that $h_0|_Y = h_1|_Y$ and 
\[
\eta \in \Omega^{2\Z + \type(A)-1}(X, Y; \Ori(A))/\mathrm{Im}(d).
\]
We have $[S, h_0, h_1, \CS_\lself(h_I)] = 0$ when $h_I$ is a homotopy relative to $Y$ between $h_0$ and $h_1$. 
Similarly, elements of the group $\widehat{K}_-^A(X, Y)$ are of the form $[S, m_0, m_1, \eta]$, where $m_i \in C^\infty(X, \Skew_A^*(S))$ are such that $m_0|_Y = m_1|_Y$ and 
\[
\eta \in \Omega^{2\Z + \type(A)-1}(X, Y; \Ori(A))/\mathrm{Im}(d).
\]
We have $[S, m_0, m_1, \CS_\lskew(m_I)] = 0$ when $m_I$ is a homotopy relative to $Y$ between $m_0$ and $m_1$. 

For twisted groups, we define the (same) categories $\Tw^2_{\widehat{K}_+} = \Tw^2_{\widehat{K}_-}$ by the $\C$-linear analogue of Definition \ref{def_twist_cat_hat_KO_+}. 
We define the following. 

\begin{defn}[{$\widehat{K}_+^\mathcal{A}(X, Y)$ and $\widehat{K}_-^{\mathcal{A}}(X, Y)$}]\label{def_hat_K_twisted}
Let $(X, Y, \mathcal{A}) \in \Tw^2_{\widehat{K}_+} = \Tw^2_{\widehat{K}_-}$. 
By the same replacement as in Definition \ref{def_hat_K} of Definitions \ref{def_twisted_KO_hat_m} and \ref{def_twisted_KO_hat_-}, we define $\widehat{K}_+^\mathcal{A}(X, Y)$ and $\widehat{K}_-^{\mathcal{A}}(X, Y)$, respectively. 
\end{defn}

\begin{defn}[{Structure homomorphisms for $\widehat{K}_+$ and $\widehat{K}_-$}]\label{def_str_hom_K}
We define the structure homomorphisms $R$, $I$, $a$ for $\widehat{K}^A_+$, $\widehat{K}^{\mathcal{A}}_+$, $\widehat{K}^A_-$ and $\widehat{K}^{\mathcal{A}}_-$ by the same replacement as in Definition \ref{def_hat_K} of Definitions \ref{def_str_hom_untwisted}, \ref{def_str_hom}, \ref{def_str_hom_untwisted_-} and \ref{def_str_hom_twisted_-}, respectively. 
\end{defn}

For example in the case $\widehat{K}^A_+$ we define
\begin{align*}
        R &\colon \widehat{K}_+^{A}(X, Y) \to \Omega_{\mathrm{clo}}^{2\Z + \type(A)}(X, Y; \mathrm{Ori}(A)) \\
       &[S, h_0, h_1, \eta] \mapsto \mathrm{Ch}_\lself(h_1) - \mathrm{Ch}_\lself(h_0) + d\eta.\\
        I &\colon \widehat{K}_+^{A}(X, Y) \to K_+^{A}(X, Y) \\
       & [S, h_0, h_1, \eta] \mapsto [S, h_0, h_1]. \\
        a &\colon \Omega^{2\Z + \type(A) - 1}(X, Y; \Ori(A) ) / \mathrm{Im}(d) \to \widehat{K}_+^{A}(X, Y)  \\
       & \eta \mapsto [0, 0, 0, \eta]. 
\end{align*}

We have natural isomorphisms for both untwisted and twisted cases corresponding to Propositions \ref{prop_hat_KO_skew=self} and \ref{prop_hat_KO_skew=self_twisted} 
\begin{align}\label{eq_isom_hat_K_self_skew}
    \widehat{K}_- \simeq \widehat{K}_+, 
\end{align}
which are compatible with the structure homomorphisms. 
The isomorphisms \eqref{eq_isom_hat_K_self_skew} are given by $[S, m_0, m_1, \eta] \mapsto [S, \sqrt{-1}m_0, \sqrt{-1}m_1, \eta]$ in the untwisted case and $[\slashed S, \nabla^\mathcal{A}, \nabla^{\slashed S}, m_0, m_1, \eta] \mapsto [\slashed S, \nabla^\mathcal{A}, \nabla^{\slashed S}, \sqrt{-1}m_0, \sqrt{-1}m_1, \eta]$ in the twisted case. 
The well-definedness follows from Corollary \ref{cor_Ch_mass_gradation}. 

Moreover, by the cohomological properties of the Chern character forms $\Ch_\lself$ in Corollaries \ref{cor_Ch=Ch_top} and \ref{cor_Ch=Ch_top_twisted}, we can check the axioms of differential cohomology theories as in Theorems \ref{thm_axiom_diffcoh_untwisted_KO} and  \ref{thm_axiom_diffcoh_KO}. 
Thus we conclude the following. 
\begin{thm}\label{thm_hat_K}
The quadruples $\left(\widehat{K}^A_+, R, I, a\right)$ and $\left(\widehat{K}^A_-, R, I, a\right)$ are differential extensions of the $K$-theories $K^A_+ \simeq K^A_- \simeq KO^{\type(A)}$ on $\mathrm{MfdPair}_f$. 

The quadruples $\left(\widehat{K}_+, R, I, a\right)$ and $\left(\widehat{K}_-, R, I, a\right)$ are differential extensions of the twisted $K$-theories ${K}_+ \simeq K_-$ on $\Tw^2_{\widehat{K}_+} = \Tw^2_{\widehat{K}_-}$. 
\end{thm}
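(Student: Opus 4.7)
My plan is to verify the axioms of differential extensions for the four quadruples by mirroring the proofs already given in the real case, then leveraging the self/skew correspondence to halve the work.

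First I would establish the untwisted $\widehat{K}_+^A$ case. The identity $R \circ a = d$ is immediate from Definition \ref{def_str_hom_K}. The compatibility square
\[
\xymatrix{
\widehat{K}_+^{A}(X,Y) \ar[r]^-{R} \ar[d]^{I} & \Omega_{\mathrm{clo}}^{2\Z + \type(A)}(X,Y; \Ori(A)) \ar[d]^{\mathrm{Rham}} \\
K_+^{A}(X,Y) \ar[r]^-{\mathrm{Ch}_{\mathrm{top}}} & H^{2\Z + \type(A)}(X,Y; \Ori(A))
}
\]
is exactly the content of Corollary \ref{cor_Ch=Ch_top}. The exactness of
\[
K_+^{\Sigma A}(X,Y) \xrightarrow{\mathrm{Ch}_{\mathrm{top}}} \Omega^{2\Z + \type(A)-1}(X,Y;\Ori(A))/\mathrm{Im}(d) \xrightarrow{a} \widehat{K}_+^{A}(X,Y) \xrightarrow{I} K_+^{A}(X,Y) \to 0
\]
is proved by transposing the argument of Theorem \ref{thm_axiom_diffcoh_untwisted_KO} word-for-word: surjectivity of $I$ is trivial; the two-step commutative ladder built from the suspension isomorphism and the excision onto a smooth thickening $\overline{U}$ of $\partial I \times X \cup I \times Y$ carries over verbatim; and the proof that $a \circ \mathrm{Rham} \circ \int_I \circ R = 0$ reduces to the observation that $\mathrm{CS}_\lself(h_i) = \int_I \mathrm{Ch}_\lself(h_i)$ by definition \eqref{eq_def_CS_self_complex}. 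The remaining exactness at $\Omega/\mathrm{Im}(d)$ and at $\widehat{K}_+^A(X,Y)$ uses only formal manipulation of $\widehat{K}_+$-cocycles and the equality analogous to \eqref{eq_CS_mod_exact} for $\mathrm{CS}_\lself$, which is proved in the same way.

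Next I would handle the twisted case $\widehat{K}_+^{\mathcal{A}}$. The arguments of Theorem \ref{thm_axiom_diffcoh_KO} depend only on (i) the transgression identity \eqref{eq_Ph=dCS} for the Chern character forms on connections, (ii) the invariance of the cocycle data under the action of $\Omega^1(X;\mathcal{A}^0_{\mathrm{skew}})$ coming from changes of $\nabla^{\mathcal{A}}$, and (iii) the universal realization of $\mathrm{Ch}_{\mathrm{top}}$. Item (i) is the $\C$-linear version of Lemma \ref{lem_Ph=dCS}, which holds by the same computation using $\mathcal{R}_\C$ in place of $\mathcal{R}$; item (ii) is the $\C$-linear Lemma \ref{lem_twisted_KO_connection_V}, whose proof depends only on Lemma \ref{lem_change_connection_Ch} and carries over; item (iii) is Corollary \ref{cor_Ch=Ch_top_twisted}. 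With these in hand the proof of Theorem \ref{thm_axiom_diffcoh_KO} transfers verbatim, yielding that $(\widehat{K}_+, R, I, a)$ is a differential extension of the twisted $K$-theory $K_+$.

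Finally, for the skew-adjoint models I would invoke the natural isomorphisms
\[
\widehat{K}_-^{A} \simeq \widehat{K}_+^{A}, \qquad \widehat{K}_-^{\mathcal{A}} \simeq \widehat{K}_+^{\mathcal{A}},
\]
given by $[S,m_0,m_1,\eta] \mapsto [S, \sqrt{-1}m_0, \sqrt{-1}m_1, \eta]$ (respectively its twisted version). Well-definedness follows directly from Corollary \ref{cor_Ch_mass_gradation}, and bijectivity is obvious since $m \mapsto \sqrt{-1}m$ is a bijection $\mathrm{Skew}^*_A(S) \simeq \mathrm{Self}^*_A(S)$ by Remarks \ref{rem_complex_self_skew} and \ref{rem_Karoubi_K_self_skew}. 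Moreover, the explicit formulas of Corollary \ref{cor_Ch_mass_gradation} show that the isomorphism intertwines the structure homomorphisms $R$, $I$, $a$ of the two models (identifying $K_-^A \simeq K_+^A$ and $K_- \simeq K_+$ as in Remark \ref{rem_Karoubi_K_self_skew}). Since being a differential extension is preserved under such isomorphisms of quadruples, the statements for $\widehat{K}_-$ follow.

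There is no real obstacle beyond diligent bookkeeping: every nontrivial ingredient — the cohomological identification of the Chern character forms (Corollaries \ref{cor_Ch=Ch_top}, \ref{cor_Ch=Ch_top_twisted}), the transgression formula, and the self/skew dictionary — has already been established in Subsection \ref{subsec_complex_superconn}. The mildest point requiring care is the factor $(-\sqrt{-1})^{\deg}$ in the definition \eqref{eq_def_Ch_skew} of $\mathrm{Ch}_\lskew$, but this is precisely what is needed to make Corollary \ref{cor_Ch_mass_gradation} an on-the-nose equality, so the identification of structure homomorphisms under \eqref{eq_isom_hat_K_self_skew} is exact.
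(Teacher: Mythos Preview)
Your proposal is correct and follows essentially the same approach as the paper: the paper's proof is the one-line remark that the axioms are checked ``as in Theorems \ref{thm_axiom_diffcoh_untwisted_KO} and \ref{thm_axiom_diffcoh_KO}'' using Corollaries \ref{cor_Ch=Ch_top} and \ref{cor_Ch=Ch_top_twisted}, together with the isomorphism \eqref{eq_isom_hat_K_self_skew} (via Corollary \ref{cor_Ch_mass_gradation}) to transfer the result to $\widehat{K}_-$. You have simply spelled out in detail what the paper leaves implicit.
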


\section{The proofs}

\subsection{The proof of Proposition \ref{prop_int_Ph_m}}\label{subsec_proof_int_Ph_m}
Here we prove Proposition \ref{prop_int_Ph_m}. 
Let $S$ be a $\Sigma^{0, 1}A$-module. For $h \in C^\infty(X, \Self_{\Sigma^{0, 1} A}^\dagger (S))$, we have the corresponding $\widetilde{h} \in C^\infty(I \times X, \Self^\dagger_{A}(S))$. 
Choose a volume element $u$ of $A$ to trivialize $\Ori(A)$. 

By Definition \ref{def_Ph_m_triv}, using $h^2  =1$ and $\widetilde{h}^2 = 1$ we have
\begin{align*}
    \Ph_\lself(h) &= \pi^{-1/2}\mathcal{R}\circ \int_{(0, \infty)}dt e^{-t^2}\mathrm{Tr}_{u \widehat{\otimes} \beta}(h\exp(-tdh)), \\
    \Ph_\lself(\widetilde{h}) &=\pi^{-1/2}\mathcal{R}\circ \int_{(0, \infty)}dt e^{-t^2}\mathrm{Tr}_{u}(\widetilde{h}\exp(-td\widetilde{h})).
\end{align*}
We compute (note that we are working in the algebra $\Omega^*(I \times X, \End_A(S))$ with the multiplication \eqref{eq_multi_form}),
\begin{align*}
    &\exp(-td\widetilde{h}) \\
    &= \exp(-t(d\theta \cdot (\del_\theta \widetilde{h}) +   (\sin\pi \theta) dh)) \\
    &= \sum_{n = 0}^\infty \frac{(-t)^n}{n!}(d\theta \cdot (\del_\theta \widetilde{h}) +  (\sin\pi \theta) dh)^n \\
    &=d\theta \cdot (\del_\theta \widetilde{h}) \left(\sum_{n = 0}^\infty \frac{(-t)^{n+1}}{n!} \cdot (\sin^{n}\pi\theta) (dh)^{n}\right) 
    + \exp(-t (\sin\pi \theta) dh).
\end{align*}
Here the last equality used the fact that $dh$ commutes with $d\theta \cdot (\del_\theta \widetilde{h})$, which can be checked easily by $h \in C^\infty(X, \Self_{\Sigma^{0, 1} A}^\dagger (S))$ and the formula for $\widetilde{h}$. 
We also note
\begin{align*}
    \widetilde{h}(\del_\theta \widetilde{h}) = \pi(\beta \cos \pi\theta + h \sin \pi\theta)(-\beta \sin\pi \theta + h \cos\pi \theta) =\pi \beta h. 
\end{align*}
Thus we have (note that $\widetilde{h} d\theta = - d \theta \widetilde{h}$)
\begin{multline}\label{eq_proof_susp_3}
\int_I\Ph_\lself(\widetilde{h}) \\
=\pi^{1/2}  \sum_{n = 0}^\infty \left(\frac{1}{n!} \int_0^\infty (-t)^{n+1}e^{-t^2} dt  \int_I \sin^n (\pi \theta) \cdot \mathcal{R} \left(d\theta\mathrm{Tr}_{u}(-\beta {h}(d{h})^n)\right)\right). 
\end{multline}

Now assume $\type(A)$ is even. 
In this case $\Ph_\lself(\widetilde{h})$ is an even form and $\Ph_\lself(h)$ is an odd form, so that \eqref{eq_proof_susp_3} becomes
\begin{align}\label{eq_proof_susp_4}
     \pi^{1/2} 
     \sum_{l = 0}^\infty \left(\frac{1}{(2l+1)!} \int_0^\infty t^{2l+2}e^{-t^2} dt  \int_I  \sin^{2l+1} (\pi \theta)\cdot \mathcal{R}\left(d\theta \mathrm{Tr}_{u}(-\beta h(dh)^{2l+1})\right)\right), 
\end{align}
and $\Ph_\lself(h)$ is expanded as
\begin{align}\label{eq_proof_susp_5}
    \Ph_\lself(h) = - \pi^{-1/2} \sum_{l = 0}^\infty \frac{1}{(2l+1)!}\int_0^\infty t^{2l+1}e^{-t^2}dt
    \cdot \mathcal{R}\left(\mathrm{Tr}_{u \widehat{\otimes} \beta}(h (dh)^{2l+1})\right). 
\end{align}
We now check that \eqref{eq_proof_susp_4} and \eqref{eq_proof_susp_5} coincide. 
We have:
\begin{align}\label{eq_proof_susp_6}
    \mathrm{Tr}_{u}(-\beta h(dh)^{2l+1}) = -\mathrm{Tr}_{u \widehat{\otimes} \beta}(h (dh)^{2l+1}),
\end{align}

\begin{align}\label{eq_proof_susp_7}
    \int_0^\infty t^{2l+2}e^{-t^2} dt   \int_I d\theta \sin^{2l+1} (\pi \theta)
    = \frac{(2l+1)!! \sqrt{\pi}}{2^{l+2}} \cdot \frac{(2l)!! \cdot 2}{(2l+1)!!\pi} = \frac{l!}{2\sqrt{\pi}},
\end{align}
\begin{align}\label{eq_proof_susp_8}
    \int_0^\infty t^{2l+1}e^{-t^2}dt = \frac{l!}{2}.
\end{align}
Using the above formulas and \eqref{eq_def_R}, we see that \eqref{eq_proof_susp_4} and \eqref{eq_proof_susp_5} coincide. 
This finishes the proof in the case where $\type(A)$ is even. 
 
Now assume $\type(A)$ is odd. 
In this case $\Ph_\lself(\widetilde{h})$ is an odd form and $\Ph_\lself(h)$ is an even form, so that \eqref{eq_proof_susp_3} becomes
\begin{align}\label{eq_proof_susp_9}
     -\pi^{1/2}  \sum_{l = 0}^\infty \left(\frac{1}{(2l)!} \int_0^\infty t^{2l+1}e^{-t^2} dt  \int_I \sin^{2l}(\pi \theta) \mathcal{R}\left(d\theta  \mathrm{Tr}_{u}(-\beta h(dh)^{2l})\right)\right), 
\end{align}
and $\Ph_\lself(h)$ is expanded as
\begin{align}\label{eq_proof_susp_10}
    \Ph_\lself(h) = \pi^{-1/2} \sum_{l = 0}^\infty \frac{1}{(2l)!}\int_0^\infty t^{2l}e^{-t^2}dt
    \cdot \mathcal{R}\left(\mathrm{Tr}_{u \widehat{\otimes} \beta}(h (dh)^{2l})\right). 
\end{align}
We now check that \eqref{eq_proof_susp_9} and \eqref{eq_proof_susp_10} coincide. 
In this case we have
\begin{align}\label{eq_proof_susp_11}
   -\mathrm{Tr}_{u}(-\beta h(dh)^{2l+1}) = 2\mathrm{Tr}_{u \widehat{\otimes} \beta}(h (dh)^{2l+1}),
\end{align}
\begin{align}\label{eq_proof_susp_12}
    \int_0^\infty t^{2l+1}e^{-t^2} dt  \int_I \sin^{2l}(\pi \theta) d\theta 
    = \frac{l!}{2} \cdot \frac{(2l-1)!!}{(2l)!!}
    = \frac{(2l-1)!! }{2^{l+1}},
\end{align}
\begin{align}\label{eq_proof_susp_13}
    \int_0^\infty t^{2l}e^{-t^2} dt  
    = \frac{(2l-1)!! \sqrt{\pi}}{2^{l+1}}.
\end{align}
Using above formulas and \eqref{eq_def_R}, we see that \eqref{eq_proof_susp_9} and \eqref{eq_proof_susp_10} coincide. 
This finishes the proof in the case where $\type(A)$ is odd and completes the proof of Proposition \ref{prop_int_Ph_m}. 

\subsection{Proof of Theorem \ref{thm_Ph=Ph_top}}\label{subsec_proof_Ph=Ph_top}
Here we prove Theorem \ref{thm_Ph=Ph_top}. 
The first step is to show the following. 
\begin{lem}\label{lem_Ph=Ph_top_20}
Theorem \ref{thm_Ph=Ph_top} holds if $\type(A) = 0$. 
\end{lem}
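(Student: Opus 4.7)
The plan is to prove Lemma \ref{lem_Ph=Ph_top_20} by reducing to the Grassmannian model of $KO_0$ explained in Subsubsection \ref{subsubsec_KO_spectrum} and then identifying $\Ph_\lself(h_{\mathrm{univ}})$ with the classical Chern--Weil Pontryagin form of the tautological bundle. First I would reduce to the subspace $\Self_A^\dagger(S) \hookrightarrow \Self_A^*(S)$, which is a homotopy equivalence, so it suffices to verify the equality of de Rham cohomology classes after restriction. On $\Self_A^\dagger(S)$, since $h^2 = 1$ identically, Definition \ref{def_Ph_m_triv} simplifies to
\begin{align*}
\Ph_\lself(h_{\mathrm{univ}})\big|_{\Self_A^\dagger(S)} = \pi^{-1/2}\mathcal{R}\circ \int_{(0,\infty)} dt\, e^{-t^2}\, \mathrm{Tr}_A\!\left(h_{\mathrm{univ}} \, e^{-t\, dh_{\mathrm{univ}}}\right),
\end{align*}
so the integrand is a polynomial in $t$ times $e^{-t^2}$ whose $t$-integrals are the Gaussian moments already computed in \eqref{eq_proof_susp_7}, \eqref{eq_proof_susp_8}, \eqref{eq_proof_susp_12}, \eqref{eq_proof_susp_13}.

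Next I would employ the explicit diffeomorphism \eqref{eq_Gr_vs_Skew} identifying $\Self_A^\dagger(S) \simeq \mathrm{Gr}(\R^{N_+}\oplus \R^{N_-})$ via $h_{\mathrm{univ}} = u\otimes(1-2P)$, where $P$ is the tautological orthogonal projection. Writing $\theta \to \mathrm{Gr}(\R^{N_+}\oplus \R^{N_-})$ for the tautological subbundle, it carries the Gauss connection $\nabla^\theta = P\circ d\circ P$, whose curvature and Chern--Weil Pontryagin form $\mathrm{Ph}(\nabla^\theta)= \mathcal{R}\circ \mathrm{Tr}_\theta\bigl(e^{(\nabla^\theta)^2}\bigr)$ represent, by the standard theory recalled in Subsection \ref{subsec_Ph_top}, the topological class $\iota_{S,u}^*\Ph_{\mathrm{top}}$ on the Grassmannian.

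The core of the proof is then a term-by-term comparison of the series expansions
\begin{align*}
\mathrm{Tr}_A\!\bigl(h_{\mathrm{univ}}\, e^{-t\, dh_{\mathrm{univ}}}\bigr) = \sum_{k\ge 0}\frac{(-t)^k}{k!}\,\mathrm{Tr}_A\!\bigl(h_{\mathrm{univ}}(dh_{\mathrm{univ}})^k\bigr)
\end{align*}
and $\mathrm{Tr}_\theta\!\bigl(e^{(\nabla^\theta)^2}\bigr)$, after substituting $h_{\mathrm{univ}}=u\otimes(1-2P)$ and using $P\,dP\,P = 0$ and $(1-P)\,dP\,(1-P)=0$, which force $h_{\mathrm{univ}}(dh_{\mathrm{univ}})^{2j}$ and $h_{\mathrm{univ}}(dh_{\mathrm{univ}})^{2j+1}$ to have closed-form expressions in terms of $(dP)^{2j}$ and $(dP)^{2j+1}$ times $P$. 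Combining this algebraic identity with the Gaussian integrals and the normalization factors in $\mathcal{R}$ and $\pi^{-1/2}$, and using Definition \ref{def_u_str} to convert $\mathrm{Tr}_A$ to $\mathrm{Tr}_\theta$ via the factor $(\dim_\R A)^{-1/2}$ that cancels against $\dim S_\pm$, should yield $\Ph_\lself(h_{\mathrm{univ}}) = \mathrm{Ph}(\nabla^\theta)$ on the nose (not merely up to an exact form).

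The main obstacle is the algebraic bookkeeping in the term-by-term comparison, in particular keeping track of the normalization constants and the distinction between the even and odd powers of $dh_{\mathrm{univ}}$ (which match the even and odd parts of $\mathrm{Tr}_\theta(e^{(\nabla^\theta)^2})$ after the substitution $h_{\mathrm{univ}} = 1-2P$); the structure of this bookkeeping closely parallels the Gaussian-integral identities already carried out in the proof of Proposition \ref{prop_int_Ph_m} in Subsection \ref{subsec_proof_int_Ph_m}, and I expect the calculation to reuse those identities directly.
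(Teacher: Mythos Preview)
Your overall strategy coincides with the paper's: restrict to $\Self_A^\dagger(S)$, use the diffeomorphism \eqref{eq_Gr_vs_Skew} to the Grassmannian, and compare $\Ph_\lself(h_{\mathrm{univ}})$ with the Chern--Weil Pontryagin form of the tautological bundle via Gaussian integrals. However, your expectation that the two forms agree \emph{on the nose} is incorrect, and this is the one substantive gap in the plan.

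Write $h_{\mathrm{univ}} = u\otimes a$ with $a = 1-2P$ as in \eqref{eq_Gr_vs_Skew}. When you carry out the Gaussian integration you will find (this is the paper's \eqref{eq_univ_form_31})
\[
\Ph_\lself(h_{\mathrm{univ}}) \;=\; \mathcal{R}\circ \sum_{k\ge 0}\frac{1}{2^{2k+1}k!}\,\mathrm{Tr}\!\bigl(-a(da)^{2k}\bigr),
\]
whereas expanding $\nabla_{\mathrm{Gr}}^2 = P\,dP\wedge dP = \tfrac14 P\,da\wedge da$ and using $P = \tfrac12(1-a)$ gives
\[
\mathrm{Tr}_\theta\bigl(e^{\nabla_{\mathrm{Gr}}^2}\bigr) \;=\; \sum_{k\ge 0}\frac{1}{2^{2k+1}k!}\Bigl(\mathrm{Tr}\!\bigl(-a(da)^{2k}\bigr)\;+\;\mathrm{Tr}\!\bigl((da)^{2k}\bigr)\Bigr).
\]
The extra summand $\mathrm{Tr}((da)^{2k})$ does not cancel; it is what separates the two expressions. (Already in degree~$0$ one sees the mismatch: $\Ph_\lself(h_{\mathrm{univ}})$ has constant term $\mathrm{rank}(\theta)-n$, while $\mathrm{Ph}(\nabla^\theta)$ has constant term $\mathrm{rank}(\theta)$.) Two corrections are therefore needed. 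First, the relative class $\iota_{S,u}^*\Ph_{\mathrm{top}}$ is represented by $\mathcal{R}\circ\mathrm{Tr}_\theta(e^{\nabla_{\mathrm{Gr}}^2}) - n$, not by $\mathrm{Ph}(\nabla^\theta)$ alone; cf.\ \eqref{eq_Gr_vec_bdle}. Second, even after this subtraction the remaining discrepancy
\[
\mathcal{R}\circ\mathrm{Tr}_\theta\bigl(e^{\nabla_{\mathrm{Gr}}^2}\bigr) - n - \Ph_\lself(h_{\mathrm{univ}})
\;=\; \mathcal{R}\circ \tfrac12\,\mathrm{Tr}\!\bigl(e^{\frac14 da\wedge da}-1\bigr)
\]
is a genuinely nonzero form (e.g.\ in degree $4$ on $\mathrm{Gr}_2(\R^4)$), and one must check separately that it is exact, for instance via $\mathrm{Tr}((da)^{2k}) = d\,\mathrm{Tr}(a(da)^{2k-1})$. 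Your algebraic identities $P\,dP\,P = 0$ etc.\ are correct and useful for the first expansion, but they do not force the extra $\mathrm{Tr}((da)^{2k})$ terms to vanish. Once you adjust the target to ``equality up to an explicit exact form'' and supply this last step, the argument goes through exactly as the paper's does. The odd powers of $dh_{\mathrm{univ}}$ you mention play no role here: for $\type(A)=0$ they are killed by $\mathrm{Tr}_u$, so both sides live purely in degree $4\Z$.
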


\begin{proof}
Choose and fix a volume element $u \in A$ to trivialize $\Ori(A)$.
Recall that, in Subsubsection \ref{subsubsec_KO_spectrum}, we explained the model of the $KO$-spectrum realized as a direct limit of $\mathrm{Skew}^\dagger_{A}(-)$, and in the case that $\type(A) = 0$ we gave an explicit homeomorphism \eqref{eq_homeo_Gr_Skew} with the model of $KO_0$ realized by using Grassmannians. 
Also, as explained in Subsection \ref{subsec_Ph_top}, the degree-zero part of the topological Pontryagin character is realized as the Chern-Weil construction on vector bundles. 
For each $n$, set $S_n := (S_+ \otimes \R^n)\oplus (S_- \otimes \R^n)$ in the notation there. Recall the diffeomorphism \eqref{eq_Gr_vs_Skew} 
\begin{align*}
\Self^\dagger_{A}(S_n) = 
     \Self^\dagger_{A}((S_+ \otimes \R^n)\oplus (S_- \otimes \R^n)) &\simeq \mathrm{Gr}(\R^{2n})  \\
     u \otimes a &\mapsto \mathrm{Im}((1-a)/ 2) = \mathrm{Im}(P), 
\end{align*}
where $P := (1-a)/2$. The canonical connection on the tautological real vector bundle $\theta_{2n}$ over $\mathrm{Gr}(\R^{2n})$ is given by
\begin{align*}
    \nabla_{\mathrm{Gr}} = PdP, 
\end{align*}
where the inclusions $\Omega^*(\mathrm{Gr}(\R^{2n}); \theta_{2n}) \subset \Omega^*(\mathrm{Gr}(\R^{2n}); \underline{\R}^{2n})$ are understood. 
We know that the topological Pontryagin character is the direct limit with respect to $n$ of the Pontryagin character form $\mathcal{R}\circ \mathrm{Tr}(e^{\nabla_{\mathrm{Gr}}^2}) - n$ for the connection $\nabla_{\mathrm{Gr}}$. 
Thus it is enough to show that (see \eqref{eq_Gr_vec_bdle})
\begin{align}\label{proof_gr_taut}
   \mathcal{R}\circ \mathrm{Tr}(e^{\nabla_{\mathrm{Gr}}^2}) - n -\mathrm{Ph}_\lself(h_\mathrm{univ}) \in \mathrm{Im}(d)
\end{align}
on $\Self^\dagger_{A}(S_n)$ for each $n$. 

First we compute $ \mathrm{Tr}(e^{\nabla_{\mathrm{Gr}}^2})$. 
We have
\begin{align*}
    \nabla_{\mathrm{Gr}}^2 = P dP \wedge dP = \frac{1}{4}P da \wedge da. 
\end{align*}
Using the relation $ada + da\cdot a = 0$ and $P ^2 = P$, 
we have
\begin{align}\label{eq_Pf_gr}
    \mathrm{Tr}(e^{\nabla_{\mathrm{Gr}}^2}) 
    &= \sum_{k=0}^\infty\frac{1}{2^{2k+1} k! } \left( \mathrm{Tr}(-a(da)^{2k}) + \mathrm{Tr}((da)^{2k})\right).
\end{align}

Next we compute $\mathrm{Ph}_\lself(h_\mathrm{univ})$. 
Applying Definition \ref{def_Ph_m_triv} to $h_\mathrm{univ} = u \otimes a$ and using $h_{\mathrm{univ}}^2 = 1$,  
we have
\begin{align*}
    \mathrm{Ph}_\lself(h_\mathrm{univ}) = \mathcal{R} \circ \pi^{-1/2} \int_{(0, \infty)} e^{-t^2}dt \wedge \mathrm{Tr}_u \left( (u \otimes a) \cdot e^{-t u \widehat{\otimes} da } \right). 
\end{align*}
We have
\begin{align*}
    e^{-tu \widehat{\otimes} da}
    =\sum_{k=0}^\infty \frac{t^{2k} }{(2k)!} \cdot 1 \widehat{\otimes} (da)^{2k}- \sum_{k=0}^\infty \frac{t^{2k+1} }{(2k+1)!}\cdot u\widehat{\otimes} (da)^{2k+1}, 
\end{align*}
so that, using $\mathrm{Tr}_u(u \otimes B) = -\mathrm{Tr}(B)$ and $\mathrm{Tr}_u(1 \otimes B) =0$ for any $B \in \mathrm{End}(\R^{2n})$, 
\begin{align*}
    \mathrm{Tr}_u\left((u \otimes a) e^{-tu \widehat{\otimes} da}\right)
     = \sum_{k=0}^\infty \frac{t^{2k} }{(2k)!} \cdot \mathrm{Tr} \left( -a(da)^{2k}\right) .
\end{align*}
Using the formula 
\begin{align*}
    \int_0^\infty e^{-t^2} t^{2k}dt = \sqrt{\pi} \cdot \frac{(2k-1)!!}{2^{k+1}}, 
\end{align*}
we get
\begin{align}\label{eq_univ_form_31}
    \mathrm{Ph}_\lself(h_\mathrm{univ}) = \mathcal{R} \circ
     \sum_{k=0}^\infty \frac{1}{2^{2k+1}k!}\mathrm{Tr} \left( -a(da)^{2k}\right). 
\end{align}

Now we can compute the element \eqref{proof_gr_taut}. 
By \eqref{eq_Pf_gr} and \eqref{eq_univ_form_31}, we have
\begin{align*}
    \mathcal{R}\circ \mathrm{Tr}(e^{\nabla_{\mathrm{Gr}}^2}) - n -\mathrm{Ph}_\lself(h_\mathrm{univ}) 
    &= \mathcal{R} \circ \left(-n + \sum_{k=0}^\infty \frac{1}{2^{2k+1}k!}  \mathrm{Tr}((da)^{2k})
    \right)\\
    &= \mathcal{R} \circ \frac{1}{2} \mathrm{Tr}(e^{\frac{1}{4}da \wedge da}-1). 
\end{align*}
This is exact, so we have proved \eqref{proof_gr_taut}. 
This completes the proof of Lemma \ref{lem_Ph=Ph_top_20}. 
\end{proof}

\begin{proof}[Proof of Theorem \ref{thm_Ph=Ph_top}]
By Proposition \ref{prop_int_Ph_m}, we know that $\mathrm{Ph}(m)$ is compatible with the suspension. 
Thus we can reduce to the case $\type(A) = 0$, and the result follows from Lemma \ref{lem_Ph=Ph_top_20}. 
\end{proof}

\subsection{The proof of Proposition \ref{prop_chracteristic_form_mod4}}\label{subsec_proof_mod4}
Here we prove Proposition \ref{prop_chracteristic_form_mod4}. 
We only give the proof for (1),  whose straightforward modification leads to (2). 

It is enough to consider the case that $f(z) = z^n$ for $n \in \Z_{\ge 0}$. 
Moreover, working locally, we may assume that $V = \underline{A}$ for some $A$ and $\slashed S = \underline{S}$ for an $A$-module $S$ with an inner product. 
Furthermore, by Lemma \ref{lem_change_connection} it is enough to consider the case that $ \nabla^\mathcal{A} = d$, since the action \eqref{eq_connection_action} preserves the self-adjointness. 
Fix a volume element $u$ for $A$ to trivialize $\mathrm{Ori}(A)$. 
Then we can decompose the self-adjoint $\mathcal{A}$-superconnection $\Grad^{\slashed S}$ as
\begin{align*}
    \Grad^{\slashed S} = d + \sum_{j \in J} \omega_j \widehat{\otimes}\xi_j , 
\end{align*}
for $\omega_j \in \Omega^*(X)$ and $\xi_j \in \mathrm{End}_{A}(S) $ with $|\omega_j| + |\xi_j| \equiv 1 \pmod 2$. 
We have
\begin{align*}
    F({\Grad^{\slashed S}}; d) = (\Grad^{\slashed S})^2 = \sum_j d\omega_j \widehat{\otimes}\xi_j
    + \sum_{j, k} (\omega_j\widehat{\otimes}\xi_j) \cdot (\omega_k\widehat{\otimes}\xi_k). 
\end{align*}
Thus $(F({\Grad^{\slashed S}}; d))^n = (\Grad^{\slashed S})^{2n}$ is the sum of the all possible terms of the form
\begin{align}\label{eq_proof_pfmod4_1}
    (\alpha_{j_1} \widehat{\otimes}\xi_{j_1}) \cdot (\alpha_{j_2} \widehat{\otimes}\xi_{j_2})\cdot \cdots \cdot (\alpha_{j_l} \widehat{\otimes}\xi_{j_l}), 
\end{align}
where $j_t \in J$ and $\alpha_{j_t}$ is either of $d\omega_{j_t}$ or $\omega_{j_t}$ for each $t$, and if the number of $t$ such that $\alpha_{j_t} = d\omega_{j_t}$ is $m$, we have $l + m =2 n$. 

We show that, for each term \eqref{eq_proof_pfmod4_1} as above, the sum
\begin{multline}\label{eq_proof_pfmod4_sum}
    \mathrm{Tr}_{u}\left( (\alpha_{j_1} \widehat{\otimes}\xi_{j_1}) \cdot (\alpha_{j_2} \widehat{\otimes}\xi_{j_2})\cdot \cdots \cdot (\alpha_{j_l} \widehat{\otimes}\xi_{j_l})\right) \\
    + \mathrm{Tr}_{u}\left( (\alpha_{j_l} \widehat{\otimes}\xi_{j_l}) \cdot (\alpha_{j_{l-1}} \widehat{\otimes}\xi_{j_{l-1}})\cdot \cdots \cdot (\alpha_{j_1} \widehat{\otimes}\xi_{j_1})\right)
\end{multline}
can be nonzero only if we have $\sum_t |\alpha_{j_t}| \equiv \mathrm{type}(A)+1 \pmod 4$. 
This implies the desired result. 

Fix a term \eqref{eq_proof_pfmod4_1}. 
We define $I \subset \{1, \cdots, l\}$ so that $\alpha_{j_t} = d\omega_{j_t}$ for $t \in I$ and $\alpha_{j_t} = \omega_{j_t}$ for $t \notin I$. 
We have $|I| = m$. 
We also define $\sigma \colon \Z \to  \Z_2$ by
\begin{align*}
    \sigma(x) := \begin{cases}
    1 & \mbox{if } x \equiv 1, 2 \pmod 4, \\
    0 & \mbox{if } x \equiv 0, 3 \pmod 4. 
    \end{cases}
\end{align*}
Then the self-adjointness of $\Grad$ (Definition \ref{def_skew_superconn}) implies that
\begin{align}\label{eq_proof_pfmod4_2}
    \xi_j^* = (-1)^{\sigma(|\omega_j|)}\xi_j
\end{align}
for all $j \in J$. 
Also, a straightforward computation shows that
\begin{align}\label{eq_proof_pfmod4_3}
    u^* = (-1)^{\sigma(\mathrm{type}(A))} u. 
\end{align}

Now we compute \eqref{eq_proof_pfmod4_1}. 
We define $A, B, C, D \in \Z_2$ by the following. 
\begin{align}
    (\alpha_{j_1} \widehat{\otimes}\xi_{j_1}) \cdot (\alpha_{j_2} \widehat{\otimes}\xi_{j_2})\cdot \cdots \cdot (\alpha_{j_l} \widehat{\otimes}\xi_{j_l})
    &= (-1)^A \alpha_{j_1} \wedge \alpha_{j_2} \wedge \cdots \wedge \alpha_{j_l} \widehat{\otimes} \xi_{j_1} \xi_{j_2} \cdots \xi_{j_l}, \\
    (\alpha_{j_l} \widehat{\otimes}\xi_{j_l}) \cdot (\alpha_{j_{l-1}} \widehat{\otimes}\xi_{j_{l-1}})\cdot \cdots \cdot (\alpha_{j_1} \widehat{\otimes}\xi_{j_1})
    &= (-1)^B \alpha_{j_l} \wedge \alpha_{j_{l-1}} \wedge \cdots \wedge \alpha_{j_1} \widehat{\otimes} \xi_{j_l} \xi_{j_{l-1}} \cdots \xi_{j_1}, \notag\\
    \alpha_{j_1} \wedge \alpha_{j_2} \wedge \cdots \wedge \alpha_{j_l} 
    &= (-1)^C\alpha_{j_l} \wedge \alpha_{j_{l-1}} \wedge \cdots \wedge \alpha_{j_1}, \notag \\
    (u \xi_{j_1} \xi_{j_2} \cdots \xi_{j_l})^* 
    &= (-1)^D \xi_{j_l} \xi_{j_{l-1}} \cdots \xi_{j_1}u  \notag. 
\end{align}
Then we have
\begin{align*}
    \mathrm{Tr}\left(u\xi_{j_1} \xi_{j_2} \cdots \xi_{j_l} \right) &= \mathrm{Tr}\left((u\xi_{j_1} \xi_{j_2} \cdots \xi_{j_l})^* \right) \\
    &= (-1)^D \mathrm{Tr}\left(  \xi_{j_l} \xi_{j_{l-1}} \cdots \xi_{j_1}u \right) \\
    &= (-1)^D\mathrm{Tr}\left( u \xi_{j_l} \xi_{j_{l-1}} \cdots \xi_{j_1}\right), 
\end{align*}
so that
\begin{align*}
    \mathrm{Tr}_u\left(\xi_{j_1} \xi_{j_2} \cdots \xi_{j_l} \right) =(-1)^D\mathrm{Tr}_{u} \left(\xi_{j_l} \xi_{j_{l-1}} \cdots \xi_{j_1} \right).
\end{align*}
Using this, we get
\begin{align}\label{eq_proof_pfmod4_8}
    &\mathrm{Tr}_u\left( (\alpha_{j_1} \widehat{\otimes}\xi_{j_1}) \cdot (\alpha_{j_2} \widehat{\otimes}\xi_{j_2})\cdot \cdots \cdot (\alpha_{j_l} \widehat{\otimes}\xi_{j_l})\right) \\
    &= (-1)^A \mathrm{Tr}_{u}\left(\xi_{j_1} \xi_{j_2} \cdots \xi_{j_l} \right) \alpha_{j_1} \wedge \alpha_{j_2} \wedge \cdots \wedge \alpha_{j_l} \notag \\
    &=(-1)^{A +C + D}\mathrm{Tr}_{u} \left(\xi_{j_l} \xi_{j_{l-1}} \cdots \xi_{j_1} \right)\alpha_{j_l} \wedge \alpha_{j_{l-1}} \wedge \cdots \wedge \alpha_{j_1} \notag \\
    &= (-1)^{A + B+C+D} \mathrm{Tr}_{u}\left( (\alpha_{j_l} \widehat{\otimes}\xi_{j_l}) \cdot (\alpha_{j_{l-1}} \widehat{\otimes}\xi_{j_{l-1}})\cdot \cdots \cdot (\alpha_{j_1} \widehat{\otimes}\xi_{j_1})\right). \notag
\end{align}
Now we compute $A+B+C+D$.  
We have
\begin{align}
    A &= \sum_{s < t} |\alpha_{j_s}| \cdot |\xi_{j_t}|, \\ 
    B &= \sum_{t < s}|\alpha_{j_s}| \cdot |\xi_{j_t}|, \notag \\
    C &= \sum_{s < t} |\alpha_{j_s}| \cdot|\alpha_{j_t}|, \notag \\
    D &= \sum_t \sigma(|\omega_{j_t}|) + \sigma(\mathrm{type}(A)), \notag
\end{align}
where the last equality used \eqref{eq_proof_pfmod4_2} and \eqref{eq_proof_pfmod4_3}. 
Recall that $|\xi_{j_t}| \equiv |\alpha_{j_t}| \pmod 2$ if $t \in I$ and $|\xi_{j_t}| \equiv |\alpha_{j_t}| -1 \pmod 2$ if $t \notin I$. 
We get
\begin{align}\label{eq_proof_pfmod4_4}
    A + B &= \sum_{t \neq s}|\alpha_{j_s}| \cdot |\xi_{j_t}| \\
    &=  \sum_{t \neq s}|\alpha_{j_s}| \cdot|\alpha_{j_t}| - (l-m) \sum_{s \in I}|\alpha_{j_s}| - (l - m - 1)\sum_{s \in J \setminus I}|\alpha_{j_s}| \notag \\
    &= \sum_{s \in J \setminus I}|\alpha_{j_s}|, \notag
\end{align}
where the last equality follows from the fact that the first term in the second line is even, and that $l-m$ is even since $l + m = 2n$. 
Using $\sigma(x - 1 ) = \sigma(x) + x$ and the fact that $|\omega_{j_t}| = |\alpha_{j_t} | - 1$ if $t \in I$ and $|\omega_{j_t}| = |\alpha_{j_t} |$ if $t \notin I$, we have
\begin{align}
    D &= \sum_{t \in J} \sigma(|\alpha_{j_t}|) + \sum_{t \in I}|\alpha_{j_t}|  +  \sigma(\mathrm{type}(A)) . 
\end{align}
Moreover, using $\sigma(x )+\sigma( y)+xy= \sigma(x+ y)  $, we get
\begin{align}\label{eq_proof_pfmod4_5}
    C + D &= \sigma \left(\sum_{t \in J} |\alpha_{j_t}|\right) +  \sum_{t \in I}|\alpha_{j_t}|  + \sigma(\mathrm{type}(A))  .
\end{align}
By \eqref{eq_proof_pfmod4_4} and \eqref{eq_proof_pfmod4_5}, we conclude
\begin{align}\label{eq_proof_pfmod4_6}
    A + B + C + D = \sigma \left(\sum_{t \in J} |\alpha_{j_t}|\right) + \sum_{t \in J}|\alpha_{j_t}|  + \sigma(\mathrm{type}(A)) . 
\end{align}

Recall that we already know that the map \eqref{eq_def_Gamma_tr} preserves the $\Z_2$-grading if $\mathrm{type}(A)$ is odd, and reverses the $\Z_2$-grading if $\mathrm{type}(A)$ is even. 
In particular, by Lemma \ref{lem_curv_even}, we know that \eqref{eq_proof_pfmod4_sum} can be nonzero only when 
\begin{align}\label{eq_proof_pfmod4_7}
    \sum_{t \in J} |\alpha_{j_t}| \equiv \mathrm{type}(A) + 1 \pmod 2. 
\end{align}
In this case \eqref{eq_proof_pfmod4_6} becomes
\begin{align*}
    A + B + C + D &= \sigma \left(\sum_{t \in J} |\alpha_{j_t}|\right)  +\mathrm{type}(A) +1 + \sigma(\mathrm{type}(A)) \\
    &= \sigma \left(\sum_{t \in J} |\alpha_{j_t}|\right) + \sigma(\mathrm{type}(A) + 1)
\end{align*}
where the last equality used $\sigma(x+1) = \sigma(x) +x + 1$. 
Using \eqref{eq_proof_pfmod4_8}, the sum \eqref{eq_proof_pfmod4_sum} can be nonzero only when both \eqref{eq_proof_pfmod4_7} and 
\begin{align*}
    \sigma \left(\sum_t |\alpha_{j_t}|\right) = \sigma(\mathrm{type}(A) + 1)
\end{align*}
are satisfied. 
But $\sigma(x) = \sigma(y)$ and $x \equiv y \pmod 2$ imply $x \equiv y \pmod 4$, so these conditions imply
\begin{align*}
    \sum_t |\alpha_{j_t}| \equiv \mathrm{type}(A)+1 \pmod 4. 
\end{align*}
Thus we get the desired result.

\section*{Acknowledgment}
The authors are grateful to Mikio Furuta, Yuji Tachikawa and Kazuya Yonekura for helpful discussion and comments. 
KG is supported by 
JSPS KAKENHI Grant Numbers 20K03606 and JP17H06461.
MY is supported by JSPS KAKENHI Grant Number 20K14307 and JST CREST program JPMJCR18T6.

\bibliographystyle{ytamsalpha}
\bibliography{genusQFT}

\end{document}